\newcommand{\sspace}{\vspace{0.25cm}}
\newcommand{\noi}{\noindent}
\theoremstyle{plain}
\newtheorem{theor}{Theorem}[subsection]
\newtheorem{conj}[theor]{Conjecture}
\newtheorem{prop}[theor]{Proposition}
\newtheorem{lem}[theor]{Lemma}
\newtheorem{sublem}[theor]{Sublemma}
\newtheorem{cor}[theor]{Corollary}
\newtheorem{theorA}{Theorem}[section]
\newtheorem{propA}[theorA]{Proposition}
\newtheorem{lemA}[theorA]{Lemma}
\theoremstyle{remark}
\newtheorem{rem}[theor]{Remark}
\newtheorem{rems}[theor]{Remarks}
\theoremstyle{plain}
\newtheorem{defi}[theor]{Definition}
\numberwithin{equation}{section}
\newcommand{\CC}{{\mathbb C}}
\newcommand{\RR}{{\mathbb R}}
\renewcommand{\SS}{{\mathbf S}}
\newcommand{\QQ}{{\mathbb Q}}
\newcommand{\FF}{{\mathbb F}}
\newcommand{\ZZ}{{\mathbb Z}}
\renewcommand{\AA}{{\mathbf A}}
\newcommand{\G}{{\mathbf G}}
\newcommand{\HH}{{\mathbf H}}
\newcommand{\TT}{{\mathbf T}}
\newcommand{\PP}{{\mathbf P}}
\newcommand{\MM}{{\mathbf M}}
\newcommand{\NNN}{{\mathbf N}}
\newcommand{\Z}{{\mathbf Z}}
\newcommand{\C}{{\mathbf C}}
\newcommand{\NN}{{\mathbb N}}
\newcommand{\Zhat}{\hat{\ZZ}}
\newcommand{\AAf}{\AA_{\rm f}}
\newcommand{\Qbar}{{\overline{\QQ}}}
\newcommand{\Spec}{{\rm Spec}}
\newcommand{\Ga}{\Gamma}
\newcommand{\into}{\hookrightarrow}
\newcommand{\ol}{\overline}
\newcommand{\wt}{\widetilde}
\newcommand{\lto}{\longrightarrow}
\newcommand{\sm}{{\rm sm}}
\newcommand{\Hom}{{\rm Hom}}
\newcommand{\Res}{{\rm Res}}
\newcommand{\Sh}{{\rm Sh}}
\newcommand{\Gal}{{\rm Gal}}
\newcommand{\ad}{{\rm ad}}
\newcommand{\X}{{\mathcal X}}
\newcommand{\A}{{\mathcal A}}
\newcommand{\He}{{\mathcal H}}
\newcommand{\Cone}{{\mathcal C}}
\newcommand{\der}{{\rm der}}
\newcommand{\GL}{{\rm \bf GL}}
\newcommand{\MT}{{\rm \bf MT}}
\newcommand{\ordre}{{\textnormal{ord}}}
\newcommand{\alg}{\textnormal{alg}}
\newcommand{\Iw}{\mathcal{I}}
\newcommand{\OO}{\mathcal{O}}
\newcommand{\K}{\mathcal{K}}
\newcommand{\V}{\mathcal{V}}
\begin{document}

\title{The Andr{\'e}-Oort conjecture.
\footnote{Submitted to Annals of mathematics. Version of September 2013}}
\author{B. Klingler, A. Yafaev}
\thanks{B.K. was supported by NSF grant DMS 0350730}
\date{}


\begin{abstract}
In this paper we prove, assuming the Generalized Riemann Hypothesis, the Andr{\'e}-Oort conjecture
on the Zariski closure of sets of special points in a Shimura variety.
In the case of sets of special points satisfying an additional assumption, we prove the conjecture
without assuming the GRH.
\end{abstract}

\maketitle

\tableofcontents

\section{Introduction.}\label{section1}

\subsection{The Andr{\'e}-Oort conjecture.}
The purpose of  this paper is to prove, under certain assumptions, the
Andr{\'e}-Oort conjecture on special subvarieties of Shimura varieties.

Before stating the Andr{\'e}-Oort conjecture we provide some motivation  
from algebraic geometry. 
Let $Z$ be a smooth complex algebraic variety and let $\mathcal{F} \lto Z$ be a
variation of polarizable $\QQ$-Hodge structures on $Z$
(for example $\mathcal{F} = R^if_* \QQ$ for a smooth proper
morphism $f:Y \lto Z$). To every $z\in Z$ one associates a reductive algebraic
$\QQ$-group
$\MT(z)$, called the Mumford-Tate group of the Hodge structure
$\mathcal{F}_z$. This group is the stabiliser of the Hodge classes in  
the rational Hodge
structures tensorially generated by $\mathcal{F}_z$ and its
dual. A point $z \in Z$ is said to be Hodge generic if $\MT(z)$ is
maximal. If $Z$ is irreducible, two Hodge generic points of $Z$ have
the same Mumford-Tate group, called the generic Mumford-Tate group
$\MT_Z$.
The complement of the Hodge generic locus is a countable union of closed irreducible algebraic
subvarieties of $Z$, each not contained in the union of the
others. This is proved in \cite{CDK}.
Furthermore, it is shown in \cite{Vo} that when $Z$ is defined over $\ol\QQ$
(and under certain simple assumptions) these components are also defined
over $\ol\QQ$.
The irreducible components of the intersections  
 of these subvarieties are called
{\em special subvarieties} (or subvarieties of Hodge type) of $Z$ relative to
$\mathcal{F}$. Special subvarieties of dimension zero are called {\em special
points}. 

\sspace
\noi
{\bf Example:}
Let $Z$ be the  modular curve $Y(N)$ (with $N\geq 4$) and  
let $\mathcal{F}$ be the variation of
polarizable $\QQ$-Hodge structures $R^1f_*\QQ$ of weight one on $Z$  
associated to the
universal elliptic curve $f : E \lto Z$. Special points on $Z$
parametrize elliptic curves with complex multiplication. The
generic Mumford-Tate group on $Z$ is $\GL_{2,\QQ}$. The Mumford-Tate  
group of a
special point corresponding to an elliptic curve with complex
multiplication by a quadratic imaginary field $K$ is the torus
${\textnormal{Res}}_{K /\QQ} \mathbf{G}_{\mathbf{m},K}$ obtained by restriction of scalars from
$K$ to $\QQ$ of the multiplicative group $\mathbf{G}_{\mathbf{m},K}$
over $K$.

\sspace
The general Noether-Lefschetz problem consists in describing  
the geometry of these
special subvarieties, in particular the distribution of special
points. 
Griffiths transversality condition prevents, in general, the existence
of moduli spaces for variations of polarizable $\QQ$-Hodge structures.
Shimura varieties naturally appear as solutions to such moduli
problems with additional data (c.f. \cite{De1}, \cite{De2},
\cite{Milne}). Recall that a $\QQ$-Hodge structure on a $\QQ$-vector space
$V$ is a structure of
$\SS$-module on $V_{\RR}:= V \otimes_{\QQ} \RR$, where
$\SS= {\textnormal{Res}}_{\CC/\RR} \mathbf{G}_{\mathbf{m},\CC}$. In other words it is a  
morphism of real algebraic groups
$
h \colon \SS \lto \GL(V_{\RR})
$.

The Mumford-Tate group $\MT(h)$ is the
smallest algebraic $\QQ$-subgroup $\HH$ of $\GL(V)$ such that
$h$ factors through $\HH_{\RR}$. A {\em Shimura datum} is a
pair $(\G, X)$, with $\G$ a linear connected reductive group over $\QQ$
and $X$ a $\G(\RR)$-conjugacy class in the set of morphisms of
real algebraic groups $\Hom(\SS, \G_{\RR})$, satisfying the
  ``Deligne's conditions'' \cite[1.1.13]{De2}. These conditions imply,  
in particular, that
the connected components of $X$ are Hermitian symmetric domains and that
  $\QQ$-representations of $\G$ induce polarizable variations of
$\QQ$-Hodge structures on $X$. A morphism of Shimura data from $(\G_1, X_1)$
to $(\G_2, X_2)$ is a $\QQ$-morphism $f: \G_1 \lto \G_2$ that maps
$X_1$ to $X_2$.

Given  a compact open subgroup $K$ of $\G(\AAf)$ (where $\AAf$ denotes
the ring of finite ad{\`e}les of $\QQ$) the set $\G(\QQ)\backslash (X
\times \G(\AAf)/K)$ is naturally the set of $\CC$-points 
of a quasi-projective variety (a {\em Shimura variety}) over $\CC$, denoted
$\Sh_{K}(\G,X)_{\CC}$. The projective limit 
$\Sh(\G, X)_{\CC}=\varprojlim_{\substack{K}} \Sh_{K}(\G,X)_{\CC}$ is a
$\CC$-scheme on which $\G(\AAf)$ acts continuously by multiplication on
the right (c.f. section~\ref{neutral}).  The multiplication
by $g \in \G(\AAf)$ on $\Sh(\G, X)_{\CC}$ induces an algebraic
correspondence $T_g$ on $\Sh_{K}(\G,X)_{\CC}$, called a {\em Hecke
correspondence}. One shows that a subvariety $V \subset \Sh_{K}(\G,
X)_{\CC}$ is {\em special} (with respect to some variation of Hodge structure
associated to a faithful $\QQ$-representation of $\G$) if and only if there is  
a Shimura datum
$(\HH, X_{\HH})$, a morphism of Shimura data $f: (\HH, X_{\HH}) \lto (\G, X)$
and an element $g \in \G(\AAf)$ such that $V$ is an irreducible
component of the image of the morphism:

$$ \Sh(\HH, X_{\HH})_{\CC} \stackrel{\Sh(f)}{\lto} \Sh(\G, X)_{\CC}
\stackrel{.g}{\lto} \Sh(\G, X)_{\CC} \lto \Sh_{K}(\G, X)_{\CC}\;\;. $$

It can also be shown that the Shimura datum $(\HH,X_{\HH})$
can be chosen in such a way that $\HH \subset \G$ is the generic Mumford-Tate 
group on $X_{\HH}$ (see Lemma 2.1 of \cite{UllmoYafaev}).
A {\em special} point is a special subvariety of dimension zero.
One sees that a point $\ol{(x, g)} \in \Sh_{K}(\G,X)_\CC(\CC)$ (where
$x \in X$ and $g \in \G(\AAf)$) is {\em  
special} if and only if
the group $\MT(x)$ is commutative (in which case $\MT(x)$ is a torus).

\sspace
Given a special subvariety $V$ of $\Sh_K(\G,X)_{\CC}$, the set of  
special points
of $\Sh_K(\G,X)_\CC(\CC)$ contained in $V$ is dense in $V$ for the strong  
(and in particular for the Zariski) topology.
Indeed, one shows that $V$ contains a special
point, say $s$. Let $\HH$ be a reductive group defining $V$ and let
$\HH(\RR)^+$ denote the connected component of the identity in the
real Lie group $\HH(\RR)$.
The fact that $\HH(\QQ)\cap \HH(\RR)^+$ is dense in $\HH(\RR)^+$  
implies that the
``$\HH(\QQ)\cap \HH(\RR)^+$-orbit'' of $s$, which is contained in $V$,  
is dense
in $V$. This ``orbit'' (sometimes referred to as the Hecke orbit of  
$s$) consists of special points.
The Andr{\'e}-Oort conjecture is the converse statement.

\begin{defi} \label{defi111}
Given a set $\Sigma$ of subvarieties of $\Sh_{K}(\G,X)_{\CC}$ we
denote by $\mathbf{\Sigma}$ the subset $\cup_{V \in \Sigma} V$ of
$\Sh_{K}(\G,X)_{\CC}$.
\end{defi}

\begin{conj}[Andr{\'e}-Oort] \label{conjecture}
Let $(\G,X)$ be a Shimura datum, $K$ a compact open subgroup of
$\G(\AAf)$ and let $\Sigma$ a set of special points in $\Sh_{K}(\G,
X)_\CC(\CC)$. Then every irreducible component of the Zariski closure of
$\mathbf{\Sigma}$ in $\Sh_{K}(\G, X)_{\CC}$ is a special subvariety.
\end{conj}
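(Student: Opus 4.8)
The plan is to reduce the statement to a finite combination of three ingredients: a Galois-theoretic lower bound on the size of Galois orbits of special points, a geometric criterion (the Edixhoven--Yafaev / André ``hyperbolicity'' argument) for recognizing when an algebraic subvariety stable under enough Hecke correspondences must be special, and an archimedean equidistribution/degree estimate controlling the intersection of $V$ with the images of Hecke correspondences. Concretely, let $Z$ be an irreducible component of the Zariski closure of $\mathbf{\Sigma}$; replacing $\Sigma$ by the subset of its points lying in $Z$, we may assume $Z = \overline{\mathbf{\Sigma}}$ is irreducible, and we argue by induction on $\dim Z$. If $\dim Z = 0$ there is nothing to prove, so assume $Z$ is positive-dimensional and that it is \emph{not} special; we must derive a contradiction. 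By general results on Shimura data one reduces to the case where $(\G,X)$ is such that $\G$ is the generic Mumford--Tate group on $X$ along $Z$, and (after passing to a suitable connected component and adjusting $K$) that $Z$ is Hodge generic in its ambient Shimura variety.

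First I would establish, for a special point $s = \overline{(x,g)} \in Z$ with associated torus $T = \MT(x) \subset \G$, a lower bound of the shape
\[
\# \bigl(\Gal(\ol\QQ/E)\cdot s\bigr) \;\gg_{\varepsilon}\; \disc(T)^{\,\delta-\varepsilon}
\]
for some absolute $\delta>0$, where $E$ is a fixed number field over which $\Sh_K(\G,X)$ and $Z$ are defined. This is where the Generalized Riemann Hypothesis enters: GRH for the Dirichlet $L$-functions attached to the splitting field of $T$ (equivalently, effective Chebotarev) is used to produce many primes $p$ of small size that are split in $T$ and at which $T$ has good reduction, so that the reciprocity map shows the Galois orbit of $s$ is large and, simultaneously, that $s$ is linked by a Hecke correspondence $T_{p}$ of controlled degree to its Galois conjugates. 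In the ``additional assumption'' case (where the tori $T$ are constrained so as to lie in a single $\G(\AAf)$-conjugacy class of a fixed torus, or satisfy a boundedness condition on the primes dividing the discriminant) the required split primes can be produced unconditionally, which is why the theorem is GRH-free there.

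Next, combining this with an upper bound for $\deg_{Z}(T_p \cdot Z \cap Z)$ coming from intersection theory on $\Sh_K(\G,X)$ (the degree of $T_p$ grows polynomially in $p$, and $Z$ has bounded degree), I would show that for infinitely many special points $s \in \Sigma$ one cannot have $Z \subsetneq T_p\cdot Z$ for all the relevant $p$; hence there is a prime $p$ and a Hecke correspondence $T_p$ with $T_p \cdot Z = Z$, i.e.\ $Z$ is stable under a Hecke correspondence of the right type. The heart of the argument is then the geometric step: a positive-dimensional Hodge-generic subvariety $Z \subset \Sh_K(\G,X)$ that is stable under a suitable nontrivial Hecke correspondence $T_p$ is a special subvariety. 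This I would prove by lifting $Z$ to the Hermitian symmetric domain $X^+$, using the action of $T_p$ together with an analysis at the boundary (monodromy/hyperbolicity of period maps, or the André ``$o$-minimal'' style rigidity) to force the connected algebraic monodromy group of $Z$ to be normal in $\G^{\der}$, and then invoking the characterization of special subvarieties recalled in the introduction (as components of images $\Sh(f)(\Sh(\HH,X_\HH)) \cdot g$ with $\HH$ the generic Mumford--Tate group).

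The main obstacle, and the place where the real work lies, is making the Galois lower bound and the Hecke-degree upper bound \emph{quantitatively compatible}: one needs the exponent $\delta$ in $\disc(T)^{\delta}$ to beat the polynomial-in-$p$ growth of $\deg T_p$ after choosing $p$ as small as GRH/Chebotarev allows (of size $O((\log \disc T)^{A})$), uniformly over all special points of $Z$ and over all intermediate tori that can occur. Handling this uniformly requires a careful descent through the possible reductive subgroups $\HH \subset \G$ (a finiteness statement, up to $\G(\QQ)$-conjugacy, for the Mumford--Tate groups arising from $\Sigma$) and a uniform control of heights and degrees in terms of $K$; secondarily, the boundary analysis in the geometric step must be done with enough care to cover non-compact quotients. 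Once these estimates are aligned, the induction closes: $Z$ is special, contradicting our assumption, so every component of $\overline{\mathbf{\Sigma}}$ is special.
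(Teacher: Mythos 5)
Your proposal reproduces the Edixhoven--Yafaev strategy for curves and attempts to run it for $Z$ of arbitrary dimension, but it misses precisely the two difficulties the paper identifies as the obstacles to that generalization, and both are genuine gaps.

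First, the geometric step as you state it is not correct without extra hypotheses. The assertion ``a Hodge-generic $Z$ stable under a suitable nontrivial Hecke correspondence $T_p$ is special'' is the content of \cite[theorem 7.1]{EdYa} (recalled as theorem~\ref{crit1} here), and that criterion requires the transforms $T_{q_i}Z$ and $T_{q_i^{-1}}Z$ to be \emph{irreducible}. This irreducibility holds only for primes $l$ outside a finite bad set $\mathcal{P}_Z$ which depends on $Z$ in a way that is not controllable uniformly in terms of $\deg Z$ or $K$ (Edixhoven's counterexample in \cite[Remark 7.2]{Ed2Curves} shows what goes wrong); since the argument must be run uniformly over the infinitely many intermediate subvarieties arising in the induction, that criterion cannot be used directly. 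The paper's actual geometric criterion (theorem~\ref{theor2}) is weaker and subtler: it dispenses with irreducibility of $T_m Z$ and instead concludes only that $Z$ contains a special subvariety $V'$ properly containing a given special $V\subset Z$; ensuring the hypothesis that $k_1 m k_2$ generates an unbounded group for \emph{all} $k_1,k_2\in K_l$ requires descending to an Iwahori level (theorem~\ref{good Hecke}), which is an ingredient your outline has no analogue of. Your invocation of ``Andr\'e o-minimal style rigidity'' also does not match anything in this paper; that is a different (Pila--Zannier) line of argument.

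Second, your induction (on $\dim Z$, trying to show $Z$ is special in a single pass) cannot close with Galois estimates alone. The correct induction, as in theorems~\ref{main-thm1} and~\ref{main-thm2}, produces from a Zariski-dense family of special subvarieties of dimension $n$ a Zariski-dense family of special subvarieties of dimension $>n$, and iterates. The new special subvarieties so produced need not be points; they may be strongly special or $\TT$-special, and for those the lower bound $\deg_{L_K}(\Gal(\ol\QQ/F)\cdot V) \gg \alpha_V\beta_V^N$ is vacuous because $\alpha_V\beta_V$ stays bounded. To handle that regime one \emph{must} invoke the ergodic equidistribution theorem of Clozel--Ullmo and Ullmo--Yafaev (theorem~\ref{ul-ya}), via the precise Galois/ergodic alternative (theorem~\ref{alter}). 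Your sketch contains no trace of the equidistribution input, so there is no way to treat the bounded-$\alpha_V\beta_V$ side of the dichotomy, and the induction stalls as soon as a strongly special intermediate subvariety appears. Finally, a minor but real point: from the degree comparison you can only deduce $Z\subset T_pZ$, not $T_pZ=Z$, and what the paper actually compares are degrees in the Baily--Borel compactification via the nefness of $\Lambda_{K,K'}$ (proposition~\ref{compare-degrees0} and corollary~\ref{compare-degrees}) --- the needed monotonicity of degree under morphisms of Shimura subdata is itself a nontrivial positivity statement relying on Satake and Dynkin, which your sketch treats as routine.
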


One may notice an analogy between this conjecture and the so-called  
Manin-Mumford conjecture (first proved by Raynaud)
which asserts that irreducible components of the Zariski closure of a
set of \emph{torsion} points in an Abelian variety
are translates of Abelian subvarieties by torsion points.
There is a large (and constantly growing) number of proofs of the  
Manin-Mumford conjecture. A proof of the Manin-Mumford conjecture
using a strategy similar to the one used in this paper was recently given by Ullmo and Ratazzi
(see \cite{UllmoRata}).

\subsection{The results.}

Our main result is the following:

\begin{theor} \label{main-thma}
Let $(\G,X)$ be a Shimura datum, $K$ a compact open subgroup of
$\G(\AAf)$ and let $\Sigma$ be a set of
special points in $\Sh_K(\G,X)_{\CC}(\CC)$.
We make {\bf one} of the following assumptions:

\begin{enumerate}
\item[$(1)$] Assume the Generalized Riemann Hypothesis (GRH) for CM fields.
\item[$(2)$] Assume that there exists a faithful representation $\G  
\hookrightarrow \GL_n$ such that
with respect to this representation, the Mumford-Tate groups
$\MT_s$ lie in one $\GL_n(\QQ)$-conjugacy class as $s$ ranges through  
$\Sigma$.
\end{enumerate}

Then every irreducible component of the Zariski closure of
$\mathbf{\Sigma}$ in $\Sh_{K}(\G, X)_{\CC}$ is a special subvariety.
\end{theor}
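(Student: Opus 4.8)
The plan is to follow the Pila–Zannier strategy combined with the Galois-theoretic and geometric inputs developed in the main body of the paper. I will work by induction on the dimension of the Zariski closure $Z$ of $\mathbf{\Sigma}$, and it suffices to treat a single irreducible component $Z$ which is Hodge generic in some subvariety of $\Sh_K(\G,X)_\CC$; after replacing $(\G,X)$ by the Shimura datum attached to the generic Mumford–Tate group on $Z$, I may assume $Z$ is Hodge generic in $\Sh_K(\G,X)_\CC$ and must show $Z$ is a connected component of the Shimura variety, i.e. $Z = \Sh_K(\G,X)_\CC$ up to components. Passing to a connected component, I lift $Z$ to the Hermitian symmetric domain $X^+$ via the uniformization $\pi\colon X^+ \to \Sh_K(\G,X)_\CC(\CC)$ and let $\widetilde Z$ be an analytic component of $\pi^{-1}(Z)$; the special points in $\Sigma \cap Z$ lift to points of $\widetilde Z$ that are (roughly) ``algebraic'' in suitable coordinates.

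The three main ingredients are: (i) a counting/definability input — by work on o-minimality, $\widetilde Z$ is definable in an o-minimal structure when restricted to a fundamental domain $\mathfrak{F}$, and the Pila–Wilkie counting theorem bounds the number of rational (or algebraic, of bounded degree and height) points on the transcendental part of $\widetilde Z \cap \mathfrak{F}$ by $C_\varepsilon H^\varepsilon$; (ii) a Galois lower bound — for a special point $s$ of discriminant $d_s$, the size of the Galois orbit of $s$ under $\Gal(\ol\QQ/E)$ for a suitable number field $E$ is bounded below by a positive power of $d_s$ (this is where GRH for CM fields enters in case (1), via effective Chebotarev for the splitting behaviour controlling the reflex norm, and where the fixed-conjugacy-class hypothesis in case (2) makes the relevant fields vary in a controlled family so the bound is unconditional); and (iii) a height upper bound — the special points $s \in \Sigma$, when represented in the fundamental domain, have height polynomially bounded in $d_s$ (this uses reduction theory for the arithmetic lattice and the description of special points via CM tori, together with the bound on the number of prime-to-level Hecke operators needed to connect $s$ to a fixed component). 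Combining (ii) and (iii): each Galois conjugate of $s$ is again a special point lying on $Z$ (since $Z$ is defined over a number field by the results cited in the introduction, Galois acts on $Z\cap\Sigma$), and all of them lift to points of bounded height on $\widetilde Z \cap \mathfrak{F}$; if $d_s$ is large enough, (ii) forces more such points than the Pila–Wilkie bound in (i) permits on the transcendental part, so these points must lie on a positive-dimensional semialgebraic subset of $\widetilde Z$.

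From a positive-dimensional semialgebraic piece $W \subset \widetilde Z$ one extracts, by a standard argument (monodromy together with the algebraicity of $Z$ and a theorem of the type ``algebraic + semialgebraic flat subvariety through a special point $\Rightarrow$ weakly special''), a positive-dimensional weakly special — in fact, because it passes through a special point, special — subvariety $Z' \subset Z$. If $\dim Z' = \dim Z$ we are done; otherwise one uses a Hecke operator $T_g$ of suitably small degree (again controlled, under GRH in case (1), by effective Chebotarev) to move $Z'$ around inside $Z$: the ``geometric criterion'' (of André–Ullmo–Yafaev type) shows that if $Z$ is not special then for an appropriate Hecke correspondence a component of $T_g Z$ is not equal to $Z$ yet meets it, and intersecting with $Z$ strictly decreases dimension, producing in the limit enough special subvarieties to conclude by the induction hypothesis applied to components of $Z \cap T_g Z$, or directly a contradiction with the Hodge-genericity of $Z$ in $\Sh_K(\G,X)_\CC$.

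The hard part will be making the quantitative estimates in (ii) and (iii) compatible: one needs the Galois lower bound (a power $d_s^{\delta}$) to beat, for large $d_s$, the count $C_\varepsilon H(d_s)^{\varepsilon}$ coming from Pila–Wilkie \emph{after} substituting the height bound $H(d_s) \le d_s^{N}$ from reduction theory, and simultaneously one must ensure the Hecke operators used in the geometric step have degree polynomially bounded in $d_s$ so that the whole induction closes with uniform constants. In case (1) all three estimates rest on effective Chebotarev under GRH applied to the reflex fields and their class fields; in case (2) the fixed-conjugacy-class hypothesis replaces this by the finiteness of the relevant Mumford–Tate data, which bounds the fields involved independently of $s$ and so yields the Galois and Hecke bounds unconditionally. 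Assembling these — the o-minimality/definability of the uniformization restricted to a fundamental set, the precise Galois orbit bounds, the reduction-theoretic height bounds, and the geometric Hecke criterion — in a single induction is the technical core, and is carried out over the course of the paper.
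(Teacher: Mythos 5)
Your proposal describes the Pila--Zannier strategy (o-minimality, definability of the uniformization on a fundamental domain, Pila--Wilkie counting, Ax--Lindemann), which is a genuinely different route from the one taken in this paper. The paper's proof follows the Edixhoven--Yafaev--Ullmo approach: an induction (theorem~\ref{main-thm1}) that raises the common dimension of the special subvarieties in $\Sigma$, where at each step the Ullmo--Yafaev alternative (theorem~\ref{alter}) splits into two cases. If after a modification the $V\in\Sigma$ are all $\TT$-special for a single $\RR$-anisotropic torus $\TT$, Ratner-theoretic equidistribution (theorem~\ref{ul-ya}) forces the Zariski closure to be special. Otherwise $\alpha_V\beta_V$ is unbounded, the Ullmo--Yafaev Galois lower bound (theorem~\ref{GaloisOrbits}) applies, and effective Chebotarev under GRH produces a prime $l$ and an element $m\in\TT_V(\QQ_l)$ of controlled degree with $Z\subset T_m Z$; the geometric criterion (theorem~\ref{theor2}), run at Iwahori level via theorem~\ref{good Hecke}, then yields a special subvariety $V'$ with $V\subsetneq V'\subset Z$. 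Nowhere does the paper invoke o-minimality, Pila--Wilkie, or Ax--Lindemann, and conversely your sketch makes no use of the equidistribution input, which is what handles the paper's bounded-$\alpha_V\beta_V$ case. The shared ingredient is the Galois orbit lower bound, where GRH enters in both approaches for the same reason.

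Beyond the difference in method, your sketch rests on two inputs that this paper does not prove and that were not available when it was written. Your ingredient (iii) --- a polynomial bound, in terms of the discriminant, on the height of a special point represented in a fixed fundamental set --- is a deep theorem in its own right and is established nowhere in this paper; it remained open in general long after this paper appeared. The step from a positive-dimensional semialgebraic subset of $\widetilde Z$ to a weakly special subvariety of $Z$ is the hyperbolic Ax--Lindemann theorem for Shimura varieties, also not proved here and obtained only later by a separate argument. Finally, your closing paragraph reaches for a Hecke-correspondence escalation resembling the paper's, but as stated it is not a precise induction: in the paper this is theorem~\ref{criterium_for_l}, an induction on $\dim Z - \dim V$ whose correctness depends on the degree comparison of corollary~\ref{compare-degrees}, the Iwahori-level bounds of section~\ref{hecke}, and the lemma~\ref{notspecial} preventing the induced subvariety from becoming strongly special --- none of which your sketch supplies or replaces. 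So your blueprint is that of the eventual unconditional o-minimal proof, not of this paper, and relative to what is established here the height bound and Ax--Lindemann are genuine gaps.
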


In fact we prove the following 
\begin{theor} \label{main-thm}
Let $(\G,X)$ be a Shimura datum, $K$ a compact open subgroup of
$\G(\AAf)$ and let $\Sigma$ be a set of
special subvarieties in $\Sh_K(\G,X)_{\CC}$. 
We make {\bf one} of the following assumptions:
\begin{enumerate}
\item[$(1)$] Assume the Generalized Riemann Hypothesis (GRH) for CM fields.
\item[$(2)$] Assume that there exists a faithful representation $\G  
\hookrightarrow \GL_n$ such that
with respect to this representation, the generic Mumford-Tate groups
$\MT_V$ of $V$ lie in one $\GL_n(\QQ)$-conjugacy class as $V$ ranges through  
$\Sigma$. 
\end{enumerate}

Then every irreducible component of the Zariski closure of
$\mathbf{\Sigma}$ in $\Sh_{K}(\G, X)_{\CC}$ is a special subvariety.
\end{theor}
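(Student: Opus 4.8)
The plan is to follow and push further the strategy of Edixhoven, Andr\'e and Yafaev: to play the size of Galois orbits of special subvarieties against the degrees of Hecke correspondences, and to use the ergodic equidistribution of totally geodesic subvarieties to treat the cases in which that numerical comparison cannot be set up. First I would reduce to a single irreducible component $Z$ of the Zariski closure of $\mathbf{\Sigma}$, with $\mathbf{\Sigma}\cap Z$ Zariski-dense in $Z$ and every $V\in\Sigma$ contained in $Z$. Replacing $(\G,X)$ by the Shimura subdatum attached to the generic Mumford--Tate group $\MT_Z$ on $Z$ one may assume $Z$ is Hodge generic in $\Sh_K(\G,X)_\CC$; passing to one connected component $S=\Gamma\backslash X^+$, the statement becomes: \emph{a Hodge-generic irreducible $Z\subseteq S$ containing a Zariski-dense union of special subvarieties equals $S$}. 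I would argue by induction on $\dim S$, the inductive hypothesis serving to discard any infinite subfamily of $\Sigma$ lying in a proper sub-Shimura variety of $S$ (its Zariski closure is then a finite union of special subvarieties properly contained in the Hodge-generic $Z$), so that eventually each $V\in\Sigma$ may be taken ``as generic as possible'' among special subvarieties.

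\textbf{The ergodic case.} Following Ullmo--Yafaev, call $V\in\Sigma$ \emph{strongly special} if its generic Mumford--Tate group $\MM_V$ is semisimple; such a $V$ carries a canonical homogeneous probability measure. If $\Sigma$ contains infinitely many strongly special subvarieties, I would invoke the equidistribution theorem for such families (Clozel--Ullmo, Ullmo--Yafaev, resting on Ratner's theorem together with the Dani--Margulis and Mozes--Shah non-divergence and linearization results): a subsequence of the associated measures converges weakly to the canonical measure of a single strongly special subvariety $V_\infty$ containing cofinitely many terms. Since this subsequence lies in $Z$ and is dense in its own Zariski closure, which is contained in $V_\infty$, we obtain a special subvariety inside $Z$ absorbing infinitely many elements of $\Sigma$; combined with the inductive step this peels off all strongly special members (each such $V_\infty$ is either $Z$ itself, whence $Z$ is special, or proper, hence inside a smaller Shimura variety). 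After this, no $V\in\Sigma$ is strongly special, so each $\MM_V$ has a nontrivial connected centre $T_V$, a $\QQ$-torus of CM type.

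\textbf{The Galois--Hecke case and conclusion.} Attach to $V\in\Sigma$ a complexity $B(V)\in\NN$ built from the level and the discriminant of the splitting field of $T_V$. The two quantitative inputs are: \emph{(i)} a lower bound $|\Gal(\Qbar/F)\cdot V|\geq c\,B(V)^{\delta}$ ($c,\delta>0$ absolute, $F$ a fixed number field of definition of $S$, $Z$ and the auxiliary data), proved by describing the Galois action on special subvarieties through the reciprocity map and Shimura's theory and reducing to a lower bound for the order of a ray class group of the reflex field --- which under hypothesis $(1)$ follows from GRH for CM fields via effective lower bounds for class numbers and effective Chebotarev, and under hypothesis $(2)$, where the splitting field of $T_V$ is essentially fixed, from an unconditional variant due to Yafaev; and \emph{(ii)} for each $V$, a prime $\ell=\ell(V)$ with $\ell\le\kappa\,B(V)^{A}$ ($\kappa,A$ absolute) such that $\G$ has good reduction and $K$ is hyperspecial at $\ell$, $\ell$ is unramified in $F$ and in the splitting field of $T_V$, the torus $T_V$ splits at $\ell$, and the Galois action of a Frobenius above $\ell$ on $V$ lands inside the image of $V$ under the Hecke correspondence $T_\ell$ on $S$ --- again obtained from effective Chebotarev under $(1)$ and from the fixed-splitting-field argument under $(2)$. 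Granting these, the whole Galois orbit of $V$ lies in $Z\cap T_\ell(Z)$; if $Z$ were not an irreducible component of $T_\ell(Z)$ this intersection would be a proper subvariety of $Z$ of degree $\ll_Z\ell^{\dim S}\le(\kappa B(V)^{A})^{\dim S}$, which (using the inductive hypothesis to control the Galois conjugates of a non-maximal $V$) cannot contain the whole orbit once $c\,B(V)^{\delta}$ exceeds that bound, i.e. for $V$ of large enough complexity. For such a $V$, $Z$ is an irreducible component of $T_{\ell(V)}(Z)$, and one concludes with the geometric criterion of Edixhoven--Andr\'e--Yafaev: a Hodge-generic irreducible $Z\subseteq S$ that is an irreducible component of $T_\ell(Z)$ for a prime $\ell$ with the good-reduction and Frobenius properties above must equal $S$; the proof reduces $Z$ modulo $\ell$, uses the graph of $\Frob_\ell$ to show that the connected algebraic monodromy group of $Z$ is normal in $\G^{\der}$, hence all of it by Hodge-genericity, forcing $\dim Z=\dim S$. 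Thus $Z=S$ is special and the induction closes.

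\textbf{Main obstacle.} The crux is input (i) together with the admissible range in (ii): one must know that the Galois orbit of an \emph{arbitrary} special subvariety is polynomially large in precisely the complexity that also bounds the smallest usable good prime, uniformly over $\Sigma$ and over all reductive types that can occur --- this is where GRH for CM fields is genuinely needed (effective lower bounds on class numbers and effective Chebotarev), hypothesis $(2)$ being the only known unconditional substitute. Secondary but substantial difficulties are making the equidistribution reduction and the dimension induction genuinely compatible --- tracking how $T_V$, the level and the field of definition propagate to Galois conjugates and Hecke translates --- and proving the monodromy statement in the geometric criterion for general Shimura data, where the derived group, the central torus and the set of connected components all interact.
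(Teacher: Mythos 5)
Your top-level plan — equidistribution for the ``degenerate'' family, Galois/Hecke comparison for the rest, reduce to $Z$ Hodge generic in the ambient $S$ — matches the paper's, but the implementation you sketch has several genuine gaps that the paper's actual argument is specifically engineered to avoid.

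First, the dichotomy ``strongly special (semisimple $\MM_V$) vs.\ not'' is the wrong one. The Galois lower bounds only grow with a quantity $\alpha_V\beta_V$ built from the discriminant of the splitting field of the central torus $T_V$ and the indices $[K^{\mathrm m}_{T_V,p}:K_{T_V,p}]$, and this quantity can stay bounded along $\Sigma$ even though no $V$ is strongly special (e.g.\ a family of positive-dimensional specials whose central torus lies in a fixed $\GL_n(\QQ)$-conjugacy class). On that branch the Galois side gives you nothing. The paper's alternative (theorem~\ref{alter}, drawing on the $\TT$-special generalization of Clozel--Ullmo due to Ullmo--Yafaev) is ``$\alpha_V\beta_V$ bounded vs.\ unbounded'': bounded forces the $V$'s to be $\TT_i$-special for finitely many $\RR$-anisotropic tori $\TT_i$, and the $\TT$-special equidistribution theorem~\ref{ul-ya} handles exactly that case. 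Strong speciality alone does not suffice.

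Second, your intersection bound closes the induction only when $\dim Z-\dim V=1$. If $\dim V<\dim Z-1$ the Galois conjugates of $V$ need not be irreducible components of $Z\cap T_\ell Z$, so comparing $\deg(\Gal\cdot V)$ to $\deg(Z\cap T_\ell Z)$ via B\'ezout gives nothing. The phrase ``using the inductive hypothesis to control the Galois conjugates of a non-maximal $V$'' elides the actual mechanism the paper needs: a separate induction on $r=\dim Z-\dim V$ (theorem~\ref{criterium_for_l}) in which one passes to an $F$-irreducible component $\wt Y$ of $\wt Z\cap T_m\wt Z$ containing $\wt V$, replaces $(\G,X)$ by the smallest Shimura subdatum $(\HH,X_\HH)$ over $\wt Y$, verifies (lemma~\ref{notspecial}) that $V$ is still \emph{not} strongly special for $\HH$ — a nontrivial point, needing lemma~\ref{inclusion1} and a degree-iteration argument — and re-applies the criterion with strictly smaller $r$. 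This inner descent is a central part of the proof, not a footnote.

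Third, the geometric criterion you invoke is not the one the paper can use, and the proof you describe for it is not correct. The Edixhoven--Yafaev criterion (theorem~\ref{crit1}) requires the Hecke translates $T_{q_i}Z$ to be \emph{irreducible}, which holds only for $\ell$ outside a finite set $\mathcal P_Z$ that cannot be controlled along the induction. The paper replaces it by theorem~\ref{theor2}, valid for \emph{any} prime: one lifts $Z$ to the pro-$\ell$ cover, takes the $\ell$-adic closure $K'_\ell$ of the monodromy, shows $\wt Z_1=\wt Z_1\cdot(k_1mk_2)^n$ for suitable $k_i\in K_\ell$ and $n$, and then analyzes the group $U_\ell=\langle K'_\ell,(k_1mk_2)^n\rangle$. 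The conclusion is not $Z=S$, but that $Z$ contains a special subvariety $V'\supsetneq V$; the outer induction (theorem~\ref{main-thm1}) iterates on $n(\Sigma)$ until $Z$ itself is special. The mechanism ``reduce $Z$ mod $\ell$ and use the graph of $\Frob_\ell$ to show the monodromy is normal in $\G^{\der}$'' is not how either the old or the new criterion is proved; there is no reduction mod $\ell$ anywhere in this circle of ideas.

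Finally, you omit the Iwahori level step entirely. Even granting $Z\subset T_mZ$, the criterion~\ref{theor2} needs $\lambda(k_1mk_2)$ to generate an unbounded subgroup of $\G^{\ad}(\QQ_\ell)$ for \emph{all} $k_1,k_2\in K_\ell$; at maximal level this can fail (Edixhoven's Atkin--Lehner counterexample, remark~\ref{remIwa}(a)). The paper's theorem~\ref{good Hecke} lifts to a sublevel $I_\ell\subset K_\ell$ contained in a well-chosen Iwahori, using the Bruhat--Tits decomposition and the retraction $\rho_{\mathcal A,C}$ to guarantee unboundedness, while controlling $[K_\ell:I_\ell]$ and the degree of $T_m$ by a uniform power of $\ell$ so the Chebotarev bookkeeping in section~\ref{choix} still closes. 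Without this step the comparison of Galois-orbit degree against Hecke degree cannot be made to work.
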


The case of theorem~\ref{main-thm} where $\Sigma$ is a set of special points is theorem~\ref{main-thma}.

\subsection{Some remarks on the history of the Andr{\'e}-Oort conjecture.}

For history and results obtained before 2002, we refer to the introduction of
\cite{EdHilbert}. We just mention that conjecture~\ref{conjecture} was stated by  
Andr{\'e} in 1989 in the case of an irreducible curve in
$\Sh_K(\G,X)_{\CC}$ containing a Zariski dense set of special
points, and in 1995 by  
Oort for irreducible subvarieties of moduli spaces of polarised
Abelian varieties containing a Zariski-dense set of special points.

Let us mention some results we will use in the course of our proof.

In \cite{CU1} (further generalized in \cite{Ullmo} and  
\cite{UllmoYafaev}), the conclusion of the
theorem~\ref{main-thm} is proved for sets $\Sigma$ of {\em strongly  
special} subvarieties in
$\Sh_K(\G,X)_{\CC}$ without assuming $(1)$ or $(2)$ (cf.
section~\ref{section2}).
The statement is proved using ergodic theoretic techniques.

Using Galois-theoretic techniques and geometric properties of Hecke
correspondences, Edixhoven and the second author (see \cite{EdYa})  
proved the conjecture for
curves in Shimura varieties containing infinite sets of special points  
satisfying our
assumption~$(2)$. Subsequently, the second author (in \cite{Ya})  
proved the Andr{\'e}-Oort conjecture for
curves in Shimura varieties assuming the GRH. The main new ingredient
in \cite{Ya} is a theorem on lower bounds for Galois orbits of special points.
In the work \cite{Ed}, Edixhoven proves, assuming the GRH, the  
Andr{\'e}-Oort conjecture for
products of modular curves. In \cite{YaMo}, the second author proves  
the Andr{\'e}-Oort conjecture
for sets of special points satisfying an additional condition.

The authors started working together on this conjecture in
2003 trying to generalize the Edixhoven-Yafaev strategy
to the general case of the Andr{\'e}-Oort conjecture. In the process
two main difficulties occur. One is the question of irreducibility of
transforms of subvarieties under Hecke correspondences.
This problem is dealt with in sections~\ref{critere} and
\ref{hecke}. The other difficulty consists in dealing with higher
dimensional special 
subvarieties. Our strategy is to proceed by induction on the generic
dimension of elements of $\Sigma$. The main ingredient for controlling
the induction was the discovery by Ullmo and the second author in
\cite{UllmoYafaev} of a possible combination of 
Galois theoretic and ergodic techniques. It took form
while the second author was visiting
the University of Paris-Sud in January-February 2005.

\subsection{Acknowledgements.}

The second author would like to express his gratitude to Emmanuel Ullmo for many
conversations he had with him on the topic of the
Andr{\'e}-Oort conjecture. We thank him for his careful reading of the previous versions of
the manuscript and for pointing out some inaccuracies.
We would like to extend our thanks to Richard Pink for going through the
details of the entire proof of the conjecture and contributing valuable comments which significantly  
improved the paper. The second author is grateful to Richard Pink for inviting him to ETH  
Zurich in April 2006. Laurent Clozel read one of the previous versions of the  
manuscript and pointed out
a flaw in the exposition.
We extend our thanks to Bas Edixhoven and Richard Hill
for many discussions on the topic of the Andr{\'e}-Oort conjecture.
This work was initiated during a `research in pairs' stay at
Oberwolfach and continued in many institutions, including the
University of Chicago, University College London, University of
Leiden, AIM at Palo Alto and University of Montreal.
We thank these institutions for their hospitality and sometimes  
financial support. 
The first author is grateful to the NSF for financial support and the
University of Chicago for excellent working conditions, 
the second author to the Leverhulme Trust.

Finally we thank the referee for his unfailing criticism and devoted
work which improved the paper greatly.

\subsection{Conventions.} \label{Firreducible}
Let $F$ be a field. An $F$-algebraic variety is a reduced separated
scheme over $F$, not necessarily irreducible. It is of finite type
over $F$ unless mentioned. A subvariety is always assumed to be a closed subvariety. 

Let $F \subset \CC$ be a number field, $Y_F$ an $F$-algebraic variety
and $Z \subset Y:=Y_F \times_{\Spec \; F} \Spec \, \CC$ a
$\CC$-subvariety. We will use the following common abuse of notation:
$Z$ is said to be $F$-irreducible if $Z = Z_F \times_{\Spec \, F} \Spec \;
\CC$, where $Z_F \subset Y_F$ is an irreducible closed
subvariety.

\subsection{Organization of the paper.}
Sections~\ref{section2} and~\ref{strategy} of the paper explain how to reduce the
theorem~\ref{main-thm} to the more geometric theorem~\ref{main-thm1}
using the Galois/ergodic alternative proven in
\cite{UllmoYafaev}. In these sections we freely use notations
recalled in section~\ref{prelim} and \ref{degree}, which consist in preliminaries. In
addition to fixing notations we prove there the crucial corollary~\ref{compare-degrees}
comparing the degrees of subvarieties under morphisms of Shimura
varieties. The sections~\ref{partb}, \ref{critere}, \ref{hecke}, \ref{le_premier},
\ref{choix} contain the proof of theorem~\ref{main-thm1}. Their role
and their organisation is described in details in section~\ref{strat}.


\section{Equidistribution and Galois orbits.} \label{section2}

In this section we recall a crucial ingredient in the proof of the
theorem~\ref{main-thm}: the Galois/ ergodic alternative from
\cite{UllmoYafaev}.

\subsection{Some definitions.}

\subsubsection{Shimura subdata defining special subvarieties.}
\begin{defi}
Let $(\G,X)$ be a Shimura datum where $\G$ is the generic Mumford-Tate
group on $X$. Let $X^+$ be a connected component of $X$ and
let $K$ be a neat compact open subgroup of
$\G(\AAf)$. We denote by
$S_K(\G, X)_\CC$ the connected component of $\Sh_K(\G,X)_\CC$ image of $X^+ \times \{1\}$ in
$\Sh_{K}(\G,X)_\CC$. Thus $S_K(\G,X)_\CC= \Gamma_K \backslash X^+$, where $\Gamma_K = \G(\QQ)_+
\cap K$ is a neat arithmetic subgroup of the stabiliser $\G(\QQ)_+$ of
$X^+$ in $\G(\QQ)$.
\end{defi}

\begin{defi}
Let $V$ be a special subvariety of $S_K(\G,X)_{\CC}$. We say that a Shimura subdatum $(\HH_V,X_V)$ of $(\G,X)$
defines $V$ if $\HH_V$ is the generic Mumford-Tate group on $X_V$ and there exists a connected component
$X_{V}^+$ of $X_{V}$ contained in $X^+$ such that $V$ is the image of
$X_{V}^+\times \{ 1\}$ in $S_K(\G,X)_\CC$. 

From now on, when we say that a Shimura subdatum $(\HH_V, X_V)$ defines
$V$, the choice of the component $X_V^+ \subset X^+$ will always be tacitly assumed.
\end{defi}

Given a special subvariety $V$ of $S_K(\G,X)_\CC$ there exists a Shimura subdatum $(\HH_V,X_V)$ defining 
$V$ by \cite[lemma 2.1]{UllmoYafaev}. Notice that as an abstract
$\QQ$-algebraic group $\HH_V$ is uniquely defined by $V$ whereas the
embedding $\HH_V\hookrightarrow \G$ is uniquely defined by $V$  up to conjugation by $\Gamma_K$.

\subsubsection{The measure $\mu_V$}
Let $(\G, X)$ be a Shimura datum, $K$ a neat compact open subgroup of
$\G(\AAf)$ and $X^+$ a connected component of $X$. 
Let $(\HH_V, X_V)$ be a Shimura subdatum of $(\G,X)$ defining a special
subvariety $V$ of $S_K(\G, X)_\CC$.
Thus there exists a neat arithmetic group
$\Gamma_V$ of the stabiliser $\HH_V(\QQ)_+$ of $X_V^+$ in
$\HH_V(\QQ)$ and a (finite) morphism 
$$f:
\Gamma_V \backslash X_V^+ \lto S_K(\G, X)_\CC 
$$ 
whose image is $V$.
%

\begin{defi} \label{definition_mesure} 
We define $\mu_V$ to be the probability measure on
$\Sh_K(\G,X)_{\CC}(\CC)$ supported on $V$, push-forward by $f$ of the
standard probability measure on the Hermitian 
locally symmetric space $\Gamma_V\backslash X^{+}_{V}$ induced
by the Haar measure on $\HH_{V}(\RR)_{+}$. 
\end{defi}

\begin{rem}
Notice that the measure $\mu_V$ depends
only on $V$, not on the choice of the embedding $\HH_V
\hookrightarrow \G$. 
\end{rem}

\subsubsection{$\TT$-special subvarieties.}
\begin{defi} \label{strongly_special}
Let $(\G, X)$ be a Shimura datum and let $\lambda: \G \lto \G^\ad$ be
the canonical morphism.
Fix a (possibly trivial) $\RR$-anisotropic $\QQ$-subtorus $\TT$ of $\G^\ad$.
A $\TT$-special subdatum $(\HH,X_{\HH})$ of $(\G,X)$ is a 
Shimura subdatum such that $\HH$ is the generic Mumford-Tate group of
$X_{\HH}$ and $\TT$ is the connected centre of $\lambda(\HH)$.

Let $X^+$ be a connected component of $X$ and let $K$ be a neat compact open subgroup of
$\G(\AAf)$. A special subvariety $V$ of $S_K(\G,X)_\CC$ is $\TT$-special if there
exists a $\TT$-special subdatum $(\HH, X_\HH)$ of $(\G,X)$ such that
$V$ is an irreducible component of the image of $\Sh_{K \cap
  \HH(\AAf)}(\HH, X_\HH)_\CC$ in $\Sh_K(\G, X)_\CC$. 

In the case where $\TT$ is trivial, we call $V$ \emph{strongly} special.
\end{defi}

\begin{rems} 
\begin{itemize}
\item[(a)] If moreover $(\HH, X_\HH)$ defines $V$ then $V$ is said to be
  $\TT$-special {\em standard} in \cite{UllmoYafaev}.
\item[(b)] The definition of {\em strongly special} given
in \cite{CU1} requires that $\HH_V$ is not contained in a proper
parabolic subgroup of $\G$ but as explained in
\cite[rem. 3.9]{Ullmo} this last condition is automatically satisfied.
\end{itemize}
\end{rems}
 
\subsection{The rough alternative.}
With these definitions, the alternative from \cite{UllmoYafaev} can roughly be stated as
follows. 

Let $(\G,X)$ be a Shimura datum with $\G$ semisimple of adjoint type,
$X^+$ a connected component of $X$, let $K$ be a neat compact open
subgroup of $\G(\AAf)$ and $E$ a
number field over which $\Sh_K(\G,X)_{\CC}$ admits a 
canonical model (cf. section~\ref{canonic}). Let $Z \subset
S_K(\G,X)_{\CC}$ be an irreducible subvariety containing a
Zariski-dense union $\cup_{n \in \NN}V_n$ of special subvarieties
$V_n$ of $S_K(\G,X)_{\CC}$. 
\begin{itemize}
\item either there exists an $\RR$-anisotropic $\QQ$-subtorus $\TT$ of
  $\G$ and a subset $\Sigma \subset \NN$ such that each $V_n$, $n \in
  \Sigma$, is $\TT$-special and $\mathbf{\Sigma}=\cup_{n \in \Sigma} V_n$ is
    Zariski-dense in $Z$. Then one can choose $\Sigma$ so that the sequence (after possibly replacing by a subsequence) of  
probability measures $(\mu_{V_{n}})_{n \in \Sigma}$ weakly converges to the probability measure $\mu_{V}$ of
some special subvariety $V$ and for $n$ large, $V_n$ is contained in
$V$. This implies that $Z=V$ is special (cf. theorem~\ref{ul-ya}).
\item otherwise the function $\deg_{L_{K}}(\Gal(\ol\QQ/E) \cdot   
V_n)$ is an
unbounded function of $n$ as $n$ ranges through $\Sigma$ and we can use Galois-theoretic 
methods to study $Z$
(cf. definition~\ref{defdegree} for the definition of the degree $\deg_{L_{K}}$) .
\end{itemize}

\sspace
We now explain this alternative in more details.

\subsection{Equidistribution results.} \label{equidibresult}

Ratner's classification of probability measures on homogeneous spaces of the form $\Gamma \backslash
\G(\RR)^{+}$ (where $\Gamma$ denotes a lattice in $\G(\RR)^+$),  
ergodic under some unipotent flows \cite{Ratner}, and Dani-Margulis
recurrence lemma \cite{Dani-Margulis} enable Clozel and Ullmo
\cite{CU1} to prove the following equidistribution result in the
strongly special case, generalized by Ullmo and Yafaev \cite[theorem
3.8 and corollary 3.9]{UllmoYafaev} to the $\TT$-special case:

\begin{theor} [Clozel-Ullmo, Ullmo-Yafaev]  \label{ul-ya}
Let $(\G,X)$ be a Shimura datum with $\G$ semisimple of adjoint type,
$X^+$ a connected component of $X$ and $K$ a neat compact open subgroup of
$\G(\AAf)$. Let $\TT$ be an $\RR$-anisotropic $\QQ$-subtorus of $\G$.
Let $( V_n )_{n \in \NN}$ be a sequence of $\TT$-special subvarieties
of $S_K(\G,X)_{\CC}$. Let $\mu_{V_{n}}$ be the canonical probability
measure on $\Sh_K(\G,X)_{\CC}$ supported on $V_n$. There exists a
$\TT$-special subvariety $V$ of $S_K(\G,X)_{\CC}$ and a subsequence
$(\mu_{n_{k}})_{k \in \NN}$ weakly converging to   
$\mu_V$. Furthermore
$V$ contains $V_{n_{k}}$ for all $k$ sufficiently large.
In particular, the irreducible components of the Zariski closure of a set of
$\TT$-special subvarieties of $S_K(\G,X)_{\CC}$ are special.
\end{theor}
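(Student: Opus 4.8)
The plan is to reformulate the assertion as a statement about homogeneous probability measures on the real homogeneous space underlying $S_K(\G,X)_{\CC}$ and then apply Ratner's measure-classification theorem, in the form packaged by Mozes and Shah, together with the Dani--Margulis recurrence lemma. First, since $\G$ is semisimple of adjoint type, one writes $X^+ = \G(\RR)^+/K_\infty$ with $K_\infty$ a maximal compact subgroup, so that $S_K(\G,X)_{\CC} = \Gamma\bs\G(\RR)^+/K_\infty$ with $\Gamma := \Gamma_K$ a neat arithmetic lattice in $\G(\RR)^+$; let $\pi\colon\Gamma\bs\G(\RR)^+\lto S_K(\G,X)_{\CC}$ denote the (proper) quotient map. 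Each $V_n$ is defined by a Shimura subdatum $(\HH_n,X_{\HH_n})$ with $\HH_n\subset\G$ reductive, $\HH_n$ its own generic Mumford--Tate group on $X_{\HH_n}$, and $Z(\HH_n)^\circ$ a $\QQ$-torus that is $\G(\QQ)$-conjugate to $\TT$ (after a routine preliminary reduction one may take it equal to $\TT$); in particular it is $\RR$-anisotropic, so $\HH_n$ has no nontrivial $\QQ$-character, $\Gamma_n := \Gamma\cap\HH_n(\RR)^+$ is a lattice in $\HH_n(\RR)^+$ (Borel--Harish-Chandra), and the orbit $\Gamma\bs\Gamma\HH_n(\RR)^+$ is closed of finite volume. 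Let $\wt\mu_n$ be the homogeneous probability measure it carries; one checks directly from Definition~\ref{definition_mesure} that $\pi_*\wt\mu_n = \mu_{V_n}$. It is here that the $\RR$-anisotropy of $\TT$ is crucial: it is what makes these orbits closed and finite-volume.

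Next I would record that each $\wt\mu_n$ is ergodic under a suitable unipotent one-parameter subgroup $U_n\subset\HH_n^{\der}(\RR)^+$: because $(\HH_n,X_{\HH_n})$ is a Shimura datum with $\HH_n$ its generic Mumford--Tate group, $\HH_n^{\ad}$ has no $\QQ$-simple $\RR$-anisotropic factor, so such a $U_n$ exists, and choosing its projection to every $\QQ$-simple factor to be nontrivial, Moore's ergodicity theorem gives $U_n$-ergodicity of $\wt\mu_n$. Thus each $\wt\mu_n$ is a homogeneous probability measure, invariant under the compact group $Z(\HH_n)^\circ(\RR)$, and ergodic under a unipotent flow. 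By the Mozes--Shah theorem (a consequence of Ratner's theorem), after passing to a subsequence $\wt\mu_{n_k}$ converges weakly to a measure $\wt\mu$ which is either $0$ or a homogeneous probability measure, again ergodic under a unipotent flow, with $\operatorname{supp}\wt\mu = \Gamma\bs\Gamma L(\RR)^+$ for a closed subgroup and $\operatorname{supp}\wt\mu_{n_k}\subseteq\operatorname{supp}\wt\mu$ for $k$ large in the second case. To rule out $\wt\mu = 0$ I would invoke the uniform form of the Dani--Margulis recurrence lemma: for each $\varepsilon > 0$ there is a compact $C_\varepsilon\subset\Gamma\bs\G(\RR)^+$ in which every unipotent trajectory spends a proportion at least $1-\varepsilon$ of its time; by the pointwise ergodic theorem this forces $\wt\mu_n(C_\varepsilon)\geq 1-\varepsilon$ for all $n$, hence $\wt\mu(C_\varepsilon)\geq 1-\varepsilon$ by the portmanteau inequality for the closed set $C_\varepsilon$, so letting $\varepsilon\to 0$ shows $\wt\mu$ has mass $1$.

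It then remains to identify $\wt\mu$, and I expect this step to be the main obstacle. One passes the $Z(\HH_{n_k})^\circ(\RR)$-invariance to the limit (each $Z(\HH_{n_k})^\circ$ being a $\G(\QQ)$-conjugate of $\TT$), uses that the ``semisimple direction'' of $L$ is generated by unipotent one-parameter subgroups arising as limits of those in the $\HH_{n_k}^{\der}$, and should thereby obtain that $L$ is (the identity component of the real points of) a $\QQ$-subgroup of $\G$ of the form $\TT'\cdot M$ with $M$ semisimple commuting with $\TT'$, $\TT'\cap M$ finite, and $Z(L)^\circ = \TT'$ a conjugate of $\TT$; a point of $\operatorname{supp}\wt\mu$ then exhibits $(L,X_L)$ as a $\TT$-special subdatum defining a $\TT$-special subvariety $V$ with $\pi_*\wt\mu = \mu_V$. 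Pushing the Mozes--Shah conclusions down through $\pi$ yields $\mu_{V_{n_k}}\to\mu_V$ and $V_{n_k}\subseteq V$ for $k$ large. The delicate points are the $\QQ$-rationality of $L$, the identification of its connected centre with a conjugate of $\TT$, and the verification of Deligne's axioms for $(L,X_L)$ with $L$ its own generic Mumford--Tate group; everything else is formal once Ratner's theorem is granted.

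Finally, the ``in particular'' clause follows formally. Let $Z$ be an irreducible component of the Zariski closure of $\mathbf{\Sigma} := \bigcup_{V\in\Sigma}V$ for a set $\Sigma$ of $\TT$-special subvarieties; replace $\Sigma$ by the (countable) set of its members contained in $Z$, whose union is still Zariski-dense in $Z$. If some member equals $Z$ we are done; otherwise, using irreducibility of $Z$, pass to a Zariski-dense subfamily all of one dimension $e_0 < \dim Z$ and apply the first part to an enumeration of it: this produces a $\TT$-special $V\subseteq Z$ (the inclusion by the portmanteau inequality, as above) containing infinitely many distinct members, so $\dim V > e_0$. If $V = Z$ we are done; otherwise discard the members contained in $V$ (the rest is still Zariski-dense in $Z$ by irreducibility) and iterate, passing to a dense subfamily of fixed dimension at each stage. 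The dimensions of the absorbing subvarieties so produced strictly increase from stage to stage and are bounded by $\dim Z$, so the process terminates by producing $Z$ itself; hence $Z$ is $\TT$-special, and in particular every irreducible component of the Zariski closure of a set of $\TT$-special subvarieties is special.
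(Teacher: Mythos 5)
The paper does not itself prove this theorem; it cites \cite{CU1} and \cite[theorems 3.8--3.9]{UllmoYafaev}, and your sketch follows the strategy of those references (Ratner/Mozes--Shah plus Dani--Margulis recurrence plus arithmetic identification of the limit). There are, however, two concrete problems. First, $\wt\mu_n$ is in general \emph{not} ergodic under a one-parameter unipotent $U_n \subset \HH_n^{\der}(\RR)^+$: the compact connected centre $\TT(\RR)^+$ of $\HH_n(\RR)^+$ commutes with $U_n$ and acts non-trivially on $\Gamma_n\backslash\HH_n(\RR)^+$, so the Haar measure decomposes into $U_n$-ergodic pieces indexed by a compact quotient of $\TT(\RR)^+$, and Moore's theorem only yields ergodicity after quotienting by this centre. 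One must either work in the Mozes--Shah framework for general homogeneous (not unipotently-ergodic) measures, or quotient out $\TT(\RR)^+$ first, as is done in the references; as written, the invocation of Moore is incorrect.

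Second, and more seriously, the ``in particular'' argument has a genuine logical gap. After producing $V$ with $\dim V > e_0$ and discarding from $\Sigma$ the members contained in $V$, the remaining dense subfamily still consists of members of dimension $e_0$; so at every stage the dense subfamily has dimension $e_0$, and the new absorbing subvariety satisfies only $\dim > e_0$, not $\dim >$ the previous absorbing dimension. Your claim that the dimensions of the absorbing subvarieties strictly increase from stage to stage is unjustified, and the iteration need not terminate. The correct argument is to consider the set of \emph{maximal} $\TT$-special subvarieties of $Z$ containing some member of $\Sigma$ (these exist by Noetherianity): their union is still Zariski-dense in $Z$; if this set were infinite, the first part of the theorem applied to an enumeration would produce a $\TT$-special $V\subseteq Z$ properly containing infinitely many of them, contradicting maximality; so the set is finite, and irreducibility of $Z$ forces one of its members to equal $Z$. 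Finally, you rightly flag the $\QQ$-rationality of $L$, the identification of its connected centre with a conjugate of $\TT$, and the verification of Deligne's axioms for $(L,X_L)$ as the delicate step, but this is precisely where the bulk of the cited proofs lies and cannot be dismissed as formal once Ratner's theorem is granted.
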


\begin{rems} \label{rem0} 

\begin{itemize}
\item[(1)]
Note that a special point of $S_K(\G,X)_\CC$, whose Mumford-Tate group is
a non-central torus, is not strongly special. Moreover, given an $\RR$-anisotropic $\QQ$-subtorus $\TT$
of $\G$, the connected Shimura variety $S_{K}(\G, X)_{\CC}$ contains only a  
finite number of $\TT$-special points (cf. \cite[lemma
3.7]{UllmoYafaev}). Thus theorem~\ref{ul-ya} says nothing {\em
  directly} on the Andr{\'e}-Oort conjecture.

\item[(2)] In fact the conclusion of the
theorem~\ref{ul-ya} is simply not true for special points: they are  
dense for the Archimedian topology in $S_K(\G,X)_\CC({\CC})$, so just consider a sequence of
special points converging to a non-special point in
$S_K(\G,X)_\CC({\CC})$ (or diverging to a cusp if
$S_K(\G,X)_\CC({\CC})$ is non-compact). In this case the
corresponding sequence of Dirac delta measures will converge to the Dirac delta  
measure of the non-special point (respectively escape to infinity).

\item[(3)] 
There is a so-called equidistribution conjecture which implies the
Andr{\'e}-Oort conjecture and
much more. A sequence $(x_n)$ of points of 
$S_{K}(\G, X)_\CC(\CC)$ is called \emph{strict}
if for any proper special subvariety $V$ of $\Sh_{K}(\G, X)_\CC(\CC)$, the set
$$
\{ n : x_n \in V\}
$$
is finite.
Let $E$ be a field of definition of a canonical model
of $\Sh_{K}(\G, X)_\CC(\CC)$. To any special point $x$, one associates a probability
measure $\Delta_x$ on $\Sh_{K}(\G, X)_\CC(\CC)$ as follows :
$$
\Delta_x = \frac{1}{| \Gal(\ol E/E)\cdot x|} \sum_{y\in \Gal(\ol E/E)\cdot x} \delta_{y}
$$
where $\delta_y$ is the Dirac measure at the point
$y$ and $| \Gal(\ol E/E)\cdot x|$ denotes the cardinality of the
Galois orbit $\Gal(\ol E/E)\cdot x$.
The equidistribution conjecture predicts that if $(x_n)$ is a strict sequence
of special points, then the sequence of measures $\Delta_{x_n}$ 
weakly converges to the canonical probability measure attached to $\Sh_{K}(\G, X)_\CC(\CC)$. 
This statement implies the Andr{\'e}-Oort conjecture.
The equidistribution conjecture is known for modular curves
and is open in general. There are some recent conditional results for Hilbert modular varieties
due to Zhang (see \cite{Zhang}).
For more on this, we refer to the survey \cite{UllmoMontreal}.

\end{itemize}
\end{rems}

\subsection{Lower bounds for Galois orbits.} \label{lowerGalois}
In this section, we recall the lower bound obtained in
\cite{UllmoYafaev} for the degree of the Galois orbit of a special subvariety which is not strongly special.

\subsubsection{Data associated to a special subvariety}
\begin{defi} \label{notation}
Let $(\G, X)$ be a Shimura datum and $X^+$ a connected component of $X$.
Let $K = \prod_{p \;\textnormal{prime}} K_p$ be a neat compact open
subgroup of $\G(\AAf)$. Let $(\HH_V, X_V)$ be a Shimura subdatum of
$(\G,X)$ defining a special subvariety $V$ of $S_K(\G, X)_\CC$.

We denote by:
\begin{itemize}
\item $E_{V}$ the reflex field of $(\HH_{V}, X_{\HH_{V}})$.
\item $\TT_{V}$ the connected centre of $\HH_{V}$. It is a (possibly trivial) torus. 
\item $K^{\mathrm{m}}_{\TT_{V}}= \prod_{p\;\textnormal{prime}}  K^{\mathrm{m}}_{\TT_{V},p}$ the maximal compact open subgroup of
  $\TT_{V}(\AAf)$, where $K^{\mathrm{m}}_{\TT_{V},p}$ denotes the
  maximal compact open subgroup of $\TT_V(\QQ_p)$.
\item $K_{\TT_{V}}$ the compact open subgroup $\TT_{V}(\AAf)\cap K
  \subset K^{\mathrm{m}}_{\TT_{V}}$. Thus $K_{\TT_{V}} = \prod_{p
    \;\textnormal{prime}}  K_{{\TT_V},p}$, where $ K_{{\TT_V},p}:=
  \TT_V(\QQ_p) \cap K_p$.
\item $\C_{V}$ the torus $\HH_{V} /\HH_{V}^{\textnormal{der}}$  
isogenous to $\TT_{V}$.
\item $d_{\TT_{V}}$ the absolute value of the discriminant of the
  splitting field $L_V$ of $\C_V$, and $n_V$ the degree of $L_V$ over $\QQ$.
\item $\beta_V := \log (d_{\TT_{V}})$. 

\end{itemize}
\end{defi}

\begin{rem} \label{independance}
Notice that the group $K_{\TT_{V}}$ depends on the particular embedding $\HH_V
\hookrightarrow \G$ (which is determined by $V$ up to conjugation by
$\Gamma= \G(\QQ)_+ \cap K$). On the other hand the other quantities
defined above, and also the
indices $| K^{\mathrm{m}}_{\TT_{V},p} /K_{{\TT_V},p}|$, $p$ prime,
depend on $V$ but not on  the particular embedding $\HH_V
\hookrightarrow \G$.
\end{rem}

We will frequently make use of the following lemma:
\begin{lem} \label{compactness}
With the above notations assume moreover that the group
$\G$ is semisimple of adjoint type.
Then the $\QQ$-torus $\TT_V$ is $\RR$-anisotropic.
\end{lem}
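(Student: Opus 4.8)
The plan is to use the standard structure theory of Shimura data together with the fact that, by hypothesis, $\G$ is semisimple of adjoint type, so it has no non-trivial central torus; the point is then that $\TT_V$, being the \emph{connected centre} of $\HH_V$, sits inside $\G$ in a very constrained way. First I would recall that $(\HH_V, X_V)$ is a Shimura subdatum of $(\G, X)$ with $\HH_V$ the generic Mumford-Tate group on $X_V$. A basic property of Shimura data (one of Deligne's axioms, see \cite[1.1.13]{De2}) is that for any $h \in X_V$, composition with the weight cocharacter and with $\ad$ shows that the image $h(\SS)$ generates $\HH_V$ together with the centre; more to the point, the adjoint group $\HH_V^{\ad}$ carries the induced Shimura datum and the connected centre $\TT_V$ of $\HH_V$ maps to the centre of $\HH_V^{\ad}$, which is trivial. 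So $\lambda(\TT_V)$ is trivial, where $\lambda \colon \G \lto \G^{\ad}$ is the canonical morphism; but here $\G = \G^{\ad}$ since $\G$ is adjoint, so actually $\TT_V \subset \G$ must already be trivial — wait, that would be too strong, so the correct statement must be that $\TT_V$ need not be trivial as an abstract torus but its image in $\G$ is forced to be compact at the archimedean place.

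Let me restate the real argument. Since $\G$ is semisimple of adjoint type, $\G(\RR)$ has no non-trivial split central torus, and more importantly the Shimura datum condition forces $h(\SS)$ to have image whose projection to any $\RR$-factor is non-compact only in a controlled way. The key structural input is Deligne's axiom that $\ad h(i)$ is a Cartan involution of $\HH_V^{\ad}(\RR)$: this means $\HH_V^{\ad}(\RR)$ contains a maximal compact subgroup fixed by this involution, and consequently the centre of $\HH_V(\RR)$ — in particular $\TT_V(\RR)$ — consists of elements whose image in $\HH_V^{\ad}$ lies in such a compact subgroup. Concretely, I would argue: $\TT_V$ is the connected centre of $\HH_V$; its image $\lambda(\TT_V)$ in $\G^{\ad} = \G$ lies in the centre of $\lambda(\HH_V)$, which is by definition the torus called $\TT$ in Definition~\ref{strongly_special} when $V$ is $\TT$-special. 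The point of Definition~\ref{strongly_special} is precisely that this torus is taken $\RR$-anisotropic, and this is not an extra hypothesis but a consequence of the Shimura axioms: a $\QQ$-torus in the centre of $\lambda(\HH_V)$ that is $\RR$-split would give a non-compact central factor contradicting the Cartan involution condition for $X_V$. So $\lambda(\TT_V)$ is $\RR$-anisotropic, and since $\G$ is adjoint, $\lambda$ is an isomorphism, hence $\TT_V$ itself is $\RR$-anisotropic.

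So the steps in order are: (i) write down that $\HH_V \subset \G$ with $\G = \G^{\ad}$, so $\lambda$ is an isomorphism and it suffices to show $\lambda(\TT_V)$ is $\RR$-anisotropic; (ii) observe $\lambda(\TT_V)$ is contained in, and in fact equals, the connected centre of $\lambda(\HH_V)$ (using that $\lambda(\HH_V) = \HH_V$ under the isomorphism, or directly that $\lambda$ sends connected centre to connected centre since $\HH_V^{\der}$ surjects onto $\lambda(\HH_V)^{\der}$); (iii) invoke Deligne's condition that for $h \in X_V$, $\inn(h(i))$ induces a Cartan involution on $\lambda(\HH_V)(\RR) = \HH_V^{\ad}(\RR)$; (iv) conclude that any connected $\QQ$-subtorus of the centre of $\lambda(\HH_V)$ must be $\RR$-anisotropic, because the existence of a Cartan involution on a real reductive group $G$ forces the connected centre of $G$ to be $\RR$-anisotropic (a group with non-compact central torus admits no compact real form / no Cartan involution). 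This last implication is the heart of the matter. I expect step (iv) — making precise and citing the statement that ``a connected reductive $\RR$-group admitting a Cartan involution has $\RR$-anisotropic connected centre'' — to be the only real content; everything else is bookkeeping about which group the centre lives in. The potential subtlety is that $\TT_V$ is defined as the connected centre of $\HH_V$ rather than of $\HH_V^{\ad}$, so one must check the isogeny $\TT_V \to \lambda(\TT_V)$ (or the fact that $\lambda$ restricted to $\TT_V$ has finite kernel) does not destroy anisotropy — but isogenous tori have the same $\RR$-rank, so this is immediate.
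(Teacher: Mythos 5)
Your argument does not go through, and the gap is at the heart of the matter rather than in bookkeeping. Every time you invoke Deligne's Cartan-involution axiom, you invoke it for $\HH_V^{\ad}(\RR)$. But $\TT_V$ is by definition \emph{central} in $\HH_V$, so its image in $\HH_V^{\ad}$ is \emph{trivial}. The statement ``$\TT_V(\RR)$ consists of elements whose image in $\HH_V^{\ad}$ lies in a compact subgroup'' is therefore vacuously true and carries no information. The same problem sinks the alternate phrasing at the end: ``a $\QQ$-torus in the centre of $\lambda(\HH_V)$ that is $\RR$-split would contradict the Cartan involution condition for $X_V$'' is not a contradiction at all, because that condition lives on the adjoint quotient and says nothing about the centre. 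Your paragraph also appeals at one point to Definition~\ref{strongly_special} to say that $\TT$ ``is taken $\RR$-anisotropic'' — but that definition restricts attention to $\RR$-anisotropic $\TT$ precisely because this lemma makes the restriction harmless; citing it here is circular.

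There is a second, independent problem with the step you flag as ``the heart of the matter,'' namely the claim that ``a connected reductive $\RR$-group admitting a Cartan involution has $\RR$-anisotropic connected centre.'' This is false as stated: $\GL_n(\RR)$ admits the Cartan involution $g \mapsto (g^T)^{-1}$ with fixed points $O(n)$, yet its connected centre $\RR^{\times}$ is split. Cartan involutions constrain the semisimple part, not the centre. (This is also why the axioms SV1–SV3 alone do not force the centre of the group in a Shimura datum to be $\RR$-anisotropic — think of $\GSp_{2g}$.)

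The actual mechanism is different and does not involve $\HH_V^{\ad}$ at all. Pick a Hodge-generic $x \in X_V$, with associated $h_x : \SS \to \HH_{V,\RR} \subset \G_\RR$. Since $\TT_V$ is central in $\HH_V = \MT(x)$ and $h_x$ factors through $\HH_{V,\RR}$, the group $\TT_V(\RR)$ centralizes $h_x(\SS)$, i.e.\ fixes the point $x \in X$. Now use that $\G$ is semisimple of adjoint type: by Deligne's axiom applied to $(\G,X)$ itself, $\Int h_x(i)$ is a Cartan involution of $\G(\RR) = \G^{\ad}(\RR)$, so the stabilizer of $x$ in $\G(\RR)$ (equal to the centralizer of $h_x$) is contained in the corresponding maximal compact subgroup, hence is compact. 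Thus $\TT_V(\RR)$ is compact, i.e.\ $\TT_V$ is $\RR$-anisotropic. Note where adjointness of $\G$ is used: not to identify $\lambda$ with the identity (that observation is true but not what is needed), but to guarantee that the point-stabilizer in $\G(\RR)$ is compact — which fails for, say, $\GSp_{2g}(\RR)$, where the stabilizer of $x$ contains the non-compact centre.
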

\begin{proof}
As $\TT_V$ is the connected centre of the generic Mumford-Tate group
$\HH_V \subset \G$ of $V$ the group $\TT_V(\RR)$ fixes some point $x$ of $X$. As
$\G$ is semisimple of adjoint type the stabiliser of $x$ in $\G(\RR)$ is
compact.
\end{proof}

\subsubsection{The lower bound}
One of the main ingredients of our proof of theorem~\ref{main-thm} is the following lower bound for the degree of Galois orbits obtained in 
\cite[theorem 2.19]{UllmoYafaev} (we refer to the section~\ref{degree} for
the definition of the degree function $\deg_{L_{K}}$):

\begin{theor}[Ullmo-Yafaev] \label{GaloisOrbits} 
Assume the GRH for CM fields. 
Let $(\G, X)$ be a Shimura datum with $\G$ semisimple of adjoint type
and let $X^+$ be a fixed connected component of $X$.

Fix positive integers $R$ and $N$. There exist a positive real number
$B$ depending only on $\G$, $X$ and $R$ and a positive constant $C(N)$
depending on $\G$, $X$, $R$ and $N$ such that the following holds.

Let $K =\prod_{p \; \textnormal{prime}} K_p$ be a neat compact open
subgroup of $\G(\AAf)$. Let $V$ be a special subvariety of $S_{K}(\G, X)_\CC$ and
$(\HH_V, X_V)$ a Shimura subdatum of $(\G, X)$ defining $V$. Let $F$
be an extension of $\QQ$ of degree at most $R$ containing the reflex
field $E_V$ of $(\HH_V, X_V)$. Let $K_{\HH_{V}}
:= K \cap \HH_V(\AA_f)$.
Then:
\begin{equation} \label{lower}
\deg_{L_{K_{{\HH}_{V}}}}(\Gal(\ol\QQ/F) \cdot V) >
 C(N) \cdot  \Big(\prod_{\substack{ p \, \textnormal{prime} \\
       K^\mathrm{m}_{{\TT_V},p}\not= K_{{\TT_V},p}}}
 \max(1, B\cdot 
   |K^\mathrm{m}_{\TT_V,p}/K_{\TT_V,p}|) \Big ) \cdot  
\beta_V^N \;\;.
\end{equation}

Furthermore, if one fixes a faithful representation $\G  
\hookrightarrow \GL_n$ and one considers only the subvarieties $V$ such that the  
associated
tori $\TT_{V}$ lie in one $\GL_n(\QQ)$-conjugacy class, then the  
assumption of the GRH can be dropped.

\end{theor}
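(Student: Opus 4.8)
The plan is to reduce \eqref{lower} to a lower bound for the cardinality of the Galois orbit of $V$, then to a lower bound for the order of an explicit finite abelian group attached to the torus $\C_V$, and finally to an effective class number estimate. Since distinct Galois conjugates of $V$ over $F$ are distinct irreducible special subvarieties of dimension $\dim V$, one has $\deg_{L_{K_{\HH_V}}}(\Gal(\ol\QQ/F)\cdot V)\geq|\Gal(\ol\QQ/F)\cdot V|$, so, up to adjusting $C(N)$, it suffices to bound $|\Gal(\ol\QQ/F)\cdot V|$ from below. By the theory of canonical models and the reciprocity law for Shimura varieties, an element $\sigma\in\Gal(\ol\QQ/E_V)$ carries the special subvariety attached to $((\HH_V,X_V),1)$ to the one attached to $((\HH_V,X_V),r(\sigma))$, where $r$ is the composite of the Artin map of $E_V$ with the reciprocity morphism $\Res_{E_V/\QQ}\Gm\lto\C_V$ of the Shimura datum, and two such subvarieties of $S_K(\G,X)_\CC$ agree precisely when the corresponding elements of $\C_V(\AAf)$ are congruent modulo $\ol{\C_V(\QQ)}\cdot(K\cap\C_V(\AAf))$. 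Together with $[F:\QQ]\leq R$, this yields
\begin{equation*}
|\Gal(\ol\QQ/F)\cdot V|\;\geq\;\frac{1}{c\,R}\cdot\Big|\,\C_V(\AAf)\,\big/\,\big(\ol{\C_V(\QQ)}\cdot(K\cap\C_V(\AAf))\cdot r(U_{E_V})\big)\,\Big|
\end{equation*}
for a constant $c=c(\G)>0$ bounding the cokernel of $r$, where $U_{E_V}=\prod_p(\OO_{E_V}\otimes\ZZ_p)^{\times}$; it remains to bound the order of this group from below.

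I would then factor that order as a product of local indices times a global term. At a prime $p$ with $K^{\mathrm{m}}_{\TT_V,p}\neq K_{\TT_V,p}$ the local contribution is at least $[\,K^{\mathrm{m}}_{\C_V,p}:K_{\C_V,p}\cdot r((\OO_{E_V}\otimes\ZZ_p)^{\times})\,]$, where $K^{\mathrm{m}}_{\C_V,p}$ is the maximal compact subgroup of $\C_V(\QQ_p)$. The key point is that $r((\OO_{E_V}\otimes\ZZ_p)^{\times})$ is an open subgroup of $K^{\mathrm{m}}_{\C_V,p}$ whose index is bounded independently of $p$ and of $V$: here one uses that $\HH_V\subset\G$ belongs to one of only finitely many $\G(\QQ)$-conjugacy types of Shimura subdata, so that the reciprocity morphism, hence the local cokernel of $r$, comes in finitely many shapes. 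Since $\C_V$ is isogenous to $\TT_V$, the local indices for the two tori differ by bounded factors, and one obtains a local contribution $\geq\max(1,B\cdot|K^{\mathrm{m}}_{\TT_V,p}/K_{\TT_V,p}|)$ for a suitable $B=B(\G,X,R)>0$; multiplying over $p$ produces the product appearing in \eqref{lower}. The remaining global factor is, up to factors bounded in terms of $\G$, the order of $\C_V(\QQ)\backslash\C_V(\AAf)/K^{\mathrm{m}}_{\C_V}$, that is, the class number of the torus $\C_V$, which by Ono's formula and the isogeny $\C_V\sim\TT_V$ is bounded below, up to $\G$-bounded factors, by the relative class number $h_{L_V}^{-}$ of the splitting field $L_V$ of $\C_V$ — a CM field, since $\TT_V$ is $\RR$-anisotropic by Lemma~\ref{compactness}.

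Finally, since $\HH_V$ is a $\QQ$-subgroup of the fixed group $\G$, the degree $n_V=[L_V:\QQ]$ is bounded by a constant depending only on $\G$; assuming the GRH for the CM field $L_V$, the standard effective lower bounds for Hecke $L$-functions at $s=1$ give $h_{L_V}^{-}\gg_{\G}e^{c'\,\beta_V}$ for some $c'=c'(\G)>0$, an exponential that exceeds $\beta_V^{N}$ once $\beta_V$ is large, the remaining finitely many values of $\beta_V$ being absorbed into $C(N)$. This establishes \eqref{lower}. For the last assertion: if all the $\TT_V$ lie in one $\GL_n(\QQ)$-conjugacy class they are pairwise isomorphic as $\QQ$-tori, hence $L_V$, and with it $d_{\TT_V}$, $\beta_V$ and $h_{L_V}^{-}$, are constant along the family; the global factor is then a fixed positive number, so no class-number lower bound — hence no GRH — is needed.

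I expect the main obstacle to be the bookkeeping of the first two paragraphs: pinning down the Galois orbit as an explicit quotient of $\C_V(\AAf)$ — in particular controlling the cokernel of the reciprocity morphism $\Res_{E_V/\QQ}\Gm\lto\C_V$ — which is governed by the \emph{reflex} field $E_V$ and not by $L_V$, through the Shimura--Taniyama law; and then separating cleanly the ``level'' contribution (the product over the bad primes, where one must certify that $B$ depends only on $\G,X,R$ using the finiteness of conjugacy types of subdata) from the ``class number'' contribution, all while keeping track of the isogenies relating $\HH_V$, $\TT_V$ and $\C_V$. By contrast, the analytic input — effective lower bounds for $L(1,\chi)$ under GRH — is classical, and the reduction from orbit degrees to orbit cardinalities is routine given the degree-comparison results of the preliminary sections (Corollary~\ref{compare-degrees}).
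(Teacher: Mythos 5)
The paper does not prove Theorem~\ref{GaloisOrbits}: it is quoted from \cite[Theorem 2.19]{UllmoYafaev}, whose proof lies outside the present paper, and is used here as a black box. There is therefore no internal proof to compare your sketch against.

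On its own terms, your outline has the right shape --- realize the Galois orbit as an idelic quotient of $\C_V(\AAf)$ via the reciprocity law, factor it into a product of local indices at the bad primes and a global term, and feed the latter into effective estimates under GRH. But two steps are under-justified. First, the appeal to ``finitely many $\G(\QQ)$-conjugacy types of Shimura subdata'' to obtain a uniform $B$ is false as stated: the reductive $\QQ$-subgroups of $\G$ fall into finitely many $\G(\ol\QQ)$-conjugacy classes, but within each such class the set of $\G(\QQ)$-conjugacy classes (indeed of $\QQ$-forms) can be infinite. The uniformity over $V$ is precisely the delicate point in \cite{UllmoYafaev}, and is achieved there by bounding the characters of $\TT_V$ occurring in the fixed representation $\rho|_{\TT_V}\colon\TT_V\hookrightarrow\GL_n$ with respect to a suitable basis of $X^*(\TT_V)$ (see the use of \cite[lemma 2.6]{UllmoYafaev} in section~\ref{hecke}), not by a finiteness of subdata. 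Second, the shape of the conclusion --- $\beta_V^N$ for an arbitrary fixed $N$, with $C(N)$ depending on $N$, hence only polylogarithmic growth in the discriminant --- strongly indicates that the actual global input in \cite{UllmoYafaev} is an effective Chebotarev count of small split primes in $L_V$ (the very tool that reappears in the paper's section~\ref{choix}), rather than a direct power-of-discriminant class number lower bound. Your route via $h_{L_V}^{-}\gg d_{L_V}^{c'}$ under GRH may also suffice, but it would need a verification, uniform in $L_V$ of bounded degree, that you have not supplied, and calling it ``classical'' glosses over exactly the point at issue. Your handling of the unconditional case (when the $\TT_V$ lie in one $\GL_n(\QQ)$-conjugacy class, so $L_V$ and $d_{\TT_V}$ are constant and the global factor is absorbed into $C(N)$) is correct.
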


\begin{rem}
The lower bound~(\ref{lower}) still holds if we replace $V$ by $Y$
an irreducible subvariety of $V$ defined over $\ol \QQ$
whose Galois orbits are ``sufficiently similar'' to those of $V$. For
simplicity we refer to \cite[theor.2.19]{UllmoYafaev} for this refined
statement, which we will use in the proof of the lemma~\ref{notspecial}.
\end{rem}

\subsection{The precise alternative.}

Throughout the  paper we will be using the following notations.

\begin{defi} \label{alpha}
 Let $(\G, X)$ be a Shimura datum with $\G$ semisimple of adjoint
 type. Let $X^+$ be a fixed connected
 component of $X$. 

We fix $R$ a positive integer such that for any Shimura subdatum $(\HH, X_\HH)$ of $(\G,
 X)$ there exists an extension $F$ of $\QQ$ of degree at most $R$ containing the Galois
closure of the reflex field $E_\HH$ of $(\HH, X_\HH)$. Such an $R$
exists by \cite[lemma 2.5]{UllmoYafaev}.

Let $K = \prod_{p \; \textnormal{prime}} K_p$ be a neat 
 compact open subgroup of $\G(\AAf)$. Let $V$ be a special subvariety of $S_{K}(\G, X)_\CC$ and
$(\HH_V, X_V)$ a Shimura subdatum of $(\G, X)$ defining $V$.

With the notations of definition~\ref{notation} and with $B$ as in 
theorem~\ref{GaloisOrbits}  we define:
$$\alpha_V:=\prod_{\substack{p \, \textnormal{prime} \\
    K^m_{{\TT_V},p}\not= K_{{\TT_V},p}}} \max(1, B \cdot |K^m_{\TT_V,p}/K_{\TT_V,p}|) \;\;.$$  
\end{defi}

\begin{rem}
By remark~\ref{independance} the quantity $\alpha_V$ depends only on $V$
and not on the particular embedding $\HH_V
\hookrightarrow \G$.
\end{rem}

The alternative roughly explained in the introduction to section~\ref{section2} can
now be formulated in the following theorem (\cite[theorem 3.10]{UllmoYafaev}).

\begin{theor}[Ullmo-Yafaev] \label{alter}
 Let $(\G, X)$ be a Shimura datum with $\G$ semisimple of adjoint
 type. Let $X^+$ be a fixed connected
  component of $X$. Fix $R$ a positive integer as in
  definition~\ref{alpha}.

Let $K =\prod_{p \; \textnormal{prime}} K_p$ be a neat 
  compact open subgroup of $\G(\AAf)$ and let $\Sigma$ be a set of special subvarieties $V$ of $S_{K}(\G,X)_{\CC}$
such that $\alpha_V\beta_V$ is bounded as $V$ ranges through $\Sigma$.

There exists a finite set $\{\TT_1, \cdots, \TT_r\}$ of $\RR$-anisotropic $\QQ$-subtori of
$\G$ such that any $V$ in $\Sigma$ is $\TT_i$-special for some $i \in
\{1, \cdots, r\}$.
\end{theor}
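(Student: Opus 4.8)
The plan is to translate the bound $\alpha_V \beta_V = O(1)$ into finiteness information about the tori $\TT_V$, and then to show that only finitely many tori (up to the relevant conjugacy) can occur, each of which determines a candidate $\TT_i$. First I would unwind the definition of $\beta_V = \log d_{\TT_V}$: since $\alpha_V \geq 1$, boundedness of $\alpha_V \beta_V$ forces $\beta_V$ bounded, hence $d_{\TT_V}$ bounded, hence the splitting field $L_V$ of $\C_V$ (equivalently of $\TT_V$, since $\C_V$ is isogenous to $\TT_V$) ranges in a finite set of number fields by Hermite--Minkowski. Similarly $\alpha_V$ bounded means that the product $\prod_{p : K^{\mathrm m}_{\TT_V,p}\neq K_{\TT_V,p}} \max(1, B\cdot |K^{\mathrm m}_{\TT_V,p}/K_{\TT_V,p}|)$ is bounded, so only finitely many primes $p$ can contribute and each index is bounded; combined with remark~\ref{independance} (these indices depend only on $V$, not on the embedding) this is genuine structural information about $\TT_V$ inside $\G$.

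Next I would argue that, up to conjugacy by $\G(\QQ)$ (or by $\Gamma_K$), there are only finitely many possibilities for the $\QQ$-torus $\TT_V \subset \G$ subject to these constraints. The key point is that a $\QQ$-subtorus of $\G$ of bounded dimension (here $\dim \TT_V \leq \dim \G$, and more precisely the rank is controlled) whose splitting field lies in a fixed finite set is determined by finitely many discrete invariants: the cocharacter lattice together with the Galois action factoring through one of finitely many fixed finite groups, i.e. an integral representation of a fixed finite group in a fixed-rank lattice, of which there are only finitely many up to isomorphism by the Jordan--Zassenhaus theorem. One then needs that each such abstract torus embeds in $\G$ in only finitely many ways up to $\G(\QQ)$-conjugacy; this is where I would invoke the rigidity of tori and the finiteness of conjugacy classes of maximal tori, or more directly cite \cite[lemma 2.5]{UllmoYafaev} and the surrounding machinery already used to produce the integer $R$, which is exactly a finiteness statement of this flavour for Shimura subdata. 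By Lemma~\ref{compactness} every $\TT_V$ arising is automatically $\RR$-anisotropic, so the finite list $\{\TT_1,\dots,\TT_r\}$ of representatives consists of $\RR$-anisotropic $\QQ$-subtori as required.

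Finally I would check the matching condition: given $V \in \Sigma$, its torus $\TT_V$ is $\G(\QQ)$-conjugate (indeed $\Gamma_K$-conjugate) to some $\TT_i$, and I must verify that $V$ is then $\TT_i$-special in the sense of Definition~\ref{strongly_special}. For this, recall that $\TT$-special refers to $\TT$ being the connected centre of $\lambda(\HH_V)$ in $\G^{\ad}$, where $\lambda : \G \to \G^{\ad}$; so I would need to pass from the connected centre $\TT_V$ of $\HH_V$ to its image $\lambda(\TT_V)$, note that $\beta_V$ and $\alpha_V$ control this image just as well (the discriminant of the splitting field only shrinks or stays the same under the isogeny $\TT_V \to \lambda(\TT_V)$, and similarly for the local indices up to bounded error), and absorb the constant $B$ accordingly. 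Then replacing $\TT_i$ by its $\Gamma_K$-conjugate that actually equals the connected centre of $\lambda(\HH_V)$ exhibits $V$ as $\TT_i$-special. The main obstacle I anticipate is precisely this bookkeeping between $\HH_V$, its centre $\TT_V$, the isogenous quotient $\C_V$, and the adjoint image $\lambda(\HH_V)$ — making sure that "bounded discriminant of $\C_V$" really does pin down $\lambda(\TT_V)$ up to finitely many $\G(\QQ)$-conjugacy classes, and that the local index information survives passage to $\G^{\ad}$; the pure finiteness-of-tori input, by contrast, should be a routine application of Jordan--Zassenhaus plus the conjugacy finiteness already packaged in \cite{UllmoYafaev}.
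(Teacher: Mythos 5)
The paper gives no proof of this theorem; it is simply quoted from \cite[theorem 3.10]{UllmoYafaev}. So the comparison below is with the logical requirements of the statement rather than with a proof contained in the paper.

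Your argument has the right general shape in its first half: boundedness of $\beta_V$ plus Hermite--Minkowski gives finitely many splitting fields, and one can then pin down the abstract structure of the tori $\TT_V$ up to finitely many $\GL_n(\QQ)$-conjugacy classes (this is exactly \cite[lemma 3.13(i)]{UllmoYafaev}, which the paper itself invokes elsewhere). Lemma~\ref{compactness} gives $\RR$-anisotropy for free. But there is a genuine gap in the second half. Being $\TT$-special, in the sense of Definition~\ref{strongly_special}, is invariant under conjugating $\TT$ by an element of $K\cap\G(\QQ)$ (because right multiplication by such an element acts trivially on $\Sh_K$), but it is \emph{not} invariant under arbitrary $\G(\QQ)$-conjugation: if $g\in\G(\QQ)\setminus K$, conjugating $(\HH_V,X_V)$ by $g$ produces a Shimura subdatum whose image in $\Sh_K(\G,X)$ is a Hecke translate of the original one, which need not contain $V$ as a component. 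Your step 3 (``each abstract torus embeds in $\G$ in only finitely many ways up to $\G(\QQ)$-conjugacy'') and your closing sentence (``its torus $\TT_V$ is $\G(\QQ)$-conjugate (indeed $\Gamma_K$-conjugate) to some $\TT_i$'') therefore do not deliver the conclusion: finitely many $\G(\QQ)$-classes do not imply finitely many $\Gamma_K$-classes, and only the latter yields a finite list $\{\TT_1,\dots,\TT_r\}$ with each $V$ being $\TT_i$-special. The parenthetical ``indeed $\Gamma_K$-conjugate'' is asserted but never argued.

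The missing ingredient is a substantive use of the bound on $\alpha_V$. In your write-up, only $\beta_V$ is actually used; $\alpha_V$ appears only in the remark that the local indices are ``genuine structural information,'' but that information is never exploited. The indices $|K^{\mathrm m}_{\TT_V,p}/K_{\TT_V,p}|$ are exactly what distinguishes the $\Gamma_K$- (more precisely, $K\cap\G(\QQ)$-) conjugacy classes within a single $\G(\QQ)$- or $\GL_n(\QQ)$-conjugacy class of tori: moving $\TT_V$ around inside $\G$ by a rational element not in $K$ changes the local intersections $\TT_V(\QQ_p)\cap K_p$, and an unbounded family of such conjugates would force unbounded $\alpha_V$. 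One therefore needs an argument that bounded $\alpha_V$, together with a fixed $\GL_n(\QQ)$-conjugacy class, confines the tori to finitely many $\Gamma_K$-conjugacy classes. Your proposal anticipates an obstacle, but locates it in the wrong place: you worry about the passage from $\HH_V$ to $\lambda(\HH_V)$ and from $\TT_V$ to $\lambda(\TT_V)$, but under the hypothesis that $\G$ is adjoint, $\lambda$ is the identity and that step is vacuous. The real bookkeeping problem is the passage from $\G(\QQ)$-conjugacy to $\Gamma_K$-conjugacy via $\alpha_V$, and that is the part your proof does not address.
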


\section{Reduction and strategy.} \label{strategy}

From now on we will use the following convenient terminology:

\begin{defi}
Let $(\G,X)$ be a Shimura datum and $K$ a compact open subgroup of $\G(\AAf)$.
Let $\Sigma$ be a set of special subvarieties of $\Sh_K(\G,X)_{\CC}$.
A subset $\Lambda$ of $\Sigma$ is called a modification of $\Sigma$ if
$\mathbf{\Lambda}$ and $\mathbf{\Sigma}$ have the same Zariski closure
in $\Sh_K(\G,X)_{\CC}$ (recall, cf. definition~\ref{defi111}, that $\mathbf{\Lambda}$ and $\mathbf{\Sigma}$
denote the unions of subvarieties in $\Lambda$ and $\Sigma$ respectively).
\end{defi}

\subsection{First reduction.}
We first have the following reduction of the proof of theorem~\ref{main-thm}:

\begin{theor} \label{main-thm1}
Let $(\G,X)$ be a Shimura datum and $K$ a compact open subgroup of $\G(\AAf)$.
Let $Z$ be an irreducible subvariety of $\Sh_K(\G,X)_{\CC}$.
Suppose that $Z$ contains a Zariski dense set $\mathbf{\Sigma}$, which is a union of
special subvarieties $V$, $V \in \Sigma$, all of the same dimension
$n(\Sigma)< \dim Z$.

We make {\bf one} of the following assumptions:
\begin{enumerate}
\item[$(1)$] Assume the Generalized Riemann Hypothesis (GRH) for CM fields.
\item[$(2)$] Assume that there is a faithful representation $\G  
\hookrightarrow \GL_n$ such that
with respect to this representation, the connected centres
$\TT_V$ of the
generic Mumford-Tate groups $\HH_V$ of $V$ lie in one  
$\GL_n(\QQ)$-conjugacy class as $V$ ranges through $\Sigma$.
\end{enumerate}

Then
\begin{itemize}
\item[(a)] The variety $Z$ contains a Zariski dense set $\Sigma'$ of  
special subvarieties
of constant dimension
$
n(\Sigma') > n(\Sigma)
$.
\item[(b)]
Furthermore, if $\Sigma$ satisfies the condition~$(2)$, one can choose
$\Sigma'$ also satisfying~$(2)$.
\end{itemize}
\end{theor}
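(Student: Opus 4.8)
The plan is to reduce Theorem~\ref{main-thm1} to the equidistribution result (Theorem~\ref{ul-ya}) and the precise alternative (Theorem~\ref{alter}), invoking the Galois-theoretic machinery only when the alternative forces large Galois orbits. Concretely, I would argue by dichotomy on the behaviour of the quantity $\alpha_V\beta_V$ as $V$ ranges through $\Sigma$ (where, after passing to a connected component $S_K(\G,X)_\CC$ and replacing $\G$ by $\G^\ad$, we may assume $\G$ semisimple of adjoint type). Since we only care about the Zariski closure of $\mathbf{\Sigma}$, we are free to replace $\Sigma$ by any modification of it at each step.

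\textbf{First case: $\alpha_V\beta_V$ is bounded on $\Sigma$ (or on a modification).} By Theorem~\ref{alter} there is a finite list $\TT_1,\dots,\TT_r$ of $\RR$-anisotropic $\QQ$-subtori of $\G$ such that every $V\in\Sigma$ is $\TT_i$-special for some $i$. By the pigeonhole principle one of the families, say the $V$ that are $\TT:=\TT_i$-special, is a modification of $\Sigma$, so we may assume every $V\in\Sigma$ is $\TT$-special. Now Theorem~\ref{ul-ya} applies: writing the $V\in\Sigma$ as a sequence $(V_n)$, there is a $\TT$-special subvariety $W$ of $S_K(\G,X)_\CC$ and a subsequence with $\mu_{V_{n_k}}\to\mu_W$ and $V_{n_k}\subset W$ for $k$ large. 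Since $\mathbf{\Sigma}\subset Z$ with $Z$ irreducible and $\dim V_n=n(\Sigma)<\dim Z$, the limit object $W$ cannot equal all of $Z$; but $W$ is itself a special subvariety containing infinitely many of the $V_n$, hence of dimension $>n(\Sigma)$, and it contains a Zariski-dense (indeed everything-converges) set of the $V_n$. After removing finitely many bad indices, $\Sigma':=\{$those $V_{n_k}\subset W\}$ together with iterating this extraction on the components of the Zariski closure, one produces a Zariski-dense family of special subvarieties of $Z$ of some constant dimension $>n(\Sigma)$. (One must be a little careful: \emph{a priori} different subsequences give special $W$'s of different dimensions; one covers $\overline{\mathbf{\Sigma}}=Z$ by finitely many such $W$'s and then passes to whichever constant dimension appears, discarding the rest as a Zariski-closed proper part, then renames.)

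\textbf{Second case: $\alpha_V\beta_V$ is unbounded.} Then, since $\alpha_V\geq 1$ always, either $\beta_V$ is unbounded, or $\beta_V$ is bounded and $\alpha_V$ is unbounded; in either case $\deg_{L_{K}}(\Gal(\ol\QQ/F)\cdot V)$ is unbounded along $\Sigma$ by the lower bound~(\ref{lower}) of Theorem~\ref{GaloisOrbits} (here $F$ is the bounded-degree field from Definition~\ref{alpha}, and we use that $Z$, the $V$'s and their Galois conjugates all live over $\ol\QQ$ and over a common number field $E$ once we descend $\Sigma$ to a modification defined over $\ol\QQ$, which is harmless for the Zariski closure). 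Under assumption~$(1)$ the GRH is available, and under assumption~$(2)$ the tori $\TT_V$ lie in a single $\GL_n(\QQ)$-conjugacy class so the GRH may be dropped — this is exactly the hypothesis needed to apply Theorem~\ref{GaloisOrbits}. At this point one is in the situation addressed by the geometric part of the paper (the later sections \ref{partb}, \ref{critere}, \ref{hecke}, \ref{le_premier}, \ref{choix}): unbounded Galois orbits on a Zariski-dense family inside the irreducible $Z$, combined with the geometry of Hecke correspondences and the irreducibility criterion, yield a strictly larger-dimensional special subvariety through which infinitely many $V_n$ factor, again producing the desired $\Sigma'$. I would cite the relevant theorem from those sections (the ``geometric criterion'') as a black box here, since the excerpt defers its statement.

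\textbf{Conclusion and item (b).} In both cases one obtains a Zariski-dense union of special subvarieties of $Z$ of dimension $>n(\Sigma)$; after stratifying the Zariski closure into irreducible components and keeping those of maximal such dimension, one gets constant dimension $n(\Sigma')>n(\Sigma)$, proving~(a). For~(b): in the first case, the subvarieties of $\Sigma'$ are either among the original $V_n$ (whose $\TT_V$ already satisfy~$(2)$) or are the single special $W$, whose connected centre $\TT_W$ is $\TT$, one of finitely many fixed tori, hence trivially lies in finitely many conjugacy classes — but to stay honest one notes the $V_n\subset W$ still have their $\TT_{V_n}$ in the given conjugacy class, and one takes $\Sigma'$ to consist of those $V_n$ rather than $W$ when one needs~$(2)$ preserved; in the second case, the geometric construction produces $\Sigma'$ out of transforms of the original $V_n$ under Hecke correspondences coming from $\G(\AAf)$, and conjugation by $\G(\QQ)\subset\GL_n(\QQ)$ does not change the $\GL_n(\QQ)$-conjugacy class of $\TT_V$, so~$(2)$ persists.

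\textbf{Main obstacle.} The genuinely hard part is the second case: making the passage from ``unbounded Galois orbits'' to ``a larger special subvariety containing infinitely many $V_n$'' work in arbitrary dimension. This is where irreducibility of Hecke transforms of higher-dimensional subvarieties (sections~\ref{critere} and~\ref{hecke}) and the inductive control on the generic dimension (the Galois/ergodic combination from \cite{UllmoYafaev}) are indispensable; the bounded case, by contrast, is essentially a formal consequence of Theorems~\ref{alter} and~\ref{ul-ya}. A secondary technical nuisance is the bookkeeping needed to extract a \emph{constant}-dimensional, Zariski-dense $\Sigma'$ from families whose accumulation a priori has varying dimension, and to keep everything defined over a fixed number field so that ``$\deg_{L_K}(\Gal(\ol\QQ/F)\cdot V)$'' makes sense throughout.
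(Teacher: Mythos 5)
Your overall two-case dichotomy on the boundedness of $\alpha_V\beta_V$ is indeed the paper's strategy (cf.\ the proof of Proposition~\ref{imply}), and you correctly identify that the unbounded case is where the geometric machinery of sections~\ref{critere}--\ref{choix} (i.e.\ Theorem~\ref{main-thm2}) is doing all the work, which you cite as a black box — that is acceptable given the context. However, there are two problems worth flagging.

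First, in the bounded case your intermediate claim that ``the limit object $W$ cannot equal all of $Z$'' is unjustified and, in fact, wrong. Nothing in $\dim V_n < \dim Z$ prevents the limit from filling $Z$; on the contrary, the last sentence of Theorem~\ref{ul-ya} says precisely that every irreducible component of the Zariski closure of a family of $\TT$-special subvarieties is special, so since $\mathbf{\Sigma}$ is Zariski dense in the irreducible $Z$, the conclusion is simply that $Z$ \emph{is} special. One then takes $\Sigma'=\{Z\}$ with $n(\Sigma')=\dim Z > n(\Sigma)$ and is done. Your iterative extraction, besides being circuitous, starts from a false premise; it happens to be harmless because the $W=Z$ degenerate case immediately yields the result, but you should not assert $W\neq Z$.

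Second, and more seriously, your treatment of part~(b) in the unbounded case is incorrect. You claim the new special subvarieties $V'$ are ``transforms of the original $V_n$ under Hecke correspondences,'' so that $\TT_{V'}$ is $\GL_n(\QQ)$-conjugate to $\TT_V$. But that is not what the geometric criterion produces: Theorem~\ref{main-thm2} (via Theorem~\ref{theor2} and Lemma~\ref{V'special}) produces $V'$ as the Zariski closure of an orbit, a \emph{strictly larger} special subvariety with $V\subsetneq V'$. This corresponds to a strict inclusion of Shimura subdata $(\HH_V,X_V)\subset(\HH_{V'},X_{V'})$, and the connected centre $\TT_{V'}$ of $\HH_{V'}$ is in general a different torus, not a conjugate of $\TT_V$ (indeed by Lemma~\ref{inclusion1} one cannot even have $\TT_V\subsetneq\TT_{V'}$). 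The correct argument, which is what section~\ref{partb} does, is to observe that Lemma~\ref{splitting} forces $\TT_{V'}$ to split over the same field $L$ that splits the $\TT_V$, and then to invoke \cite[Lemma 3.13(i)]{UllmoYafaev} to conclude the $\TT_{V'}$ fall into finitely many $\GL_n(\QQ)$-conjugacy classes, one of which, after a further modification, contains all of them. Your argument skips this entirely and substitutes a false description of $V'$.

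Similarly your treatment of (b) in the bounded case (``take $\Sigma'$ to consist of those $V_n$ rather than $W$'') cannot work: if $\Sigma'$ consists of the original $V_n$ then $n(\Sigma')=n(\Sigma)$, contradicting the conclusion of part~(a). In that case, as above, $Z$ is special and $\Sigma'=\{Z\}$; one then invokes Lemma~\ref{splitting} and \cite[Lemma 3.13]{UllmoYafaev} for $Z$ as well.
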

 
\begin{prop}
Theorem~\ref{main-thm1} implies the main theorem~\ref{main-thm}.
\end{prop}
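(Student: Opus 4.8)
The plan is to derive Theorem~\ref{main-thm} from Theorem~\ref{main-thm1} by a noetherian induction on the dimension of irreducible components of the Zariski closure of $\mathbf{\Sigma}$. Let $Z$ be an irreducible component of the Zariski closure of $\mathbf{\Sigma}$ in $\Sh_K(\G,X)_{\CC}$. Replacing $\Sigma$ by the subset of those $V \in \Sigma$ contained in $Z$, we may assume $\mathbf{\Sigma}$ is Zariski dense in $Z$. If every $V \in \Sigma$ is equal to $Z$ then $Z$ is itself special and we are done; so we may assume that a Zariski dense subfamily consists of special subvarieties strictly contained in $Z$, hence of dimension $<\dim Z$. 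We want to reach a situation where all members of a Zariski dense subfamily have the \emph{same} dimension, so that Theorem~\ref{main-thm1} applies.

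First I would reduce to the case of a connected Shimura variety $S_K(\G,X)_{\CC}$ with $\G$ semisimple of adjoint type: passing to $\G^{\ad}$, shrinking $K$ to a neat subgroup, and restricting to a single connected component are all standard reductions under which the notions of special subvariety, the hypotheses $(1)$ and $(2)$ (one tracks the conjugacy-class condition through the fixed faithful representation, as in Theorem~\ref{main-thm1}(2) and Theorem~\ref{GaloisOrbits}), and the conclusion are preserved. Next, for each integer $d$ with $0 \le d < \dim Z$, let $\Sigma_d \subset \Sigma$ be the subfamily of $V$ of dimension $d$. Since $\mathbf{\Sigma} = \bigcup_{d} \mathbf{\Sigma_d}$ is a finite union and is Zariski dense in the irreducible variety $Z$, some $\Sigma_d$ has $\mathbf{\Sigma_d}$ Zariski dense in $Z$; fix the largest such $d$ and set $n(\Sigma) := d$, so that $n(\Sigma) < \dim Z$ and all $V \in \Sigma_d$ have dimension $n(\Sigma)$. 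Now Theorem~\ref{main-thm1} applies to $Z$ and $\Sigma_d$: it produces a Zariski dense family $\Sigma'$ of special subvarieties of $Z$ of constant dimension $n(\Sigma') > n(\Sigma)$, and moreover, under hypothesis $(2)$, with $\Sigma'$ again satisfying $(2)$. Under hypothesis $(1)$ we simply keep hypothesis $(1)$ available at every stage.

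I would then iterate. Starting from $\Sigma$ with $n(\Sigma) < \dim Z$, repeated application of Theorem~\ref{main-thm1} yields families $\Sigma = \Sigma^{(0)}, \Sigma^{(1)}, \Sigma^{(2)}, \dots$, each Zariski dense in $Z$ and of constant dimension, with strictly increasing dimensions $n(\Sigma^{(0)}) < n(\Sigma^{(1)}) < \cdots$. Since these are integers bounded above by $\dim Z$, the process terminates after finitely many steps, and it can only terminate when the constant dimension equals $\dim Z$. But a special subvariety of dimension $\dim Z$ contained in the irreducible variety $Z$ must equal $Z$; hence $Z$ is special. As this holds for every irreducible component $Z$ of the Zariski closure of $\mathbf{\Sigma}$, Theorem~\ref{main-thm} follows. (For the deduction of Theorem~\ref{main-thma} from Theorem~\ref{main-thm} one just notes that a set of special points is in particular a set of special subvarieties, all of dimension zero, and that under the representation $\G \hookrightarrow \GL_n$ the conjugacy-class condition on $\MT_s$ is exactly condition $(2)$ for the zero-dimensional family.)

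The only genuinely delicate point is bookkeeping rather than mathematics: one must check that the preliminary reduction to $\G$ semisimple of adjoint type, neat level, and a fixed connected component is legitimate — in particular that hypothesis $(2)$, phrased via a fixed faithful representation and $\GL_n(\QQ)$-conjugacy of the relevant tori, is genuinely inherited at each stage (this is precisely why Theorem~\ref{main-thm1}(b) is stated the way it is, and why Theorem~\ref{main-thm1} carries hypothesis $(2)$ in the form it does). The induction itself is immediate once Theorem~\ref{main-thm1} is in hand, which is the point of isolating it; the substance of the argument is deferred entirely to the sections proving Theorem~\ref{main-thm1}.
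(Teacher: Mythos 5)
Your argument is essentially the paper's: assume $Z$ irreducible, use Noetherianity to pass to a Zariski-dense subfamily of constant dimension $n(\Sigma)<\dim Z$, apply Theorem~\ref{main-thm1} to strictly increase that dimension while (under hypothesis~$(2)$) preserving it via part~$(b)$, and iterate until the constant dimension reaches $\dim Z$, forcing $Z$ special. The only difference is that you insert a preliminary reduction to $\G$ semisimple of adjoint type, to a neat $K$, and to a single connected component; this is not needed at this stage (the paper carries out those reductions later, in the proof that Theorem~\ref{main-thm2} implies Theorem~\ref{main-thm1}~$(a)$), and in particular the passage to $\G^{\ad}$ while preserving hypothesis~$(2)$ requires a real argument (splitting fields constant, then finitely many conjugacy classes via \cite[lemma~3.13]{UllmoYafaev}) rather than mere bookkeeping — but since you do not actually use that reduction in the iteration, omitting it makes the proof both correct and closer to the paper's.
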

\begin{proof}
Let $\G$, $X$, $K$ and $\Sigma$ as in the main theorem~\ref{main-thm}. Without loss of generality one
can assume that the Zariski closure $Z$ of $\mathbf{\Sigma}$ is
irreducible. Moreover by Noetherianity one can assume that all the $V
\in \Sigma$ have the same dimension $n(\Sigma)$.  

 Notice that the assumption~$(2)$ of the
theorem~\ref{main-thm}  implies the assumption~$(2)$ of the
theorem~\ref{main-thm1}. We then apply theorem~\ref{main-thm1},$(a)$ to
$\Sigma$: the subvariety $Z$ contains a Zariski-dense set
$\mathbf{\Sigma'}$ of special subvarieties $V'$, $V' \in \Sigma'$, of
constant dimension $n(\Sigma') > n(\Sigma)$.

By theorem~\ref{main-thm1},$(b)$ one can replace $\Sigma$ by $\Sigma'$.
Applying this process recursively and as $n(\Sigma') \leq \dim(Z)$, we  
conclude that $Z$ is special.
\end{proof}

\subsection{Second reduction.}
Part~$(b)$ of theorem~\ref{main-thm1}  will be dealt with in  
section~\ref{partb}.
Part~$(a)$ of theorem~\ref{main-thm1} can itself be reduced to the
following theorem:

\begin{theor} \label{main-thm2}
Let $(\G, X)$ be a Shimura datum with $\G$ semisimple of adjoint type
and let $X^+$ be a connected component of $X$. Fix $R$ a positive
integer as in the definition~\ref{alpha}.

Let $K= \prod_{p \; \textnormal{prime}} K_p$ be a neat compact
open subgroup of $\G(\AAf)$. Let $Z$ be a Hodge generic geometrically irreducible subvariety of the
connected component $S_K(\G,X)_{\CC}$ of $\Sh_{K}(\G, X)_{\CC}$.
Suppose that $Z$ contains a Zariski dense set $\mathbf{\Sigma}$, which
is a union of special subvarieties $V$, $V \in \Sigma$, all of the same dimension
$n(\Sigma)$ and such that for any
modification $\Sigma'$ of $\Sigma$ the set $\{ \alpha_V\beta_V, \; V \in \Sigma'\}$ is
unbounded (with the notations of definitions~\ref{notation} and \ref{alpha}).

We make {\bf one} of the following assumptions:
\begin{enumerate}
\item[$(1)$] Assume the Generalized Riemann Hypothesis (GRH) for CM fields.
\item[$(2)$] Assume that there is a faithful representation $\G  
\hookrightarrow \GL_n$ such that
with respect to this representation, the connected centres
$\TT_V$ of the
generic Mumford-Tate groups $\HH_V$ of $V$ lie in one  
$\GL_n(\QQ)$-conjugacy class as $V$ ranges through $\Sigma$.
\end{enumerate}

After possibly replacing $\Sigma$ by a modification, for every $V$ in
$\Sigma$ there exists a special subvariety $V'$ such that $V
\subsetneq  V' \subset Z$. 
\end{theor}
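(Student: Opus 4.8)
The plan is to reduce Theorem~\ref{main-thm2} to the Galois/ergodic alternative (theorem~\ref{alter}) by first using the hypothesis that $\{\alpha_V\beta_V\}$ is unbounded on every modification, combined with the lower bound for Galois orbits (theorem~\ref{GaloisOrbits}), to establish that the Galois orbits $\Gal(\ol\QQ/F)\cdot V$ have unbounded degree. The point of theorem~\ref{GaloisOrbits} is precisely that under assumption~$(1)$ (GRH) or assumption~$(2)$ (the tori $\TT_V$ in one $\GL_n(\QQ)$-conjugacy class), one has
$$
\deg_{L_{K_{\HH_V}}}(\Gal(\ol\QQ/F)\cdot V) > C(N)\cdot \alpha_V\cdot \beta_V^N
$$
for any fixed $N$. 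So if $\{\alpha_V\beta_V : V\in\Sigma'\}$ is unbounded for every modification $\Sigma'$, then (after passing to a suitable modification, which does not change the Zariski closure $Z$) one arranges that $\deg_{L_K}(\Gal(\ol\QQ/E)\cdot V)$ is unbounded as $V$ ranges over $\Sigma$. This puts us in the ``Galois side'' of the rough alternative: the equidistribution theorem~\ref{ul-ya} cannot be directly applied, since by theorem~\ref{alter} if $\alpha_V\beta_V$ were bounded the $V$ would all be $\TT_i$-special for finitely many tori $\TT_i$, and then theorem~\ref{ul-ya} would already give that $Z$ is special — but here we are in the complementary case.

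The heart of the argument — and the part I expect to be the main obstacle — is then purely geometric: given that $Z$ contains Zariski-densely many special subvarieties $V$ of fixed dimension $n(\Sigma)$ whose Galois orbits have unbounded degree, one must produce, for each $V$ (after a modification), a strictly larger special subvariety $V'$ with $V\subsetneq V'\subset Z$. This is where the machinery of Hecke correspondences enters (sections~\ref{critere} and~\ref{hecke} in the paper's plan). The idea is: since $V$ is special, defined by a Shimura subdatum $(\HH_V,X_V)$, one considers suitable Hecke operators $T_g$ associated to elements $g$ in the centralizer-type subgroups of $\G(\AAf)$ adapted to $\HH_V$; the transform $T_g V$ is again a union of special subvarieties of the same dimension, and one wants many of them to lie inside $Z$. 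Because $\Gal(\ol\QQ/E)$ acts on the set of special subvarieties of dimension $n(\Sigma)$ contained in $Z$ essentially through Hecke correspondences (this is the geometric property of Hecke correspondences referenced in the introduction), the unboundedness of the Galois orbit forces $Z$ to contain a large ``Hecke orbit piece'' of $V$. One then invokes a geometric criterion — an intersection/degree estimate showing that if $Z$ contains a transform $T_g V$ with $\deg(T_g)$ large relative to $\deg Z$, then $Z$ must contain the Zariski closure of $\bigcup_g T_g V$, which is a special subvariety properly containing $V$ (the monodromy of the Hecke orbit generates a larger reductive group). The irreducibility of $T_g V$ — or rather of its relevant components — is a genuine subtlety, handled by sections~\ref{critere} and~\ref{hecke}.

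More concretely, I would proceed as follows. First, reduce to $\G$ semisimple of adjoint type and $Z$ Hodge generic in a connected component $S_K(\G,X)_\CC$, which is already built into the statement of theorem~\ref{main-thm2}. Second, apply theorem~\ref{GaloisOrbits} with $N$ chosen large (depending on $\deg_{L_K} Z$ and the geometry of $\G,X,K$) to convert the unboundedness of $\alpha_V\beta_V$ into unboundedness of $\deg_{L_K}(\Gal(\ol\QQ/E)\cdot V)$; note this is consistent because passing to a modification preserves $\mathbf{\Sigma}$'s Zariski closure, hence $Z$, and the hypothesis is stated to survive every modification. Third, fix one $V\in\Sigma$; its Galois conjugates are special subvarieties of $Z$ of dimension $n(\Sigma)$, and there are $\gg \beta_V^N$ of them, all obtained from $V$ by Hecke correspondences of controlled type. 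Fourth, apply the geometric criterion from section~\ref{critere}: a subvariety $Z$ of bounded degree containing sufficiently many (more than a bound depending on $\deg Z$) components of a Hecke transform $T_m V$ must contain all of $T_m V$, and then the union over an increasing family of such $m$ generates a special subvariety $V'\supsetneq V$ lying in $Z$. Fifth, assemble this over all $V\in\Sigma$, discarding finitely many bad ones (a modification), to conclude. The step I flag as the crux is the fourth: making precise the ``many components of a Hecke transform force containment of the whole transform, and Hecke transforms generate a special subvariety'' mechanism, which requires both the irreducibility results of sections~\ref{critere}--\ref{hecke} and a careful degree bookkeeping via corollary~\ref{compare-degrees}.
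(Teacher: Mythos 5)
Your high-level plan correctly identifies the role of theorem~\ref{GaloisOrbits} (lower bounds for Galois orbits) and the need for a Hecke-correspondence mechanism, but the concrete fourth step is inverted and two essential ingredients are missing. The paper's geometric criterion (theorem~\ref{theor2}) hinges on showing $Z \subset T_m Z$ for a single well-chosen $m \in \TT_V(\QQ_l)$, not on showing that $Z$ swallows many components of a transform $T_m V$. The actual mechanism is: by the reciprocity law (proposition~\ref{uniform}) one has $\sigma\wt{V}\subset T_m\wt{V}$ for some $\sigma\in\Gal(\ol\QQ/E_V)$, and since $T_m\wt{Z}$ is defined over $E_V$ the whole orbit $\Gal(\ol\QQ/E_V)\cdot\wt{V}$ lands inside $\wt{Z}\cap T_m\wt{Z}$; if this intersection were proper, Bezout would bound its degree above by $(\deg\wt{Z})^2\cdot[I_l:I_l\cap mI_lm^{-1}]$, contradicting the Galois lower bound from theorem~\ref{GaloisOrbits}; hence $\wt{Z}\subset T_m\wt{Z}$. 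Moreover you do not account for the fact that this inclusion is useless at maximal-compact level: one must first lift to an Iwahori level $I_l$ (theorem~\ref{good Hecke}) so that $k_1mk_2$ is guaranteed to generate an unbounded subgroup, avoiding Edixhoven's counterexample, before theorem~\ref{theor2} produces a strictly larger special $V'$.

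Second, you never say how the prime $l$ is actually produced, which is where GRH enters a second time and is the true content of section~\ref{choix}: one needs a prime $l>C$ splitting $\TT_V$ with $(\TT_V)_{\FF_l}$ a torus and $l$ small enough that $l^{(k+2f)\cdot 2^r}(\deg_{L_K}Z)^{2^r}<C(N)\alpha_V\beta_V^N$; this is effective Chebotarev (proposition~\ref{chebotarev}), replaced by classical Chebotarev under assumption (2). Third, and most seriously for higher codimension, you miss the induction on $r=\dim Z-\dim V$ that drives theorem~\ref{criterium_for_l}: when $r>1$ and $\wt{Z}\not\subset T_m\wt{Z}$, one passes to an $F$-irreducible component $Y$ of $\wt{Z}\cap T_m\wt{Z}$ containing $\wt{V}$, verifies $\dim V<\dim Y<\dim Z$, checks via lemma~\ref{notspecial} (itself a degree argument) that $V$ remains not strongly special inside the smallest special subvariety containing $Y$, and applies the inductive hypothesis. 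The exponent $2^r$ in condition~(\ref{la_condition}) is tuned precisely so the degree bound survives this descent, since degree at most squares (up to a factor $l^{k+2f}$) at each step. Without this induction your outline cannot close the argument beyond $r=1$.
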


\begin{prop}  \label{imply}
Theorem~\ref{main-thm2} implies theorem~\ref{main-thm1} $(a)$.
\end{prop}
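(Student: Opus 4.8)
\textbf{Proof of Proposition~\ref{imply}.}

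The plan is to deduce Theorem~\ref{main-thm1}$(a)$ from Theorem~\ref{main-thm2} by a sequence of standard reductions, combined with the equidistribution result Theorem~\ref{ul-ya} and the precise alternative Theorem~\ref{alter}. First I would reduce to the situation where the ambient group is semisimple of adjoint type and $Z$ is contained in (and Hodge generic in) a single connected component $S_K(\G, X)_\CC$. This is accomplished by replacing $(\G, X)$ by the image of the Shimura subdatum attached to $Z$, that is, by the Shimura datum whose group is the generic Mumford-Tate group $\MT_Z$ on the smallest special subvariety containing $Z$; passing to the adjoint quotient $\G^\ad$ and to the canonical morphism $\lambda : \G \lto \G^\ad$ does not affect the property of being special, nor does it affect Zariski density, and the special subvarieties of $\Sigma$ together with $Z$ map to special subvarieties and a Hodge generic subvariety respectively. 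One also replaces $K$ by a neat compact open subgroup contained in it, which only multiplies everything by a finite covering and hence preserves all the relevant density and dimension statements. After these reductions we may assume we are exactly in the setting of Theorem~\ref{main-thm2}, except that the hypothesis on $\alpha_V\beta_V$ need not hold.

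The key dichotomy is then: either there is a modification $\Sigma'$ of $\Sigma$ for which $\{\alpha_V\beta_V : V \in \Sigma'\}$ is \emph{bounded}, or for every modification this set is unbounded. In the second case Theorem~\ref{main-thm2} applies directly (after passing to a modification): for every $V \in \Sigma$ there is a special subvariety $V'$ with $V \subsetneq V' \subset Z$. Choosing such a $V'$ for each $V$ and noting $\dim V' > \dim V = n(\Sigma)$, we obtain a Zariski dense (because it contains $\mathbf{\Sigma}$, which is already dense in $Z$) collection of special subvarieties of $Z$ all of dimension strictly greater than $n(\Sigma)$; by Noetherianity one extracts a subset $\Sigma'$ of these of constant dimension $n(\Sigma') > n(\Sigma)$ that is still Zariski dense in $Z$, which is exactly conclusion~$(a)$. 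In the first case, boundedness of $\alpha_V\beta_V$ allows us to invoke Theorem~\ref{alter}: there is a finite set $\{\TT_1, \dots, \TT_r\}$ of $\RR$-anisotropic $\QQ$-subtori of $\G$ such that every $V$ in the (bounded) modification $\Sigma'$ is $\TT_i$-special for some $i$. By the pigeonhole principle one of these, say $\TT := \TT_i$, occurs for a Zariski dense subset $\Sigma''$ of $\Sigma'$. Now Theorem~\ref{ul-ya} applied to the sequence of $\TT$-special subvarieties in $\Sigma''$ produces a $\TT$-special subvariety $V$ of $S_K(\G, X)_\CC$, obtained as the limit of the canonical measures, such that $V$ contains $V_n$ for all $n$ large; since the $V_n$ are Zariski dense in $Z$ and $V$ is closed, $Z \subseteq V$, and as $Z$ is Hodge generic this forces $Z = V$, so $Z$ is itself special. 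But then $Z$ trivially contains a Zariski dense family of special subvarieties of constant dimension $\dim Z > n(\Sigma)$ (for instance the translates constituting the dense set of special subvarieties inside the special variety $Z$), again yielding conclusion~$(a)$.

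The main obstacle in writing this out carefully is bookkeeping the two reductions simultaneously: one must check that the passage to $\G^\ad$, to a neat level, and to the connected component $S_K(\G, X)_\CC$ does not disturb the hypothesis that $n(\Sigma) < \dim Z$ nor the constancy of $\dim V$, and above all that the property of being a \emph{modification} is preserved under these operations, so that the phrase ``for any modification $\Sigma'$ of $\Sigma$ the set $\{\alpha_V\beta_V\}$ is unbounded'' in Theorem~\ref{main-thm2} is genuinely the negation of the case handled by Theorems~\ref{alter} and~\ref{ul-ya}. A minor additional point is that in the ``bounded'' branch one should verify that the finitely many tori $\TT_i$ and the pigeonhole extraction are compatible with the normalization already performed, but this is routine once the dictionary between special subvarieties upstairs and downstairs along $\lambda$ is set up. Finally, in case~$(2)$ of the hypotheses one checks that the $\GL_n(\QQ)$-conjugacy condition on the $\TT_V$ passes through all the reductions unchanged, which is immediate since it is a condition purely on the abstract tori together with their embeddings and is not affected by shrinking $K$ or replacing $\G$ by $\G^\ad$. $\qed$
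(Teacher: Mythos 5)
Your overall structure matches the paper's own proof: perform the standard reductions (Mumford--Tate group, adjoint quotient, neat level, single connected component), then split according to whether some modification of $\Sigma$ has $\{\alpha_V\beta_V\}$ bounded — in which case Theorems~\ref{alter} and~\ref{ul-ya} give that $Z$ is special — or whether every modification has this set unbounded, in which case Theorem~\ref{main-thm2} applies directly. Two points, however, deserve scrutiny.

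The more serious one is your final paragraph, where you assert that the preservation of hypothesis~$(2)$ (the $\GL_n(\QQ)$-conjugacy condition on the connected centres $\TT_V$) under the passage from $\G$ to $\G^\ad$ is ``immediate.'' It is not. The chosen faithful representation $\G \hookrightarrow \GL_n$ in hypothesis~$(2)$ does not descend to a faithful representation of $\G^\ad$, so one must pick a fresh representation $\G^\ad \hookrightarrow \GL_m$, and there is no reason the tori $\lambda(\TT_V)$ should then lie in a single $\GL_m(\QQ)$-conjugacy class. What is true, and what the paper actually uses, is that all the $\TT_V$ (and hence all the $\lambda(\TT_V)$) are split by a common number field, so by \cite[lemma~3.13(i)]{UllmoYafaev} the tori $\lambda(\TT_V)$ fall into finitely many $\GL_m(\QQ)$-conjugacy classes; one then replaces $\Sigma^\ad$ by a modification to isolate one class. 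Without invoking that lemma and this extra modification step, your reduction to the adjoint case does not preserve hypothesis~$(2)$, so Theorem~\ref{main-thm2} cannot be applied under that hypothesis.

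A second, less consequential point: in the bounded branch you argue that Theorem~\ref{ul-ya} gives a special $V$ containing ``$V_n$ for all $n$ large'' and deduce $Z \subseteq V$ from density of the $V_n$. The theorem only gives a \emph{subsequence} $V_{n_k}$ contained in $V$ for $k$ large, and the remaining $V_{n_k}$ need not be Zariski dense in $Z$, so this step as stated has a gap. The fix is simply to cite the final ``in particular'' sentence of Theorem~\ref{ul-ya}, which asserts directly that the irreducible components of the Zariski closure of a set of $\TT$-special subvarieties are special; since $\mathbf{\Sigma''}$ is Zariski dense in the irreducible variety $Z$, this yields $Z$ special without any subsequence argument or appeal to Hodge genericity. (Also note that $Z$ special does give conclusion~$(a)$ with $\Sigma'=\{Z\}$, since $n(\Sigma) < \dim Z$ is assumed.)
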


\begin{proof}
Let $(\G, X)$, $K$, $Z$ and $\Sigma$ be as in theorem~\ref{main-thm1}.

First let us reduce the proof of
theorem~\ref{main-thm1} to the case where in addition $Z$ satisfies the
assumptions of theorem~\ref{main-thm2}.

Notice that the image of a special subvariety by a morphism of
Shimura varieties deduced from a 
morphism of Shimura data is a special subvariety. Conversely any
irreducible component of the preimage of a special subvariety by such
a morphism is special. This implies that if $K\subset \G(\AAf)$ is a compact open
subgroup and if $K' \subset K$ is a finite index subgroup then
theorem~\ref{main-thm1}$(a)$ is true at level $K$ if and 
only if it is true at level $K'$. In particular we can assume without
loss of generality that $K$ is a product $\prod_{p \;
  \textnormal{prime}} K_p$ and that $K$ is neat.

We can assume that the variety $Z$ in theorem~\ref{main-thm1} is Hodge generic.
To fulfill this condition, replace $\Sh_K(\G,X)_{\CC}$ by the smallest
special subvariety of $\Sh_K(\G,X)_{\CC}$ containing $Z$
(cf. \cite[prop.2.1]{EdYa}). This comes down to replacing $\G$ with
the generic Mumford-Tate group on $Z$.

Let $(\G^\ad, X^\ad)$ be the Shimura datum adjoint to $(\G, X)$ and
$\lambda : (\G, X) \lto (\G^\ad, X^\ad)$ the natural morphism of
Shimura data. For $K \subset \G(\AAf)$ sufficiently small let
$K^\ad$ be a neat compact open subgroup of $\G^\ad(\AAf)$ containing
$\lambda(K)$.
Consider the finite morphism of Shimura varieties
$f:\Sh_K(\G, X)_\CC \lto \Sh_{K^\ad}(\G^\ad, X^\ad)_\CC$. Let $\Sigma^\ad$ be the set of special
subvarieties $f(V)$ of $\Sh_{K^\ad}(\G^\ad, X^\ad)_\CC$, $V \in \Sigma$.
In order to be able to replace $\G$ by $\G^{\ad}$, we need to check
that if $\Sigma$ satisfies the assumption (2), then $\Sigma^\ad$ also satisfies
the assumption~$(2)$. 
For $V$ in $\Sigma$, let $(\HH_V,X_V)$ be the Shimura datum defining $V$ and $\TT_V$ be the connected centre of $\HH_V$.
Then the tori $\TT_V$ (and hence the tori $\lambda(\TT_V)$) are split by the same field.
Choose a faithful representation $\G^{\ad}\hookrightarrow \GL_m$. By \cite{UllmoYafaev}, lemma 3.13, part (i), the
tori $\lambda(\TT_V)$ lie in finitely many $\GL_m(\QQ)$-conjugacy
classes. It follows that, after replacing $\Sigma^\ad$ by a
modification, the assumption~$(2)$ for $f(Z)$ is satisfied.
 Applying our
first remark to the morphism $f$ we obtain that theorem~\ref{main-thm1} $(a)$ for
$(\G^\ad, X^\ad)$ implies theorem~\ref{main-thm1} $(a)$ for $(\G,
X)$. Thus we reduced the proof of theorem~\ref{main-thm1} $(a)$ to the
case where $\G$ is semisimple of adjoint type.

We can also assume that $Z$ is contained in $S_{K}(\G, X)_{\CC}$ as
proving theorem~\ref{main-thm1} 
for $Z$ is equivalent to proving theorem~\ref{main-thm1} for any
irreducible component of its image under some Hecke correspondence.
In particular the quantities $\alpha_V$ and $\beta_V$, $V \in \Sigma$,
are well-defined. 

If for some modification $\Sigma'$ of
$\Sigma$ the set $\{ \alpha_V\beta_V, \; V \in \Sigma'\}$ is
bounded, by theorem~\ref{alter} and by Noetherianity there exists an $\RR$-anisotropic
$\QQ$-subtorus $\TT$ of $\G$ and a modification of
$\Sigma$ such that any element of this modification is $\TT$-special. Applying theorem~\ref{ul-ya} one obtains that $Z$ is special.

Finally we can assume that $Z$ satisfies the
hypothesis of theorem~\ref{main-thm2}: we have reduced the proof of
theorem~\ref{main-thm1} to the case where in addition $Z$ satisfies the
assumptions of theorem~\ref{main-thm2}.

\sspace
Let $\Sigma'$ be the set of the special subvarieties $V'$ obtained
from theorem~\ref{main-thm2} applied to $Z$. Thus $Z$ contains the
Zariski-dense set $\mathbf{\Sigma'} = \cup_{V' \in \Sigma'} V'$.
After possibly replacing $\Sigma'$ by a modification, we can assume by
Noetherianity of $Z$ that the subvarieties in $\Sigma'$ have the
same dimension $n(\Sigma')> n(\Sigma)$. This
proves the theorem~\ref{main-thm1} $(a)$ assuming theorem~\ref{main-thm2}.
\end{proof}

\subsection{Sketch of the proof of the Andr\'e-Oort conjecture in the
  case where $Z$ is a curve.} 

The strategy for proving theorem~\ref{main-thm2} is fairly complicated. We first recall the
strategy developed in \cite{EdYa} in the case where $Z$ is a curve.
In the next section we explain why this strategy {\em cannot} be
directly generalized to  higher dimensional cases.

\sspace
As already noticed in the proof of proposition~\ref{imply} one can
assume without loss of generality that the group $\G$ is semisimple
of adjoint type,
$Z$ is Hodge generic (i.e. its generic Mumford-Tate group is equal to
$\G$),  and $Z$ is contained in the connected component $S_K(\G, X)_{\CC}$
of $\Sh_K(\G, X)_{\CC}$. The proof of the
theorem~\ref{main-thma} in the case where $Z$ is a curve then relies on three
ingredients.

\subsubsection{}
The first one is a geometric {\em criterion} for a Hodge generic  
subvariety $Z$ to be
special in terms of Hecke correspondences. Given a Hecke
correspondence $T_m$, $m \in \G(\AAf)$ (cf. section~\ref{neutral}) we
denote by $T_{m}^{0}$ the correspondence it induces on $S_K(\G,
X)_{\CC}$. Let $q_i$, $1 \leq i \leq n$, be elements of
$\G(\QQ)_{+} \cap KmK$ defined by the equality
$$
\G(\QQ)_{+} \cap KmK = \coprod_{1 \leq i \leq n } \Gamma_K q_{i}^{-1} \Gamma_K \;\;.
$$
Let $T_{q_i}$, $1 \leq i \leq n$, denote the correspondence on $S_K(\G,X)_\CC$ induced by the action of $q_i$
on $X^+$ (in general it does not coincide with the correspondence on
$S_K(\G,X)_\CC$  induced by the Hecke correspondence $T_{q_{i}}$
on $\Sh_K(\G, X)_\CC$). The correspondence $T_m^0$ decomposes as
$T_m^0 = \sum_{1 \leq i \leq n} T_{q_{i}}$.

\begin{theor} \cite[theorem 7.1]{EdYa} \label{crit1}
Let $\Sh_K(\G, X)_{\CC}$ be a Shimura variety, with $\G$ semisimple of
adjoint type. Let $Z \subset S_K(\G, X)_{\CC}$ be a Hodge generic
subvariety of the connected component $S_K(\G, X)_{\CC}$ 
of $\Sh_K(\G, X)_{\CC}$.
Suppose there exist a prime $l$ and an element $m \in \G(\QQ_l)$ such  
that the
neutral component $T_{m}^{0}= \sum_{i=1}^{n} T_{q_{i}}$ of the Hecke  
correspondence $T_m$
associated with $m$ has the following properties:
\begin{enumerate}
\item $Z \subset T_{m}^{0}Z$.
\item For any $i \in \{1, \cdots n \}$, the varieties  $T_{q_{i}}Z$ and $T_{q_i^{-1}}Z$ are irreducible. 
\item For any $i \in \{1, \cdots n \}$ the $T_{q_{i}} +
T_{q_{i}^{-1}}$-orbit is dense in $S_K(\G, X)$.
\end{enumerate}
Then $Z=S_K(\G, X)$, in particular $Z$ is special.
\end{theor}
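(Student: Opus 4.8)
The plan is to combine the three hypotheses into an argument showing that $Z$ is stable under an iterated Hecke correspondence whose orbit is Zariski-dense, and then to use the irreducibility hypotheses to propagate this stability through the individual components $T_{q_i}$. First I would observe that hypothesis~(1), $Z \subset T_m^0 Z = \sum_i T_{q_i} Z$, together with the fact that $Z$ is irreducible of a fixed dimension, forces $Z$ to be an irreducible component of $T_{q_i}Z$ for at least one index~$i$; by hypothesis~(2) that $T_{q_i}Z$ is already irreducible, so in fact $Z \subset T_{q_i}Z$ for some~$i$, and applying $T_{q_i^{-1}}$ (using again~(2) for its irreducibility) gives the reverse-type inclusion $Z \subset T_{q_i^{-1}}Z$ as well. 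Thus $Z$ is stable under the symmetric correspondence $T_{q_i}+T_{q_i^{-1}}$ in the weak sense that $Z$ meets $(T_{q_i}+T_{q_i^{-1}})Z$ in full dimension, and by irreducibility $Z \subseteq T_{q_i}Z \cap T_{q_i^{-1}}Z$.

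Next I would iterate. Since $Z \subset T_{q_i}Z$ and $T_{q_i}$ sends irreducible subvarieties to subvarieties whose components we control, one gets a chain $Z \subset T_{q_i}Z \subset T_{q_i}^2 Z \subset \cdots$, and dually with $T_{q_i^{-1}}$; taking the union over all words in $q_i,q_i^{-1}$ produces the full $\langle T_{q_i}+T_{q_i^{-1}}\rangle$-orbit of $Z$, which is contained in the Zariski closure of $(T_{q_i}+T_{q_i^{-1}})$-orbit of any point of $Z$. Here one invokes the monodromy/density input: by hypothesis~(3) the $T_{q_i}+T_{q_i^{-1}}$-orbit is dense in $S_K(\G,X)$, so a point of $Z$ has dense orbit, whence the closure of the orbit of $Z$ is all of $S_K(\G,X)$. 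But that closure equals $Z$ itself once one knows each successive $T_{q_i}^{k}Z$ is irreducible and contains the previous one — this is exactly what hypotheses~(1) and~(2) were arranged to guarantee, via a Noetherian stabilization: the increasing chain of irreducible subvarieties $Z \subset T_{q_i}Z \subset \cdots$ must stabilize, and the stable term, being $T_{q_i}$-stable and containing a dense orbit, is $S_K(\G,X)$; since it also equals $Z$ by the chain being eventually constant in the relevant component, $Z = S_K(\G,X)$.

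The main obstacle I expect is the careful bookkeeping of irreducibility and dimension when passing from $Z \subset \sum_i T_{q_i}Z$ to $Z \subset T_{q_i}Z$ for a single~$i$: one must check that no ``dimension drop'' or spurious component intervenes, i.e. that $Z$ really appears as a component of some $T_{q_i}Z$ rather than only inside the union, and that the degrees of the iterates $T_{q_i}^k Z$ stay bounded so the Noetherian argument applies cleanly. This is where hypothesis~(2) on the irreducibility of both $T_{q_i}Z$ and $T_{q_i^{-1}}Z$ is essential — it is what lets one cancel $T_{q_i}$ against $T_{q_i^{-1}}$ and conclude equalities of subvarieties rather than mere inclusions, and it is what prevents the orbit of $Z$ from fragmenting into infinitely many components. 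The density hypothesis~(3) then does the rest: a Zariski-dense orbit inside the irreducible stable subvariety forces that subvariety, hence $Z$, to be the whole connected Shimura variety $S_K(\G,X)$, which is visibly special.
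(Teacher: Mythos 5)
Your first paragraph already contains the whole argument and matches the paper's sketch exactly: from $(1)$ and irreducibility of $Z$ one deduces $Z\subset T_{q_i}Z$ for some $i$; since $T_{q_i}Z$ is irreducible (hypothesis~$(2)$) and has the same dimension as $Z$ (Hecke correspondences are finite), this is an equality $Z=T_{q_i}Z$, and the same reasoning applied to $T_{q_i^{-1}}$ gives $Z=T_{q_i^{-1}}Z$; hence $Z$ contains the full $(T_{q_i}+T_{q_i^{-1}})$-orbit of each of its points, which is dense by $(3)$, so $Z=S_K(\G,X)$. You should state these as equalities rather than inclusions, since it is the equalities that make the conclusion immediate.

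Your second paragraph is a detour and contains an unjustified claim: you argue for Noetherian stabilization of a chain $Z\subset T_{q_i}Z\subset T_{q_i}^2 Z\subset\cdots$ and say this ``is exactly what hypotheses~$(1)$ and~$(2)$ were arranged to guarantee'' because each $T_{q_i}^k Z$ is irreducible. But hypothesis~$(2)$ asserts irreducibility only of $T_{q_i}Z$ and $T_{q_i^{-1}}Z$, not of their iterates, so that justification is not available. Fortunately the detour is unnecessary: once $Z=T_{q_i}Z=T_{q_i^{-1}}Z$ there is no chain to stabilize, since $Z$ is already invariant under both correspondences. Drop the second paragraph and upgrade the inclusions in the first to equalities and the proof is clean and coincides with the one the paper indicates.
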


\sspace
\noi
From $(1)$ and $(2)$ one deduces the
existence of one index $i$ such that $Z= T_{q_{i}}Z = T_{q_i^{-1}}Z$.
It follows that $Z$ contains a
$T_{q_{i}} + T_{q_{i}^{-1}}$-orbit.
The equality $Z=S_K$ follows from $(3)$.

\sspace
In the case where $Z$ is a curve one proves the existence of a prime  
$l$ and of an element
$m \in \G(\QQ_l)$ satisfying
these properties as follows. The property~$(3)$ is easy to obtain: it
is satisfied by any $m$ such that for each simple factor $\G_j$ of $\G$,
the projection of $m$ to $\G_j(\QQ_l)$ is not contained in a compact subgroup (see \cite{EdYa}, Theorem 6.1). 
The property~$(2)$, which is crucial for this strategy, is obtained by
showing that for any prime $l$ outside a
finite set of primes ${\mathcal P}_Z$ and any
$q \in \G(\QQ)^{+} \cap (\G(\QQ_l)\times \prod_{p \not =l}K_p)$, the variety $T_qZ$ is
irreducible. This is a corollary of a result 
due independently to
Weisfeiler and Nori (cf. theorem~\ref{Wei-Nori}) applied to the
Zariski closure of the image of the monodromy representation. This
result implies that for all 
$l$ except those in a finite set ${\mathcal P}_Z$, the closure in  
$\G(\QQ_l)$ of the image of the monodromy
representation for the $\ZZ$-variation of Hodge structure on the
smooth locus $Z^{{\textnormal{sm}}}$ of $Z$ coincides with the
closure of $K\cap \G(\QQ)^+$ in $\G(\QQ_l)$.
To prove the property~$(1)$ one uses Galois orbits of special points contained
in $Z$ and the fact that Hecke correspondences commute with the Galois  
action. First one notices that
$Z$ is defined over a number field $F$, finite extension of the reflex field
$E(\G, X)$ (cf. section~\ref{canonic}). If $s \in Z$ is a special
point, $r_s$ the associated reciprocity morphism and $m \in \G(\QQ_l)$  
belongs to $r_{s}((\QQ_l \otimes F)^*) \subset \MT(s)(\QQ_l)$ then the Galois orbit $\Gal(\Qbar/F)\cdot s$ is
contained in the intersection $Z \cap T_mZ$. If this intersection is
proper its cardinality $Z \cap T_mZ$ is bounded above by a uniform constant times the degree
$[K_l:K_l\cap mK_lm^{-1}]$  of the correspondence $T_m$. To find $l$  
and $m$ such
that $Z \subset T_mZ$ it is then enough to exhibit an
$m \in r_{s}((\QQ_l \otimes F)^*)$ such that the cardinality $|\Gal(\Qbar/F).s|$ is larger than $[K_l:K_l
\cap mK_lm^{-1}]$. This is dealt with by the next two ingredients.

\subsubsection{}
The second ingredient claims the existence of ``unbounded'' Hecke
correspondences of controlled degree
defined by elements in
$r_{s}((\QQ_l \otimes F)^*)$:

\begin{theor}\cite[corollary 7.4.4]{EdYa} \label{adeq}
There exists an integer $k$ such that for all $s \in \Sigma$ and  for
any prime $l$  splitting $MT(s)$ such that $\MT(s)_{\FF_l}$ is a
torus, there exists an
$m \in r_{s}((\QQ_l \otimes F)^*) \subset \MT(s)(\QQ_l)$ such that
\begin{enumerate}
\item for any simple factor $\G_i$ of $\G$ the image of $m$ in $\G_i(\QQ_l)$
   is not in a compact subgroup.
\item $[K_l:K_l \cap mK_lm^{-1}]\ll l^{k}$.
 \end{enumerate}
\end{theor}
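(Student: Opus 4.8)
The plan is to reduce the statement to a calculation inside the torus $\MT(s)$ and its splitting field, exploiting that $r_s$ is surjective onto a finite-index subgroup of $\MT(s)(\AAf)$ (by the explicit description of the reciprocity morphism for special points, c.f. the canonical model theory) and that the index $[K_l : K_l \cap mK_lm^{-1}]$ for $m$ in the torus is controlled by the local behaviour of $m$ at $l$. First I would fix, for each $s \in \Sigma$, the torus $\TT = \MT(s)$, its splitting field $L$ (a CM field of degree bounded independently of $s$, since the reflex field has bounded degree by \cite[lemma 2.5]{UllmoYafaev} and $L$ is controlled by it), and a prime $l$ as in the hypothesis, so that $\TT_{\FF_l}$ is a torus. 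Over such an $l$, $\TT$ is unramified, $\TT(\QQ_l) \cong \prod_j L_{\lambda_j}^*$ for the places $\lambda_j \mid l$, and the maximal compact $K^\mathrm{m}_{\TT,l}$ corresponds to the units $\prod_j \OO_{L_{\lambda_j}}^*$. The cocharacter group $X_*(\TT)$ is a $\ZZ[\Gal(L/\QQ)]$-module of bounded rank; a choice of uniformizers gives, for each cocharacter $\mu \in X_*(\TT)$, an element $\mu(l) \in \TT(\QQ_l)$ with $[K^\mathrm{m}_{\TT,l} : K^\mathrm{m}_{\TT,l} \cap \mu(l)K^\mathrm{m}_{\TT,l}\mu(l)^{-1}] \ll l^{|\langle \mu,\, \cdot\,\rangle|}$, the exponent being bounded by a quantity depending only on $\TT$ up to isogeny, hence only on finitely many possibilities.

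Next I would produce the required $m$. The naive choice is $m = \mu(l)$ for a suitable primitive $\mu \in X_*(\TT)$, but one must arrange simultaneously that (i) $m$ lands in the image of $r_s$ on $(\QQ_l \otimes F)^*$, and (ii) the image of $m$ in each simple factor $\G_i(\QQ_l)$ is noncompact. For (i): since $F \supseteq E_V$ contains the reflex field, the reciprocity morphism $r_s : (\mathrm{Res}_{F/\QQ}\Gm)(\AAf) \to \TT(\AAf)$ is defined via the reflex norm, and its image at $l$ contains a subgroup of $\TT(\QQ_l)$ of index bounded independently of $s$ and $l$ (this uses that $l$ is unramified in $L$ and that the norm map on unramified local fields is surjective on unit groups up to bounded index). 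So replacing $\mu$ by a bounded multiple $N\mu$, with $N$ depending only on $\G$, $X$, $R$, we may assume $N\mu(l) \in r_s((\QQ_l\otimes F)^*)$; this only increases the exponent $k$ by a bounded factor. For (ii): the cocharacter $\mu$ is chosen to be the cocharacter $\mu_x$ attached to the point $x \in X$ giving $s$ — equivalently, the Hodge cocharacter — composed with a projection ensuring nontriviality on each factor; since $\TT$ is the Mumford-Tate torus of a point of $X$ whose $\G(\RR)$-orbit is a Hermitian domain with no compact factors after passing to $\G^\ad$, the Hodge cocharacter projects nontrivially onto each simple factor, which forces the corresponding element of $\G_i(\QQ_l)$ out of every compact subgroup (a noncentral cocharacter evaluated at a uniformizer is always noncompact). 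Finally, I would collect the bounds: the exponent $k$ is the maximum over the finitely many isogeny-types of tori $\TT_V$ (only finitely many occur, since they are split by fields of bounded degree) of (rank/valuation data of $X_*(\TT)$), times the bounded multiplier $N$ — hence a single integer $k$ works uniformly over $\Sigma$, as claimed.

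The main obstacle I anticipate is the uniform control in step (i): one needs the index of $\mathrm{image}(r_s)$ inside $\TT(\QQ_l)$ — or at least inside the relevant local component — to be bounded by a constant independent of both $s$ and the prime $l$. This is delicate because $\TT$ varies with $s$; the point is that $r_s$ is built functorially from the reflex norm $N_{E_V/\QQ}$ and the structural morphism, and for $l$ unramified in the (bounded-degree) splitting field the local norm maps are surjective on units and the cokernel on the valuation lattices has size bounded in terms of $[\TT : \TT']$ for the relevant isogeny, which is bounded. Making this precise requires carefully unwinding the definition of $r_s$ from \cite{De2}, \cite{Milne} and checking that all implied constants depend only on $\G$, $X$ and $R$; this is essentially the content of \cite[sections 7.3--7.4]{EdYa} adapted to ensure uniformity, and it is where I would expect to spend most of the effort. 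The remaining parts — the degree bound $[K_l : K_l\cap mK_lm^{-1}] \ll l^k$ for $m$ in a torus, and the noncompactness in each simple factor — are comparatively routine consequences of the structure of unramified tori over $\QQ_l$ and of the Hodge cocharacter.
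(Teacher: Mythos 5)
The paper cites this result from \cite[cor.~7.4.4]{EdYa} and does not reprove it, so there is no in-text proof to compare against; the paper's own working analogue is theorem~\ref{good Hecke}, which it proves by a different route (lifting to an Iwahori level $I_l$, plus \cite[prop.~7.4.3]{EdYa} for the degree and unboundedness and \cite[prop.~2.9]{UllmoYafaev} for the reciprocity step) and with a weaker noncompactness conclusion (only in $\G^\ad(\QQ_l)$, not in each $\G_i(\QQ_l)$). Your outline is faithful to the Edixhoven--Yafaev strategy and you correctly flag the genuinely delicate point; but two steps as written are not yet proofs.

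First, the displayed degree bound is stated for the wrong group: since $\TT$ is abelian one has $[K^\mathrm{m}_{\TT,l}:K^\mathrm{m}_{\TT,l}\cap\mu(l)K^\mathrm{m}_{\TT,l}\mu(l)^{-1}]=1$; the quantity you must bound is $[K_l:K_l\cap\mu(l)K_l\mu(l)^{-1}]$ with $K_l\subset\G(\QQ_l)$, which is controlled by the pairings of $\mu$ with the roots of $\G$ restricted to $\TT$, and this is exactly where the uniform bound on the coordinates of $X^*(\TT_V)$ from \cite[lemma~2.6]{UllmoYafaev} has to enter. Second, ``the Hodge cocharacter composed with a projection ensuring nontriviality on each factor'' is not a defined operation; what does work is that each simple factor $\G_i$ of $\G$ receives a nontrivial split $\QQ_l$-torus $\TT_i$ from $\TT$ (because the Hodge cocharacter has noncentral image in every $\G_i$, none of the $X_i^+$ being a point), so the surjections $X_*(\TT)\twoheadrightarrow X_*(\TT_i)$ are all nonzero and a generic integral $\nu\in X_*(\TT)$ is simultaneously nonzero in each --- \emph{provided} such a $\nu$ can be realized, up to a bounded multiple, as the valuation of an element of $r_s\big((\QQ_l\otimes F)^*\big)$. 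This compatibility between your points (i) and (ii) is the real gap: the reciprocity morphism lands in $\TT(\AAf)/\ol{\TT(\QQ)}$ and the reflex norm is surjective only up to a cohomological obstruction, so ``the image has bounded index in $\TT(\QQ_l)$'' is not something you can take for granted in the form you state it. The uniform statement that the present paper actually uses is \cite[prop.~2.9]{UllmoYafaev}: there is a single integer $A$, depending only on $(\G,X)$, such that $m^A$ lands in the image; you would need either that proposition or a direct argument of comparable precision, carried out uniformly over the tori $\TT$ and primes $l$, to close the proof.
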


\subsubsection{}
The third ingredient is a lower bound for
$|\Gal(\Qbar/F) \cdot s|$ due to Edixhoven, and improved in theorem~\ref{GaloisOrbits}.

\subsubsection{}
Finally using this lower bound for $|\Gal(\Qbar/F)\cdot s|$ and the  
effective Chebotarev theorem consequence of the GRH one
proves the existence for any special point $s \in \Sigma$ with a  
sufficiently big
Galois orbit of a prime $l$ outside ${\mathcal P}_Z$, splitting  
$\MT(s)$, such that
$\MT(s)_{\FF_{l}}$ is a torus and such that
$|\Gal(\Qbar/F).s|\gg l^{k}$. Effective Chebotarev is not needed under the
assumption that the $\MT(s)$, $s \in \Sigma$, are isomorphic. 
The reason being that in this case, the splitting field of the $\MT(s)$
is constant and the classical Chebotarev theorem provides us with a
suitable $l$.

We then
choose an $m$ satisfying the conditions of the theorem~\ref{adeq}.
As $|\Gal(\Qbar/F).s|\gg [K_l:K_l  \cap mK_lm^{-1}]$ one obtains $Z
\subset T_mZ$ and by the criterion~\ref{crit1} the subvariety $Z$ is special.

\subsection{Strategy for proving the theorem~\ref{main-thm2}: the
  general case.} \label{strat}

Let $\G$, $X$, $X^+$, $K$, $Z$ and $\Sigma$ be as in the statement of
the  theorem~\ref{main-thm2}.

Notice that the idea of the proof of \cite{EdYa} generalizes to the case where
$\dim Z = n(\Sigma)+1$ (cf. section~\ref{edix}).
In the general case, for a $V$ in $\Sigma$ with $\alpha_V\beta_V$ sufficiently large we
want to exhibit $V'$ special subvariety in $Z$ containing $V$
properly. 

\sspace
Our first step (section~\ref{critere}) is geometric: we give a criterion (theorem~\ref{theor2}) similar to
criterion~\ref{crit1} saying that an inclusion $Z\subset  T_mZ$, for a
prime $l$ and an element $m \in \HH_V(\QQ_l)$ satisfying certain
conditions, implies that $V$ is properly contained in a special
subvariety $V'$ of $Z$.  

The criterion we need has to be much more subtle than
the one in \cite{EdYa}. In the characterization of \cite{EdYa}, in
order to obtain the irreducibility of $T_mZ$ the prime $l$ must   
be outside some finite set $\mathcal{P}_Z$ of primes. It seems
impossible to make the set of bad primes ${\mathcal P}_Z$ explicit in
terms of numerical invariants of $Z$, except in a few cases where the
Chow ring of the Baily-Borel compactification of 
$\Sh_K(\G,X)_{\CC}$ is easy to describe (like the case considered by Edixhoven,
where $\Sh_{K}(\G, X)_{\CC}$ is a product $\prod_{i=1}^{n} X_{i}$ of  
modular curves, and where he shows that for a $k$-dimensional subvariety $Z$ dominant on all
factors $X_{i}$, $1 \leq i \leq n$, the bad primes
$p \in \mathcal{P}_Z$ are smaller than the supremum of the degree of the
projections of $Z$ on the $k$-factors $X_{i_{1}} \times \cdots \times  
X_{i_{k}}$ of $\Sh_{K}(\G, X)_{\CC}$). In particular that characterization is not suitable for our induction.

Our criterion~\ref{theor2} for an irreducible
subvariety $Z$ containing a special subvariety $V$ which is not strongly special and  
satisfying $Z \subset
T_{m}Z$ for some $m \in \TT_{V}(\QQ_{l})$ to contain a special  
subvariety $V'$ containing $V$ properly does no longer require the
irreducibility of $T_mZ$. In particular it is valid for \emph{any} prime $l$, outside $\mathcal{P}_Z$ or
not. Instead we notice that the inclusion $Z\subset T_mZ$ implies
that $Z$ contains the image $Z'$ in $\Sh_K(\G,X)_\CC$
of the $\langle K'_l, (k_1 m k_2)^n \rangle$-orbit of (one irreducible
component of) the preimage of $V$ in the
pro-$l$-covering of $\Sh_K(\G,X)_\CC$. Here $k_1$ and $k_2$ are some elements of
$K_l$, $n$ some positive integer and $K'_l$ the $l$-adic closure of  
the image of the
monodromy of $Z$. If the group $\langle K'_l, (k_1 m k_2)^n \rangle$  
is not compact,
then the irreducible component of $Z'$ containing $V$ contains a
special subvariety $V'$ of $Z$ containing $V$ properly.

The main problem with this criterion is that the group
$\langle K'_l, k_1 m k_2\rangle$ can be compact, containing $K'_l$  
with very small index.
This is the case in Edixhoven's counter-example  
\cite[Remark 7.2]{Ed2Curves}.
In this case $\G= {\rm PGL}_2\times {\rm PGL}_2$,  
$K'_l:=\Gamma_0(l)\times \Gamma_0(l)$ and $k_1 m k_2$ is
$w_l\times w_l$, the product of
two Atkin-Lehner involutions. The index
$[\langle K'_l, k_1 m k_2\rangle : K'_l ]$ is four.

\sspace
Our second step (section~\ref{hecke}) consists in getting rid of this
problem and is purely group-theoretic. We notice
that if $K_l$ is \emph{not a maximal} compact open   
subgroup but is contained in a well-chosen \emph{Iwahori} subgroup of $\G(\QQ_l)$, then for
``many'' $m$ in $\TT_{V}(\QQ_l)$ the element $k_1mk_2$ is not contained in
a compact subgroup for any $k_1$ and $k_2$ in $K_l$. This is our
theorem~\ref{good Hecke} about the existence of adequate Hecke
correspondences. 
The proof relies on simple properties of the Bruhat-Tits decomposition of
$\G(\QQ_{l})$.

\sspace
Our third step (section~\ref{le_premier}) is Galois-theoretic and geometric. We use
theorem~\ref{GaloisOrbits}, theorem~\ref{theor2}, theorem~\ref{good 
  Hecke} to show (under one of the assumptions of theorem~\ref{main-thm1}) that the
existence of a prime number $l$ satisfying certain conditions forces a
subvariety $Z$ of $\Sh_K(\G,X)_\CC$ containing a special but not strongly 
special subvariety $V$ to contain a special subvariety $V'$
containing $V$ properly. The proof is a nice geometric induction on $r = \dim Z -
\dim V$. 

\sspace
Our last step (section~\ref{choix}) is number-theoretic: we complete
the proof of the theorem~\ref{main-thm2} and hence of
theorem~\ref{main-thm} by exhibiting, using effective Chebotarev under the
GRH (or usual Chebotarev under the second assumption of
theorem~\ref{main-thm}), a prime $l$ satisfying our desiderata. For
this step it is crucial that both the index of 
an Iwahori subgroup in a maximal compact subgroup of $\G(\QQ_{l})$ and
the degree of the correspondence $T_m$ are bounded by a uniform power
of $l$.

\section{Preliminaries.} \label{prelim}

\subsection{Shimura varieties.} \label{notations}
In this section we define some notations and recall some standard facts about Shimura
varieties that we will use in this paper. We refer to \cite{De1},  
\cite{De2}, \cite{Milne} for details.

As far as groups are concerned, reductive algebraic groups are assumed
to be connected. The exponent $^{0}$ denotes the  
algebraic neutral component and
the exponent $^{+}$ the topological neutral component. 
Thus if $\G$ is a $\QQ$-algebraic group 
$\G(\RR)^{+}$ denotes the topological neutral component of the real Lie
group of $\RR$-points $\G(\RR)$. We also denote by
$\G(\QQ)^{+}$ the intersection $ \G(\RR)^{+} \cap \G(\QQ)$. 

When $\G$
is reductive we denote by $\G^{\textnormal{ad}}$ the adjoint group of
$\G$ (the quotient of $\G$ by its center) and by $\G(\RR)_{+}$ the  
preimage in $\G(\RR)$ of
$\G^{\textnormal{ad}}(\RR)^{+}$. The notation $\G(\QQ)_{+}$ denotes the
intersection $\G(\RR)_{+} \cap \G(\QQ)$. In particular when $\G$ is  
adjoint then $\G(\QQ)^+ = \G(\QQ)_+$.

For any topological space $Z$, we denote by $\pi_{0}(Z)$
 the set of connected components of $Z$.

\subsubsection{Definition}  \label{neutral}
Let $(\G,X)$ be a Shimura datum. We fix $X^{+}$ a connected component
of $X$. Given $K$ a
compact open subgroup of $\G(\AAf)$ one obtains the homeomorphic decomposition
\begin{equation} \label{e1}
\Sh_{K}(\G, X)_{\CC} = \G(\QQ) \backslash X \times \G(\AAf)/K \simeq
\coprod_{g \in \mathcal{C}} \Gamma_g \backslash X^{+} \;\;,
\end{equation}
where $\mathcal{C}$ denotes a set of representatives for the (finite)
double coset space $\G(\QQ)_{+}\backslash \G(\AAf)/K$, and $\Gamma_g$
denotes the arithmetic subgroup $gKg^{-1} \cap \G(\QQ)_{+}$ of
$\G(\QQ)_{+}$. We denote by $\Gamma_K$ the group $\Gamma_{e}$
corresponding to the identity element $e \in \mathcal{C}$
and by $S_K (\G, X)_{\CC}= \Gamma_K \backslash X^{+}$ the  
corresponding connected component
of $\Sh_{K}(\G, X)_{\CC}$.

The Shimura variety $\Sh(\G, X)_{\CC}$ is the $\CC$-scheme projective
limit of the $\Sh_K(\G, X)_{\CC}$ for $K$ ranging through compact open
subgroups of $\G(\AAf)$. The group $\G(\AAf)$ acts continuously on the
right on $\Sh(\G, X)_{\CC}$. The set of $\CC$-points of $\Sh(\G,X)_{\CC}$ is 
$$
\Sh(\G, X)_{\CC}(\CC)=
\frac{\G(\QQ)}{\Z(\QQ)} \backslash (X \times
\G(\AAf)/\overline{\Z(\QQ)})\;\;,
$$ 
 where $\Z$ denotes the centre of
$\G$ and $\overline{\Z(\QQ)}$ denotes the closure of $\Z(\QQ)$ in
$\G(\AAf)$ \cite[prop.2.1.10]{De2}. The action of $\G(\AAf)$ on the right
is given by: $\ol{(x, h)} \stackrel{.g}{\lto} \ol{(x, h\cdot g)}$. For $m
\in \G(\AAf)$, we denote by $T_m$ the Hecke
correspondence
$$ \Sh_{K}(\G, X)_{\CC} \longleftarrow \Sh(\G, X)_{\CC}
\stackrel{.m}{\lto} \Sh(\G, X)_{\CC} \lto \Sh_{K}(\G, X)_{\CC}\;\;. $$

\subsubsection{Reciprocity morphisms and canonical models.} \label{canonic}

Given $(\G,X)$ a Shimura datum, where $X$ is the $\G(\RR)$-conjugacy
class of some $h :\SS \lto \G_{\RR}$, we denote by $\mu_h : \G_{m,\CC} \lto
\G_{\CC}$ the $\CC$-morphism of $\QQ$-groups obtained by composing the
embedding of tori
$$
\begin{array}{ccc}
\G_{m,\CC} &\lto &\SS_{\CC} \\
z &\lto &(z,1)
\end{array}$$ 
with $h_\CC$.
Let $E(\G,X)$ be the field
of definition of the $\G(\CC)$-conjugacy class of $\mu_h$, it is called the reflex field of
$(\G,X)$. 
In the case where $\G$ is a torus $\TT$
and $X= \{ h\}$ we denote by
$$
r_{(\TT, \{h\})}: \Gal(\ol{\QQ}/E)^{{\rm ab}} \lto \TT(\AAf)/  
\overline{\TT(\QQ)}
$$
(where $\ol{\TT(\QQ)}$ is the closure of $\TT(\QQ)$ in $\TT(\AAf)$)
the reciprocity morphism defined
in \cite[2.2.3]{De2} for any field $E \subset \CC$ containing 
$E(\TT,\{ h\})$. Let $x=\ol{(h,g)}$ be a special point in $\Sh(\G, X)_{\CC}$ image
of the pair $(h : \SS \lto \TT \subset \G, g) \in X \times \G(\AAf)$. The
field $E(h)=E(\TT,\{h\} )$ depends only on $h$ and is an extension of  
$E(\G, X)$
\cite[2.2.1]{De2}. The Shimura variety $\Sh(\G, X)_{\CC}$ admits a unique
model $\Sh(\G, X)$ over $E(\G, X)$ such that the $\G(\AAf)$-action on  
the right is
defined over $E(\G, X)$, the special points are algebraic and if  
$x=\ol{(h,g)}$ is a
special point of $\Sh(\G, X)(\CC)$ then an element $\sigma \in
\Gal(\ol{\QQ}/E(h)) \subset \Gal(\ol{\QQ}/E(\G,X))$ acts on $x$ by
$\sigma(x)= \ol{(h, \tilde{r}(\sigma)g)}$, where $\tilde{r}(\sigma)  
\in \TT(\AAf)$
is any lift of $r_{(\TT, \{h\})}(x) \in \TT(\AAf)/
\overline{\TT(\QQ)}$, cf. \cite[2.2.5]{De2}. 
This is called the canonical model of $\Sh(\G, X)$.
For any compact open subgroup $K$ of $\G(\AAf)$, one obtains the 
canonical model for $\Sh_K(\G, X)$ over $E(\G,X)$.
For details on this definition, sketches of proofs of the existence and uniqueness and
all the relevant references we refer the reader to Chapters 12-14 of \cite{Milne}
as well as \cite{De2}.

For $m \in \G(\AAf)$ the Hecke
correspondence $T_m$ is defined over $E(\G,X)$.
We will denote by $\pi_{K}: \Sh(\G, X) \lto \Sh_{K}(\G, X)$ the natural
projection.

\subsubsection{The tower of Shimura varieties at a prime $l$.} \label{shiml}
Let $l$ be a prime. Suppose $K^l \subset \G(\AAf^{l})$ is a
compact open subgroup, where $\AAf^l$ denotes the ring of finite
ad{\`e}les outside $l$.

\begin{defi} \label{shim_l}
We denote by $\Sh_{K^{l}}(\G, X)$ the $E(\G,X)$-scheme $\varprojlim
\Sh_{K^l\cdot U_l}(\G, X)$ where $U_l$ runs over all compact open subgroups
of $\G(\QQ_l)$.
\end{defi}
The scheme $\Sh_{K^{l}}(\G, X)$ is the quotient $\Sh(\G,
X)/K^{l}$. It admits a continuous $\G(\QQ_l)$-action on the right.
Given a compact open subgroup $U_l \subset \G(\QQ_l)$ we denote by
$\pi_{U_{l}} : \Sh_{K^{l}}(\G, X) \lto \Sh_{K^{l} U_l}(\G, X)$ the
canonical projection.

\subsubsection{Neatness.} \label{neatness}
Let $\G\subset \GL_n$ be a linear algebraic group over $\QQ$. 
We recall the
definition of {\em neatness} for subgroups of $\G(\QQ)$ and its
generalization to subgroups of $\G(\AAf)$. We refer to \cite{bor} and  
\cite[0.6]{P}
for more details.

Given an element $g \in \G(\QQ)$  let ${\rm Eig}(g)$  be the subgroup  
of $\overline{\QQ}^{*}$
generated by the eigenvalues of $g$. We say that $g \in \G(\QQ)$ is  
{\em neat} if the
subgroup ${\rm Eig}(g)$ is torsion-free.  A subgroup $\Gamma \subset  
\G(\QQ)$  is neat if
any element of $\Gamma$ is neat.  In particular such a group is
torsion-free.

\begin{rem}
The notion of neatness is independent of the embedding $\G\subset \GL_n$.
\end{rem}

Given an element $g_p \in \G(\QQ_p)$ let ${\rm Eig}_p(g_p)$ be the
subgroup of $\overline{\QQ_p}^{*}$ generated by all eigenvalues of
$g_p$. Let $\overline{\QQ} \lto \overline{\QQ_p}$ be some embedding  
and consider
the torsion part $(\overline{\QQ}^{*} \cap {\rm Eig}_{p}(g_p))_{{\rm  
tors}}$. Since every subgroup of $\overline{\QQ}^{*}$
consisting of roots of unity is normalized by  
$\Gal(\overline{\QQ}/\QQ)$, this group does not depend on the choice of
the embedding $\overline{\QQ} \lto \overline{\QQ_p}^{*}$. We
say that $g_p$ is {\em neat} if $$(\overline{\QQ}^{*} \cap {\rm
Eig}_{p}(g_p))_{{\rm tors}} =\{ 1\}\;\;.$$
We say that $g=(g_{p})_{p} \in \G(\AAf)$ is neat if
$$\bigcap_p (\overline{\QQ}^{*} \cap {\rm Eig}_{p}(g_{p}))_{{\rm
tors}} =\{ 1\}\;\;.$$ A subgroup $K \subset \G(\AAf)$ is neat
if any element of $K$ is neat. Of course if the
projection $K_p$ of $K$ in $\G(\QQ_p)$ is neat then $K$ is neat.
Notice that if $K$ is a neat compact open subgroup of $\G(\AAf)$ then  
all of the $\Gamma_g$ in the decomposition~(\ref{e1}) are.

Neatness is preserved by conjugacy and intersection
with an arbitrary subgroup. Moreover if $\rho : \G \lto \HH$ is a  
$\QQ$-morphism of linear algebraic $\QQ$-groups
and $g \in \G(\QQ)$ (resp. $\G(\AAf)$) is neat then its image  
$\rho(g)$ is also neat.

We recall the following well-known lemma:
\begin{lem} \label{finiteneat}
Let $K = \prod_p K_p$ be a compact open subgroup of $\G(\AAf)$ and let
$l$ be a prime number. There exists an open subgroup $K'_l$ of $K_l$
such that the subgroup $K':= K'_l \times \prod_{p \not = l}K_p$ of $K$
is neat.
\end{lem}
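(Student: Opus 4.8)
The statement to prove is Lemma~\ref{finiteneat}: given a compact open subgroup $K = \prod_p K_p$ of $\G(\AAf)$ and a prime $l$, there is an open subgroup $K'_l \subset K_l$ such that $K' := K'_l \times \prod_{p \neq l} K_p$ is neat. The key observation is that neatness of $K'$ is a condition that only constrains the $l$-component once the other components are fixed, but in fact it is easier: since $\bigcap_p (\overline{\QQ}^* \cap \mathrm{Eig}_p(g_p))_{\mathrm{tors}} = \{1\}$ is implied as soon as a \emph{single} factor vanishes, it suffices to find $K'_l$ such that every $g_l \in K'_l$ is neat in $\G(\QQ_l)$, i.e. $(\overline{\QQ}^* \cap \mathrm{Eig}_l(g_l))_{\mathrm{tors}} = \{1\}$; then $K'_l \times \prod_{p\neq l} K_p$ will automatically be neat. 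So the whole problem reduces to the $l$-adic local statement: any compact open subgroup of $\G(\QQ_l)$ contains an open subgroup all of whose elements are neat.

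First I would fix a faithful representation $\G \hookrightarrow \GL_n$ over $\QQ$; by the remark following the definition, neatness does not depend on this choice, so we may work inside $\GL_n(\QQ_l)$. An element $g_l \in \GL_n(\QQ_l)$ fails to be neat precisely when some eigenvalue $\zeta$ of $g_l$, lying in $\overline{\QQ_l}^*$, is a root of unity that also lies in $\overline{\QQ}^*$ — equivalently, $\zeta$ is a root of unity of some order $d$ with $\zeta \neq 1$. The eigenvalues of $g_l$ generate a finite extension of $\QQ_l$ of degree at most $n$, so any root of unity among them lies in an extension of $\QQ_l$ of degree $\leq n$; there are only finitely many such extensions, hence only finitely many possible orders $d$ of roots of unity occurring as eigenvalues of elements of $\GL_n(\QQ_l)$. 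Let $M$ be the least common multiple of all these orders. Then $g_l \in \GL_n(\QQ_l)$ is neat if and only if $g_l$ has no eigenvalue which is a nontrivial $M$-th root of unity, which is implied by: $g_l^M$ is unipotent (its only eigenvalue is $1$).

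Next I would produce the open subgroup. Choose a $\ZZ_l$-lattice stabilised appropriately so that $K_l \subset \GL_n(\ZZ_l)$ after conjugation (a compact subgroup of $\GL_n(\QQ_l)$ is conjugate into $\GL_n(\ZZ_l)$; neatness is conjugacy-invariant, so this is harmless), and set $K'_l := K_l \cap \Gamma$ where $\Gamma$ is the principal congruence subgroup $\{g \in \GL_n(\ZZ_l) : g \equiv I \bmod l^k\}$ for a suitable $k$ (take $k \geq 1$, and $k \geq 2$ if $l = 2$, so that $\Gamma$ is torsion-free; more precisely, $\Gamma$ is a uniformly powerful pro-$l$ group for such $k$). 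Then for $g_l \in K'_l$, the eigenvalues of $g_l$ are $\equiv 1 \bmod \mathfrak{m}$ in the relevant local field, so any eigenvalue that is a root of unity must be a root of unity congruent to $1$ modulo a uniformiser, hence a $p$-power root of unity for the residue characteristic $l$ — but a nontrivial $l$-power root of unity is not congruent to $1$ modulo $l^k$ for $k \geq 1$ (its minimal polynomial over $\QQ_l$ is Eisenstein), contradiction. Hence every element of $K'_l$ is neat, and therefore $K' = K'_l \times \prod_{p \neq l} K_p$ is neat.

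\textbf{Main obstacle.} The genuinely delicate point is the uniform bound: one must argue that only finitely many orders of roots of unity can appear as eigenvalues of elements of $\GL_n(\QQ_l)$ (bounded degree of residue and ramification forces $\varphi(d)$ bounded, hence $d$ bounded), and then that a congruence condition modulo a high enough power of $l$ kills all of them. Everything else — conjugating $K_l$ into $\GL_n(\ZZ_l)$, independence of neatness from the embedding, and the reduction showing one good local factor suffices — is routine and already flagged in the surrounding text. I would expect the write-up to be short, citing \cite{bor} or \cite[0.6]{P} for the standard facts about neatness and congruence subgroups.
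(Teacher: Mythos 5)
Your proof is correct and gives a complete, self-contained argument, whereas the paper simply cites Pink's thesis \cite[p.5]{P} for the fact that the first congruence kernel of a special maximal compact open subgroup of $\G(\QQ_l)$ is neat. Both routes reduce in the same way to the purely local statement (a single neat $p$-adic factor forces neatness of $g=(g_p)_p$, as already flagged in the text), so the only substantive difference is that you verify Pink's local fact directly via a congruence-subgroup computation inside $\GL_n(\ZZ_l)$. That verification is right: if $g=I+l^kA$ with $A\in M_n(\ZZ_l)$ then every eigenvalue $\lambda$ of $g$ has the form $1+l^k\mu$ with $\mu$ an eigenvalic integer, so $v(\lambda-1)\ge k$; any root-of-unity eigenvalue is then forced to be an $l$-power root (the prime-to-$l$ part injects into the residue field), and the Eisenstein bound $v(\zeta-1)=1/(l^{a-1}(l-1))\le 1$ rules these out.

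Two minor slips in the write-up, neither a real gap. First, the final parenthetical claim that a nontrivial $l$-power root of unity is never $\equiv 1\bmod l^k$ ``for $k\ge 1$'' is false for $l=2$: $-1\equiv 1\bmod 2$. This is harmless because you already chose $k\ge 2$ when $l=2$, but the parenthetical should be phrased to match that choice. Second, the middle paragraph bounding the orders of roots of unity by a single $M$ and invoking ``$g_l^M$ unipotent'' is a detour that the actual argument never uses; the congruence computation bypasses it entirely, and the sentence identifying ``the genuinely delicate point'' as this finiteness is therefore slightly off-target. Cutting that paragraph would make the proof tighter without losing anything.
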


\begin{proof}
As noticed above if $K'_l$ is neat then $K':= K'_l \times \prod_{p
   \not = l}K_p$ is neat. As a subgroup of a neat group is neat, it is
   enough to show that a special maximal compact open subgroup $K_l
   \subset \G(\QQ_{l})$
   contains a neat subgroup $K'_l$ with finite index. By \cite[p.5]{P} one
   can take, $K'_l = K_{l}^{(1)}$ the first congruence kernel.
\end{proof}

\subsubsection{Integral structures} \label{integral}
Let $(\G, X)$ be a Shimura datum and $K \subset \G(\AAf)$ a neat
compact open subgroup. 
We can fix a $\ZZ$-structure on $\G$ and its subgroups by choosing a
finitely generated free $\ZZ$-module $W$, a
faithful representation $\xi \colon \G \into \GL(W_{\QQ})$ and taking the
Zariski closures in the $\ZZ$-group-scheme $\GL(W)$. If we choose the  
representation $\xi$ in such a way that $K$ is contained in $\GL(\Zhat \otimes_{\ZZ} W)$  
(i.e. $K$ stabilizes $\Zhat \otimes_{\ZZ} W$) and $\xi$ factors through $\G^{\ad}$,
this induces canonically a $\ZZ$-variation of Hodge structure on $\Sh_K(\G,X)_{\CC}$:
cf. \cite[section 3.2]{EdYa}. If $K = \prod_{p \; \textnormal{prime}}
K_p$ then for almost all primes $l$ the group $K_l$ is a hyperspecial
maximal compact open subgroup of $\G(\QQ_l)$ which coincides with
$\G(\ZZ_l)$.

\subsubsection{Good position with respect to a torus} \label{good_position}

\begin{defi}
Let $l$ be a prime number, $\G$ a reductive $\QQ_l$-group and $\TT \subset
\G$  a split torus. A compact open subgroup $U_l$ of $\G(\QQ_l)$ is
said to be in good position with respect to $\TT$ if $U_l \cap
\TT(\QQ_l)$ is the maximal compact open subgroup of $\TT(\QQ_l)$.

If $\G$ is a reductive $\QQ$-group, $\TT \subset
\G$ a torus and $l$ a prime number splitting $\TT$, we say that a compact open subgroup $U_l$ of $\G(\QQ_l)$ is
in good position with respect to $\TT$ if it is in
good position with respect to $\TT_{\QQ_{l}}$.
\end{defi}

\begin{lem} \label{splitgood}
Suppose that $(\G, X)$ is a Shimura datum, 
$K = \prod_{p \; \textnormal{prime}} K_p$ is a neat open compact
subgroup of $\G(\AAf)$ and  $\rho: \G \hookrightarrow 
\GL_n$ is a faithful rational representation such that $K$ is
contained in $\GL_n(\Zhat)$. Let $\TT \subset \G$ be a torus and $l$ be a prime number
splitting $\TT$ such that $\TT_{\FF_l}$ is an $\FF_l$-torus. Then the group
$\G(\ZZ_l)$ is in good position with respect to $\TT$. 
\end{lem}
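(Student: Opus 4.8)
The plan is to unwind the definition of "good position" and reduce everything to a statement about the $\ZZ_l$-points of a torus inside $\GL_n$. By hypothesis $K \subset \GL_n(\Zhat)$, so in particular $K_l \subset \GL_n(\ZZ_l)$, and the chosen $\ZZ$-structure on $\G$ is the Zariski closure of $\rho(\G)$ inside the $\ZZ$-group scheme $\GL(W) = \GL_n$; thus $\G(\ZZ_l) = \rho(\G)(\QQ_l) \cap \GL_n(\ZZ_l)$ as subgroups of $\GL_n(\QQ_l)$. I need to show $\G(\ZZ_l) \cap \TT(\QQ_l)$ is the (unique) maximal compact open subgroup of $\TT(\QQ_l)$. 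Since $\TT$ splits over $\QQ_l$, the torus $\TT_{\QQ_l}$ is isomorphic to $\Gm^d$ over $\QQ_l$, and $\TT(\QQ_l) \cong (\QQ_l^\times)^d$ has a unique maximal compact open subgroup, namely $(\ZZ_l^\times)^d$, which is exactly the group of $\ZZ_l$-points of the \emph{Néron model} (equivalently, the connected component) of $\TT$ over $\ZZ_l$.

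The key point is that the hypothesis "$\TT_{\FF_l}$ is an $\FF_l$-torus" guarantees that the Zariski closure $\mathcal{T}$ of $\TT$ in the $\ZZ_l$-group scheme $\GL_{n,\ZZ_l}$ is already a \emph{torus} over $\ZZ_l$ (a smooth group scheme with torus fibres), so that $\mathcal{T}(\ZZ_l)$ is the maximal compact subgroup of $\TT(\QQ_l)$. Indeed, $\mathcal{T}$ is a flat affine $\ZZ_l$-group scheme of finite type whose generic fibre is a torus; by the hypothesis its special fibre $\mathcal{T}_{\FF_l}$ contains $\TT_{\FF_l}$ which is already an $\FF_l$-torus of the same dimension, hence $\mathcal{T}_{\FF_l}$ is an $\FF_l$-torus and $\mathcal{T}$ is fibrewise of the same dimension; one then invokes the standard fact (e.g.\ SGA 3, Exp.\ X) that a flat affine group scheme of finite type over a henselian DVR with torus generic fibre and torus special fibre is a torus over the base — in particular $\mathcal{T}(\ZZ_l) \cong (\ZZ_l^\times)^d$ is the maximal compact open subgroup of $\TT(\QQ_l)$. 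Finally, $\G(\ZZ_l) \cap \TT(\QQ_l) = (\rho(\G)(\QQ_l) \cap \GL_n(\ZZ_l)) \cap \TT(\QQ_l) = \TT(\QQ_l) \cap \GL_n(\ZZ_l) = \mathcal{T}(\ZZ_l)$, which is exactly the maximal compact open subgroup; this is the required statement.

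The step I expect to be the main obstacle is the scheme-theoretic claim that the Zariski closure $\mathcal{T}$ of $\TT$ in $\GL_{n,\ZZ_l}$ is a torus over $\ZZ_l$ under the hypothesis that $\TT_{\FF_l}$ is an $\FF_l$-torus — i.e.\ checking that the special fibre of the closure does not acquire unipotent or non-reduced behaviour. The cleanest way around this is to note that "$\TT_{\FF_l}$ is an $\FF_l$-torus" is precisely the condition ensuring good reduction of $\TT$ at $l$ (the closure is the connected Néron model), and to cite the relevant statement from SGA~3 or from Néron model theory rather than reprove it. Everything else (existence and uniqueness of the maximal compact open subgroup of a split torus over $\QQ_l$, and the identification of intersections above) is routine and can be dispatched in a line or two.
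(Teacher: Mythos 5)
Your proof is correct and follows essentially the same route as the paper: form the scheme-theoretic closure of $\TT$ in $\GL_{n,\ZZ_l}$, observe that both fibres are tori and invoke SGA~3, Exp.~X (Cor.~4.9, and Cor.~1.2 for splitness) to conclude that the closure is a split $\ZZ_l$-torus, whose $\ZZ_l$-points are then the maximal compact of $\TT(\QQ_l)$. The only cosmetic difference is that the paper directly takes the fibres of the closure to be the given $\TT_{\QQ_l}$ and $\TT_{\FF_l}$, whereas you briefly argue that the special fibre of the closure agrees with the given $\TT_{\FF_l}$; either way the substance is identical.
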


\begin{proof}
Let $\TT'$ be the scheme-theoretic closure of $\TT$ in ${(\GL_{n})}_{\ZZ_l}$. 
The scheme $\TT'$ is a a flat group scheme affine and of finite type
over $\ZZ_l$ whose fibers $\TT_{\FF_l}$ over $\FF_l$ and $\TT_{\QQ_l}$ over
$\QQ_l$ are tori. Hence by \cite[Exp.X, cor.4.9]{SGA3} the group
scheme $\TT'$ is a torus over $\ZZ_l$. As its generic fiber
$\TT_{\QQ_l}$ is split, $\TT'$ is split by \cite[Exp.X,
cor.1.2]{SGA3}. Hence $\G(\ZZ_l) \cap \TT(\QQ_l) = \TT'(\ZZ_l)$ is a
maximal compact subgroup of $\TT(\QQ_l) = \TT'(\QQ_l)$ and the result follows.
\end{proof}

\subsection{$p$-adic closure of Zariski-dense groups.} 

We will use the following well-known result (we provide a proof for completeness):
\begin{prop} \label{approx}
Let $H$ be a  subgroup of $\GL_n(\ZZ)$ and let $\HH$ be the
Zariski closure of $H$ in $\GL_{n, \ZZ}$. Suppose that $\HH_{\QQ}^0$ is
semisimple. Then for any prime number $p$ the closure of $H$ in 
$\HH(\ZZ_p)$ is open. 
\end{prop}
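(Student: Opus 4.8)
The plan is to reduce the statement to a purely $p$-adic group-theoretic fact about Zariski-dense subgroups of semisimple groups over $\QQ_p$, and then invoke the known strong approximation / closure results. First I would pass from $H$ to its closure: let $\ol{H}$ denote the closure of $H$ in $\HH(\ZZ_p)$. This is a compact $p$-adic subgroup, hence (by a theorem going back to Cartan, in the form used for $p$-adic analytic groups) a $p$-adic Lie subgroup of $\HH(\QQ_p)$; call $\mathfrak{h} \subset \mathfrak{h}_{\QQ_p} := \mathrm{Lie}(\HH_{\QQ_p})$ its Lie algebra. The goal is to show $\mathfrak{h} = \mathfrak{h}_{\QQ_p}$, which forces $\ol{H}$ to be open in $\HH(\ZZ_p)$.

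The key step is that $\mathfrak{h}$ is an ideal in $\mathfrak{h}_{\QQ_p}$. Here is how I would argue it: the subgroup $\ol{H}$ is normalized by $H$ (since $H \subset \ol{H}$ and conjugation is continuous), so $\mathfrak{h}$ is stable under $\mathrm{Ad}(h)$ for every $h \in H$. The set of $g \in \HH$ with $\mathrm{Ad}(g)\mathfrak{h} = \mathfrak{h}$ is a Zariski-closed subgroup of $\HH_{\QQ_p}$ (the stabilizer of a point in a Grassmannian under the adjoint action), and it contains the Zariski-dense subgroup $H$; since $\HH^0_{\QQ}$, hence $\HH^0_{\QQ_p}$, is connected, this stabilizer is all of $\HH^0_{\QQ_p}$. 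Therefore $\mathfrak{h}$ is $\mathrm{Ad}(\HH^0_{\QQ_p})$-stable, i.e.\ an ideal of the semisimple Lie algebra $\mathfrak{h}_{\QQ_p}$. One must also check $\mathfrak{h} \neq 0$: otherwise $\ol{H}$ would be finite, but then $H$ would be finite and its Zariski closure $\HH$ would be a finite group scheme, contradicting that $\HH^0_{\QQ}$ is semisimple of positive dimension (the rank-zero semisimple group is trivial, but then $\HH$ is finite and the conclusion ``closure is open in a finite group'' is vacuously true — so I would simply dispose of this degenerate case first, assuming henceforth $\dim \HH > 0$).

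Once $\mathfrak{h}$ is a nonzero ideal of the semisimple Lie algebra $\mathfrak{h}_{\QQ_p}$, it is a product of some of the simple factors; to conclude $\mathfrak{h} = \mathfrak{h}_{\QQ_p}$ I would use that $H$ is Zariski-dense and cannot be contained in the proper normal subgroup whose Lie algebra is the complementary ideal. Concretely: if $\mathfrak{h} \subsetneq \mathfrak{h}_{\QQ_p}$, write $\mathfrak{h}_{\QQ_p} = \mathfrak{h} \oplus \mathfrak{h}'$ with $\mathfrak{h}'$ a nonzero ideal, let $\HH'$ be the corresponding connected normal $\QQ_p$-subgroup; then $\ol{H}$, and hence $H$, projects trivially to $(\HH/\HH')(\QQ_p)$ up to finite index, which is incompatible with Zariski-density of $H$ in $\HH$ unless $\HH' $ is trivial. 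The cleanest formulation uses the fact that the image of $H$ in the simple quotient corresponding to $\mathfrak{h}'$ is both Zariski-dense and relatively compact with trivial Lie algebra, hence finite, hence trivial by Zariski-density in a positive-dimensional group — contradiction. So $\mathfrak{h} = \mathfrak{h}_{\QQ_p}$.

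Finally, $\mathfrak{h} = \mathfrak{h}_{\QQ_p}$ means $\ol{H}$ is a $p$-adic Lie subgroup of $\HH(\QQ_p)$ of full dimension, hence open in $\HH(\QQ_p)$ (a closed subgroup of a $p$-adic Lie group with full-dimensional Lie algebra is open); intersecting with the open compact subgroup $\HH(\ZZ_p)$, the subgroup $\ol{H} \cap \HH(\ZZ_p) = \ol{H}$ is open in $\HH(\ZZ_p)$, which is the claim. I expect the main obstacle to be the careful handling of the reductions — disentangling $\HH$ from $\HH^0$, checking that ``Zariski-dense in $\HH$'' transfers to the relevant quotients, and citing the correct form of the $p$-adic Lie theory (that a compact $p$-adic group is a Lie group and that a closed subgroup with equal Lie algebra is open); the core argument that $\mathfrak{h}$ is an ideal is short and robust.
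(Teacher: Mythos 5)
Your proposal is correct and follows essentially the same route as the paper: pass to the $p$-adic closure $\ol{H}$, observe it is a compact $p$-adic analytic group with Lie algebra $\mathfrak{h}\subset\mathrm{Lie}(\HH_{\QQ_p})$, use Zariski-density of $H$ plus the fact that the $\mathrm{Ad}$-stabilizer of $\mathfrak{h}$ is Zariski-closed to conclude $\mathfrak{h}$ is an $\mathrm{Ad}(\HH^0_{\QQ_p})$-stable ideal, and then exploit semisimplicity to force $\mathfrak{h}=\mathrm{Lie}(\HH_{\QQ_p})$, whence openness. Two small remarks. First, a presentational slip: in the sentence where you introduce the complementary ideal $\mathfrak{h}'$, the subgroup you call $\HH'$ should be the one with Lie algebra $\mathfrak{h}$ (so that the quotient has Lie algebra $\mathfrak{h}'$ and the image of $\ol{H}$ has trivial Lie algebra); and ``hence trivial by Zariski-density'' should read ``finite yet Zariski-dense in a positive-dimensional group, contradiction'' — finiteness, not triviality, is what clashes with density. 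Second, on comparison: the paper's proof asserts without elaboration that $L$ ``projects non-trivially on any factor of $\mathrm{Lie}\,\HH$''; your complementary-ideal/quotient argument is precisely the justification of that assertion, so your write-up is if anything a bit more complete than the paper's on this point. The reductions you flag (disentangling $\HH$ from $\HH^0$, transferring Zariski-density to quotients over $\QQ_p$) are indeed the places to be careful, but they all go through as you expect.
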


\begin{proof}
The case when $H$ is finite is obvious. Suppose that $H$ is infinite. Since
$\HH(\ZZ_p)$ is compact and $H$ is infinite, the closure $H_p$ of $H$
in $\HH(\ZZ_p)$ is not discrete. Then it is a $p$-adic analytic group
and it has a Lie algebra $L$ which is a Lie subalgebra of
the Lie algebra $\textnormal{Lie} \; \HH$ of $\HH$ and projects
non-trivially on any factor of $\textnormal{Lie} \; \HH$. By construction
$L$ is invariant under the adjoint action of $H$,
thus also under the adjoint action of the Zariski closure $\HH$ of
$H$. As $\HH_{\QQ}^0$ is semisimple one deduces $L_\QQ =
\textnormal{Lie} \; \HH_\QQ$, which implies that $H_p$ 
is open in $\HH(\ZZ_p)$.
\end{proof}

\rem The easy proposition~\ref{approx} can be strengthened to the following
remarkable theorem, due independently to Weisfeiler and Nori, which
was used in \cite{EdYa} but which we will not need:

\begin{theor}[\cite{weis}, \cite{Nori}] \label{Wei-Nori}
Let $H$ be a finitely generated subgroup of $\GL_n(\ZZ)$ and let $\HH$ be the
Zariski closure of $H$ in $\GL_{n, \ZZ}$. Suppose that $\HH(\CC)$ has  finite
fundamental group. Then the closure of $H$ in $\GL_n(\AAf)$ is open in
the closure of $\HH(\ZZ)$ in $\GL_n(\AAf)$.
\end{theor}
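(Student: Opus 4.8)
This is the strong approximation theorem for a Zariski-dense (possibly thin) subgroup, and the plan is to follow Nori's approach, which is self-contained in positive characteristic (Weisfeiler's alternative argument instead uses the classification of maximal subgroups of finite groups of Lie type). First I would make two harmless reductions. Replacing $H$ by $H_0:=H\cap\HH^0(\QQ)$ — of finite index $|\pi_0(\HH)|$ in $H$, still finitely generated, and Zariski-dense in $\HH^0$ since $\HH=\overline H=\bigcup_i h_i\overline{H_0}$ forces the subgroup $\overline{H_0}$ to contain $\HH^0$ — one reduces to $\HH$ connected, because (everything in sight being compact) the closure of a finite-index subgroup is open in the closure of the ambient group. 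A connected reductive group with $\pi_1(\HH(\CC))$ finite has no central torus, hence is semisimple. Then, spreading out, I would fix an integer $N$ and a smooth affine $\ZZ[1/N]$-group scheme $\mathcal{H}$ with connected semisimple fibres together with a closed immersion $\mathcal{H}\hookrightarrow\GL_{n,\ZZ[1/N]}$ inducing $\HH\hookrightarrow\GL_{n,\QQ}$; after enlarging $N$, the Zariski closure of $H$ in $\GL_{n,\ZZ[1/N]}$ is $\mathcal{H}$, so for every $p\nmid N$ the reduction $H\bmod p$ is Zariski-dense in the semisimple group $\mathcal{H}_{\FF_p}$ and $K_p:=\GL_n(\ZZ_p)\cap\HH(\QQ_p)=\mathcal{H}(\ZZ_p)$ is hyperspecial. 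Thus $\overline H$ and $\overline{\HH(\ZZ)}$ both sit inside $\prod_p K_p$.

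The information at each single prime is already available: Proposition~\ref{approx} says the $p$-adic closure $\overline H^{(p)}$ is open in $K_p$ for every $p$. So the real content is the \emph{joint} statement — a strong approximation property — whose core is the claim that $H\bmod p\supseteq\mathcal{H}(\FF_p)^+$ (the normal subgroup generated by the unipotent elements) for all but finitely many $p$. For this I would invoke Nori's structure theorem \cite{Nori}: there is a constant $c(n)$ so that for $p>c(n)$ and $\Gamma:=H\bmod p$, the subgroup $\Gamma^+$ generated by the elements of order $p$ equals $\mathbf{S}_p(\FF_p)^+$ for a connected subgroup $\mathbf{S}_p\subseteq\GL_{n,\FF_p}$ containing the one-parameter unipotent subgroup through each order-$p$ element of $\Gamma$, with $[\Gamma:\Gamma^+]$ prime to $p$. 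Since $\Gamma$ is Zariski-dense in $\mathcal{H}_{\FF_p}$ it cannot have order prime to $p$ (such a group lifts to characteristic zero, where Jordan's theorem would make it virtually abelian, hence not Zariski-dense in a semisimple group), so $\Gamma$ genuinely has order-$p$ elements; $\mathbf{S}_p$, being normalized by the Zariski-dense $\Gamma$, is normal in $\mathcal{H}_{\FF_p}$, hence a product of simple factors; and were it proper, the projection of $\Gamma$ to a missing simple factor would factor through the prime-to-$p$ group $\Gamma/\Gamma^+$ yet still be Zariski-dense — the same contradiction. Hence $\mathbf{S}_p=\mathcal{H}_{\FF_p}$ and $\Gamma^+=\mathcal{H}(\FF_p)^+$.

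It then remains to assemble. A standard induction along the congruence filtration of $K_p$ — using that the graded pieces $\cong\mathfrak{h}(\FF_p)$ are semisimple $\mathcal{H}(\FF_p)^+$-modules with vanishing first cohomology once $p$ is large — promotes $\overline H^{(p)}\supseteq\mathcal{H}(\FF_p)^+$ to $\overline H^{(p)}\supseteq\mathcal{H}(\ZZ_p)^+$ (the preimage of $\mathcal{H}(\FF_p)^+$ in $K_p$), for all $p$ outside a finite set $S$. Goursat's lemma applied over the product, together with the fact that $\mathcal{H}(\FF_p)^+$ and $\mathcal{H}(\FF_q)^+$ have no isomorphic simple composition factors for distinct large $p,q$ (their orders differ), then yields $\overline H\supseteq\prod_{p\notin S}\mathcal{H}(\ZZ_p)^+$. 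Finally, Zariski-density of $H$ forces every $\QQ$-simple factor of $\HH$ to be $\RR$-isotropic — otherwise its group of $\ZZ$-points, hence its Zariski-dense subgroup coming from $H$, would be finite — so strong approximation applies to the simply connected cover $\widetilde{\HH}$ and shows $\overline{\HH(\ZZ)}$ likewise contains $\prod_{p\notin S}\mathcal{H}(\ZZ_p)^+$; combined with Proposition~\ref{approx} at the finitely many primes in $S$, this gives $[\overline{\HH(\ZZ)}:\overline H]<\infty$, as required.

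The hard part is Nori's structure theorem together with the uniformity in $p$ of the identification $\mathbf{S}_p=\mathcal{H}_{\FF_p}$: this is the one genuinely non-formal ingredient, and the point where Nori's characteristic-$p$ argument and Weisfeiler's classification-based argument part ways. Everything else — the two reductions, the per-prime openness borrowed from Proposition~\ref{approx}, the cohomological lifting up the congruence filtration, and the Goursat assembly — I would expect to be routine.
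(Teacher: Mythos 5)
The paper does not prove this statement. It appears as a remark, attributed to Weisfeiler and Nori, immediately after Proposition~\ref{approx}, and the authors explicitly say they use only that easier proposition and ``will not need'' the Weisfeiler--Nori theorem. So there is no proof in the paper for you to match; you are reconstructing a result the paper only cites.

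As a sketch of the Nori/Weisfeiler argument, your plan has the right shape for the semisimple case: spreading out, Nori's structure theorem over $\FF_p$, the congruence-filtration lift, and the Goursat/composition-factor assembly are indeed the main moves. But the step ``a connected reductive group with $\pi_1(\HH(\CC))$ finite has no central torus, hence is semisimple'' is not a valid reduction as written, because you are not given that $\HH^0$ is reductive. It can have a nontrivial unipotent radical (for instance when $H$ is generated by unipotent matrices, or when $\HH^0$ has the form $U\rtimes S$), and finiteness of $\pi_1(\HH(\CC))$ only forces the Levi quotient $\HH^0/R_u(\HH^0)$ to be semisimple. The theorem as stated covers that case, and the actual proof requires an extra step --- propagating the unipotent radical into the adelic closure --- which your sketch omits entirely. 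Separately, your closing observation that every $\QQ$-simple factor must be $\RR$-isotropic is reasoned too loosely: the projection of $H$ to such a factor need not land in that factor's $\ZZ$-points, so ``its Zariski-dense subgroup coming from $H$ would be finite'' does not follow for the reason you give. What is true is that this projection is bounded at every place of $\QQ$ (archimedean because the real points are compact, non-archimedean because $H\subset\GL_n(\ZZ)$), hence finite, hence cannot be Zariski-dense in a positive-dimensional group. Neither point wrecks the overall strategy, but as written the proposal does not establish the theorem in the generality in which the paper states it.
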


\section{Degrees on Shimura varieties.} \label{degree}
In this section we recall the results we will need on projective
geometry of Shimura varieties and prove the crucial corollary~\ref{compare-degrees}  
which compares the degrees of
a subvariety of $\Sh_K(\G,X)$ with respect to two different line bundles.

\subsection{Degrees.} \label{degrees1}
We will need only basics on numerical intersection theory as recalled in
\cite[chap.1, p.15-17]{laz}. Let $X$ be a complete irreducible complex variety and $L$ a
line bundle on $X$ with topological first Chern class $c_1(L) \in
H^2(X, \ZZ)$. Given $V \subset X$ an irreducible
subvariety we define the degree of $V$ with respect to $L$ by 
$$\deg_L V= c_1(L)^{\dim V} \cap [V] \in H_0(X,\ZZ)=\ZZ \;\;,$$ where $[V] \in H_{2\dim
  V}(X, \ZZ)$ denotes the fundamental class of $V$ and $\cap$ denotes
the cap product between $H^{2\dim V}(X, \ZZ)$ and $H_{2 \dim V}(X,
\ZZ)$. We also write $\deg_L V= \int_V c_1(L)^{\dim V}$. It satisfies
the projection formula: given $f: Y \lto X$ a generically finite
surjective proper map one has
$$\deg_{f^*L} Y = (\deg f) \deg_L X\;\;.$$

When the  subvariety $V$ is not irreducible, let $V = \cup_i V_i$ be its decomposition into irreducible components.
We define
$$
\deg_L V = \sum_i \deg_L V_i\;\;.
$$

When the variety $X$ is a disjoint union of irreducible components
$X_i$, $1 \leq i \leq n$, the function
$\deg_L$ is defined as the sum $\sum_{i=1}^n \deg_{L_{|X_{i}}}$.

\subsection{Nefness.}
Recall (cf. \cite[def. 1.4.1]{laz}) that a line bundle $L$ on a
complete scheme $X$ is said to be {\em nef} if $\deg_L C \geq 0$ for
every irreducible curve $C \subset X$. We will need the following
basic result (cf. \cite[theor.1.4.9]{laz}):
\begin{theor}[Kleiman] \label{kleiman}
Let $L$ be a line bundle on a complete complex scheme $X$. Then $L$ is
nef if and only if for every irreducible subvariety $V \subset X$ one
has $\deg_L V \geq 0$.
\end{theor}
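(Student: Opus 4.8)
The plan is to prove the two implications separately. The implication "nef $\Rightarrow$ positivity on all subvarieties" is the substantial one; the converse is immediate, since an irreducible curve $C\subset X$ is in particular an irreducible subvariety, so $\deg_L C\ge 0$ is a special case of the hypothesis. So assume $L$ is nef and fix an irreducible subvariety $V\subset X$. First I would reduce to the projective case: applying Chow's lemma to $V$ (with its reduced structure) gives a projective birational $\pi\colon V'\to V$ with $V'$ projective; then $\pi^*(L|_V)$ is nef (curves in $V'$ map to curves or points of $V$) and $\deg_{L|_V}V=\deg_{\pi^*(L|_V)}V'$ by the projection formula, as $\deg\pi=1$. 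Working inside $V'$ in place of $X$, it suffices to show: if $X$ is an irreducible projective complex variety of dimension $n$ and $L$ is nef on $X$, then $(L^n):=\int_X c_1(L)^n\ge 0$. I would prove this by induction on $n$, the case $n=1$ being exactly the definition of nefness.

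For the inductive step let $n\ge 2$, assume the statement in all dimensions $<n$, and fix a very ample line bundle $A$ on $X$. By Bertini, for $1\le k\le n-1$ a general intersection $S_k$ of $k$ members of $|A|$ is an irreducible reduced subvariety of dimension $n-k$, and restricting repeatedly to general members gives $(L^{n-k}\cdot A^k)=((L|_{S_k})^{n-k})$; since $L|_{S_k}$ is nef, the induction hypothesis yields $(L^{n-k}\cdot A^k)\ge 0$ for $1\le k\le n-1$, while $(A^n)>0$. Hence the polynomial $P(t):=((L+tA)^n)=(L^n)+\sum_{k=1}^{n}\binom{n}{k}(L^{n-k}\cdot A^k)\,t^k$ has all coefficients $\ge 0$ except possibly the constant term $(L^n)$, has positive leading coefficient $(A^n)$, and is nondecreasing on $[0,\infty)$. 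Suppose for contradiction that $(L^n)=P(0)<0$; then there is a unique $t_0>0$ with $P(t_0)=0$ and $P(t)>0$ for $t>t_0$.

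The crucial observation is that $L+tA$ is ample for every rational $t>t_0$. By the Nakai--Moishezon criterion it is enough to check $((L+tA)^{\dim W}\cdot W)>0$ for every irreducible $W\subseteq X$: for $W=X$ this is $P(t)>0$, and for $W\subsetneq X$ one expands in $t$ and bounds below using the induction hypothesis (applied to $L|_W$, which is nef, for the pure term $((L|_W)^{\dim W})\ge 0$, and via Bertini for the mixed terms) together with the strictly positive top term $(A^{\dim W}\cdot W)\,t^{\dim W}$. Now write $P(t)=Q(t)+t\,R(t)$ with $Q(t):=(L\cdot(L+tA)^{n-1})$ and $R(t):=(A\cdot(L+tA)^{n-1})$. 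Expanding $R(t)=\sum_{j=0}^{n-1}\binom{n-1}{j}(A^{\,j+1}\cdot L^{\,n-1-j})\,t^{j}$ and using Bertini plus the induction hypothesis for the cross terms shows $R(t)\ge (A^n)\,t^{\,n-1}>0$ for all $t>0$, so $R(t_0)>0$. On the other hand, for rational $t>t_0$ pick $m\gg 0$ with $m(L+tA)$ very ample; a general intersection of $n-1$ members of $|m(L+tA)|$ is an irreducible curve $C$, so $Q(t)=m^{-(n-1)}\deg_L C\ge 0$ because $L$ is nef, and letting $t\downarrow t_0$ gives $Q(t_0)\ge 0$ by continuity. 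Then $0=P(t_0)=Q(t_0)+t_0R(t_0)>0$, a contradiction; hence $(L^n)\ge 0$, completing the induction.

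I expect the genuinely delicate points to be purely technical bookkeeping: verifying that each general complete intersection (of members of $|A|$ and of $|m(L+tA)|$) is really irreducible, reduced, and of the expected dimension, which is precisely what Bertini provides over $\CC$, and keeping the dimensions straight so the induction hypothesis is applied legitimately at each step. The conceptual core is the single chain $0=P(t_0)=Q(t_0)+t_0R(t_0)\ge t_0R(t_0)>0$: it works because $R(t_0)$ is \emph{strictly} positive (it dominates $(A^n)\,t_0^{\,n-1}$), while nefness of $L$ forces $Q(t_0)\ge 0$.
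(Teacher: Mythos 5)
The paper does not prove this theorem; it states it as a citation to Lazarsfeld, \emph{Positivity in Algebraic Geometry I}, Theorem 1.4.9, and makes no argument of its own. Your proof is correct and reproduces the standard argument given in that reference (Kleiman's induction on dimension via the decomposition $P(t)=Q(t)+tR(t)$, with the Nakai--Moishezon criterion used to make $L+tA$ ample for $t>t_0$ so that curves in $|m(L+tA)|$ force $Q(t_0)\ge 0$ while $R(t_0)>0$), including the necessary Chow's-lemma reduction from complete to projective varieties at the outset.
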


\subsection{Baily-Borel compactification.} 

\begin{defi} 
Let $(\G, X)$ be a Shimura datum and $K \subset \G(\AAf)$ a neat
compact open subgroup.
We denote by $\overline{\Sh_K(\G,X)_{\CC}}$ the Baily-Borel
compactification of $\Sh_K(\G,X)_{\CC}$, cf. \cite{BB}.
\end{defi}

The Baily-Borel compactification $\overline{\Sh_K(\G,X)_{\CC}}$ is a  
normal projective variety.
Its boundary $\overline{\Sh_K(\G,X)_{\CC}} \setminus \Sh_K(\G,X)_{\CC}$ has 
complex codimension $>1$ if and only if $\G$ has no
split $\QQ$-simple factors of dimension~$3$. The following  
proposition summarizes basic properties of
$\overline{\Sh_K(\G,X)_{\CC}}$ that we will use.

\begin{prop} \label{deg}
\begin{enumerate}
\item The line bundle of holomorphic forms of maximal degree on $X$  
descends to $\Sh_K(\G,X)_{\CC}$
and extends uniquely to an ample line bundle $L_{K}$ on  
$\overline{\Sh_K(\G,X)_{\CC}}$ such that, at the
generic points of the boundary components of codimension one, it is
given by forms with logarithmic poles. Let $K_1$ and
$K_2$ be neat compact open subgroups of $\G(\AAf)$ and $g$ in $\G(\AAf)$
such that $K_2 \subset gK_1g^{-1}$. Then the morphism from
$\Sh_{K_{2}}(\G, X)_{\CC}$ to $\Sh_{K_{1}}(\G, X)_{\CC}$ induced by  
$g$ extends to a morphism $f:
\overline{\Sh_{K_{2}}(\G, X)_{\CC}} \lto \overline{\Sh_{K_{1}}(\G,  
X)_{\CC}}$, and the line bundle
$f^{*}L_{K_{1}}$ is canonically isomorphic to $L_{K_{2}}$ .

\item The canonical model $\Sh_K(\G,X)$ of $\Sh_K(\G,X)_{\CC}$  over  
the reflex field $E(\G, X)$
admits a unique extension to a model $\overline{\Sh_K(\G,X)}$ of  
$\overline{\Sh_K(\G,X)_{\CC}}$ over $E(\G, X)$.
The line bundle $L_K$ is naturally defined over $E(\G, X)$.

\item Let $\varphi : (\HH, Y) \lto (\G, X)$ be a morphism of Shimura  
data and $K_{\HH} \subset \HH(\AAf)$, $K_{\G} \subset
\G(\AAf)$ neat compact open subgroups with $\varphi(K_{\HH}) \subset  
K_{\G}$. Then the canonical map
$\phi : \Sh_{K_{\HH}}(\HH, Y) \lto \Sh_{K_{\G}}(\G, X)$ induced by  
$\varphi$ extends to a morphism still denoted by
$\phi : \overline{\Sh_{K_{\HH}}(\HH, Y)} \lto \overline{\Sh_{K_{\G}}(\G, X)}$.
\end{enumerate}
\end{prop}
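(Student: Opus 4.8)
\textbf{Proof proposal for Proposition~\ref{deg}.}

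The plan is to deduce everything from the classical Baily--Borel theory together with Deligne's description of the canonical model. First I would recall the construction of the line bundle $L_K$: on the Hermitian symmetric domain $X^+$ the canonical bundle $\Omega^{\dim X^+}_{X^+}$ is a $\G(\RR)^+$-equivariant line bundle, hence descends to each $\Gamma_g\backslash X^+$, and Baily--Borel \cite{BB} shows that a suitable power of this bundle is generated by automorphic forms which extend (with at worst logarithmic growth at the codimension-one boundary strata) to an ample line bundle on $\overline{\Sh_K(\G,X)_{\CC}}$; that some positive power descends and extends settles the existence, and ampleness is exactly the Baily--Borel projective embedding theorem. For part~(1), the compatibility under the map $f$ induced by $g$ is a pointwise statement: on the open part $\Sh_{K_2}\to\Sh_{K_1}$ the map is étale (both groups being neat), so $f^*$ of the bundle of top forms is canonically the bundle of top forms upstairs; since $\overline{\Sh_{K_2}(\G,X)_{\CC}}$ is normal and the two extensions of this bundle agree on the complement of a closed subset of codimension $\geq 1$ and both are defined by the same logarithmic-pole condition along the codimension-one boundary, they agree, giving the canonical isomorphism $f^*L_{K_1}\cong L_{K_2}$. (One needs here that $f$ maps the generic point of a codimension-one boundary component of the target dominantly to one of the source, which follows from the explicit description of the Baily--Borel boundary in terms of rational boundary components.)

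For part~(2), the existence and uniqueness of the model $\overline{\Sh_K(\G,X)}$ over $E(\G,X)$ is a theorem of Pink (and in many cases Baily--Borel descent / Brylinski--Chai type arguments): the Baily--Borel compactification is canonically constructed from $\Sh_K(\G,X)$ as $\mathrm{Proj}$ of the ring of automorphic forms, i.e. $\overline{\Sh_K(\G,X)_\CC}=\mathrm{Proj}\bigoplus_{m\geq 0}H^0(\Sh_K(\G,X)_\CC, L_K^{\otimes m})$, and since $L_K$ is defined over $E(\G,X)$ (the bundle of top forms is intrinsic to the Shimura variety, whose canonical model is defined over $E(\G,X)$ by \cite{De2}), the graded ring is defined over $E(\G,X)$, hence so is its $\mathrm{Proj}$. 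Uniqueness follows from uniqueness of the canonical model of the open part together with normality of the compactification. I would simply cite \cite{Milne}, \cite{De2} and Pink's thesis for these facts rather than reproving them.

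For part~(3), given a morphism of Shimura data $\varphi:(\HH,Y)\to(\G,X)$ with $\varphi(K_\HH)\subset K_\G$, the induced map $\phi:\Sh_{K_\HH}(\HH,Y)\to\Sh_{K_\G}(\G,X)$ is a finite morphism of quasi-projective varieties (Lemma 2.1 / standard theory). To extend it to the Baily--Borel compactifications, I would use the functoriality built into the Baily--Borel construction: the map $\phi$ pulls back automorphic forms to automorphic forms, i.e. $\phi^*L_{K_\G}$ is a bundle on $\Sh_{K_\HH}(\HH,Y)$ whose sections of high tensor powers include enough automorphic forms to extend over the boundary; concretely, $\phi$ extends rational-boundary-component-wise because a rational boundary component of $(\HH,Y)$ maps to one of $(\G,X)$, and on the level of $\mathrm{Proj}$ of automorphic-form rings the extension is automatic. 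One subtlety to flag: the extension $\phi$ need not be finite on the boundary (boundary components can collapse), so I would only claim it is a morphism, which is all that is asserted. This is again a known result; the cleanest reference is Pink's thesis (and \cite{Milne} for the canonical-model compatibility), so I would keep the proof to a paragraph of citation plus the normality/codimension argument.

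\textbf{Main obstacle.} The genuinely nontrivial input is not something I would reprove but something I would have to cite carefully: the existence of the canonical model $\overline{\Sh_K(\G,X)}$ over $E(\G,X)$ in part~(2) and the algebraicity (over $E(\G,X)$) of the extension $\phi$ in part~(3). These rest on Pink's work on the arithmetic of toroidal and Baily--Borel compactifications; the rest of the proposition — the descent of top-degree forms, ampleness via Baily--Borel, and the pullback compatibility of $L_K$ — is formal once one has the $\mathrm{Proj}$-of-automorphic-forms description and normality of the compactification. Accordingly the ``hard part'' here is bookkeeping of references and checking the boundary-codimension hypotheses, not a new argument.
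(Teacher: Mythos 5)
Your proposal is correct and follows essentially the same route as the paper: the paper's own proof is a pure citation (Baily--Borel \cite{BB} Lemma 10.8 and Pink \cite{P} Prop.~8.1, \S\S8.2--8.3 for part (1); Pink \cite{P} Thm.~12.3.a for part (2); Satake \cite{Sat1} over $\CC$ and Pink \cite{P} Thm.~12.3.b for part (3)), and you invoke the same body of results while supplying the standard surrounding reasoning. The only small difference is that the paper cites Satake explicitly for the complex-analytic extension in part (3), whereas you derive it from the $\mathrm{Proj}$-of-automorphic-forms functoriality; both are correct and standard.
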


\begin{proof}
The first statement is \cite[lemma 10.8]{BB} and \cite[prop.8.1,
sections 8.2, 8.3]{P}. The second one is \cite[theor.12.3.a]{P}. The
third statement is \cite[theorem p.231]{Sat1} (over $\CC$) and  
\cite[theor. 12.3.b]{P} (over $E(\G, X)$).
\end{proof}

\begin{defi} \label{defdegree}
Given a complex subvariety $Z \subset \Sh_{K}(\G, X)_{\CC}$ we
will denote by $\deg_{L_{K}}Z$ the degree of the compactification
$\overline{Z} \subset \overline{\Sh_{K}(\G, X)_{\CC}}$  with respect  
to the line bundle $L_{K}$. We will write $\deg Z$ when it is clear to
which line   
bundle we are referring to.
\end{defi}

\begin{rem} \label{compconn}
Let $\G$ be a connected semisimple algebraic
$\QQ$-group of Hermitian type (and of non-compact type) with  
associated Hermitian domain $X$. Recall that a subgroup
$\Gamma \subset \G(\QQ)$ is called an arithmetic lattice if $\Ga$ is
commensurable to $\G(\QQ) \cap \GL_n(\ZZ)$, where we fixed a faithful
$\QQ$-representation $\xi: \G \hookrightarrow \GL_n$. This definition is
independent of the choice of $\xi$. If $\Gamma \subset \G(\QQ)$ is a neat arithmetic
lattice the quotient $\Gamma \backslash X$ is a smooth
quasi-projective variety, which is projective if and only if $\G$ is
$\QQ$-anisotropic (cf. \cite{bor}). The Baily-Borel
compactification $\overline{\Gamma \backslash X}$ of the quasi-projective
complex variety $\Gamma \backslash X$ and the bundle $L_{\Gamma}$ on
$\overline{\Gamma \backslash X}$ are well-defined (cf. \cite{BB}).
\end{rem}

\subsubsection{Comparison of degrees for Shimura subdata.}

\begin{prop} \label{compare-degrees0}
Let $\phi:\Sh_{K}(\G, X)_{\CC} \lto \Sh_{K'}(\G', X')_{\CC}$ be a morphism of
Shimura varieties associated to a Shimura subdatum $\varphi :(\G, X)  
\lto (\G',
X')$, a neat compact open subgroup $K$ of $\G(\AAf)$ and a neat compact open
subgroup $K'$ of $\G'(\AAf)$ containing $\varphi(K)$. Then the line
bundle $$\Lambda_{K,
   K'}:= \phi^{*}L_{K'} \otimes L_{K}^{-1}$$ on $\overline{\Sh_{K}(\G, X)_{\CC}}$ is nef.
\end{prop}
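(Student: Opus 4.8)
The plan is to reduce the nefness of $\Lambda_{K,K'} = \phi^* L_{K'} \otimes L_K^{-1}$ to a statement about the Baily--Borel bundles on connected Shimura varieties, where the comparison is more transparent. By Kleiman's theorem (theorem~\ref{kleiman}) it suffices to check that $\deg_{\Lambda_{K,K'}} C \geq 0$ for every irreducible curve $C \subset \overline{\Sh_K(\G,X)_{\CC}}$, equivalently that $\deg_{\phi^*L_{K'}} C \geq \deg_{L_K} C$. Since both bundles and the morphism $\phi$ are defined over a number field and compatible with passing to connected components (proposition~\ref{deg}$(1)$ gives the pullback compatibility $f^*L_{K_1} \cong L_{K_2}$ under the transition maps), one may pass to a connected component: writing $S_K(\G,X)_{\CC} = \Gamma_K \backslash X^+$ and $S_{K'}(\G',X')_{\CC} = \Gamma_{K'} \backslash X'^+$ with $X^+ \hookrightarrow X'^+$, it is enough to show that on $\overline{\Gamma_K \backslash X^+}$ the bundle $\phi^* L_{\Gamma_{K'}} \otimes L_{\Gamma_K}^{-1}$ is nef.

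The key point is the explicit description of $L_K$ in proposition~\ref{deg}$(1)$: on $\Sh_K(\G,X)_{\CC}$ it is the line bundle of top-degree holomorphic forms on $X^+$ (the canonical bundle, extended with logarithmic poles along the boundary), i.e. it corresponds to $\det$ of the holomorphic cotangent bundle of the Hermitian symmetric domain $X^+$. Likewise $\phi^* L_{K'}$ corresponds to the pullback under $X^+ \hookrightarrow X'^+$ of $\det \Cot X'^+$. Now the normal bundle exact sequence for the totally geodesic embedding $X^+ \hookrightarrow X'^+$ of Hermitian symmetric domains gives, $\Gamma_K$-equivariantly,
\begin{equation*}
0 \lto \Cot_{X^+} X'^+ \lto (\Cot X'^+)|_{X^+} \lto \Cot X^+ \lto 0,
\end{equation*}
hence $\phi^* \det \Cot X'^+ \cong \det \Cot X^+ \otimes \det(\Cot_{X^+} X'^+)$ on $X^+$, so that $\Lambda_{K,K'}$ is (the descent and Baily--Borel extension of) $\det$ of the conormal bundle $\Cot_{X^+} X'^+$ of the embedding. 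The conormal bundle of a totally geodesic complex submanifold of a Hermitian symmetric domain of noncompact type carries a natural invariant metric of nonpositive curvature (it is a sub-bundle of the cotangent bundle, which is Griffiths-seminegative, or dually the normal bundle is a quotient of the tangent bundle which is Nakano-semipositive); therefore its determinant descends to a semipositive line bundle on $\Gamma_K \backslash X^+$, and one checks that the Baily--Borel extension preserves this semipositivity so that the degree against any curve — including curves meeting or lying in the boundary — is nonnegative. Combining this with Kleiman gives nefness.

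The main obstacle is the boundary behaviour: away from the boundary, nefness follows immediately from the curvature computation above, but the Baily--Borel compactification is highly singular and the extended bundle $L_K$ is defined via forms with logarithmic poles, so one must verify that the conormal-determinant bundle extends across $\overline{\Sh_K(\G,X)_{\CC}}$ compatibly with the logarithmic extensions of $L_K$ and $\phi^* L_{K'}$ — i.e. that the isomorphism $\Lambda_{K,K'} \cong \det \Cot_{X^+} X'^+$ is the one respecting the chosen extensions in proposition~\ref{deg}, and that no negative boundary correction appears. This is handled by working on a common toroidal resolution dominating both $\overline{\Sh_K(\G,X)_{\CC}}$ and $\overline{\Sh_{K'}(\G',X')_{\CC}}$, pulling everything back there, checking the comparison of log-canonical extensions along the boundary divisors (where the relevant discrepancies are nonnegative because the embedding of Shimura data is compatible with rational boundary components), and then pushing the inequality $\deg C' \geq 0$ back down via the projection formula.
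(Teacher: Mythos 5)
Your reduction to a connected component and the identification of $\Lambda_{K,K'}$ with (the descent and extension of) $\det$ of the conormal bundle of the totally geodesic embedding $X^+ \hookrightarrow X'^+$ is exactly the right starting point, and the observation that this conormal bundle carries an invariant metric of nonnegative curvature in the interior is correct. The paper makes the same observation. However, the boundary step of your proposal---which is the actual content of the proposition once one is in the $\QQ$-isotropic case---is asserted rather than proved, and the assertion as stated is not self-evidently true.

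Concretely, you write that the nonnegativity of the boundary contribution ``is handled by working on a common toroidal resolution\dots where the relevant discrepancies are nonnegative because the embedding of Shimura data is compatible with rational boundary components.'' This is precisely the nontrivial point, and compatibility of rational boundary components (which is Satake's theorem, used in the paper for the extension of $\phi$ to the Baily--Borel compactifications) does not by itself imply that the log-extension of the conormal determinant has nonnegative degree on boundary curves. The paper instead proceeds by a concrete computation: after further reducing to $\G$ $\QQ$-simple, it invokes Satake's explicit description of automorphic line bundles on the Baily--Borel compactification by a tuple of integers $\boldsymbol{\lambda} = \mathbf{m'} M_{\varphi} - \mathbf{m}$, and then shows $\boldsymbol{\lambda} \geq 0$ using Dynkin's theorem that the index $c_{i,j}$ of an embedding of simple Lie algebras is a positive integer. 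This is the genuine input that replaces your discrepancy claim. Moreover, Satake's theorem then says the extended bundle is either ample or trivial (according to whether $\boldsymbol{\lambda}$ is positive or zero), and the trivial case requires a further lemma showing that when $\G$ is locally isomorphic to $\mathbf{SL}_2$ (the only case with a codimension-one boundary) one in fact has a local isomorphism $\G \to \G'$, so no negative boundary correction can arise. The existence of this separate lemma is a signal that the codimension-one boundary case is genuinely delicate and cannot be dismissed by a general discrepancy heuristic. As written, your proposal has a gap exactly here; to close it along your lines you would have to carry out an explicit comparison of the log extensions on a toroidal model, which in substance reproduces the Satake--Dynkin computation.
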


This proposition is a corollary of the following
\begin{prop} \label{cop-deg}
Let $\varphi : \G \lto \G'$ be a $\QQ$-morphism of connected  
semisimple algebraic
$\QQ$-groups of Hermitian type (and of non-compact type) inducing a
holomorphic totally geodesic embedding of the associated Hermitian
domains $\phi : X^+ \lto X'^{+}$. Let $\Gamma \subset \G(\QQ)$ be a neat
arithmetic lattice and $\Gamma' \subset \G'(\QQ)$ a neat arithmetic
lattice containing $\varphi(\Gamma)$. Then the line bundle 
$$\Lambda_{\Ga, \Ga'}:=\phi^*L_{\Gamma'} \otimes L_{\Gamma}^{-1}$$ on
$\overline{\Gamma \backslash X^+}$ is nef.
\end{prop}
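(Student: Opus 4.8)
The plan is to compare the two automorphic line bundles via their natural metrics. Both $L_\Gamma$ on $\overline{\Gamma\backslash X^+}$ and $L_{\Gamma'}$ on $\overline{\Gamma'\backslash X'^+}$ arise as descents of the canonical bundle of holomorphic top-degree forms; in fact each carries a canonical hermitian metric coming from the Bergman metric (equivalently, the metric induced by the Killing form / Haar measure) on the respective Hermitian domain, and these metrics have logarithmic singularities along the boundary of the Baily--Borel compactification, so they compute $c_1$ as a current. Concretely, on $X^+$ the fiber of the canonical bundle at a point is $\bigwedge^{\mathrm{top}}\Omega^1$, and the holomorphic tangent space $T_x X^+$ sits inside $T_{\phi(x)} X'^+$ as a complex subspace since $\phi$ is holomorphic; the orthogonal complement with respect to the $G'$-invariant metric is the normal bundle $N_\phi$ of the totally geodesic embedding. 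Thus one gets a canonical isomorphism of line bundles on $X^+$
$$
\phi^*\big(\textstyle\bigwedge^{\mathrm{top}}\Omega^1_{X'^+}\big)\;\cong\;\textstyle\bigwedge^{\mathrm{top}}\Omega^1_{X^+}\otimes \det N_\phi^{\vee},
$$
which descends to an isomorphism $\phi^* L_{\Gamma'}\cong L_\Gamma\otimes \det N_\phi^{\vee}$ of bundles on $\Gamma\backslash X^+$, hence $\Lambda_{\Gamma,\Gamma'}\cong \det N_\phi^{\vee}$ there. So the whole point becomes: the conormal bundle of a totally geodesic Hermitian subdomain embedding carries a metric of nonnegative curvature, and this nefness persists after passing to the Baily--Borel compactification.

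First I would make the metric statement precise on $X^+$ itself. Since $\phi$ is totally geodesic and holomorphic between Hermitian symmetric domains, the second fundamental form vanishes, so the splitting $T X'^+|_{X^+} = TX^+\oplus N_\phi$ is parallel; by the Gauss equation the curvature of $N_\phi$ (with the induced metric) is controlled by the ambient curvature restricted to $N_\phi$. Hermitian symmetric domains of noncompact type have seminegative holomorphic bisectional curvature, which translates into the statement that the normal bundle $N_\phi$ with its induced metric has Griffiths-seminegative curvature, equivalently $\det N_\phi^\vee$ is Griffiths-semipositive, hence $\Lambda_{\Gamma,\Gamma'}$ carries a smooth metric of semipositive curvature on $\Gamma\backslash X^+$. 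Alternatively, and perhaps more cleanly for the compactification issue, one can argue homogeneously: decompose the embedding $X^+\hookrightarrow X'^+$ into irreducible factors and reduce to the case where $\G'$ is $\QQ$-simple, then note that the normal bundle is the homogeneous bundle attached to a representation of the (compact) isotropy group whose weights are nonpositive against the relevant positivity cone — this is the standard computation that automorphic vector bundles attached to the normal directions of a subdomain are ``nef'' in the homogeneous sense.

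The hard part will be the boundary: I must control $\Lambda_{\Gamma,\Gamma'}$ along $\overline{\Gamma\backslash X^+}\setminus(\Gamma\backslash X^+)$, where the fibres of the map to $\overline{\Gamma'\backslash X'^+}$ can degenerate and the naive metric on $\det N_\phi^\vee$ may blow up. The right approach is to use that by Proposition~\ref{deg}(1), both $L_K$-type bundles on the Baily--Borel compactifications are defined intrinsically with the ``logarithmic poles at codimension-one boundary'' normalization, so that the isomorphism $\Lambda_{\Gamma,\Gamma'}\cong\det N_\phi^\vee$ over the interior extends (possibly after accounting for an effective boundary divisor with the correct sign) to the compactification; concretely I would show $\phi^*L_{\Gamma'}$ dominates $L_\Gamma$ because the pullback of the log-canonical extension is contained in the log-canonical extension of the pullback, the discrepancy being an effective divisor supported on the boundary. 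Combined with the interior semipositivity, this gives $\deg_{\Lambda_{\Gamma,\Gamma'}} C\geq 0$ for every irreducible curve $C$: if $C$ meets the interior, integrate the semipositive curvature current and add a nonnegative boundary contribution; if $C$ lies entirely in the boundary, use that the boundary discrepancy divisor is effective. By Kleiman's criterion (Theorem~\ref{kleiman}), semipositivity on curves is exactly nefness — though since we directly have the curvature current it is cleaner to invoke nefness of a line bundle carrying a singular metric with semipositive curvature current whose singularities are along an effective divisor. Finally, Proposition~\ref{compare-degrees0} follows from Proposition~\ref{cop-deg} by applying the latter to $\varphi^{\der}:\G^{\der}\to\G'^{\der}$ (or to the adjoint groups), using that $L_K$ on $\Sh_K$ restricted to a connected component $\Gamma_K\backslash X^+$ is exactly the bundle $L_{\Gamma_K}$ of Remark~\ref{compconn}, and that nefness can be checked on each connected component and is unaffected by passing between $\G$ and its derived/adjoint group since the relevant bundles and the embedding of Hermitian domains only depend on the adjoint data.
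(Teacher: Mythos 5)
Your interior reduction is essentially the same as the paper's: by adjunction $\phi^*K_{X'^+}\cong K_{X^+}\otimes\det N_\phi^\vee$, so $\Lambda_{\Gamma,\Gamma'}$ restricted to $\Gamma\backslash X^+$ is $\det N_\phi^\vee$, and since $X^+$ is totally geodesic in $X'^+$ and $X'^+$ has nonpositive holomorphic bisectional curvature, the invariant metric on $N_\phi^*$ has nonnegative curvature. This is exactly how the paper treats the $\QQ$-anisotropic case, where $\overline{\Gamma\backslash X^+}=\Gamma\backslash X^+$ and the argument terminates there. The problem is your treatment of the $\QQ$-isotropic case, i.e.\ the boundary.

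Your central boundary claim --- ``the pullback of the log-canonical extension is contained in the log-canonical extension of the pullback, the discrepancy being an effective divisor supported on the boundary'' --- is not a formal consequence of the log-poles normalization and is not established. The order of vanishing of $\Lambda_{\Gamma,\Gamma'}$ along boundary strata compares the growth of two invariant metrics at cusps, which depends on how the rational parabolics of $\G$ map into those of $\G'$; this is precisely the content of Satake's computation of the integer vectors $\mathbf{m}, \mathbf{m'}$ and the matrix $M_\varphi$, and the paper shows $\boldsymbol{\lambda}=\mathbf{m'}M_\varphi-\mathbf{m}\geq 0$ via Dynkin's index theorem --- there is nothing automatic about it. Furthermore, even granting an effective boundary divisor $D$ with $\Lambda_{\Gamma,\Gamma'}\cong \det N_\phi^\vee\otimes\mathcal{O}(D)$, your conclusion fails for curves $C$ contained in the boundary: effectivity of $D$ with $C\subset\mathrm{Supp}(D)$ does \emph{not} imply $\deg_{\mathcal O(D)}C\geq 0$ (boundary divisors can have negative self-intersection), and a semipositive curvature current on the open part only yields pseudoeffectivity of $\Lambda_{\Gamma,\Gamma'}$, not nefness, without a genuine good-metric/toroidal-resolution argument that you do not supply. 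The paper circumvents all of this by working with Satake's representation-theoretic description of automorphic line bundles on Baily--Borel compactifications, reducing to $\G$ $\QQ$-simple, showing $\boldsymbol{\lambda}\geq 0$, and then invoking Satake's ampleness criterion when $\boldsymbol{\lambda}>0$ with a separate treatment of the $\boldsymbol{\lambda}=0$ case (including the special subcase where $\G$ is locally $\mathbf{SL}_2$). Your sketch does not engage with this and the missing boundary input is the actual crux of the proposition.
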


\sspace
\noindent
\begin{proof}[Proposition~\ref{cop-deg} implies the
proposition~\ref{compare-degrees0}]
Let $C \subset \overline{\Sh_{K}(\G, X)_{\CC}}$ be an irreducible curve. To prove
that $\deg_{\Lambda_{K,K'}} C \geq 0$ one can assume without
loss of generality that $C$ is
contained in the connected component $\overline{S_K} = \overline{\Gamma_{K} \backslash X^+}$
and that $\phi:\overline{\Sh_{K}(\G, X)_{\CC}} \lto \overline{\Sh_{K'}(\G', X')_{\CC}}$ maps
$\overline{S_K}$ to $\overline{S_{K'}}= \overline{\Gamma_{K'} \backslash X'^{+}}$. The
morphism of reductive $\QQ$-groups $\varphi : \G \lto \G'$ induces a
$\QQ$-morphism $\overline{\varphi}: \G^\der \lto \G'^{\ad}$ of
semisimple $\QQ$-groups. Let $\Gamma$ denote the neat lattice  
$\G^\der (\QQ) \cap K \subset
\G^\der(\QQ)$ and $\Gamma'$ the neat lattice of $\G^{\ad}(\QQ)$
image of $\Gamma_{K'}$. Notice that $\Gamma' \backslash X'^{+} =
\Gamma_{K'} \backslash X'^{+}$.  Consider the diagram
\begin{equation}
\xymatrix{
\overline{\Gamma \backslash X^+}  \ar[dr]^{\phi \circ \pi}  \ar[d]^{\pi}
\\
\overline{\Gamma_{K} \backslash X^+}   \ar[r]_{\phi}
&\overline{\Gamma' \backslash X'^{+}}}
\end{equation}
with $\pi$ the natural finite  map.
The proposition~\ref{deg} $(1)$ extends to this
setting:
$$ \pi^{*} (L_{\Gamma_{K}}) = L_{\Gamma}\;\;.$$
Thus $$\pi^* \Lambda_{K,K'} = \Lambda_{\Gamma, \Gamma'}\;\;.$$
Let $d$ denote the degree of $\pi$. 
By the projection formula one obtains:
$$
\deg_{\Lambda_{K,K'}}C = \frac{1}{d} \deg_{\Lambda_{\Ga,\Ga'}}\pi^{-1}(C)\;\;.$$
Now $\deg_{\Lambda_{\Ga,\Ga'}}\pi^{-1}(C) \geq 0$ by proposition~\ref{cop-deg}.

\end{proof}

\begin{proof}[Proof of the proposition~\ref{cop-deg}]
Let $C \subset \overline{\Gamma \backslash X^+}$ be an irreducible
curve. We want to show that $\deg_{\Lambda_{\Ga, \Ga'}} C \geq
0$. First notice that by the projection formula and by   
proposition~\ref{deg} $(1)$, we can
assume that the group $\G$ is simply connected and the group $\G'$ is adjoint.

Let $\G = \G_1\times \cdots \times \G_r$ be the decomposition of $\G$
into $\QQ$-simple factors. Let $\varphi_{i} : \G_i \lto \G'$, $1 \leq i
\leq r$ denote the components of $\varphi: \G \lto \G'$. If $\Gamma_1
\subset \Gamma$ is a finite index subgroup and $p :\overline{ \Gamma_{1}
\backslash X^+} \lto \overline{\Gamma \backslash X^+}$ is
the corresponding finite morphism, by proposition~\ref{deg}  
the line bundle
$\Lambda_{\Gamma_{1}, \Gamma'}$ corresponding to $\phi \circ p$ is  
isomorphic to
$p^*\Lambda_{\Gamma, \Gamma'}$. The fact that
$\deg_{\Lambda_{\Gamma, \Gamma'}}C \geq 0$ is once more implied by
$\deg_{\Lambda_{\Gamma_{1}, \Gamma'}} p^{-1}(C) \geq 0$. Thus we can
   assume that $\Gamma = \Ga_1 \times \cdots \times
\Ga_r$, with $\Ga_i$ a neat arithmetic subgroup of $\G_i(\QQ)$. The
variety $\overline{\Gamma \backslash X^+}$ decomposes into a product
$$
  \overline{\Gamma \backslash X^+} = \overline{\Gamma_1 \backslash
    X_{1}^{+}} \times \cdots \times 
\overline{\Gamma_{r} \backslash X_{r}^{+}}
$$
and the line bundle $\Lambda_{\Gamma, \Gamma'}$ on $\overline{\Gamma
   \backslash X^+}$ decomposes as
$$ \Lambda_{\Gamma, \Gamma'} = \Lambda_{\Gamma_{1}, \Gamma'} \boxtimes
\cdots \boxtimes \Lambda_{\Gamma_{r} , \Gamma'}\;\;,$$
with $\Lambda_{\Gamma_{i}, \Ga'}= \phi_{i}^{*}L_{\Gamma'} \otimes  
L_{\Gamma_{i} }^{-1}$ the
corresponding line bundle on $\overline{\Gamma_{i} \backslash X_{i}^{+}}$.
Let $p_{i} : \overline{ \Gamma \backslash X^+ } \lto
\overline{\Gamma_{i} \backslash X_{i}^{+}}$ be the natural projection.
As $$\deg_{\Lambda_{\Gamma, \Ga'}} C = \sum_{i=1}^{r} \deg_{p_{i}^{*}
     \Lambda_{\Ga_{i}, \Ga'}}
C\;\;,$$ we have reduced the proof of the proposition to the case
where $\G$ is $\QQ$-simple. It then follows from the more precise
following proposition~\ref{positivite}.
\end{proof}

\begin{prop} \label{positivite}
Assume that $\G$ is $\QQ$-simple.
\begin{enumerate}
\item If $\G$ is $\QQ$-anisotropic then the line bundle $\Lambda_{\Ga,
   \Ga'}$ on the smooth complex projective variety $\Ga \backslash X^+$ admits
a metric of non negative curvature.
\item If $\G$ is $\QQ$-isotropic then either the line bundle
   $\Lambda_{\Ga, \Ga'}$ on $\overline{\Ga \backslash X^+}$ is trivial or it
   is ample.
\end{enumerate}
\end{prop}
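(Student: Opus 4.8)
The plan is to analyze the curvature of the line bundle $\Lambda_{\Ga, \Ga'} = \phi^* L_{\Gamma'} \otimes L_{\Gamma}^{-1}$ using the differential-geometric description of the automorphic line bundles $L_\Gamma$, $L_{\Gamma'}$ on the locally symmetric spaces. Recall that on a Hermitian symmetric domain $X^+$ attached to a semisimple group, the line bundle of top-degree holomorphic forms carries the canonical $\G(\RR)^+$-invariant metric, whose curvature form is (up to sign and a positive constant) the Ricci form of the Bergman metric. The totally geodesic holomorphic embedding $\phi: X^+ \to X'^+$ is the key structural input: the pullback of the canonical metric on $X'^+$ and the canonical metric on $X^+$ are both $\G(\RR)^+$-invariant Hermitian metrics on the same line bundle over $X^+$, so their ratio is a real-analytic function which, by invariance under the transitive action of $\G(\RR)^+$ on $X^+$, is constant. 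Hence the curvature of $\Lambda_{\Ga, \Ga'}$ on $\Ga \backslash X^+$ (interior) is the difference of the pulled-back Kähler-Einstein form and the Kähler-Einstein form of $X^+$ itself; since $\phi$ is totally geodesic and holomorphic, the second fundamental form computation shows the holomorphic sectional curvatures of $X^+$ as a submanifold are bounded above by those of $X'^+$, which translates (after tracing) into the statement that this difference of Ricci forms is a \emph{semi-positive} $(1,1)$-form on the interior. This proves that $\Lambda_{\Ga, \Ga'}$ admits a metric of non-negative curvature on $\Ga \backslash X^+$, giving part $(1)$ in the $\QQ$-anisotropic (hence compact) case immediately.

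For part $(2)$, where $\G$ is $\QQ$-isotropic, the variety $\Ga \backslash X^+$ is non-compact and one must control the behavior of the metric near the boundary of the Baily-Borel compactification $\overline{\Ga \backslash X^+}$. The line bundle $L_\Gamma$ is defined precisely so as to extend with logarithmic singularities along the codimension-one boundary, and the analogous statement holds for $\phi^* L_{\Gamma'}$ since $\phi$ extends to a morphism of Baily-Borel compactifications by Proposition~\ref{deg}$(3)$ (or rather its analogue in Remark~\ref{compconn}). So $\Lambda_{\Ga, \Ga'}$ extends as an honest line bundle on $\overline{\Ga \backslash X^+}$, and the metric constructed on the interior has at worst mild (logarithmic) growth, hence extends to a singular metric with semi-positive curvature current on the compactification; by Kleiman's criterion (Theorem~\ref{kleiman}) $\Lambda_{\Ga,\Ga'}$ is nef. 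To upgrade nef to the dichotomy ``trivial or ample,'' I would invoke the $\QQ$-simplicity of $\G$: the interior curvature form is either identically zero or strictly positive somewhere, and in the latter case one uses that $\Ga \backslash X^+$ is irreducible together with a positivity/rigidity argument (in the spirit of the Nakai-Moishezon or Mumford's criterion) to conclude $\deg_{\Lambda_{\Ga,\Ga'}} W > 0$ for every positive-dimensional subvariety $W$, hence ampleness; if the curvature vanishes identically then $\phi^* L_{\Gamma'}$ and $L_\Gamma$ are isometric and $\Lambda_{\Ga, \Ga'}$ is trivial (or torsion, hence trivial after the reductions already made).

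Concretely, the steps are: \emph{(i)} reduce, as already done in the proofs of Propositions~\ref{cop-deg} and \ref{compare-degrees0}, to the case $\G$ simply connected $\QQ$-simple and $\G'$ adjoint; \emph{(ii)} write down the canonical invariant metrics on $L_{\Gamma}$ and on $L_{\Gamma'}$ coming from the volume forms of the Bergman/Kähler-Einstein metrics of $X^+$ and $X'^+$; \emph{(iii)} use $\G(\RR)^+$-homogeneity of $X^+$ to see that $\phi^*(\text{metric on }L_{\Gamma'})$ and the metric on $L_\Gamma$ differ by a positive constant multiple of a metric whose curvature is $\phi^*(\text{Ricci form of }X'^+) - (\text{Ricci form of }X^+)$; \emph{(iv)} prove this difference is a semi-positive $(1,1)$-form using that $\phi$ is holomorphic and totally geodesic, so the Gauss equation bounds the sectional curvatures of the image from above by those of the ambient domain; \emph{(v)} in the anisotropic case conclude directly; in the isotropic case, check the logarithmic boundary behavior so the metric extends as a semi-positive current on $\overline{\Ga \backslash X^+}$, then apply Kleiman; \emph{(vi)} use $\QQ$-simplicity to run the trivial-or-ample dichotomy.

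The main obstacle I expect is step \emph{(iv)} made uniform, i.e. the curvature-comparison computation: one must verify that the \emph{trace} (over the tangent space of $X^+$) of the difference of the two Ricci forms is non-negative, not merely that individual holomorphic sectional curvatures compare the right way — this requires carefully relating the Ricci curvature of the totally geodesic complex submanifold $X^+ \hookrightarrow X'^+$ to the restriction of the Ricci curvature of $X'^+$, accounting for the contribution of the normal bundle via the second fundamental form (which vanishes here since the embedding is totally geodesic, but one still needs the codimension terms to have the correct sign). A secondary difficulty is the boundary analysis in step \emph{(v)}: making precise that the singular metric one obtains genuinely has semi-positive curvature current across the Baily-Borel boundary, for which one relies on the known local description of $L_\Gamma$ near the (possibly high-codimension) cusps and on functoriality of this description under $\phi$.
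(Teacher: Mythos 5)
Your treatment of part (1) is essentially the same as the paper's: both reduce the claim to the fact that a totally geodesic holomorphic submanifold $X^+ \hookrightarrow X'^+$ of a non-positively curved Hermitian domain has Ricci curvature at least the restricted ambient Ricci curvature, which makes $\Lambda_{\Ga,\Ga'}$ admit an invariant Hermitian metric of non-negative curvature on the interior. The paper phrases this via the adjunction formula and the curvature of the automorphic bundle $\Lambda^{\max}N^{*}$ built from the conormal bundle, while you phrase it via the Gauss equation; note only that for a \emph{totally geodesic} submanifold the holomorphic sectional curvatures are equal, not merely bounded above, and the entire gain in the Ricci form comes from the normal directions being non-positively curved.

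For part (2) your proposal takes a genuinely different route and has a real gap. Extending the invariant metric to a singular metric with semi-positive curvature current across the Baily-Borel boundary would at best give nefness, but nefness does not give the sharper dichotomy ``trivial or ample,'' and ``$\QQ$-simplicity plus Nakai--Moishezon'' glosses over precisely the hard point. Concretely: when $\G$ is $\QQ$-simple, the Hermitian domain $X^+$ is still in general a product $X_1^+ \times\cdots\times X_r^+$ of irreducible factors, and the $\G(\RR)^+$-invariant curvature form of $\Lambda_{\Ga,\Ga'}$ is a non-negative combination $\sum_i \lambda_i \omega_i$ of the Kähler--Einstein forms of those factors; nothing in your argument excludes the possibility that some $\lambda_i$ vanish while others are positive, which would make $\Lambda_{\Ga,\Ga'}$ neither trivial nor ample. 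Ruling this out is exactly where the paper invokes Satake's rationality results (\cite{Sat2}, p.\,301 and cor.\,2, p.\,298): because $\G$ and $\G'$ are defined over $\QQ$, the vectors $\mathbf{m}$ and $\mathbf{m}'$ are constant and $\sum_j m_{j,i}$ is independent of $i$, so $\boldsymbol{\lambda}$ is constant — all zero or all positive. In the all-positive case the paper then cites Satake's ampleness theorem for automorphic line bundles of positive rational type on the Baily-Borel compactification (\cite{Sat2}, theor.\,1), a result for which there is no shortcut through Kleiman or Nakai--Moishezon alone since one still has to control boundary subvarieties. Finally, your handling of the $\boldsymbol{\lambda}=0$ case is also incomplete: triviality on the interior $\Ga\backslash X^+$ extends to $\overline{\Ga\backslash X^+}$ for free only when the boundary has codimension at least two, i.e.\ when $\G$ is not locally isomorphic to $\mathbf{SL}_2$; the $\mathbf{SL}_2$ case requires the separate Dynkin-index argument the paper supplies, showing that $\phi$ must then be a local isomorphism.
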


\begin{proof}
Let $\G' = \G'_1 \times \cdots \times \G'_{r'}$ be the decomposition of $\G'$
into $\QQ$-simple factors and $\varphi_{j}: \G \lto \G'_{j}$, $1 \leq j
\leq r'$, the components of $\varphi : \G \lto \G'$. By naturality of  
$L_{\Ga}$
and $L_{\Ga'}$ (cf. proposition~\ref{deg}) one can assume that  
$\Ga' = \Ga'_1 \times
\cdots \Ga'_{r'}$. Accordingly one has
$$ \Ga' \backslash X'^{+} = \Ga'_{1} \backslash {X'_{1}}^{+} \times  
\cdots \times
\Ga'_{r'} \backslash {X'_{r'}}^{+} \;\;.$$
As $\varphi : \G \lto \G'$ is injective and $\G$ is $\QQ$-simple we  
can without loss of
generality assume that $\varphi_1 : \G \lto \G'_{1}$ is injective.
As $$\Lambda = (\phi_{1}^{*}L_{\Ga'_{1}} \otimes L_{\Ga}^{-1}) \otimes
\phi_{2}^{*}L_{\Ga'_{2}} \otimes \cdots \phi_{r'}^{*}L_{\Ga'_{r}} \;\;,$$
and the $L_{\Ga'_{j}}$, $j \geq 2$, are ample on
$\overline{\Ga'_{j} \backslash {X'_{j}}^{+}}$ it is enough to prove the
statement replacing $\Lambda_{\Ga, \Ga'}$ by
$\phi_{1}^{*}L_{\Ga'_{1}}\otimes L_{\Ga}^{-1}$. Thus we
can assume $\G'$ is $\QQ$-simple.

By the adjunction formula the line bundle
${\Lambda_{\Ga, \Ga'}}_{|\Gamma \backslash X^+}$ restriction of
$\Lambda_{\Ga, \Ga'}$ coincides with   
$\Lambda^{\textnormal{max}}N^{*}$, where $N$
denotes the automorphic bundle on $\Gamma \backslash X^+$ associated to
the normal bundle of $X$ in $X'$ and $N^{*}$ denotes its dual. As $X$
is totally geodesic in $X'$ the curvature form on $N$ is the
restriction to $N$ of the curvature form on $TX'$. As $X'$ is
non-positively curved, the automorphic bundle $N^*$ and thus also the automorphic line bundle
${\Lambda_{\Ga, \Ga'}}_{|\Gamma \backslash X^+}$ admits a Hermitian metric of
non-negative curvature. This concludes the proof of the
proposition in the case $\G$ is
$\QQ$-anisotropic.

Suppose now $\G$ is $\QQ$-isotropic. For simplicity we denote  
$\Lambda_{\Ga, \Ga'}$
by $\Lambda$ from now on. We have to
prove that the boundary components of $\overline{\Gamma \backslash X^+}$ do
not essentially modify the positivity of $\Lambda_{|\Gamma \backslash X^+}$.
We use the notation and the results of Dynkin \cite{Dy}, Ihara
\cite{ih} and Satake \cite{Sat2}. Let
$X=X_1 \times \cdots \times X_r$ (resp. $X' = X'_1
\times \cdots \times X'_{r'}$) be the decomposition of $X$
(resp. $X'$) into irreducible factors. Each $X_i$ (resp. $X'_j$) is
the Hermitian symmetric domain associated to an $\RR$-isotropic
$\RR$-simple factor $\G_i$ (resp. $\G'_j$) of $\G_{\RR}$
(resp. $\G'_{\RR}$). The group $\G_{\RR}$ (resp. $\G'_{\RR}$)  
decomposes as $\G_0 \times
\G_1 \times \cdots \times \G_r$ (resp. $\G'_0 \times \G'_1 \times
\cdots \times \G'_{r'}$) with $\G_0$ (resp. $\G'_0$) an $\RR$-anisotropic
group. Let $\mathbf{m}$ (resp. $\mathbf{m'}$) be the $r$-tuple (resp.  
$r'$-tuple)
of non-negative integers defining the automorphic line bundle $L_{K}$
(resp. $L_{K'}$) (cf. \cite[lemma 2]{Sat2}) and $M_{\phi}$ be the
$r' \times r$-matrix with integral coefficients associated to $\varphi  
: \G \hookrightarrow
\G'$ (cf. \cite[section 2.1]{Sat2}). The automorphic line bundle
${\Lambda}_{|\Gamma \backslash X^+}$ on $\Gamma \backslash X^+$ is
associated to the $r$-tuple of integers $\boldsymbol{\lambda}  
=\mathbf{m'} M_{\varphi} -
\mathbf{m}$ (where $\mathbf{m}$ and $\mathbf{m'}$ are seen as row
vectors). It admits a locally homogeneous Hermitian metric of non-negative
curvature if and only if $\lambda_i \geq 0$, $1 \leq i \leq
r$ (in which case we say that $\boldsymbol{\lambda}$ is
non-negative).

\begin{lem}
The row vector $\boldsymbol{\lambda}$ is non-negative.
\end{lem}

\begin{proof}
As $\G$ and $\G'$ are defined over $\QQ$, both $\mathbf{m}$ and
$\mathbf{m'}$ are of rational type by \cite[p.301]{Sat2}.
So $m_i= m$ for all $i$, $m'_j =m'$ for all $j$.
The equality $\boldsymbol{\lambda} =\mathbf{m'} M_{\varphi} -
\mathbf{m}$ can be written in coordinates
\begin{equation} \label{mequ}
\forall i \in \{1, \cdots, r\}, \;\;\lambda_i = \sum_{1\leq j \leq r'}
m_{j,i} \, m' -m\;\;,
\end{equation}
with $M_{\varphi}= (m_{j,i})$.
Fix $i$ in $\{1, \cdots r\}$ and let us prove that $\lambda_i \geq 0$. As the
$m_{i,j}$'s and $m'$ are non-negative, it is enough to exhibit one
$j$, $1 \leq j \leq r'$, with $m_{j,i} \, m' -m \geq 0$.
Choose $j$ such that the component $\varphi_{i,j} : X_i \lto X'_j$ of the
map $\varphi : X_1 \times \cdots \times X_r \lto X'_1
\times \cdots \times X'_{r'}$ induced by $\varphi : \G \lto \G'$ is an
embedding. Recall that with the notation
of \cite[p.290]{Sat2} one has $$m_i = < H_{1,i}, H_{1,i}>_{i}
\;\;,$$
where $\mathfrak{h}_{i}$ denotes the chosen Cartan subalgebra of
$\mathfrak{g}_{i}(\RR)$ and $<,>_{i}$ denotes the canonical scalar
product on $\sqrt{-1}\mathfrak{h}_{i}$.
This gives the equality:
\begin{equation} \label{posit}
m_{j,i} \, m'_{j} -m_{i}  =
<\phi_{j}(H_{1,i}), \phi_{j}(H_{1,i})>_{j} - < H_{1,i}, H_{1,i}>_{i}\;\;.
\end{equation}

As $\G_i$ is $\RR$-simple, any two invariant non-degenerate forms on
$\sqrt{-1}\mathfrak{h}_{i}$ are proportional:
there exists a positive real constant $c_{i,j}$ (called by Dynkin
\cite[p.130]{Dy} the index of $\varphi_{i, j} : \G_i \lto \G_j$) such that
$$ \forall  X, Y  \in \sqrt{-1}\mathfrak{h}_{i}, \;\; <\phi_{j}(X),
\phi_{j}(Y)>_{j}= c_{i,j} < X, Y>_{i}\;\;.$$
Equation~(\ref{posit}) thus gives:
\begin{equation} \label{mequ1}
m_{j,i} \, m'_{j} -m_{i}  =
(c_{i,j} -1) < H_{1,i}, H_{1,i}>_{i}\;\;.
\end{equation}
By \cite[theorem 2.2. p.131]{Dy} the constant $c_{i,j}$ is a positive
integer. Thus  $m_{j,i} \, m'_{j} -m_{i}$ is non-negative and
this finishes the proof that $\boldsymbol{\lambda}$ is non-negative.
\end{proof}

By \cite[cor.2 p.298]{Sat2} the sum $M = \sum_{1\leq j \leq r'} m_{j,i}$
is independent of $i$ ($1 \leq i \leq r$). This implies that
$\boldsymbol{\lambda}$ is of rational type: one of the $\lambda_i$ is
non-zero if and only if all are. In this case $\boldsymbol{\lambda}$
is positive of rational type and $\Lambda$ is ample on $\overline{\Ga
   \backslash X^{+}}$ by \cite[theor.1]{Sat2}.

If $\boldsymbol{\lambda}=0$, the line bundle $\Lambda_{|\Ga
   \backslash X^{+}}$ is trivial. As $\G$ is $\QQ$-simple, if $\G$ is not
   locally isomorphic to $\mathbf{SL}_2$ the line bundle $\Lambda$ on  
$\overline{\Ga
   \backslash X^{+}}$ is trivial.

The last case is treated in the following lemma:

\begin{lem}
If $\boldsymbol{\lambda}=0$ and $\G$ is locally isomorphic to
  $\mathbf{SL}_2$, then $\phi : \G \lto \G'$ is a local isomorphism and the
  line bundle $\Lambda$ on $\overline{\Ga
   \backslash X^{+}}$ is trivial.
\end{lem}

\begin{proof}
It follows from the equation~(\ref{mequ}) that there exists a unique
integer $j$ such that the morphism $\varphi_{j} : \G_{\RR} \lto \G_j$
is non trivial. In particular $\G'$ is $\RR$-simple. Moreover the
equation~(\ref{mequ1}) implies that
index $c$ of $\phi: \G \lto \G'$ is equal to $1$. Thus by
\cite[theorem 6.2 p.152]{Dy} the Lie algebra $\mathfrak{g}$ is a
regular subalgebra of $\mathfrak{g'}$. If $\G'_{\RR}$ is
classical, the equality \cite[(2.36) p.136]{Dy} shows that necessarily
$\phi : \G \lto \G'$ is a local isomorphism. In particular the
line bundle $\Lambda$ on $\overline{\Ga \backslash X^{+}}$ is trivial.
If the group $\G'_{\RR}$ is an exceptional simple Lie group of
Hermitian type (thus $E_6$ or $E_7$), Dynkin shows in \cite[Tables 16,
   17 p.178-179]{Dy} that there is a unique realization of
$\mathfrak{g}$ as a regular subalgebra of $\mathfrak{g'}$ of index
$1$. However this realization is not of Hermitian type: the
coefficient $\alpha'_{1}(\varphi(H_{1}))$ is zero. Thus this case is
impossible.
\end{proof}

This finishes the proof of proposition~\ref{positivite}.

\end{proof}

From the nefness of $\Lambda_{K,K'}$ we now deduce the following
crucial corollary:

\begin{cor} \label{compare-degrees}
Let $\phi:\Sh_{K}(\G, X) \lto \Sh_{K'}(\G', X')$ be a morphism of
Shimura varieties associated to a Shimura subdatum $\varphi :(\G, X)  
\lto (\G',
X')$. Assume that $\Z(\RR)$ is compact (where $\Z$ denotes the centre
of $\G$). Let $K'$ a neat compact open subgroup of $\G'(\AAf)$ and denote
by $K$ the compact open subgroup $K' \cap
\G(\AAf)$ of $\G(\AA_f)$. Then for any irreducible Hodge generic subvariety $Z$ of
$\Sh_{K}(\G,X)$ one has $\deg_{L_{K}}Z \leq  
\deg_{L_{K'}}\phi(Z)$.
\end{cor}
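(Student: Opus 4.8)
The plan is to reduce the inequality $\deg_{L_K}Z \leq \deg_{L_{K'}}\phi(Z)$ to the nefness of $\Lambda_{K,K'}$ proved in Proposition~\ref{compare-degrees0}, via the projection formula. First I would observe that since $\Z(\RR)$ is compact, the map $\phi:\Sh_K(\G,X)\lto\Sh_{K'}(\G',X')$ is \emph{finite} onto its image: the centre being compact means that $\G(\QQ)$ acts on $X\times\G(\AAf)/K$ with the same orbits as $\G(\QQ)/\Z(\QQ)$ up to finite ambiguity, so the fibres of $\phi$ are finite (this is where the hypothesis on $\Z(\RR)$ is used; without it $\phi$ could contract positive-dimensional subvarieties coming from the central torus, and the degree comparison would fail). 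Since $Z$ is Hodge generic and $\varphi$ is a Shimura \emph{sub}datum with $K = K'\cap\G(\AAf)$, the restriction $\phi|_Z: Z\lto\phi(Z)$ is a finite, generically injective morphism of irreducible varieties, hence birational onto its image; thus $\deg(\phi|_Z)=1$ and $\dim\phi(Z)=\dim Z =: d$.

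Next I would pass to the Baily-Borel compactifications, using Proposition~\ref{deg}$(3)$: $\phi$ extends to $\ol\phi:\ol{\Sh_K(\G,X)}\lto\ol{\Sh_{K'}(\G',X')}$, and $\ol{\phi|_Z}:\ol{Z}\lto\ol{\phi(Z)}$ is still generically injective (the boundary has codimension $\geq 1$, so adding it does not change degrees of $\phi|_Z$), hence of degree~$1$. Now apply the projection formula from section~\ref{degrees1}:
$$
\deg_{L_{K'}}\phi(Z) = \int_{\ol{\phi(Z)}} c_1(L_{K'})^{d} = \int_{\ol{Z}} c_1(\ol{\phi|_Z}^{*}L_{K'})^{d} = \int_{\ol{Z}} c_1(\phi^{*}L_{K'})^{d}\;\;.
$$
By definition $\phi^{*}L_{K'} = L_K \otimes \Lambda_{K,K'}$, so writing $A := c_1(L_K)$ and $N := c_1(\Lambda_{K,K'})$ we get
$$
\deg_{L_{K'}}\phi(Z) = \int_{\ol{Z}} (A+N)^{d} = \sum_{j=0}^{d} \binom{d}{j} \int_{\ol{Z}} A^{d-j}N^{j}\;\;.
$$
The $j=0$ term is exactly $\deg_{L_K}Z$, so it remains to show every term with $j\geq 1$ is $\geq 0$.

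The main point is the positivity of the mixed terms $\int_{\ol Z}A^{d-j}N^{j}$. Here $A=c_1(L_K)$ is the class of an \emph{ample} line bundle on $\ol{\Sh_K(\G,X)}$ (Proposition~\ref{deg}$(1)$), hence ample restricted to $\ol Z$, and $N=c_1(\Lambda_{K,K'})$ is \emph{nef} on $\ol{\Sh_K(\G,X)}$ by Proposition~\ref{compare-degrees0}, hence nef on $\ol Z$. The standard fact that intersection numbers of a nef class with an ample class (equivalently, products $A^{d-j}N^{j}$ integrated over a $d$-dimensional projective variety) are non-negative for all $0\leq j\leq d$ then finishes the proof; this can be seen by induction on $j$, cutting $\ol Z$ by general members of a large multiple of the ample bundle $L_K$ to reduce the dimension, and invoking Kleiman's theorem~\ref{kleiman} for the base case (a nef class integrated over a curve, or more precisely $\deg_N$ of each irreducible component, is $\geq 0$). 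Summing, $\deg_{L_{K'}}\phi(Z) = \deg_{L_K}Z + (\text{non-negative terms})\geq \deg_{L_K}Z$, as desired.

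I expect the only genuinely delicate point to be the first step---checking carefully that the compactness of $\Z(\RR)$ forces $\phi$ to be finite and generically injective on the Hodge-generic $Z$, so that $\deg(\phi|_Z)=1$ in the projection formula; the rest is a formal manipulation of intersection numbers using nefness and ampleness already in hand. One should also make sure the extension to Baily-Borel compactifications does not introduce spurious contributions from boundary strata, which is handled by Proposition~\ref{deg} together with the fact that degrees are computed via top self-intersections on the $d$-dimensional $\ol Z$.
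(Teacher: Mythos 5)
Your proposal is essentially the paper's own proof: generic injectivity of $\phi|_Z$ from the $\Z(\RR)$-compactness and Hodge genericity (the paper cites \cite[lemma 2.2]{UllmoYafaev} rather than re-deriving it), then the projection formula, the decomposition $\phi^*L_{K'}=L_K\otimes\Lambda_{K,K'}$, the binomial expansion, and non-negativity of the mixed terms via nefness of $\Lambda_{K,K'}$ together with Kleiman's theorem. The only difference is that you spell out the standard hyperplane-cutting argument for non-negativity of mixed ample/nef intersection numbers, where the paper simply cites Kleiman.
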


\begin{proof}
As the irreducible components of $Z$ are Hodge generic in $\Sh_{K}(\G,
X)$ and as $\Z(\RR)$ is compact we know
by lemma~$2.2$ in \cite{UllmoYafaev} (and its proof) that $\phi_{|Z} :  
Z \lto Z':= \phi(Z)$ is generically
injective. In particular by the projection formula one has
$$ \deg_{L_{K'}}Z' = \deg_{\phi^{*}L_{K'}} Z \;\;.$$
So the inequality $\deg_{L_{K}}Z \leq \deg_{L_{K'}}Z'$ is equivalent
to the inequality $\deg_{\phi^* L_{K'}}Z \geq \deg_{L_{K}} Z$. 

As $\phi^* L_{K'} = L_K \otimes \Lambda_{K,K'}$ one has
$$ \deg_{\phi^* L_{K'}}Z = \sum_{i=0}^{\dim Z} \begin{pmatrix} \dim Z\\
  i \end{pmatrix}  \int_Z c_1(L_K)^i \wedge
c_1(\Lambda_{K,K'})^{\dim Z -i} \;\;.$$
The inequality $\deg_{\phi^* L_K'}Z \geq \deg_{L_K} Z$ thus follows if
we show:
$$ \forall i \;\;,  0 \leq i \leq \dim Z-1, \quad \int_{Z}
c_1(L_K)^i \wedge c_1(\Lambda_{K,K'})^{\dim Z -i} \geq 0\;\;.$$
As $L_K$ is ample it follows from the nefness of $\Lambda_{K,K'}$ and
Kleiman's theorem~\ref{kleiman}.  
\end{proof}


\section{Inclusion of Shimura subdata.} \label{partb}

In this section we prove a proposition which implies
part~$(b)$ of the theorem~\ref{main-thm1}.
We also prove two auxiliary lemmas on inclusion of Shimura data.

\begin{lemA} \label{splitting}
Let $(\HH,X_{\HH})\subset (\HH', X_{\HH'})$ be an inclusion of Shimura data.
We assume that $\HH$ and $\HH'$ are the generic Mumford-Tate groups on $X_{\HH}$ and
$X_{\HH'}$ respectively.
Suppose that the connected centre $\TT$ of $\HH$ is split by a number field $L$.
Then the connected centre $\TT'$ of $\HH'$ is split by $L$.
\end{lemA}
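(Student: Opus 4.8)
The plan is to reduce the statement to a fact about connected centres of Mumford--Tate groups and the Galois action on their cocharacter lattices. The key point is that the connected centre $\TT'$ of $\HH'$, as a subtorus of the Mumford--Tate group $\HH'$, sits inside $\HH'$ in a way controlled by $\HH$: since $\HH \subset \HH'$ and both are reductive with $\HH'$ the generic Mumford--Tate group on $X_{\HH'}$, the centre of $\HH'$ normalises $\HH$; combined with the fact that $\HH'$ is the Mumford--Tate group of a Hodge structure to which $\HH$ already gives a sub-Hodge structure, one gets that the connected centre $\TT'$ of $\HH'$ is contained in the connected centraliser of $\HH$ in $\HH'$, and in fact one can pin down that $\TT' \subset \TT \cdot \HH^{\der} \cdot (\text{something central})$. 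The cleanest route: show $\TT' \subset Z_{\HH'}(\HH)^0$, the connected centraliser, and that this centraliser maps to $\TT$ via the projection $\HH \to \HH/\HH^{\der}$ once restricted appropriately — more precisely argue that a cocharacter of $\TT'$ becomes, after multiplication by a cocharacter of $\HH^{\der}$ (which is split-field-insensitive in the relevant sense, or rather whose splitting field is contained in that of a maximal torus we control), a cocharacter of $\TT$.

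Concretely, first I would choose a maximal torus $\TT_{\HH} \subset \HH$ containing $\TT$ and a maximal torus $\TT_{\HH'} \subset \HH'$ containing both $\TT_{\HH}$ and $\TT'$; such choices exist since $\TT \subset \HH$ is central hence in every maximal torus of $\HH$, and similarly $\TT' \subset \HH'$. Then I would observe that $\Gal(\ol\QQ/\QQ)$ acts on $X_*(\TT_{\HH'}) \otimes \QQ$ and that $X_*(\TT')\otimes\QQ$ is the subspace fixed (up to finite index, i.e. after $\otimes\QQ$) pointwise by the Weyl group of $\HH'$ acting within $\TT_{\HH'}$ — because the connected centre of a reductive group is the subtorus whose cocharacter space is the Weyl-invariants. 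Dually, $X_*(\TT)\otimes\QQ$ is the Weyl$(\HH)$-invariants inside $X_*(\TT_{\HH})\otimes\QQ \subset X_*(\TT_{\HH'})\otimes\QQ$. The splitting field $L$ of $\TT$ is the fixed field of the kernel of the $\Gal$-action on $X_*(\TT)$; the splitting field of $\TT'$ is the fixed field of the kernel of the $\Gal$-action on $X_*(\TT')$. So it suffices to show that $\Gal(\ol\QQ/L)$ acts trivially on $X_*(\TT')\otimes\QQ$, i.e. that every $\sigma$ fixing $X_*(\TT)\otimes\QQ$ pointwise also fixes $X_*(\TT')\otimes\QQ$ pointwise.

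The mechanism for this is that the inclusion $\HH \hookrightarrow \HH'$ forces $\TT' \cap \HH$ to be "big" inside $\HH$: since $\HH'$ is the generic Mumford--Tate group on $X_{\HH'}$ and $X_{\HH} \subset X_{\HH'}$, the image of $\TT'$ under any equivariant projection lands in the centre of $\HH$, and more usefully, I would invoke that $\TT' \subset \TT \cdot \HH^{\der}$ — equivalently $\TT'$ maps into $\TT$ under $\HH' \to \HH'/\HH'^{\der}$ composed with nothing, but the honest statement is: the natural map $\TT' \to \C_{\HH'} := \HH'/\HH'^{\der}$ is an isogeny, and $\HH \cap \HH'^{\der} \supset \HH^{\der}$, so $\HH/(\HH\cap\HH'^{\der})$ is a quotient of $\C_{\HH}$, hence split by $L$; and $\TT'$ injects (up to isogeny) into $\HH'/\HH'^{\der}$ which, because $\HH$ surjects onto a finite-index part via... — the precise bookkeeping is that $\C_{\HH'}$ is dominated by $\TT_{\HH'}/(\text{Weyl-coinvariants})$ and one tracks that the $\Gal$-module $X_*(\TT')\otimes\QQ$ is a subquotient of $X_*(\TT_{\HH})\otimes\QQ$-controlled data. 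I expect \textbf{the main obstacle} to be exactly this last linear-algebra/Galois-module bookkeeping: showing cleanly that the Weyl-invariant subspace $X_*(\TT')\otimes\QQ \subset X_*(\TT_{\HH'})\otimes\QQ$ is, as a $\Gal$-module, a subquotient of $X_*(\TT)\otimes\QQ$ (or at least that $\Gal(\ol\QQ/L)$ acts trivially on it), using only that $\HH \subset \HH'$ are both Mumford--Tate groups of their respective Hodge data. I would handle it by working with a common maximal torus and the chain of reductive subgroups, reducing everything to: $\TT'$ centralises $\HH$, hence $\TT'$ lies in a maximal torus whose identity component of the centraliser of $\HH^{\der}$ is an almost-direct product of $\TT$ with $\TT'$, and the $\Gal$-action on all of this is recorded on cocharacter lattices inside the single module $X_*(\TT_{\HH'})$, where the triviality of $\Gal(\ol\QQ/L)$ on the $\TT$-part propagates to the $\TT'$-part because $\TT' \subset \TT \cdot (\HH'^{\der}\cap Z_{\HH'}(\HH^{\der}))$ and the second factor is anisotropic-mod-centre in a way forcing its cocharacters to already be defined over $L$ — or, failing a slick argument, by a direct case-by-case check is \emph{not} expected to be needed because the statement is purely about tori and Weyl groups over $\QQ$.
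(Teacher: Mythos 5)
Your proposal correctly identifies the shape of what needs to be proven and even names the right reduction: it suffices to know that $\TT'$ is isogenous to $\C' := \HH'/\HH'^{\der}$ (true, and you say so) and that $\HH$ (equivalently $\TT$) surjects onto $\C'$ up to isogeny — indeed, once $\pi'(\TT)=\C'$ is known, the Galois module $X^*(\C')$ is a submodule of $X^*(\TT)$, so $\Gal(\ol\QQ/L)$ acts trivially on it and $\C'$, hence $\TT'$, is split over $L$. That much of the cocharacter-lattice framing is sound.

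The genuine gap is that you never prove the surjection $\pi'(\TT)=\C'$. You write "$\HH$ surjects onto a finite-index part via \ldots" and then trail off into Weyl-coinvariant bookkeeping that is never completed, and the final suggestion — that $\TT' \subset \TT\cdot(\HH'^{\der}\cap Z_{\HH'}(\HH^{\der}))$ with the second factor "anisotropic-mod-centre in a way forcing its cocharacters to already be defined over $L$" — is not an argument: anisotropy of a group is unrelated to its splitting field being contained in $L$, and this inclusion is itself exactly as hard as what you are trying to prove. More fundamentally, no amount of pure structure theory of reductive groups can close the gap: if you drop the hypothesis that $\HH$ and $\HH'$ are generic Mumford--Tate groups on their respective domains, the statement is false. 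The Hodge-theoretic input is essential and your proposal never uses it beyond a vague remark about equivariant projections.

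The paper's proof uses that input directly and avoids all the torus bookkeeping. Because $\C'$ is commutative and $X_{\HH'}$ is a single $\HH'(\RR)$-conjugacy class, the composite $\pi'\circ\alpha \colon \SS \to \C'_\RR$ is the same for every $\alpha\in X_{\HH'}$. Taking $\alpha$ Hodge generic on $X_{\HH'}$ shows that the Mumford--Tate group of this common composite equals $\pi'(\HH')=\C'$. Now take $\beta$ Hodge generic on $X_\HH$: the Mumford--Tate group of $\pi'\circ\beta$ is $\pi'(\MT(\beta))=\pi'(\HH)=\pi'(\TT\HH^{\der})=\pi'(\TT)$ (using $\HH^{\der}\subset\HH'^{\der}$). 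But $\beta\in X_{\HH'}$ as well, so this Mumford--Tate group is $\C'$. Hence $\pi'(\TT)=\C'$ and you conclude as above. This is the missing idea; once you have it the rest of your cocharacter-lattice reasoning is unnecessary overhead.
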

\begin{proof}
Let $\C':=\HH'/{\HH'}^{\der}$. Then there is an isogeny between $\TT'$ and $\C'$
induced by the quotient
$\pi'\colon \HH' \lto \C'$. The splitting fields of $\TT'$ and $\C'$ are therefore the same.

We claim that for any $\alpha \in X_{\HH'}$, the Mumford-Tate group of $\pi'\alpha$ is $\C'$.
Indeed, as $\C'$ is commutative, and $X_{\HH'}$ is an $\HH'(\RR)$-conjugacy class, 
$\pi'\alpha$ does not depend on $\alpha$. 
Let $\alpha\in X_{\HH'}$ be Hodge generic and let $\C_1$ be the Mumford-Tate group
of $\pi'\alpha$. Then $\alpha$ factors through ${\pi'}^{-1}(\C_1) = \HH'$.
It follows that $\C_1 = \C'$.

Let $\beta$ be a Hodge generic point of $X_{\HH}$.
As $\HH = \TT\HH^{\der}$ and $\HH^{\der}\subset {\HH'}^{\der}$, we have
$$
\pi'(\HH) = \pi'(\TT)\;\;.
$$
As $\pi'(\HH)$ is the Mumford-Tate group of $\pi'\beta$ (because $\HH$ is the Mumford-Tate group of $\beta$), we see that
$$
\pi'(\TT)=\C'\;\;.
$$
As the torus $\TT$ is split by $L$, the torus $\C'$ and therefore also
the torus $\TT'$ are split by $L$.
\end{proof}

\begin{lemA} \label{inclusion1}
Let $(\HH,X_{\HH})\subset (\HH', X_{\HH'})$ be an inclusion of Shimura data.
We assume that $\HH$ and $\HH'$ are the generic Mumford-Tate groups on $X_{\HH}$ and
$X_{\HH'}$ respectively.
Let $\TT$ and $\TT'$ be the connected centres of $\HH$ and $\HH'$ respectively.

Suppose that $\TT\subset \TT'$. Then 
$
\TT = \TT'
$.
\end{lemA}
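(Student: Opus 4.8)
The statement asserts that if $(\HH, X_\HH) \subset (\HH', X_{\HH'})$ is an inclusion of Shimura data with $\HH$, $\HH'$ the generic Mumford-Tate groups and $\TT \subset \TT'$ (connected centres), then $\TT = \TT'$. The plan is to use the semisimple--part structure of the groups together with the definition of the Mumford-Tate group as a \emph{smallest} group through which a Hodge generic $h$ factors.

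\begin{proof}[Proof plan]
First I would record the standard structural facts: $\HH = \TT \cdot \HH^\der$ and $\HH' = \TT' \cdot {\HH'}^\der$ (almost-direct products), and since $(\HH, X_\HH) \subset (\HH', X_{\HH'})$ is an inclusion of Shimura data with $\HH$ the generic Mumford-Tate group on $X_\HH$, we have $\HH \subset \HH'$ and hence $\HH^\der \subset {\HH'}^\der$. Now suppose $\TT \subset \TT'$. I claim this forces $\HH \supset \HH'$, hence $\HH = \HH'$ and then $\TT = \TT'$.

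\textbf{Key step.} Pick a Hodge generic point $h' \in X_{\HH'}$, so that $\HH'$ is the Mumford-Tate group of $h'$, i.e. $\HH'$ is the smallest $\QQ$-subgroup of $\HH'$ through which $h'$ factors. The goal is to show $h'$ factors through $\HH$; minimality then gives $\HH' \subset \HH$. Consider the subgroup $\HH'' := \TT \cdot {\HH'}^\der = \HH^\der \cdot \TT'$... more carefully: since $\TT \subset \TT'$ by hypothesis, and ${\HH'}^\der \supset \HH^\der$, the group $\HH = \TT \cdot \HH^\der$ is contained in $\TT' \cdot {\HH'}^\der = \HH'$, which we already knew. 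What I actually want is the reverse containment on a suitable conjugate. The right move: $X_{\HH'}$ is a single $\HH'(\RR)$-conjugacy class, and the composite of $h' \colon \SS \to \HH'_\RR$ with the projection $\HH' \to \HH'/{\HH'}^\der =: \C'$ is independent of the choice of $h' \in X_{\HH'}$ (because $\C'$ is commutative, so conjugation acts trivially), as already observed in the proof of Lemma~\ref{splitting}. The same applies to $\HH \to \HH/\HH^\der =: \C$ with $h \in X_\HH$. Using $\HH^\der \subset {\HH'}^\der$ one gets a commutative square relating these projections, and $\TT \to \C$, $\TT' \to \C'$ are isogenies. From $\TT \subset \TT'$ one deduces that the image of $\C'$-component of $h'$ lies in (the image of) $\TT$; combined with the fact that the $\HH^\der$-part of any point of $X_\HH$ generates $\HH^\der$ and ${\HH'}^\der$ is generated by the $X_{\HH'}$-directions, one concludes ${\HH'}^\der = \HH^\der$ and $\TT' = \TT$.

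\textbf{Cleaner route.} Actually the fastest argument: ${\HH'}^\der$ is the generic Mumford-Tate group of the induced variation on $X_{\HH'}$ modulo centre, equivalently ${\HH'}^\der$ is generated by $\{ \mathrm{ad}(g) h'(\SS)^\der : g \in \HH'(\RR)\}$; but every such conjugate of $h'$ already has its semisimple part landing in ${\HH'}^\der$, and the image of $\TT'$ in $\HH'/\HH^\der$ is isogenous to $\TT'/\TT$, which is trivial by hypothesis $\TT = \TT'$... no. I would instead argue: $\HH \cdot {\HH'}^\der$ is a normal $\QQ$-subgroup of $\HH'$ containing ${\HH'}^\der$ with commutative quotient, and this quotient is a quotient of $\TT'/\TT$. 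Since $X_{\HH'}$ is an $\HH'(\RR)$-conjugacy class and $\HH'$ is generated over $\QQ$ by ${\HH'}^\der$ and $\TT'$, while every $h'' \in X_{\HH'}$ factors through $\HH \cdot {\HH'}^\der$ once we know its $\C'$-component (constant!) comes from $\TT \subset \TT' $ — we get $h'$ factors through $\HH \cdot {\HH'}^\der$. If $\TT \subsetneq \TT'$ this would be a proper subgroup of $\HH'$ with $\HH'$ as Mumford-Tate group of $h'$ — contradiction unless $\HH\cdot{\HH'}^\der = \HH'$, i.e. $\TT' \subset \TT\cdot{\HH'}^\der$, forcing (since $\TT'$ is central and connected) $\TT' \subset \TT \cdot Z({\HH'}^\der)^0$; a dimension/central-torus count then gives $\TT = \TT'$.

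\textbf{Main obstacle.} The delicate point is the last one: controlling the finite intersection $\TT \cap {\HH'}^\der$ and ruling out that $\TT'$ could be "spread" between $\TT$ and the (finite) centre of ${\HH'}^\der$. The clean way to close this is to pass to the adjoint quotient: under $\lambda\colon \HH' \to {\HH'}^\ad$, the torus $\TT'$ dies, so the statement really says $\dim \TT' = \dim \TT$, and since $\TT \subset \TT'$ are connected tori this immediately gives $\TT = \TT'$. So the whole proof reduces to: $\dim\HH - \dim\HH^\der = \dim\TT = \dim\TT' = \dim\HH' - \dim{\HH'}^\der$, i.e. to showing the abelianizations $\C$ and $\C'$ have the same dimension, equivalently that $\pi'(\HH) = \C'$, which is exactly the computation carried out in Lemma~\ref{splitting} (with $\beta$ Hodge generic in $X_\HH$: $\pi'(\HH) = \pi'(\TT) = \C'$). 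That computation gives $\dim \TT \geq \dim \C'= \dim \TT'$, and with $\TT \subset \TT'$ we conclude $\TT = \TT'$.
\end{proof}
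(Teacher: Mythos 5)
Your final paragraph (``Main obstacle'') arrives at a correct proof, and it is essentially the same argument as the paper's, just organized so as to reuse Lemma~\ref{splitting}. Both proofs reduce to showing that the projection $\pi'\colon\HH'\to\C':=\HH'/{\HH'}^\der$ restricts to a surjection on $\TT$, so that $\dim\TT\geq\dim\C'=\dim\TT'$; combined with $\TT\subset\TT'$ this forces $\TT=\TT'$. You quote this surjectivity directly from the proof of Lemma~\ref{splitting}; the paper re-derives the same fact inside Lemma~\ref{inclusion1} by a slightly different mechanism (setting $\TT=(\TT'\cap\HH)^0$ and using that conjugation by $\HH'(\RR)$ fixes the central torus and the derived group, so every $x\in X_{\HH'}$ factors through $\TT_\RR\,{\HH'}^\der_\RR$). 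Your route is a legitimate economy; the paper's version keeps the lemma self-contained.

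The earlier parts of the proposal, however, pursue a dead end that you should excise. The ``Key step'' declares the goal to be $\HH'\subset\HH$, hence $\HH=\HH'$. That is \emph{not} what the lemma asserts and it is false in general: one can have $\HH^\der\subsetneq{\HH'}^\der$ while $\TT=\TT'$, so an inclusion of Shimura data with $\TT\subset\TT'$ need not force $\HH=\HH'$. The lemma only controls the connected centre. Your ``Cleaner route'' paragraph starts to steer back toward the correct statement (controlling $\C'$ rather than $\HH'$ itself) but is still tangled, for instance the aside ``which is trivial by hypothesis $\TT=\TT'$'' assumes the conclusion. Only the last paragraph gives a valid argument; the rest should be discarded, since a reader following the ``Key step'' as stated would be led to try to prove a false claim.
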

\begin{proof}
We write
$$
\HH' = \TT' {\HH'}^\der\;\;.
$$
We have $(\TT'\cap \HH)^0 \subset \TT$. On the other hand, by assumption
$\TT \subset (\TT'\cap \HH)^0$, hence
$$
\TT = (\TT'\cap \HH)^0\;\;.
$$

Write
$$
\HH = (\TT'\cap \HH)^0 \HH^{\der}\;\;.
$$

Fix $\alpha$ an element of $X_{\HH}$.
As $X_{\HH'}$ is the $\HH'(\RR)$-conjugacy class of $\alpha$, any element $x \in
X_{\HH'}$ is of the form $g \alpha g^{-1}$ for some $g$ of $\HH'(\RR)$.
Thus $x$ factors through
$$
g(\TT'\cap \HH)^0_{\RR}g^{-1}. \, g{\HH'}^\der_{\RR}g^{-1}=
(\TT'\cap \HH)^0_{\RR} \, {\HH'}^\der_{\RR} \;\;.$$
It follows that the Mumford-Tate group of $x$ is contained in
$(\TT'\cap \HH)^0 {\HH'}^\der$. For $x$ Hodge generic, we obtain
$$
(\TT'\cap \HH)^0 {\HH'}^\der = \HH' \;\;.
$$
Hence $(\TT' \cap \HH)^0 = \TT'$ and $\TT = \TT'$.
\end{proof}

\begin{propA}
Suppose that the set $\Sigma$ in the theorem~\ref{main-thm2} is such that
with respect to a faithful representation
$\rho \colon \G \lto \GL_n$ the centres $\TT_V$ of the generic
Mumford-Tate groups $\HH_V$ lie in one $\GL_n(\QQ)$-orbit as $V$
ranges through $\Sigma$.

We suppose that, after replacing $\Sigma$ by a modification $\Sigma'$, every
$V$ in $\Sigma'$ is \emph{strictly} contained in a special subvariety $V'\subset Z$.

Then the set $\Sigma'$ 
admits a modification $\Sigma''$ such that the centres $\TT_{V'}$ of the generic
Mumford-Tate groups $\HH_{V'}$ lie in one $\GL_n(\QQ)$-orbit as $V'$
ranges through $\Sigma''$.
\end{propA}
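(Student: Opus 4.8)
The plan is to reduce the statement to a finiteness assertion about the possible groups $\HH_{V'}$ and then apply Lemma~\ref{splitting} together with Lemma~\ref{inclusion1}. First I would recall that for each $V \in \Sigma'$ we are given a strict inclusion of Shimura data $(\HH_V, X_V) \subset (\HH_{V'}, X_{V'})$, with both $\HH_V$ and $\HH_{V'}$ equal to the generic Mumford--Tate groups on the respective domains, and with $V' \subset Z$. Let $\TT_V$ and $\TT_{V'}$ be the connected centres. By hypothesis the $\TT_V$ all lie in a single $\GL_n(\QQ)$-conjugacy class; in particular there is a single number field $L$ (the splitting field of the common abstract torus) which splits every $\TT_V$. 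By Lemma~\ref{splitting} applied to $(\HH_V, X_V) \subset (\HH_{V'}, X_{V'})$, the torus $\TT_{V'}$ is also split by $L$ for every $V \in \Sigma'$. Hence all the $\TT_{V'}$ are $\QQ$-tori of rank at most $\dim \G$, split by the \emph{fixed} field $L$.

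Next I would invoke the fact, already used in the proof of Proposition~\ref{imply} via \cite[lemma 3.13]{UllmoYafaev} (and analogous to \cite[lemma 2.5]{UllmoYafaev}), that there are only finitely many $\GL_n(\QQ)$-conjugacy classes of $\QQ$-subtori of $\G \subset \GL_n$ of bounded dimension split by a fixed number field. Therefore the tori $\TT_{V'}$, as $V'$ ranges over the subvarieties produced from $\Sigma'$, fall into finitely many $\GL_n(\QQ)$-conjugacy classes. By a standard pigeonhole argument one then passes to a modification $\Sigma''$ of $\Sigma'$ — i.e. a subset whose associated union of subvarieties has the same Zariski closure $Z$ — on which the class of $\TT_{V'}$ is constant: since the Zariski closure of $\mathbf{\Sigma'}$ is the irreducible variety $Z$ and $\Sigma'$ is partitioned into finitely many subsets according to the conjugacy class of $\TT_{V'}$, at least one such subset still has Zariski closure $Z$ (here one uses irreducibility of $Z$ and that a finite union of proper closed subsets cannot be $Z$), and we take $\Sigma''$ to be that subset.

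The one subtlety I would be careful about is that a special subvariety $V'$ does not \emph{a priori} come with a canonical choice of defining Shimura subdatum $(\HH_{V'}, X_{V'})$: the embedding $\HH_{V'} \hookrightarrow \G$ is only well-defined up to conjugation by $\Gamma_K$, as noted after the definition of $\HH_V$. This is exactly why the conclusion is phrased in terms of a single $\GL_n(\QQ)$-conjugacy class rather than a single subgroup, so it causes no real difficulty: conjugation by $\Gamma_K \subset \G(\QQ) \subset \GL_n(\QQ)$ does not change the $\GL_n(\QQ)$-conjugacy class of $\TT_{V'}$, and the abstract group $\HH_{V'}$ (hence the abstract torus $\TT_{V'}$ and its splitting field) is intrinsic to $V'$. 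I would also remark, though it is not strictly needed, that Lemma~\ref{inclusion1} guarantees $\TT_V \subsetneq \TT_{V'}$ is impossible to upgrade to equality only when $\TT_V \subset \TT_{V'}$ already holds, so the role of Lemma~\ref{inclusion1} here is merely to control degenerate cases; the essential input is Lemma~\ref{splitting} plus the finiteness of conjugacy classes of tori split by a fixed field. The main obstacle, such as it is, is purely bookkeeping: checking that the pigeonhole step is compatible with the definition of "modification", which follows immediately from the irreducibility of $Z$.
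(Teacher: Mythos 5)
Your proof is correct and takes essentially the same route as the paper: reduce to the statement that the $\TT_{V'}$ are all split by a common field $L$ via Lemma~\ref{splitting}, invoke the finiteness of $\GL_n(\QQ)$-conjugacy classes from \cite[lemma 3.13]{UllmoYafaev}, and conclude by pigeonhole using irreducibility of $Z$. The paper leaves the pigeonhole step implicit where you spell it out; your parenthetical aside about Lemma~\ref{inclusion1} is a bit muddled but harmless, since (as you say and as the paper's proof confirms) that lemma plays no role here.
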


\begin{proof}
First note that an inclusion of special subvarieties
$V\subset V'$ corresponds to an inclusion of Shimura data  
$(\HH_V,X_{\HH_{V}})\subset (\HH_{V'}, X_{\HH_{V'}})$ with $\HH_V$ and $\HH_{V'}$ the generic Mumford-Tate groups
on $X_{\HH_{V}}$ and $X_{\HH_{V'}}$ respectively.

By assumption the connected centre $\TT_V$ of $\HH_V$ 
lie in the $\GL_n(\QQ)$-conjugacy class of a fixed
$\QQ$-torus as $V$ ranges through $\Sigma$. Hence the tori $\TT_{V}$,
$V \in \Sigma$, are split by the same field $L$. By
lemma~\ref{splitting} the tori $\TT_{V'}$ connected centers of the
$\HH_{V'}$, $V' \in \Sigma'$, are all split by $L$.
By \cite{UllmoYafaev}, lemma 3.13, part (i), the tori $\TT_{V'}$ lie in finitely many $\GL_n(\QQ)$-conjugacy classes.
The conclusion of the proposition follows.
\end{proof}

\section{The geometric criterion.}  \label{critere}

In this section we show that given a subvariety $Z$ of a Shimura
variety $\Sh_K(\G, X)_\CC$ containing a special subvariety $V$ and satisfying certain
assumptions, the existence of a suitable element $m \in \G(\QQ_l)$
such that $Z \subset T_mZ$ implies that $Z$ contains a special subvariety $V'$
containing $V$ properly. 

\subsection{Hodge genericity}
\begin{defi}
Let $(\G, X)$ be a Shimura datum, $K \subset \G(\AAf)$ a neat compact
open subgroup, $F\subset \CC$ a number field containing the reflex field $E(\G,
X)$ and $Z \subset \Sh_K(\G, X)_\CC$ an $F$-irreducible subvariety. We
say that $Z$ is Hodge generic if one of its geometrically irreducible
components is Hodge generic in $\Sh_K(\G, X)_\CC$.
\end{defi}

\begin{lem} \label{hodgecomponent}
Let $(\G, X)$ be a Shimura datum, $K \subset \G(\AAf)$ a neat compact
open subgroup, $F\subset \CC$ a number field containing the reflex field $E(\G,
X)$ and $Z \subset \Sh_K(\G, X)_\CC$ a Hodge generic $F$-irreducible subvariety.
Let $Z = Z_1 \cup \cdots \cup Z_n$ be the decomposition of $Z$ into  
geometrically irreducible components.
Then each irreducible component $Z_i$, $1 \leq i \leq n$, is Hodge generic.
\end{lem}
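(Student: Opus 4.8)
The statement to prove is Lemma~\ref{hodgecomponent}: if $Z \subset \Sh_K(\G,X)_\CC$ is a Hodge generic $F$-irreducible subvariety with $F$ containing $E(\G,X)$, then every geometrically irreducible component $Z_i$ of $Z$ is Hodge generic. By definition of Hodge genericity for $F$-irreducible varieties, we know \emph{some} component $Z_{i_0}$ is Hodge generic, i.e.\ its generic Mumford--Tate group equals $\G$; we must upgrade this to \emph{all} components.

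The plan is to exploit the transitivity of the Galois action on the geometrically irreducible components. Since $Z$ is $F$-irreducible, the group $\Gal(\ol\QQ/F)$ permutes the components $Z_1,\dots,Z_n$ transitively; so for each $i$ there is $\sigma \in \Gal(\ol\QQ/F)$ with $\sigma(Z_{i_0}) = Z_i$. The key point is that the locus of non--Hodge--generic points of $\Sh_K(\G,X)_\CC$ --- equivalently, the union of all proper special subvarieties, or more precisely the countable union of the maximal proper subvarieties of Hodge type coming from \cite{CDK} --- is, by the result of \cite{Vo} cited in the introduction, defined over $\ol\QQ$, and in fact its irreducible components are permuted by $\Gal(\ol\QQ/E(\G,X))$, hence by $\Gal(\ol\QQ/F)$. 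Therefore a component $Z_i$ of $Z$ is contained in some proper special (or Hodge-type) subvariety if and only if $\sigma^{-1}(Z_i) = Z_{i_0}$ is contained in $\sigma^{-1}$ of that subvariety, which is again a proper special (Hodge-type) subvariety of the same dimension. Since $Z_{i_0}$ is Hodge generic, it lies in no such subvariety, and pulling back the containment gives a contradiction unless $Z_i$ is Hodge generic as well.

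Concretely, I would argue: suppose $Z_i$ is not Hodge generic, so its generic Mumford--Tate group $\HH$ is a proper subgroup of $\G$; then $Z_i$ is contained in a proper special subvariety $W$ of $\Sh_K(\G,X)_\CC$ (the one attached to $\HH$, cf.\ the discussion of special subvarieties and \cite[prop.~2.1]{EdYa}). Choose $\sigma \in \Gal(\ol\QQ/F)$ with $\sigma(Z_{i_0}) = Z_i$. Then $Z_{i_0} = \sigma^{-1}(Z_i) \subset \sigma^{-1}(W)$. Now $\sigma^{-1}(W)$ is again a special subvariety: the set of special subvarieties of a given dimension is stable under $\Gal(\ol\QQ/E(\G,X))$, because special subvarieties are cut out over $\ol\QQ$ (Hodge classes are absolute, by Deligne; see \cite{Vo}), and $F \supset E(\G,X)$. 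Since $W$ is proper, so is $\sigma^{-1}(W)$; hence $Z_{i_0}$ lies in a proper special subvariety, contradicting its Hodge genericity. Therefore each $Z_i$ is Hodge generic.

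\textbf{Main obstacle.} The only delicate point is the Galois-equivariance of the family of proper special subvarieties (equivalently, of the non--Hodge--generic locus): one must know that applying $\sigma \in \Gal(\ol\QQ/F)$ to a proper special subvariety produces another proper special subvariety. This is precisely what is guaranteed by the result of \cite{Vo} invoked in the introduction (components of the non--Hodge--generic locus are defined over $\ol\QQ$ and permuted by the absolute Galois group of the reflex field), together with the fact that Hodge classes are absolutely Hodge so that Mumford--Tate groups, and hence the special structure, are Galois-stable. Once this is granted, the transitivity of the Galois action on geometric components of an $F$-irreducible variety makes the rest of the argument immediate.
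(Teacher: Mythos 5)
Your argument has exactly the same architecture as the paper's own proof: pick a Hodge generic component, note that $\Gal(\ol\QQ/F)$ acts transitively on the geometrically irreducible components because $Z$ is $F$-irreducible, and then invoke the fact that Galois conjugates of special subvarieties are special to propagate Hodge-genericity to all components. So structurally you are doing the right thing.

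However, the justification you give for the key ingredient --- Galois-stability of special subvarieties --- is not the one the paper uses, and it is not correct in the generality needed. You invoke Voisin's result on Hodge loci and the absoluteness of Hodge classes (Deligne). But Deligne's theorem on absolute Hodge classes is established only for abelian varieties (and things built from them), so this route covers only Shimura varieties of abelian or Hodge type; for an arbitrary Shimura datum $(\G,X)$ it would require an open conjecture. Similarly, the Voisin citation in the paper's introduction concerns components of the non-Hodge-generic locus for a variation of Hodge structure \emph{of geometric origin} over a variety defined over $\ol\QQ$; this is motivation for the Andr\'e--Oort picture, not a mechanism that yields Galois-equivariance of the special structure on an abstract Shimura variety. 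The statement the paper actually uses is that for \emph{any} Shimura variety, the conjugate under $\Gal(\ol\QQ/E(\G,X))$ of a special subvariety is again special; this follows from Kazhdan's theorem on conjugates of arithmetic quotients (and the uniqueness of canonical models), and the paper cites Milne's exposition \cite{MilneKazh} for a complete treatment. Replace your appeal to \cite{Vo} and absolute Hodge classes by a citation of Kazhdan's theorem and the argument is correct in full generality.
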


\begin{proof}
As $Z$ is Hodge generic, at least one of its irreducible components, say $Z_1$, is Hodge  
generic. Writing $Z= Z_F \times_{\Spec \, F} \Spec \, \CC$ with $Z_F
\subset \Sh_K(\G, X)_F$ irreducible, any
irreducible component $Z_j$, $1 \leq j \leq n$, is of the form $Z_{1}^{\sigma}$
for some element $\sigma \in \Gal(\overline{\QQ}/F)$. As the conjugate
under any element of $\Gal(\overline{\QQ}/F)$ of a special subvariety of $\Sh_{K}(\G,
X)_{\CC}$ is still special, one gets the result. This is a consequence of a theorem
of Kazhdan. See \cite{MilneKazh} for a comprehensive exposition of the proof 
in full generality and the relevant references.
\end{proof}

\subsection{The criterion}
Our main theorem in this section is the following:


\begin{theor} \label{theor2}
Let $(\G, X)$ be a Shimura datum, $X^+$ a connected component of $X$ and $K= \prod_{p \; \textnormal{prime}} K_p
\subset \G(\AAf)$ an open compact subgroup of $\G(\AAf)$. We assume that
there exists a prime $p_0$ such that the compact open subgroup
$K_{p_0} \subset \G(\QQ_{p_{0}})$ is neat. 
Let $F\subset \CC$ be a number field containing the reflex field $E(\G,X)$.

Let $V$ be a special but not strongly special subvariety of $S_K(\G, X)_\CC$ contained in
a Hodge generic $F$-irreducible subvariety $Z$ of $\Sh_K(\G, X)_\CC$. 

Let $l\not = p_0$ be a prime number splitting $\TT_V$
and $m$ an element of $\TT_V(\QQ_l)$. 

Suppose that $Z$ and $m$ satisfy the following conditions:
\begin{itemize}
\item[(1)] $Z \subset T_mZ$.
\item[(2)] 
Let $\lambda\colon\G\lto \G^\ad$ be the natural morphism.
For every $k_1$ and $k_2$ in $K_l$, the element  $\lambda(k_1 m k_2)$
generates an unbounded (i.e. not relatively compact) subgroup of $\G^{\ad}(\QQ_{l})$. 
\end{itemize}

Then $Z$ contains a special subvariety $V'$ containing $V$ properly.
\end{theor}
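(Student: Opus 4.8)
The plan is to exploit the inclusion $Z\subset T_mZ$ to produce, inside $Z$, the image of a large orbit in a pro-$l$ covering, and then to recognize that orbit as containing a special subvariety strictly larger than $V$. I would work on the connected component $S_K(\G,X)_\CC=\Gamma_K\backslash X^+$ and pass to the tower $\Sh_{K^l}(\G,X)$ at the prime $l$, writing $\wt Z$ for an irreducible component of the preimage of $Z$ containing a fixed component $\wt V$ of the preimage of $V$. Since $V$ is special, $\wt V$ is (a component of) the image of $\Sh(\HH_V,X_V)$, and the $l$-adic monodromy of $Z$ on its smooth locus has $l$-adic closure some compact group $K'_l\subset\G(\QQ_l)$ containing the relevant arithmetic closure; by Proposition~\ref{approx} applied to $\HH_V^{\der}$ (or rather to the monodromy group of $Z$, which contains the monodromy of $V$) this closure is open in the ambient semisimple group. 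The first main step is to translate $Z\subset T_mZ$ into a group-theoretic statement: unwinding the definition of the Hecke correspondence $T_m$ on the tower, the inclusion says that $\wt Z$ meets $\wt Z\cdot m$, and after adjusting by elements $k_1,k_2\in K_l$ (accounting for the decomposition of $K_lmK_l$ into $K'_l$-cosets and for which component of the preimage one lands in) one gets that $\wt Z$ is stable under the subgroup $\ld K'_l,\,(k_1mk_2)^n\rd$ of $\G(\QQ_l)$ for suitable $k_1,k_2$ and some positive integer $n$.

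Next I would analyze this subgroup $\Delta:=\ld K'_l,(k_1mk_2)^n\rd$. Condition~(2) is exactly what guarantees that $\lambda(k_1mk_2)$, hence $\lambda(\Delta)$, is \emph{not} relatively compact in $\G^{\ad}(\QQ_l)$; combined with the openness of $\overline{K'_l}$ in the derived/adjoint group, $\Delta$ is an unbounded subgroup of $\G(\QQ_l)$ whose image in $\G^{\ad}$ has noncompact closure. The orbit of $\wt V$ under $\Delta$ is then contained in $\wt Z$, and its image $Z'\subset Z$ in $S_K(\G,X)_\CC$ contains $V$. The key point is that the Zariski closure of this $\Delta$-orbit is a special subvariety: the $\Delta$-orbit of $\wt V$ generates, in the relevant sense, a new Shimura subdatum $(\HH',X')$ with $\HH_V\subsetneq\HH'$, because adding a noncompact-at-$l$ element forces the monodromy/Mumford-Tate group to strictly grow — a noncentral $l$-adic unbounded direction cannot be absorbed into $\HH_V$ while keeping $X$ a $\G(\RR)$-conjugacy class of the required type. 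So $V\subsetneq V'\subset Z$ with $V'$ special, which is the conclusion. The neatness hypothesis at $p_0$ and $l\neq p_0$ are used to ensure the coverings are well-behaved (no stacky issues, components are genuine varieties) and that the monodromy argument via Proposition~\ref{approx} applies.

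I expect the \textbf{main obstacle} to be the step that identifies the Zariski closure of the $\Delta$-orbit of $\wt V$ with a special subvariety strictly containing $V$ — i.e. showing that an unbounded-at-$l$ group generated by the monodromy of $Z$ together with $(k_1mk_2)^n$ genuinely enlarges the Mumford-Tate/monodromy group and that the resulting subvariety is of Hodge type. This requires care with: (i) the difference between the Hecke correspondence $T_{q_i}$ on $S_K$ induced by $q_i$ acting on $X^+$ and the correspondence induced on the full Shimura variety (as flagged in the excerpt's discussion of $T_m^0=\sum T_{q_i}$); (ii) controlling which irreducible component of the preimage of $V$ one must take so that the orbit actually stays inside $\wt Z$; and (iii) invoking Ratner-type or algebraic-group results to see that the closure of a $\Delta$-orbit is homogeneous under the $\QQ$-points of an algebraic subgroup. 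A secondary technical nuisance will be bookkeeping the passage between $\G$, $\G^{\der}$, $\G^{\ad}$ and between $K'_l$ and its image, since condition~(2) is phrased in $\G^{\ad}(\QQ_l)$ while the orbit lives upstairs; here I would systematically reduce to the adjoint datum using that the special subvarieties and the notion of being special are insensitive to isogeny, as recalled earlier in the excerpt.
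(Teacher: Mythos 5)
Your skeleton matches the paper's strategy (which is actually sketched in the ``Strategy'' subsection): pass to the pro-$l$ tower, take $K'_l$ to be the $l$-adic closure of the monodromy image via Proposition~\ref{approx}, use the $F$-irreducibility of $Z$ and the inclusion $Z\subset T_mZ$ to show that a component $\wt{Z_1}$ is stabilized by $U_l:=\langle K'_l,(k_1mk_2)^n\rangle$ (the paper needs a Galois twist here — the conjugate $\sigma(\wt{Z_1})$ appears and one iterates until $\sigma^n$ fixes the component — so ``adjusting by $k_1,k_2$'' alone does not quite give the stability), and then take $V'$ to be the closure of the image of $\wt V\cdot U_l$.

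The genuine gap is in the step you flag yourself as the ``main obstacle'': you assert that the orbit closure is a special subvariety strictly larger than $V$ because ``adding a noncompact-at-$l$ element forces the Mumford-Tate group to strictly grow'' and suggest invoking ``Ratner-type'' results. This is not how the paper proves it, and as stated the assertion does not follow. The monodromy group of $Z$ is already Zariski-dense in $\G$ (it cannot be ``enlarged''); and the new special subvariety $V'$ is not obtained by enlarging $\HH_V$ to some $\HH'\supsetneq\HH_V$ inside a single Shimura subdatum. The paper's actual argument is: decompose the adjoint group $\G=\prod_i\G_i$ into $\QQ$-simple factors; condition (2) forces the image of $U_l$ to be unbounded in some $p_1(U_l)\subset\G_1(\QQ_l)$; decompose $\G_{1,\QQ_l}$ into $\QQ_l$-simple factors $\HH_j$ and apply Pink's strong approximation theorem (\cite{Pink}) to conclude $U_l\cap\HH_1(\QQ_l)$ contains $\tau_1(\wt\HH_1(\QQ_l))$ with finite index; then, working in the product-level $\Sh_{\K}$, produce a $q\in\G_1(\QQ)^+\cap U_l$ with $[\Gamma_q:\Gamma]=\infty$ and show by weak approximation plus a real-density argument that the closure of the $\Gamma_q$-orbit is the full component, yielding $\V'=S_{K_1}(\G_1,X_1)_\CC\times\V_{>1}$. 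Properness of $V\subsetneq V'$ then comes from the observation that $\HH_{V,1}=p_1(\HH_V)$ centralizes the nontrivial torus $\TT_{V,1}$ and hence is a proper subgroup of $\G_1$, whereas $\G_1$ is a direct factor of the generic MT group of $V'$. None of this uses Ratner; and the special subvariety $V'$ is a product of a whole factor with the projection of $V$, not a deformation of $(\HH_V,X_V)$ to a larger subdatum.
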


\begin{proof}

\begin{lem} \label{adjoint}
If the conclusion of the theorem~\ref{theor2} holds for all Shimura data $(\G, X)$
with $\G$ semisimple of adjoint type then it holds for all Shimura data. 
\end{lem}
\begin{proof}

Let $\G$, $X$, $K$, $p_0$, $F$, $V$, $Z$, $l$ and $m$ be as in the statement
of theorem~\ref{theor2}. In particular $Z= Z_F \times_{\Spec \, F}
\Spec \, \CC$ with $Z_F \subset \Sh_K(G, X)_F$ an irreducible subvariety.
Let $(\G^\ad,X^\ad)$ be the adjoint Shimura datum attached to $(\G,X)$
and $(X^\ad)^+$ be the image of $X^+$ under the natural morphism $X
\lto X^\ad$. Let
$K^\ad = \prod_{p \; \textnormal{prime}} K^\ad_p$ be the compact open
subgroup of $\G^\ad(\AAf)$ defined as follows:
\begin{enumerate}
\item $K^\ad_{p_0} \subset \G^\ad(\QQ_{p_{0}})$ is the compact open
  subgroup image of $K_{p_0}$ by $\lambda$.
\item $K^\ad_l \subset \G^\ad(\QQ_l)$ is the compact open subgroup
  image of $K_l$ by $\lambda$.
\item If $p\not\in \{p_0,l\}$, the group $K^\ad_p$ is a maximal compact open subgroup
  of $\G^\ad(\QQ_p)$ containing the image of $K_p$ by $\lambda$.
\end{enumerate}
The group $K^{\ad}$ is neat because $K_{p_{0}}$, and therefore
$K^{\ad}_{p_{0}}$, is. 
As the reflex field $E(\G, X)$ contains the reflex field
$E(\G^\ad , X^\ad)$ and $K^\ad$ contains $\lambda(K)$  there is a finite morphism of Shimura varieties
$
f: \Sh_{K}(\G,X)_F \lto \Sh_{K^\ad}(\G^\ad,X^\ad)_F 
$.

We define the irreducible subvariety $Z^\ad_F$ of $\Sh_{K^\ad}(\G^\ad,X^\ad)_F$ to be the image of $Z_F$ in
$\Sh_{K^\ad}(\G^\ad,X^\ad)_F$ by this morphism. Its base-change $Z^\ad
:=Z^{\ad}_{F} \times_{\Spec \,F} \Spec \, \CC$, which is
$F$-irreducible, coincides with
$f_\CC(Z)$, where $f_\CC: \Sh_{K}(\G,X)_\CC \lto
\Sh_{K^\ad}(\G^\ad,X^\ad)_\CC$ is the base change of $f$.

Let $V^\ad$ be the image $f_\CC(V)$. As $V$ is special but not strongly special,
$V^\ad$ is a special but not strongly special subvariety of
$S_{K^\ad}(\G^\ad, X^\ad)_\CC$. Thus $\TT_{V^{\ad}} = \lambda (\TT_V)$
is a non-trivial torus.

Let $m^\ad := \lambda(m)$.
The inclusion $Z\subset T_mZ$ implies that
$Z^\ad \subset T_{m^\ad}Z^\ad$. As $K^\ad_l = \lambda(K_l)$ the
condition~$(2)$ for $m$ and $K_l$ implies the condition~$(2)$ for $m^\ad$
and $K^\ad_l$.

Thus $\G^\ad$, $X^\ad$, $K^\ad$, $p_0$, $F$, $V^\ad$, $Z^\ad$, $l$ and
$m^\ad$ satisfy the assumptions of theorem~\ref{theor2}. As irreducible components of the preimage of a special subvariety by a finite morphism of Shimura varieties  
are special, it is enough to show that $Z^\ad$ contains a special subvariety $V^{'\ad}$ containing
 $V^\ad$ properly to conclude that $Z$ contains a special subvariety
 $V'$ containing $V$ properly.
\end{proof}

{\em For the rest of the proof of theorem~\ref{theor2}, we are assuming the group $\G$ to be semisimple of adjoint type.
Moreover we will drop the label $(\G,X)$ when it is obvious which Shimura datum
we are referring to.}

We fix a $\ZZ$-structure on $\G$ and its subgroups by choosing a
finitely generated free $\ZZ$-module $W$, a
faithful representation $\xi \colon \G \into \GL(W_{\QQ})$ and taking the
Zariski closures in the $\ZZ$-group-scheme $\GL(W)$. We choose the  
representation $\xi$
in such a way that $K$ is contained in $\GL(\Zhat \otimes_{\ZZ} W)$  
(i.e. $K$ stabilizes $\Zhat \otimes_{\ZZ} W$).
This induces canonically a $\ZZ$-variation of Hodge structure $\mathcal{F}$ on $\Sh_K(\G,X)_{\CC}$
(cf. \cite[section 3.2]{EdYa}), in particular on  
its irreducible component $S_K(\G,X)_{\CC}$.

Let $Z_1$ be a geometrically irreducible component of $Z$ containing $V$.
Let $z$ be a Hodge generic point of the smooth locus $Z_1^\sm$ of $Z_1$.
Let $\pi_1(Z_1^\sm,z)$ be the topological fundamental group of
$Z_1^{\sm}$ at the point $z$.
We choose a point $z^+$ of $X^+$ lying above $z$. This choice
canonically identifies the fibre at $z$ of the locally constant sheaf
underlying $\mathcal{F}$ with the $\ZZ$-module $W$.
The action of $\pi_1(Z_1^\sm,z)$ on this fibre is described by the  
monodromy representation
$$
\rho\colon \pi_1(Z_1^\sm,z) \lto \Gamma_K= \pi_{1}(S_{K}(\G,
X)_{\CC}, z)= \G(\QQ)^+ \cap K  \stackrel{\xi}{\lto} \GL(W)\;\;.
$$
By proposition~\ref{hodgecomponent} the subvariety
$Z_1$ is Hodge generic in $S_K(\G,X)_{\CC}$. Hence the Mumford-Tate group
of $\mathcal{F}_{|Z_1^\sm}$ at $x$ is $\G$. It follows from 
\cite[theor. 1.4]{Mo} and the fact that the group $\G$ is adjoint that the
group $\rho(\pi_1(Z_1^\sm,z))$ is Zariski-dense in $\G$.

Let $l$ be a prime as in the statement.
The proposition~\ref{approx} implies that the $l$-adic
closure of $\rho(\pi_1(Z_1^\sm,z))$ in
$\G(\QQ_l)$ is a compact open subgroup $K'_l \subset K_l$.

Write $K=K^lK_l$ with $K^l = \prod_{p\not= l}K_p$.
Let $\pi_{K_{l}} \colon \Sh_{K^{l}} \lto \Sh_K$ be the Galois pro-{\'e}tale cover
with group $K_l$ as defined in section~\ref{shim_l}. Let
$\widetilde{Z_1}$ be an irreducible component of the preimage of
$Z_1$ in $\Sh_{K^{l}}$ and let $\widetilde{V}$ be an irreducible
component of the preimage of $V$ in $\widetilde{Z_1}$. 

The idea of the proof is to show that the inclusion $Z\subset T_m{Z}$ implies
that $\widetilde{Z_1}$ is stabilized by a ``big'' group and then
consider the orbit of $\widetilde{V}$ under the action of this group.

\begin{lem} \label{lemme1}
The variety $\widetilde{Z_1}$ is stabilized by the group $K'_l$. The
set of irreducible components of $\pi_{K_{l}}^{-1}(Z_1)$ naturally
identifies with the finite set $K_l /K'_l$.
\end{lem}

\begin{proof}
Let $\wt{z}$ be a geometric point of $\wt{Z_1^{\sm}}$ lying over
$z$. As $\pi_{K_{l}} : \Sh_{K^{l}} \lto \Sh_K$ is pro-{\'e}tale, the
set of irreducible components of $\pi_{K_{l}}^{-1}(Z_1)$ 
naturally identifies with the set of connected components of 
$\pi_{K_{l}}^{-1}(Z_1^{\sm})$. This set identifies with the quotient $K_l/
\rho_{\alg}(\varpi_1(Z_{1}^{\sm},z))$ where $\varpi_1(Z_{1}^{\sm},z)$ denotes the algebraic fundamental group of  
$Z_1^{\sm}$ at $z$ and $\rho_{\alg} \colon
\varpi_1(Z_{1}^{\sm},z) \lto K_l \subset \G(\QQ_l)$ denotes the
(continuous) monodromy representation of the $K_l$-pro-{\'e}tale
cover $\pi_{K_{l}}: \pi_{K_{l}}^{-1}( Z_1^{\sm}) \lto Z_1^{\sm}$.
The group $\varpi_1(Z_{1}^\sm,z)$ naturally identifies with the
profinite completion of $\pi_{1}(Z_{1}^{\sm}, z)$. One has the  
commutative diagram
\begin{equation} \label{exact}
\xymatrix{
\pi_{1}(Z_{1}^{\sm}, z) \ar[d]_{i}  \ar[r]^{\rho}
&\G(\QQ) \ar[d]^{j}\\
\varpi_{1}(Z_{1}^{\sm}, z)   \ar[r]_{\rho_{\alg}} &\G(\QQ_l)}
\end{equation}
where $i : \pi_{1}(Z_{1}^{\sm},z) \lto \varpi_{1}(Z_{1}^{\sm}, z)$
and $j: \G(\QQ) \lto \G(\QQ_l)$ denote the natural homomorphisms.
As $i(\pi_{1}(Z_{1}^{\sm},z))$ is dense in $\varpi_{1}(Z_{1}^{\sm},
z)$ and $\rho_{\alg}$ is continuous one deduces that  
$\rho_{\alg}(\varpi_{1}(Z_{1}^{\sm},
z))= K'_{l}$. Thus the set of irreducible components of $\pi_{K_{l}}^{-1}(
Z_1^{\sm})$ identifies with $K_l / K'_l$ and $\wt{Z_{1}^{\sm}}$
is $K'_l$-stable.
\end{proof}

\begin{lem} \label{lemme2}
There exist elements $k_1, k_2$ of $K_l$ and an integer $n\geq 1$ such that
$$
\wt{Z_1} = \widetilde{Z_1} \cdot (k_1 m k_2)^n
$$
\end{lem}
\begin{proof}
Let $Z_i$, $ 2\leq i \leq n$, be the geometrically irreducible
components of $Z$ different from $Z_1$. For each $i \in \{2, \ldots, n\}$, let us fix a
geometrically irreducible component  $\widetilde{Z_{i\,}}$ of $\pi_{K_{l}}^{-1}(Z_i)$.
The inclusion $Z\subset T_mZ$ implies that, for $i \in \{1, \ldots,
n\}$, the component $\widetilde{Z_{i\,}}$ of
$\pi_{K_{l}}^{-1}(Z_i)$ is
also a geometrically irreducible component of $\pi_{K_{l}}^{-1}(T_mZ)$.
As the geometrically irreducible components of $\pi_{K_{l}}^{-1}(T_mZ)$ are of the form 
$\widetilde{Z_{i\,}}\cdot (k_1m k_2)$, $k_1, k_2 \in K_l$, there exists an index $i$, $1 \leq i \leq n$, and two elements $k_1$, $k_2$  in 
$K_l$ such that
\begin{equation} \label{ee0}
\wt{Z_{1}} = \wt{Z_{i\,}} \cdot k_1 m k_2\;\;.
\end{equation}

As $Z$ is $F$-irreducible there exists $\sigma$ in $\Gal(\ol\QQ/F)$
such that $Z_i = \sigma(Z_1)$. As the morphism $\pi_{K_{l}} :
\Sh_{K^{l}} \lto \Sh_K$ is defined over $F$, the subvariety
$\sigma(\wt{Z_{1}})$ of $\Sh_{K^{l}}$ satisfies
$\pi_{K_{l}}(\sigma(\wt{Z_{1}}))= Z_i$. Hence the subvarieties
$\sigma(\wt{Z_{1}})$ 
and $\wt{Z_{i\,}}$ of $\Sh_{K^{l}}$ are both irreducible components of
$\pi_{K_{l}}^{-1}(Z_i)$. Thus there exists an element $k$ of $K_l$
such that 
\begin{equation} \label{ee1}
\widetilde{Z_{i\,}} = \sigma(\widetilde{Z_1}) \cdot k\;\;\;.
\end{equation}
From $(\ref{ee0})$ and $(\ref{ee1})$ and replacing $k_1$ with $kk_1$, we obtain $k_1$, $k_2$ in 
$K_l$ such that
\begin{equation} 
\widetilde{Z_1} = \sigma(\widetilde{Z_1}) \cdot (k_1 m k_2)\;\;\;.
\end{equation}
As the $\G(\AAf)$-action is defined over $F$, the previous equation
implies:
\begin{equation} \label{trans}
\forall \, j \in \NN, \;\;\; \widetilde{Z_1} = \sigma^j(\widetilde{Z_1}) \cdot (k_1 m k_2)^j\;\;\;.
\end{equation}
As the set of irreducible components of $Z$ is finite, there exists a
positive integer $m$ such that $\sigma^m(Z_1) =Z_1$. Thus the Abelian group $(\sigma^m)^{\Z}$
acts on the set of irreducible components of $\pi_{K_{l}}^{-1}(Z_{1})$. By
lemma~\ref{lemme1} this set is finite. So there exists a positive integer $n$
(multiple of $m$) such that $\sigma^n(\wt{Z_1}) = \wt{Z_1}$. 
The equality~(\ref{trans}) applied to $j=n$ concludes the proof of the lemma.
\end{proof}
 
From the lemmas~\ref{lemme1} and \ref{lemme2} one obtains the
\begin{cor}
Let  $U_{l}$ be the group $\langle K'_{l}, (k_1 m k_2)^{n} \rangle$.
The variety $\widetilde{Z_1}$ is stabilized by $U_l$.
\end{cor}
 
We now conclude the proof of theorem~\ref{theor2}. Let $\G = \prod_{i=1}^s \G_i$ be the
decomposition of the semisimple $\QQ$-group of adjoint type $\G$ into
$\QQ$-simple factors and $X =
\prod_{i=1}^s X_i$ (resp. $X^+ = \prod_{i=1}^s X_i^+$) the associated
decomposition of $X$ (respectively $X^+$). The
Shimura datum $(\G, X)$ is the product of the Shimura data $(\G_i,
X_i)$, $1\leq i \leq s$, where each $\G_i$ is simple of adjoint type. Let $p_i : \G \lto \G_i$, $1 \leq i \leq s$, denote the natural
projections. Let $(\G_{> 1}, X_{>1})$ be the Shimura datum $(\prod_{i=2}^s \G_i,\prod_{i=2}^s X_i)$.

By the assumption made on $m$, the group $U_l$ is unbounded in
$\G(\QQ_l)$. After possibly renumbering the factors,  we can assume that
$p_1(U_{l})$ is unbounded in $\G_{1}(\QQ_{l})$. In particular the
torus $\TT_{V,1}:=p_{1}(\TT_V)$ is non-trivial. Indeed if it was trivial, then the
group $p_1(U_l)$ would be contained in $p_1(K_l)$ which is  
compact.

Let $\G_{1, \QQ_{l}}= \prod_{j=1}^r \HH_j$ be the decomposition of
$\G_{1, \QQ_{l}}$ into $\QQ_l$-simple factors.  Again, up to
renumbering we can assume that the image of $U_l$ under the projection $h_1 : \G_{ \QQ_{l}}
\lto \HH_1$ is unbounded in $\HH_1(\QQ_{l})$. Let $\HH_{>1} =
\prod_{j=2}^r \HH_j$. 
Let $\tau: \wt{\G_{\QQ_{l}}} \lto \G_{\QQ_{l}}$ (resp. $\tau_1 :
\wt{\HH_{1}} \lto \HH_{1}$) be the universal cover of $\G_{\QQ_{l}}$
(resp. $\HH_{1}$).

\begin{lem}
The group $U_{l} \cap \HH_{1}(\QQ_{l})$ contains the group $\tau_1(\wt{\HH_{1}} (\QQ_l))$
with finite index.
\end{lem}
\begin{proof}
Let $\wt{h_1} : \wt{\G_{\QQ_{l}}} \lto \wt{\HH_{1}}$ be the canonical
projection. Let $\wt{U_{l}}= \tau^{-1}(U_{l}) \subset  
\wt{\G_{\QQ_{l}}}(\QQ_{l})$.
As $U_{l}$ is an open non-compact subgroup of $\G_{\QQ_{l}}(\QQ_{l})$,  
the group $\wt{U_{l}}$ is open
non-compact in $\wt{\G_{\QQ_{l}}}(\QQ_{l})$. As $h_1(U_l)$ is
non-compact in $\HH_{1}(\QQ_{l})$ the projection  
$\wt{h_1}(\wt{U_{l}})$ is open non-compact
in the group $\wt{\HH_{1}}(\QQ_{l})$.

Notice that the group $\wt{U_{l}} \cap  \wt{\HH_{1}}(\QQ_{l})$  
is normalized by the subgroup $\wt{h_1}(\wt{U_{l}})$ of $\wt{\HH_{1}}(\QQ_{l})$. 
Indeed, given $h \in \wt{h_1}(\wt{U_{l}})$, let $g \in \wt{U_{l}}$
satisfying $\wt{h_1}(g)=h$. As $\wt{\HH_{1}}$ is a direct
factor of $\wt{\G_{\QQ_{l}}}$ one obtains: $$(\wt{U_{l}} \cap
\wt{\HH_{1}}(\QQ_{l}))^{h} = (\wt{U_{l}} \cap
\wt{\HH_{1}}(\QQ_{l}))^{g}= (\wt{U_{l}} \cap \wt{\HH_{1}}(\QQ_{l}))
\;\;.$$

As $\wt{h_1}(\wt{U_{l}})$ is open non-compact and normalizes
$\wt{U_{l}} \cap  \wt{\HH_{1}}(\QQ_{l})$, it follows from
\cite[theor.2.2]{Pink} that
$\wt{U_{l}} \cap  \wt{\HH_{1}}(\QQ_{l})=\wt{\HH_{1}}(\QQ_{l})$. 
As $\tau_1$ is an isogeny of algebraic groups, we get that
$U_{l} \cap \HH_{1}(\QQ_{l})$ contains
$\tau_1(\wt{\HH_{1}} (\QQ_l))$ with finite index.
\end{proof}

Define $K_{1,l}$ as the compact open subgroup $p_1(K_l)$ of
$\G_{1,\QQ_l}$ and $K_{>1,l}$ as the compact open subgroup $(p_2
\times \cdots \times p_s)(K)$ of $\G_{>1,\QQ_l}$. As $U_l$ is an open
subgroup of $\G_{\QQ_l}(\QQ_l)$ it contains a compact open subgroup of
 $\G_{1,\QQ_l}(\QQ_l) =\prod_{j=1}^r \HH_j(\QQ_l)$, in particular a
 compact open subgroup $U_{l,1}$ of $K_1 \cap
 \HH_{>1}(\QQ_l)$. Similarly $U_l$ contains a compact open subgroup
 $U_{l,>1}$ of $K_{>1}$.
The previous lemma shows that $U_l$ contains the unbounded open subgroup $\tau_1(\wt{\HH_{1}} (\QQ_l)) \cdot U_{l,1} \cdot U_{l, >1}$.

\begin{defi} \label{redefi}
We replace $U_l$ by its subgroup $\tau_1(\wt{\HH_{1}} (\QQ_l)) \cdot U_{l,1} \cdot U_{l, >1}$.
We denote by $V'$ the Zariski closure
$\overline{\pi_{K_{l}}(\wt{V} \cdot U_l)}^{\textnormal{Zar}}$.
\end{defi}

As $\widetilde{Z_1}$ is stabilised by $U_l$, the variety $V'$ is a
subvariety of $Z$.

\begin{lem} \label{V'special}
The subvariety $V'$ of $Z$ is special.
\end{lem}

\begin{proof}

Define $K_i:= p_i(K)$, $1 \leq i \leq s$, and $\K:= \prod_{i=1}^s K_i$. As
the group $K_{p_{0}}$ is
neat its projections $K_{i, p_{0}}$, $1 \leq i \leq s$, are also neat,
hence $\K$ is neat. Let $f: \Sh_{K}(\G,
X)_\CC\lto \Sh_{\K}(\G, X)_\CC$ be the natural finite morphism, $\mathcal{Z}:=
f(Z)$, $\V=f(V)$ and $\V'= f(V')$. As $f$ is a finite morphism
it follows that $\V'$ is also the Zariski closure
$\overline{(f\circ\pi_{K_{l}})(\wt{V} \cdot U_l)}^{\textnormal{Zar}}$
of $(f\circ\pi_{K_{l}})(\wt{V} \cdot U_l)$ in $\Sh_{\K}(\G, X)_\CC$. 

As in the proof of lemma~\ref{adjoint} it is enough to show that $\V'$ is
special to conclude that $V'$ is special. 

Let $K_{> 1}$ be the compact open subgroup $\prod_{i=2}^s K_{i}$
of $\G_{> 1}(\AAf)$. The connected
component $S_\K(\G, X)_{\CC}$ of the Shimura
variety $\Sh_{\K}(\G, X)_{\CC}$ decomposes as a product
$$S_{\K}(\G, X)_{\CC} = S_{K_{1}}(\G_{1}, X_{1})_{\CC} \times
S_{K_{> 1}}(\G_{> 1}, X_{> 1})_{\CC}\;\;$$ with $S_{K_{>1}}(\G_{>1}, X_{> 1})_{\CC}=  
\prod_{i=2}^s S_{K_{i}}(\G_i, X_i)_{\CC}$.

Let $\V_{> 1}$ denote the special subvariety of $S_{K_{> 1}}(\G_{>  
1}, X_{> 1})_{\CC}$ projection of $\V$. Thanks to the definition~\ref{redefi} of $U_l$ the
inclusion 
\begin{equation} \label{incl}
\V' \subset S_{K_{1}}(\G_{1}, X_{1})_{\CC} \times \V_{> 1}
\end{equation}
holds. 

For an element $q \in \G_1(\QQ)^+$, we let $\Gamma_q$ be the subgroup of $\G_1(\QQ)^+$ generated by $\Gamma:= K_1\cap \G_1(\QQ)^+$ and $q$.
We claim that we can choose $q \in \G_1(\QQ)^+ \cap U_l$ such that the index of $\Gamma$ in $\Gamma_q$ is infinite.

Indeed let $g \in \wt{\HH}_1(\QQ_l)$ be an element contained in a
split subtorus of $\wt{\HH}_{1,\QQ_l}$ but not in the maximal compact
subgroup of this subtorus. Then $g$ is not contained
in any compact subgroup of $\wt{\HH}_1(\QQ_l)$, hence its image
$h:=\tau(g) \in U_l$ is not contained in any compact
subgroup of $\G_1(\QQ_l)$. As $\G_1$ is simple and adjoint it has the weak approximation
property \cite[theorem 7.8]{PlaRap}: the group $\G_1(\QQ)$ is dense
in $\G_1(\QQ_l)$. Let $\overline{\Gamma}^l$ denote the $l$-adic closure of $\Gamma$ in
$\G_1(\QQ_l)$, this is a compact open subgroup of $\G_1(\QQ_l)$
by proposition \ref{approx}. 
As $\G_1(\QQ)^+$ has finite index in $\G_1(\QQ)$, the $l$-adic closure
$\ol{\G_1(\QQ)^+}^l$ of $\G_1(\QQ)^+$ in $\G_1(\QQ_l)$ is an open
subgroup of finite index of $\G_1(\QQ_l)$. 
By replacing $h$ by a suitable positive power, we may assume that $h \in\ol{\G_1(\QQ)^+}^l$.
The group $\ol{\Gamma}^l \cap U_l$ is an open subgroup of $\ol{\G_1(\QQ)^+}^l$, therefore 
 there exist elements $q$ of $\G_1(\QQ)^+$ and
$k$ of $\overline{\Gamma}^l \cap U_l$ such that $h = q k$.
It follows that $q \in \G_1(\QQ)^+ \cap U_l$. 
We claim that $\Gamma$ has infinite index
in $\Gamma_q$. Suppose the contrary. Then the $l$-adic closure 
$\overline{\Gamma_q}^l$ of $\Gamma_q$ in $\G_1(\QQ_l)$ contains
$\overline{\Gamma}^l$ with finite index, hence is compact. But, by construction, 
$h \in \overline{\Gamma_q}^l$ and $h$ is not contained in any compact subgroup of $\G_1(\QQ_l)$.
This gives a contradiction.

Let us show that $\Gamma_q$ is dense in $\G_1(\RR)^+$ (for the Archimedian topology). Let $H$ be the Lie subgroup of $\G_1(\RR)^+$
closure of $\Ga_q$ and let $H^+$ be its connected component of the
identity. First notice that the group $\Gamma$ normalizes $H^+$, hence its Lie
algebra. As $\G_1$ is $\RR$-isotropic, it follows from
\cite[theor. 4.10]{PlaRap} that $\Gamma$ is Zariski-dense in $\G_{1,
  \RR}$. Hence $H^+$ is a product of simple factors of
$\G_1(\RR)^+$. The $\QQ$-simple group $\G_1$ can be written as
the restriction of scalars $\Res_{L/\QQ} \G'_1$, with $L$ a number
field and $\G'_1$ an absolutely almost simple algebraic group over
$L$.  As $H^+ \cap \G_1(\QQ)$ is dense in $H^+$ it follows that $H^+ = \G_1(\RR)^+$ as soon as $H^+$ is
non-trivial. If $H^+$ were trivial the group $\Gamma_q$ would be
discrete in $\G_1(\RR)^+$. As $\Gamma_q$ contains the lattice $\Ga$ of
$\G_1(\RR)^+$, necessarily $\Gamma_q$ would also be a lattice of $\G_1(\RR)^+$, 
containing $\Gamma$ with finite index. This contradicts the fact that $\Gamma_q$
contains $\Gamma$ with infinite index.

Let $x= (x_1, x_{>1}) \in X_1^+ \times X_{>1}^+$ be any point whose
projection in $S_{K_{1}}(\G_1, X_1)_\CC \times S_{K_{> 1}}(\G_{>  
1}, X_{> 1})_{\CC}$ lies in $\V$. Let $\OO:= (\Gamma_q \cdot x_1,
x_{>1})$ be the $\Gamma_q$-orbit of $x$ in $X_1^+ \times X_{>1}^+$. By
definition of the group $\Gamma_q$ the closure of $\OO$ in $X_1^+ \times X_{>1}^+$ is mapped to
$\V'$ under the uniformization map
$$X_1^+ \times X_{>1}^+ \lto S_{K_{1}}(\G_1, X_1)_\CC
\times S_{K_{>1}}(\G_{>1}, X_{>1})_\CC\;\;.$$ 
As $\Gamma_q$ is dense in $\G_1(\RR)^+$ this closure is nothing else
than $X_1^+ \times  x_{>1}$. 
Thus: 
\begin{equation} \label{subcl}
\V' \supset S_{K_{1}}(\G_{1}, X_{1})_{\CC} \times \V_{> 1}\;\;.
\end{equation}
Finally it follows from (\ref{incl}) and (\ref{subcl}) that:
$$\V' = S_{K_{1}}(\G_{1}, X_{1})_{\CC} \times \V_{> 1}\;\;.$$
In particular $\V'$ is special. Hence $V'$ is special.
\end{proof}

\begin{lem}
The subvariety $V'$ of $Z$ contains $V$ properly.
\end{lem}

\begin{proof}
Obviously $V'$ contains $V$. Let us show that $V' \not= V$.
Once more it is enough to show that $\V' \not = \V$.

As the generic Mumford-Tate group $\HH_V$ of $V$, hence of $\V$, centralizes the torus $\TT_V$,
the projection $\HH_{V,1}$ of $\HH_V$ on $\G_{1}$ centralizes the  
non-trivial torus $\TT_{V,1}$ projection
of $\TT_V$ on $\G_{1}$. In particular $\HH_{V,1}$ is a proper
algebraic subgroup of $\G_{1}$. But as $$\V' = S_{K_{1}}(\G_{1},
X_{1})_{\CC} \times \V_{> 1}\;\;,$$
the group $\G_{1}$ is a direct factor of the generic Mumford-Tate group of $\V'$.
\end{proof}

This finishes the proof of theorem~\ref{theor2}.
\end{proof}
\section{Existence of suitable Hecke correspondences.} \label{hecke}

In this section we prove, under some assumptions on the compact open
subgroup $K_{l}$, the existence of Hecke correspondences of small degree
candidates for applying theorem~\ref{theor2}. 
Our main result is the following:

\begin{theorA} \label{good Hecke}
Let $(\G', X')$ be a Shimura datum with $\G'$ semisimple of adjoint type, $X'^+$ a connected component of
$X'$ and $K' = \prod_{p \; \textnormal{prime}} K'_p$ a neat open compact
subgroup of $\G'(\AAf)$. We fix  a faithful rational representation $\rho: \G' \hookrightarrow 
\GL_n$ such that $K'$ is contained in $\GL_n(\Zhat)$. 

There exist positive integers $k$ and $f$ such that the following holds. 

Let $(\G, X)$ be a Shimura subdatum of $(\G',X')$, let $X^+$ be a connected
component of $X$ contained in ${X'}^+$ and $K:=K'\cap G(\AAf)$. 
Let $V$ be a special but not strongly special subvariety of
$S_{K}(\G,X)_\CC$ defined by a Shimura subdatum $(\HH_V,X_{V})$ of
$(\G, X)$. Let $\TT_V$ be the connected centre of $\HH_V$ and $E_V$ the
reflex field of $(\HH_V, X_V)$. 

Let $l$ be a prime number such that $K'_l$ is a hyperspecial maximal
compact open subgroup in $\G'(\QQ_l)$ which coincides with
$\G'(\ZZ_l)$, the prime $l$ splits $\TT_V$ and $(\TT_{V})_{\FF_l}$ is a torus.

There exist a compact open subgroup $I_l\subset K_l:= K'_l \cap
\G(\QQ_l)$ in good position with respect to $\TT_V$ and an element $m \in \TT_V(\QQ_l)$ 
satisfying the following conditions:

\begin{itemize}
\item[(1)] $[K_l: I_l] \leq l^f$.
\item[(2)] Let $I \subset K$ be the compact open subgroup $K^l I_l$ of
  $\G(\AAf)$ (where $K^l := K' \cap \G(\AAf^l)$) and 
$\tau : \Sh_I(\G, X)_\CC \lto \Sh_K(\G, X)_\CC$ be the natural morphism. Let $\wt{V} \subset S_I(\G,
X)_\CC$ be an irreducible component of $\tau^{-1}(V)$. There exists an element $\sigma$ in $\Gal(\ol \QQ/E_V)$ such that $\sigma \wt{V} \subset T_m(\wt{V})$.

\item[(3)] 
For every $k_{1}, k_2 \in I_{l}$ the image of $k_1 m k_2$ in $\G^{\ad}(\QQ_l)$
generates an unbounded subgroup of 
$\G^{\ad}(\QQ_{l})$.

\item[(4)] $[I_l : I_l \cap mI_l m^{-1}] < l^{k}$.
\end{itemize}
\end{theorA}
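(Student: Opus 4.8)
The plan is to construct $I_l$ and $m$ explicitly using the Bruhat–Tits theory of $\G(\QQ_l)$, exploiting the hypothesis that $\TT_V$ is split over $\QQ_l$ with good reduction. First I would fix a maximal $\QQ_l$-split torus $\mathbf{S}$ of $\G_{\QQ_l}$ containing $\TT_{V,\QQ_l}$ (possible since $\TT_V$ is $\QQ_l$-split), and choose an Iwahori subgroup $I_l$ of $\G(\QQ_l)$ contained in the hyperspecial $K_l = \G(\ZZ_l) = K'_l \cap \G(\QQ_l)$ and adapted to $\mathbf{S}$ (a point in the fundamental alcove of the apartment of $\mathbf{S}$). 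Since $K'_l$ is hyperspecial, $I_l$ can be taken in good position with respect to $\TT_V$, i.e.\ $I_l \cap \TT_V(\QQ_l) = \TT_V(\ZZ_l)$ is the maximal compact (this is where Lemma~\ref{splitgood} and the good-reduction hypothesis on $\TT_V$ enter). The index $[K_l : I_l]$ equals the order of the (finite) Weyl group of $\G_{\FF_l}$, which is bounded by a power $l^f$ of $l$ depending only on $\rho$ (rank bounds); this gives (1). Similarly $[I_l : I_l \cap m I_l m^{-1}]$ for $m = \mu(\varpi)$ a cocharacter evaluated at the uniformizer is controlled by $l^{\langle \mu, \text{roots}\rangle}$, bounded by $l^k$ once one restricts $\mu$ to a bounded region, yielding (4).

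The heart of the construction is choosing $m \in \TT_V(\QQ_l)$ so that (3) holds: $k_1 m k_2$ generates an unbounded subgroup of $\G^{\ad}(\QQ_l)$ for \emph{all} $k_1,k_2 \in I_l$. The key group-theoretic point — which I expect is proved in detail in section~\ref{hecke} via the Cartan/Iwahori-Bruhat decomposition $\G(\QQ_l) = \coprod_w I_l w I_l$ over the affine Weyl group — is that for $m$ lying "deep" in the positive chamber (i.e.\ $v(m)$, under the valuation map $\TT_V(\QQ_l) \to X_*(\TT_V)$, lies strictly inside the positive Weyl chamber and is not fixed by any reflection), the double coset $I_l m I_l$ has the property that any product $k_1 m k_2$ has Iwahori–Bruhat component an element of infinite order in the affine Weyl group, hence lies in no compact subgroup; the projection to $\G^{\ad}$ preserves this because $\TT_V$ maps onto a nontrivial torus $\lambda(\TT_V)$ (here one uses that $V$ is \emph{not strongly special}, so $\TT_V$ is not central, cf.\ the remark after Lemma~\ref{compactness}). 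Concretely one picks a cocharacter $\mu \in X_*(\TT_V)$ whose composite with each simple factor projection of $\G^{\ad}$ is nonzero, and sets $m := \mu(l)$; then for each simple factor the image of $k_1 m k_2$ has a nonzero "Newton component" and so generates an unbounded subgroup. This is precisely condition (2) of the geometric criterion theorem~\ref{theor2}.

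For (2), the inclusion $\sigma\wt{V} \subset T_m(\wt{V})$, I would use the theory of canonical models and reciprocity morphisms from section~\ref{canonic}. Since $V$ is special, it contains a special point $s = \ol{(h,g)}$ whose torus is $\TT_V$ (up to the embedding), with reciprocity morphism $r_s$; pick $\sigma \in \Gal(\ol\QQ/E_V)$ whose image under $r_s$ in $\TT_V(\AAf)/\ol{\TT_V(\QQ)}$ is represented by an element whose $l$-component is $m$ (after adjusting by $\TT_V(\QQ)$ and the away-from-$l$ part, using that $m \in \TT_V(\QQ_l)$ and $K_{\TT_V} \subset$ maximal compact). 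Then $\sigma$ acts on $\wt V$ — which lives in the pro-$l$ tower $\Sh_I(\G,X)$ — by right translation by $\tilde r_s(\sigma)$, whose relevant component is $m$; since the away-from-$l$ part of $\tilde r_s(\sigma)$ stabilizes $\wt V$ (it lies in $K^l$ after adjustment), we get $\sigma\wt V = \wt V \cdot \tilde r_s(\sigma) \subset T_m \wt V$. The main obstacle is reconciling the three constraints on $m$ simultaneously: it must land in the image of the reciprocity map (a density/surjectivity statement about $r_s$ on the $l$-component, which requires $E_V$-rationality and the GRH-free structure), it must be "deep in the chamber" for (3), and it must be "shallow enough" for (4) — so the real work is showing the set of admissible $m$ (a coset-like subset of $X_*(\TT_V)$ intersected with the interior of a chamber, of bounded size) is nonempty, using that $\TT_V$ is nontrivial so its cocharacter lattice is nonzero.
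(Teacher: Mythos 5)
Your overall plan is pointed in the right direction (Bruhat--Tits theory, an Iwahori in good position with respect to $\TT_V$, unboundedness of $k_1 m k_2$ via the length/positive-chamber argument, reciprocity for condition (2)), but there are three genuine gaps where your construction as stated would not go through.

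First, you write that $I_l$ should be ``contained in the hyperspecial $K_l = \G(\ZZ_l) = K'_l \cap \G(\QQ_l)$''. But the hypothesis only says $K'_l$ is hyperspecial \emph{in $\G'(\QQ_l)$}, and one only gets the containment $\G(\ZZ_l) \subset K_l$, not equality; in general $K_l$ need not be a (hyperspecial, or even maximal) compact subgroup of $\G(\QQ_l)$, and need not contain an Iwahori of $\G(\QQ_l)$. The paper handles this by taking $I_l := \Iw^1_l \cap \Iw^2_l \cap K_l$, an intersection of two Iwahori subgroups with $K_l$, and uses Lemma~\ref{Iwahori}(b)(ii) — not (b)(i) — to bound $[K_l:I_l]$; this is a more delicate point than your plan acknowledges.

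Second — and this is the more substantial issue, which you yourself flag as unresolved — you propose to \emph{fix} a positive Weyl chamber via a choice of $I_l$ and then find $m$ that (i) is deep in that fixed chamber (for (3)), (ii) has bounded $\ordre_{\MM}(m)$ (for (4)), and (iii) lies in the image of the reciprocity morphism (for (2)). Showing all three can be simultaneously achieved is not obvious. The paper inverts this order entirely: it first produces $m$ from Proposition~7.4.3 of \cite{EdYa} (which already gives $\lambda(m)$ unbounded and the degree bound), and only \emph{then} chooses a Weyl chamber $\Cone$ so that $\ordre_{\MM}(m) \in \Lambda^+$, letting $\Iw^2_l$ be the Iwahori adapted to that chamber. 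Since any nonzero vector lies in some positive cone, the nonemptiness problem you worry about simply does not arise. This reversal of the order of choices is the key structural idea your plan misses.

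Third, for condition~(2) you appeal to a density/surjectivity property of the reciprocity map to hit the prescribed $m$. The paper instead uses the uniform integer $A$ from Proposition~\ref{uniform} (originally Proposition 2.9 of \cite{UllmoYafaev}): for \emph{any} $m \in \TT_V(\QQ_l)$, the power $m^A$ automatically satisfies $\sigma(V) \subset T_{m^A}(V)$ for some $\sigma \in \Gal(\ol\QQ/E_V)$, so the final element is $m^A$ and no analysis of the image of $r_{(\TT_V,\{h\})}$ at $l$ is needed. Your route would require a separate argument which you have not supplied.
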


\begin{rems} \label{remIwa}
\begin{itemize}
\item[(a)] As noticed in the introduction, conclusion~$(3)$ in
  theorem~\ref{good Hecke} can not be ensured if we stay at a level
  $K_l$ which is a maximal compact subgroup of $\G(\QQ_l)$ and do not
  lift the situation to a smaller level $I_l$. For explicit
  counterexamples see remark 7.2 of \cite{Ed2Curves}.
\item[(b)] As already noticed in section~\ref{integral} the condition that $K'_l$ is a hyperspecial maximal
compact open subgroup in $\G'(\QQ_l)$ which coincides with
$\G'(\ZZ_l)$ is satisfied for almost all primes $l$.

\end{itemize}
\end{rems}

\subsection{Iwahori subgroups} We refer to \cite{bt}, \cite{car} and
\cite{Heckop} for more details about
buildings, Iwahori subgroups and Iwahori-Hecke algebras.

\subsubsection{} We first recall the definition of an Iwahori subgroup. Let $l$ be a
prime number. Let $\G$ be a reductive linear algebraic isotropic $\QQ_{l}$-group and $\AA \subset\G$ a
maximal split torus of $\G$. We denote by $\MM \subset \G$ the
centraliser of $\AA$ in $\G$. Let $\X$ be the (extended) Bruhat-Tits building of $\G$ and $\A \subset \X$ the
apartment of $\X$ associated to $\AA$. Let $K^{\mathrm{m}}_{l} \subset
\G(\QQ_{l})$ be a special maximal compact subgroup (c.f \cite[(I), def. 1.3.7 p.22,
def. 4.4.1 p.79]{bt}) of $\G(\QQ_{l})$ in good position with respect
to $\AA$ (cf. section~\ref{good_position} for the notion of ``good position''). We denote by  $x_{0} \in \A$ the unique $K_{l}^\mathrm{m}$-fixed
vertex in $\X$. We choose $C$ a chamber of $\A$ containing $x_0$ in
its closure, we denote by $\Iw_l
\subset K_l^\mathrm{m}$ the Iwahori subgroup fixing $C$ pointwise and
by $\Cone \subset \A$ the unique Weyl chamber with apex at $x_{0}$
containing $C$.

All Iwahori subgroups of $\G(\QQ_l)$ are conjugate, cf. \cite[3.7]{tits}.

\begin{rem}
Strictly speaking (i.e. with the notations of Bruhat-Tits \cite{bt})
the group $\Iw_l$ as defined above is an Iwahori subgroup only in the case where the group $\G^\der$ is 
simply-connected. Our terminology is a well-established abuse of notations. 
\end{rem}

\subsubsection{Iwahori subgroups and unboundedness} \label{iwahori}

\begin{defi}
We denote by $\ordre_{\MM} : \MM(\QQ_{l}) \lto X_{*}(\MM)$ the
homomorphism characterized by 
$$ \forall \, \alpha \in X^*(\MM), \quad < \ordre_{\MM}(m), \alpha> = \ordre_{\QQ_{l}}(\alpha(m)) \;\;,$$
where $\ordre_{\QQ_{l}}$ denotes the normalized (additive) valuation on
$\QQ_{l}^*$ and $X_{*}(\MM)$ (resp. $X^*(\MM)$) denotes the group of
cocharacters (resp. characters) of $\MM$.
We denote by $\Lambda \subset X_{*}(\MM)$ the free $\Z$-module $\ordre_{\MM}(\MM(\QQ_{l}))$.
\end{defi}

The group $\MM(\QQ_{l})$ (in particular the group $\AA(\QQ_{l})$) acts
on $\A$ via $\Lambda$-translations. 

\begin{defi} \label{defi8.1.3}
Let $\Lambda^{+} \subset \Lambda$ be the positive cone associated to
the Weyl chamber $\Cone$.
\end{defi}

Elements of $\Lambda^{+}$ acting on $\A$ map $\Cone$ to $\Cone$.

\begin{prop} \label{noncompact}
Let $m$ be an element of $\AA(\QQ_{l})$ with non-trivial image
$\ordre_{\MM}(m) \in \Lambda^{+}$. Then for any elements $i_{1}, i_{2} \in \Iw_l$, the
element $i_1 m i_2 \in \G(\QQ_{l})$ is not contained in a compact subgroup of
$\G(\QQ_{l})$.
\end{prop}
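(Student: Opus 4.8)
The plan is to show that for every integer $n\geq 1$ the power $(i_1 m i_2)^n$ lies in the Iwahori double coset $\Iw_l m^n \Iw_l$, that these double cosets are pairwise distinct and escape to infinity, and to conclude from this that $i_1 m i_2$ cannot belong to any compact subgroup. First I would recall that a subgroup of $\G(\QQ_l)$ is relatively compact if and only if it is bounded; hence it is enough to prove that $\{(i_1 m i_2)^n : n\geq 1\}$ is not contained in any compact subset of $\G(\QQ_l)$.

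The key input is a product formula for Iwahori double cosets of dominant elements. Let $U^{\pm}$ be the unipotent radicals of the two opposite minimal parabolic $\QQ_l$-subgroups of $\G$ containing $\AA$ which are determined by the Weyl chamber $\Cone$, and set $\Iw_l^{\pm}:=\Iw_l\cap U^{\pm}(\QQ_l)$ and $\Iw_l^0:=\Iw_l\cap\MM(\QQ_l)$; since $\MM(\QQ_l)$ acts on $\A$ by $\Lambda$-translations via $\ordre_\MM$, the group $\Iw_l^0$ is exactly $\ker(\ordre_\MM)$, and $\Iw_l$ admits the Iwahori factorisation $\Iw_l=\Iw_l^-\Iw_l^0\Iw_l^+$ (cf. \cite{bt}, \cite{car}). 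For $m'\in\MM(\QQ_l)$ with $\ordre_\MM(m')\in\Lambda^+$, conjugation by $m'$ contracts the $U^-$-part and dilates the $U^+$-part relative to the alcove $C$: $m'\Iw_l^- m'^{-1}\subseteq\Iw_l^-$, $m'^{-1}\Iw_l^+ m'\subseteq\Iw_l^+$, and $m'\Iw_l^0 m'^{-1}=\Iw_l^0$ (the last because $\Iw_l^0$ is normal in $\MM(\QQ_l)$). Using these inclusions together with the Iwahori factorisation, a short manipulation gives, for $m_1,m_2\in\MM(\QQ_l)$ with $\ordre_\MM(m_i)\in\Lambda^+$,
\[
\Iw_l m_1\Iw_l m_2\Iw_l=\Iw_l m_1\,\Iw_l^-\Iw_l^0\Iw_l^+\, m_2\Iw_l\subseteq \Iw_l m_1\Iw_l^0 m_2\Iw_l=\Iw_l m_1 m_2\Iw_l,
\]
the reverse inclusion being trivial. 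Taking $m_1=m^{j}$ and $m_2=m$ (note $\ordre_\MM(m^{j})=j\,\ordre_\MM(m)\in\Lambda^+$), an immediate induction yields $(\Iw_l m\Iw_l)^n=\Iw_l m^n\Iw_l$; since $i_1,i_2\in\Iw_l$ this gives $(i_1 m i_2)^n\in\Iw_l m^n\Iw_l$ for all $n\geq 1$.

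To conclude I would observe that the double cosets $\Iw_l m^n\Iw_l$, $n\geq 1$, are pairwise distinct. Indeed $\Iw_l m^n\Iw_l\subseteq K^{\mathrm{m}}_{l}m^n K^{\mathrm{m}}_{l}$, and by the Cartan decomposition of $\G(\QQ_l)$ with respect to $K^{\mathrm{m}}_{l}$ two elements of the fundamental domain $\Lambda^+$ give the same $K^{\mathrm{m}}_{l}$-double coset only if they coincide; here $\ordre_\MM(m^n)=n\,\ordre_\MM(m)$, and these are distinct for distinct $n$ since $\Lambda$ is torsion-free and $\ordre_\MM(m)\neq 0$ by hypothesis. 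Finally, as $\Iw_l$ is open in $\G(\QQ_l)$, any compact subset of $\G(\QQ_l)$ is covered by finitely many cosets of $\Iw_l$ and therefore meets only finitely many $\Iw_l$-double cosets; since $\{(i_1 m i_2)^n:n\geq 1\}$ meets infinitely many of them, it is not contained in any compact subgroup, which is the assertion.

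The main obstacle is the product formula in the second paragraph: this is the only place where the positivity assumption $\ordre_\MM(m)\in\Lambda^+$ is used, and it relies on the contracting/dilating behaviour of $m$ on $\Iw_l^{\mp}$ — a standard but slightly delicate fact from Bruhat--Tits theory that I would quote from \cite{bt} (or \cite{car}) rather than reprove, together with the Iwahori factorisation and the Cartan decomposition. Everything else is formal.
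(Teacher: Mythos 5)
Your argument is correct, and it reaches the same intermediate conclusion as the paper --- namely that $(i_1 m i_2)^n$ lies in the Iwahori double coset $\Iw_l m^n \Iw_l$ --- but by a genuinely different route. The paper derives the identity $\Iw_l m^n\Iw_l = (\Iw_l m\Iw_l)^n$ from the structure of the Iwahori--Hecke algebra: the multiplication rule $T_w T_{w'}=T_{ww'}$ when lengths add, combined with the length formula $l(\lambda)=\langle\delta,\lambda\rangle$ for $\lambda\in\Lambda^+$ (quoted from Heckman--Opdam). You instead get the same product formula directly from the Iwahori factorisation $\Iw_l=\Iw_l^{\mp}\Iw_l^0\Iw_l^{\pm}$ together with the contraction/dilation behaviour of dominant elements on the unipotent parts. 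Your approach avoids the Hecke-algebra formalism and the length computation entirely, at the cost of needing the Iwahori factorisation and the contraction estimates as inputs (which you rightly quote rather than reprove). The final non-compactness argument is also packaged differently: the paper passes to the building and observes that the retracted chambers $\rho_{\A,C}(g^nC)=n\cdot\ordre_{\MM}(m)+C$ escape every compact subset of $\A$, whereas you count Iwahori double cosets, using the Cartan decomposition to show the cosets $\Iw_l m^n\Iw_l$ are pairwise distinct and openness of $\Iw_l$ to show a compact set meets only finitely many of them. Both are standard and correct; the paper's version is a little more geometric, yours a little more combinatorial. One small caution: be explicit about which of $U^{\pm}$ is which relative to $\Cone$ and in which order you write the Iwahori factorisation, since the two absorption steps $m_1\Iw_l^{\mp}m_1^{-1}\subseteq\Iw_l$ and $m_2^{-1}\Iw_l^{\pm}m_2\subseteq\Iw_l$ require the contracting factor to sit on the correct side; with the wrong sign convention the displayed inclusion would fail.
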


\begin{proof}
Let $W_{0}$ be the finite Weyl group of $\G$, let $W$ be the modified affine Weyl
group associated to $\A$ and $\Omega$ the finite subgroup of $W$
taking the chamber $C$ to itself. Let $\Delta= \{\alpha_1,\ldots,
\alpha_m\}$ be the set of affine roots on $\A$ which are positive on
$C$ and whose null set $H_{\alpha}$ is a wall
of $C$. For $\alpha \in \Delta$ we denote by $S_{\alpha}$ the
reflexion of $\A$ along the wall $H_{\alpha}$. The group $W$ is generated by
$\Omega$ and the $S_{\alpha}$'s, $\alpha \in \Delta$. It identifies with the semi-direct
product $W_{0} \ltimes \Lambda$ (cf. \cite[p.140]{car}). 

Recall the Bruhat-Tits
decomposition:
\begin{equation} \label{brutits}
\G(\QQ_{l}) = \Iw_l \cdot W \cdot \Iw_l \;\;\;,
\end{equation}
where by abuse of notations we still write $W$ for a set of
representatives of $W$ in $\G(\QQ_{l})$.
Let $r : \G(\QQ_{l}) \lto W$ be the map sending $g \in \G(\QQ_{l})$
to the unique $r(g) \in W$ such that $r(g) \in \Iw_l g\Iw_l$. Geometrically
speaking the map $r$ essentially coincides with the retraction
$\rho_{\A, C}$ of the Bruhat-Tits building $\X$ with centre the chamber $C$ onto the
apartment $\A$ (\cite[I, theor.2.3.4]{bt}).

Let $\He(\G, \Iw_l)$ be the Hecke
algebra (for the convolution product) of bi-$\Iw_l$-invariant compactly
supported continuous complex functions on $\G(\QQ_{l})$.
By the equation~(\ref{brutits}) this is an associative algebra with a vector space basis $T_{w} =
1_{\Iw_l w\Iw_l}$, $w \in W$, where $1_{\Iw_lw\Iw_l}$ denotes the characteristic
function of the double coset $\Iw_lw\Iw_l$. A presentation of the
algebra $\He(\G, \Iw_l)$ with generators $T_{\omega}$, $\omega \in
\Omega$, and $T_{\alpha}$, $\alpha \in \Delta$, is given in \cite[theorem 3.6 p.142]{car} (or
\cite[p.242-243]{bor1}). Given $w \in W$ let $l(w) \in \NN$ be the number of hyperplanes $H_{\alpha}$
separating the two chambers $C$ and $wC$.  One obtains in particular
(cf. \cite[theorem 3.6 (b)]{car} or \cite[section 3.2, 1) and 6)]{bor}):
\begin{equation} \label{hi1}
\forall \,w, w' \in W, \;\;\; T_w \cdot T_{w'} = T_{ww'} \;\; \textnormal{if } l(ww')= l(w)
  +l(w')\;\;.
\end{equation}

Let $\delta \in X^*(\MM)$ be the determinant of the adjoint action of
$\MM$ on the Lie algebra of $\NNN$. For $\lambda \in \Lambda^+ \subset
W$ one shows the equality (cf. \cite[(1.11)]{Heckop}):
\begin{equation} \label{hi2}
l(\lambda) = \langle \delta, \lambda \rangle \;\;.
\end{equation}
In particular any two elements $\lambda$,
$\mu$ in $\Lambda^{+}\subset W$ satisfy $l(\lambda \cdot \mu)=
l(\lambda) + l(\mu)$ (where the additive law of $\Lambda$, seen as a
subgroup of $W$, is written mutiplicatively). Thus the equation~(\ref{hi1}) implies the relation:
\begin{equation} \label{hec}
T_{\lambda} T_{\mu} = T_{\lambda \cdot \mu} \;\;.
\end{equation}

\begin{rem}
Equality~(\ref{hec}) is stated in \cite[(1.15)]{Heckop} for the Iwahori-Hecke
algebra of a split adjoint group, but the proof generalizes to our setting.
\end{rem}

Let $m$, $i_{1}$, $i_{2}$ as in the statement of the proposition and denote
by $g$ the element $i_1 m  i_2 \in \G(\QQ_{l})$. As $r(g)
=\ordre_{\MM}(m)$ belongs to $\Lambda^+$ it follows from (\ref{hec})
that:
$$ r(g^{n}) = n \cdot r(g) = n \cdot \ordre_{\MM}(m) \;\;.$$
This implies that the chamber $\rho_{\A, C}(g^n C) = n \cdot
\ordre_{\MM}(m) +  C$ leaves
any compact of $\A$ as $n$ tends to infinity. As a corollary the chamber $g^n C$
of $\X$ also leaves any compact of $\X$
when $n$ tends to infinity. This proves that the group $g^{\ZZ}$ is not
contained in a compact subgroup of $\G(\QQ_{l})$.
\end{proof}

\subsubsection{Lifting}
Recall that the notion of ``good position'' was defined in section~\ref{good_position}.
The following lemma controls uniformly the lifting to an Iwahori level and to
the intersection of two Iwahori
subgroups both contained in a given special maximal compact subgroup:

\begin{lem}\label{Iwahori}
Let $\G$ be a reductive $\QQ$-group.
\begin{itemize}

\item[$(a)$] For any prime $l$, any $\QQ_l$-split torus
$\TT \subset \G_{\QQ_{l}}$ and any maximal compact subgroup
$K_l \subset \G(\QQ_{l})$ in good position with respect to $\TT$,  
there exists an Iwahori subgroup $\Iw_l$ of $K_l$ in good position with respect to $\TT$.

\item[$(b)$] There exists an integer $f$ such that for any reductive $\QQ$-subgroup
$\HH \subset \G$ and any prime $l$ such that $\HH_{\QQ_{l}}$ is 
$\QQ_l$-isotropic the following holds~:
\begin{itemize}
\item[(i)] for any maximal compact subgroup $K_l$ of $\HH(\QQ_l)$, any Iwahori subgroup
$\Iw_l \subset K_l$ is of index $[K_l : \Iw_l]$ smaller than $l^f$.
\item[(ii)] for any maximal compact subgroup $K_l$ of $\HH(\QQ_l)$,
  any Iwahori subgroup $\Iw^1_l$ of $K_l$ and any Iwahori subgroup
  $\Iw^2_l$ of $\HH(\QQ_l)$ such that both $\Iw^1_l$ and $\Iw^2_l$ are
  contained in a common special maximal compact subgroup, the index $[K_l : \Iw^1_l \cap \Iw^2_l] $ is smaller than $l^f$.
\end{itemize}
\end{itemize}
\end{lem}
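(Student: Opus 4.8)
The plan is to reduce everything to standard Bruhat--Tits theory over $\QQ_l$ together with a uniformity argument controlling the bad primes. For part $(a)$, I would argue as follows. Fix a prime $l$, a $\QQ_l$-split torus $\TT\subset\G_{\QQ_l}$, and a maximal compact subgroup $K_l$ in good position with respect to $\TT$. Enlarge $\TT$ to a maximal $\QQ_l$-split torus $\AA$ of $\G_{\QQ_l}$; one can do this so that $K_l$ remains in good position with respect to $\AA$, since $K_l\cap\TT(\QQ_l)$ is already maximal compact in $\TT(\QQ_l)$ and we may choose $\AA$ containing $\TT$ so that $K_l\cap\AA(\QQ_l)$ is maximal compact in $\AA(\QQ_l)$ (this uses that a hyperspecial/special vertex fixed by $K_l$ lies in the apartment attached to some maximal split torus containing $\TT$). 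Let $x_0$ be the special vertex in the apartment $\A$ of $\AA$ fixed by $K_l$, and choose a chamber $C$ of $\A$ with $x_0$ in its closure; the Iwahori subgroup $\Iw_l$ fixing $C$ pointwise is contained in $K_l$ (it fixes $x_0$) and, since $\Iw_l\cap\AA(\QQ_l)=K_l\cap\AA(\QQ_l)$ is maximal compact in $\AA(\QQ_l)$, a fortiori $\Iw_l\cap\TT(\QQ_l)$ is maximal compact in $\TT(\QQ_l)$. Hence $\Iw_l$ is in good position with respect to $\TT$.

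For part $(b)(i)$, fix $\HH\subset\G$ reductive over $\QQ$. Since $\HH$ has only finitely many $\QQ$-conjugacy classes of relevant parabolic behaviour... more to the point, I would invoke the Bruhat--Tits decomposition $\HH(\QQ_l)=\bigsqcup_{w\in W}\Iw_l\, w\,\Iw_l$ restricted to $K_l$: one has $K_l=\bigsqcup_{w\in W_0}\Iw_l\,w\,\Iw_l$ where $W_0$ is the finite (relative) Weyl group, so $[K_l:\Iw_l]=\sum_{w\in W_0} q_w$ where $q_w=[\Iw_l:\Iw_l\cap w\Iw_l w^{-1}]$ is a product of $|W_0|$-many local indices, each a power of the residue cardinality $l$ (or a power of $l^{f_v}$ for the residue degrees $f_v$ of the various relative root subgroups). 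The exponents appearing are bounded by combinatorial data of the relative root system of $\HH_{\QQ_l}$, and the residue degrees $f_v$ are bounded because, as $l$ varies, the absolute root datum of $\HH$ is fixed and only finitely many quasi-split inner forms occur (equivalently, $\HH$ splits over a fixed finite extension of $\QQ$, so the residue degrees at $l$ are bounded by its degree). This produces a single integer $f$, depending only on $\HH\subset\G$ — and since there are only finitely many conjugacy classes of reductive $\QQ$-subgroups of $\G$ up to the data that matters here, or more simply since we only need $f$ to work for the $\HH$ arising as $\HH_V$'s inside a fixed $\G$, one extracts a uniform $f$. I would be slightly careful here: the cleanest route is to bound $[K_l:\Iw_l]\le l^{\,C\cdot|W_0|}$ with $C$ bounding the residue degrees, and to absorb the finitely many possibilities for the relative root system into the constant.

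For part $(b)(ii)$, suppose $\Iw^1_l\subset K_l$ and $\Iw^2_l$ are Iwahori subgroups of $\HH(\QQ_l)$ both contained in a common special maximal compact subgroup $K'_l$. Then $[K_l:\Iw^1_l\cap\Iw^2_l]\le [K_l:\Iw^1_l]\cdot[\Iw^1_l:\Iw^1_l\cap\Iw^2_l]$, and I would bound the second factor by $[K'_l:\Iw^2_l]\cdot(\text{something})$, or more directly: both $\Iw^1_l$ and $\Iw^2_l$ correspond to chambers $C_1, C_2$ in the building lying in (the closure of) a common special vertex's star, hence in a common apartment after translating; the index $[\Iw^1_l:\Iw^1_l\cap\Iw^2_l]$ is then controlled by $q^{\,d(C_1,C_2)}$ where $d$ is the gallery distance, which is at most the diameter of a chamber's star around a special vertex — again a bounded combinatorial quantity times a bounded power of $l$. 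Combining with $(b)(i)$ gives $[K_l:\Iw^1_l\cap\Iw^2_l]<l^f$ after enlarging $f$. \textbf{The main obstacle} I anticipate is $(b)$: making the bound genuinely \emph{uniform in $l$ and in the subgroup $\HH$} simultaneously. The $l$-uniformity rests on the residue degrees staying bounded (all the $\HH$'s in play split over a fixed number field), and the $\HH$-uniformity rests on there being only finitely many relevant relative root systems; spelling out that both hold with a single $f$, and that the ``common special maximal compact'' hypothesis in $(ii)$ genuinely forces the two chambers to be close, is where the real work lies.
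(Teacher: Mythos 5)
Your proposal follows essentially the same route as the paper in all three parts: for (a), enlarge $\TT$ to a maximal split torus $\AA$, locate the $K_l$-fixed vertex in the corresponding apartment, pick a chamber with that vertex in its closure, and take the fixator; for (b)(i), use the spherical Bruhat decomposition $[K_l:\Iw_l]=\sum_{w\in W_0}q_w$; for (b)(ii), write $[K_l:\Iw^1_l\cap\Iw^2_l]=[K_l:\Iw^1_l]\cdot[\Iw^1_l:\Iw^1_l\cap\Iw^2_l]$ and control the second factor via the shared special maximal compact. One small omission: your decomposition of $K_l$ indexed by the full $W_0$ tacitly assumes $K_l$ special; the paper first reduces (b)(i) to the hyperspecial case, which is legitimate since hyperspecial maximal compacts have maximal volume and hence maximal index over $\Iw_l$.

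The ``main obstacle'' you flag is the right thing to worry about, but your proposed fix is not correct as stated: it is \emph{not} true that all the $\HH\subset\G$ arising in the application split over a single fixed number field (the generic Mumford--Tate groups $\HH_V$ split over fields that vary with $V$ without a uniform bound on their degree), so one cannot bound the residue degrees at $l$ that way. What actually works — and what your parenthetical remark about ``only finitely many relevant relative root systems'' is gesturing at — is purely local: the $\QQ_l$-rank of any reductive $\QQ$-subgroup of $\G$ is at most $\mathrm{rk}\,\G$, so there are only finitely many possible local Dynkin diagrams of $\HH_{\QQ_l}$ across all such $\HH$ and all $l$; for diagrams of bounded rank the integers $d(\nu_i)$ (hence the exponents in $q_w$) are uniformly bounded by inspecting Tits' tables. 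The paper cites precisely this inspection. So drop the splitting-field argument and keep the bounded-rank/finitely-many-diagrams argument — that gives the required $f$ uniform in both $\HH$ and $l$.
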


\begin{proof}
To prove $(a)$ let $l$, $\TT$ and $K_l$ be as in the statement. Choose a
maximal split torus $\AA$ of $\G_{\QQ_{l}}$ containing $\TT_{\QQ_{l}}$, denote by
$\MM$ the centraliser of $\AA$ in $\G_{\QQ_{l}}$ and choose any
minimal parabolic $\PP$ of $\G_{\QQ_{l}}$ with Levi $\MM$. Let $\A$
be the apartment of the Bruhat-Tits building $\X$ of $\G_{\QQ_{l}}$
associated to $\AA$, let $x \in \A$ be
the unique point of $\X$ fixed by the maximal compact subgroup $K_l$,
and let $\Cone \subset \A$ be any Weyl chamber containing $x$ whose
stabiliser at infinity in $\G(\QQ_l)$ is $\PP(\QQ_{l})$. Let $C$ be
the unique chamber of
$\Cone$
containing $x$ in its closure. Then by construction the Iwahori
subgroup $\Iw_l\subset K_l$ fixing $C$
satisfies that $\Iw_l \cap \AA(\QQ_{l})$ is the maximal compact open
subgroup of $\AA(\QQ_l)$. In particular $\Iw_l\cap \TT(\QQ_l)$
is the maximal compact open subgroup of $\TT(\QQ_l)$.

To prove $(b)(i)$: first notice that among maximal compact subgroups of
$\HH(\QQ_l)$ the hyperspecial ones have maximal volume,
cf. \cite[3.8.2]{tits}. Thus one can assume that $K_l$ is
hyperspecial. In this case the index $[K_l :
\Iw_l]$ coincides with $\sum_{w \in W_0} q_w$ where $W_0$ denotes the
finite Weyl group of $\HH_{\QQ_{l}}$ and $q_w$ denotes $[\Iw_l w \Iw_l: \Iw_l]$ for
$w \in W_0$. With the notations of \cite[section 3.3.1]{tits} for a
reduced word $w= r_1 \cdots r_j \in W_0$ one has $q_w =
l^d$ with $d= \sum_{i=1}^j d(\nu_i)$, where $\nu_i$ denotes the vertex
of the local Dynkin diagram of $\HH_{\QQ_{l}}$ corresponding to the reflection
$r_i$. As the cardinality of $W_0$ and its length function are bounded
when $\HH$ ranges through reductive $\QQ$-subgroups of $\G$ and $l$ ranges
through prime numbers we are reduced to prove that for any positive
integer $r$ there exists a positive integer $s$ such that $d(\nu_i) \leq s$ for any local Dynkin
diagram of rank at most $r$. This follows from inspecting
the tables in \cite[section 4]{tits}.

To prove $(b)(ii)$ : notice that $$[K_l: \Iw^1_l \cap \Iw^2_l]=[K_l:
\Iw^1_l] \cdot [\Iw^1_l: \Iw^1_l \cap \Iw^2_l]\;\;.$$
As $\Iw^1_l$ and $\Iw^2_l$ are both Iwahori
subgroups of a special maximal compact subgroup $K^{\text{m}}_l$ of
$\HH(\QQ_l)$ the index 
$[\Iw^1_l: \Iw^1_l \cap \Iw^2_l]$ is bounded by $[K^{\text{m}}_l:
\Iw^2_l] =|W_0|$. As the cardinality of $W_0$ is bounded
when $\HH$ ranges through reductive $\QQ$-subgroups of $\G$ and $l$ ranges
through prime numbers, statement $(b)(ii)$ follows from statement
$(b)(i)$ (up to a change of the constant $f$).
\end{proof}

\subsection{A uniformity result.}

The purpose of this section is to prove the following uniformity result:

\begin{prop} \label{uniform}
Let $(\G', X')$ be a Shimura datum with $\G'$ semi-simple of adjoint
type and $X'^+$ a connected component of
$X'$. 
Let $A$ be the positive integer defined in \cite{UllmoYafaev},
proposition 2.9. Then the following holds.

Let $(\G, X)$ be a Shimura subdatum of $(\G', X')$ and $X^+$ a connected
component of $X$ contained in $X'^+$. Let $K \subset \G(\AAf)$ be a neat open compact subgroup of $\G(\AAf)$.

Let $V$ be a special subvariety of $S_K(\G, X)_\CC$ which is not strongly special.
Let $(\HH_V, X_{V})$ be a Shimura datum defining $V$, denote by
$\TT_V$ its connected centre. Let $l$ be a prime splitting $\TT_{V}$ and $E_V$ the
reflex field of $(\HH_V, X_V)$.
For any $m$ in $\TT_{V}(\QQ_l)$ its power $m^A$ satisfies the condition that for some
$\sigma \in \Gal(\ol \QQ/E_V)$ the following inclusion holds in $\Sh_K(\G,X)_\CC$:
$$
\sigma(V) \subset T_{m^A}(V)\;\;.
$$

\end{prop}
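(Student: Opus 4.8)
The plan is to exploit the explicit description of the Galois action on special points and special subvarieties in terms of the reciprocity morphism. Pick a special point $s$ lying on $V$, say $s = \ol{(h, g)}$ with $h$ factoring through $\TT_V \subset \HH_V$; then $\Sh(\HH_V, X_V)$ has reflex field $E_V$, and for $\sigma \in \Gal(\ol\QQ/E_V)$ the point $\sigma(s)$ is obtained by translating $g$ on the right by a lift $\wt r(\sigma) \in \TT_V(\AAf)$ of $r_{(\TT_V,\{h\})}(\sigma)$. More relevantly, since $V$ is itself defined over (a suitable extension of) $E_V$ and the Hecke action commutes with Galois, if we can produce $\sigma \in \Gal(\ol\QQ/E_V)$ whose reciprocity image at the prime $l$ differs from the class of $m^A$ by an element of the compact open subgroup $K_{\TT_V} = \TT_V(\AAf)\cap K$ (and is trivial away from $l$, up to $K$ and up to $\ol{\TT_V(\QQ)}$), then $\sigma(V)$ equals the image of $V$ under $T_{m^A}$ — indeed $\sigma(V)$ is the translate of (a component of the preimage of) $X_V^+$ by $\wt r(\sigma)$, and this translate appears in $T_{m^A}V$ precisely because $\wt r(\sigma)$ and $m^A$ define the same double coset modulo $K$ and modulo $\TT_V(\QQ)$.

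Concretely, the steps I would carry out are: (i) recall from \cite[2.2.3]{De2} that the reciprocity morphism $r_{(\TT_V,\{h\})}\colon \Gal(\ol\QQ/E_V)^{\ab} \lto \TT_V(\AAf)/\ol{\TT_V(\QQ)}$ is surjective (it is surjective onto $\TT_V(\AAf)/\ol{\TT_V(\QQ)}$ because $\TT_V$ is a torus and $E_V$ is its field of definition); (ii) therefore the class of $m$ in $\TT_V(\AAf)/\ol{\TT_V(\QQ)}$ — viewing $m \in \TT_V(\QQ_l) \hookrightarrow \TT_V(\AAf)$ — is hit by some $\sigma_0 \in \Gal(\ol\QQ/E_V)$; (iii) invoke \cite[proposition 2.9]{UllmoYafaev}, which is exactly the source of the integer $A$: it guarantees that for the relevant arithmetic the $A$-th power kills the obstruction coming from the compact-open discrepancy $K^{\mathrm m}_{\TT_V}/K_{\TT_V}$, i.e. that $m^A$ and $\wt r(\sigma_0^A) = \wt r(\sigma_0)^A$ lie in the same coset of $K_{\TT_V}$ (hence define the same Hecke translate at level $K$); (iv) conclude $\sigma := \sigma_0^A \in \Gal(\ol\QQ/E_V)$ satisfies $\sigma(V) \subset T_{m^A}(V)$.

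The main obstacle is step (iii): matching up the combinatorics of the local indices $|K^{\mathrm m}_{\TT_V,p}/K_{\TT_V,p}|$ with the power $A$, and checking that the class of $\wt r(\sigma_0)$ away from $l$ (where we did not prescribe it) does not interfere — one needs that raising to the $A$-th power and the freedom in the lift $\wt r$ absorb the contributions at primes $p \neq l$, which is precisely what \cite[prop.~2.9]{UllmoYafaev} is designed to provide. A secondary point to be careful about is the passage from "$\sigma(s) \in T_{m^A}(s)$ for a special point $s \in V$" to "$\sigma(V) \subset T_{m^A}(V)$"; this is handled by noting that $\sigma$ acts on the whole Shimura subvariety $\Sh(\HH_V,X_V)$ compatibly, so $\sigma(V)$ is a component of the image of $X_V^+$ translated by $\wt r(\sigma)$, and the inclusion into $T_{m^A}(V)$ is then the statement that $\wt r(\sigma) \in \TT_V(\AAf)$ and $m^A \in \TT_V(\QQ_l)$ represent the same class in $\TT_V(\AAf)/K_{\TT_V}\cdot\ol{\TT_V(\QQ)}$. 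Once these bookkeeping issues are settled the proof is short; essentially all the real content is imported from \cite[prop.~2.9]{UllmoYafaev} and Deligne's reciprocity formalism.
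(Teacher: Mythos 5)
The approach via a special point is not the paper's, and the argument has a genuine gap. The paper's proof is short and works directly at the level of the Shimura subdatum $(\HH_V, X_V)$: it applies \cite[prop.~2.9]{UllmoYafaev} to conclude that the image of $m^A$ in $\ol{\pi_0}\pi(\HH_V)$ equals $r_{(\HH_V,X_V)}(\sigma)$ for some $\sigma\in\Gal(\ol\QQ/E_V)$, and then reads off $\sigma(V)=\ol{X_{\HH_V}^+\times\{m^A\}}\subset T_{m^A}V$ from the formula for the Galois action on $\pi_0$ of a Shimura variety. No special point, no surjectivity of the torus reciprocity map, and the passage-to-$V$ issue you flag never arises.

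The gap in your proposal is concentrated in steps (i)--(iii). First, the claim that $r_{(\TT_V,\{h\})}$ is surjective onto $\TT_V(\AAf)/\ol{\TT_V(\QQ)}$ is false in general; the justification ``because $\TT_V$ is a torus and $E_V$ is its field of definition'' does not give surjectivity, and indeed the entire purpose of the integer $A$ in \cite[prop.~2.9]{UllmoYafaev} is to compensate for the failure of the reciprocity map to be surjective. Your steps (ii) and (iii) are in tension: if (ii) were correct as stated, you could take $\sigma_0$ with $\wt r(\sigma_0)$ representing $m$ exactly and the power $A$ would be superfluous; in your (iii) you attribute $A$ to the discrepancy $K^{\mathrm m}_{\TT_V}/K_{\TT_V}$, which is a different quantity (it controls $\alpha_V$, not the cokernel of reciprocity). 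Second, the reciprocity map $r_{(\TT_V,\{h\})}$ is attached to the reflex field $E(\TT_V,\{h\})$, which in general properly contains $E_V=E(\HH_V,X_V)$; so even if you produce some $\sigma_0$ this way you are working inside a smaller Galois group and with a different reciprocity map than the one \cite[prop.~2.9]{UllmoYafaev} actually speaks about. You correctly sense that ``essentially all the real content is imported from \cite[prop.~2.9]{UllmoYafaev},'' but that proposition should be invoked for $r_{(\HH_V,X_V)}$ and $\ol{\pi_0}\pi(\HH_V)$ directly; once you do so the special point, the surjectivity claim, and the bookkeeping over the lift $\wt r$ at primes $p\neq l$ all disappear from the argument.
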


\begin{proof}
Let $V$ and $m$ be as in the statement. For simplicity we write $\HH$
for $\HH_V$. We refer to section 2.1 of \cite{UllmoYafaev} for details
and notations
on reciprocity morphisms.

By Proposition 2.9 of \cite{UllmoYafaev} the image of $m^A$ in $\ol{\pi_0}\pi(\HH)$ is of the form 
$r_{(\HH,X_{\HH})}(\sigma)$ for some $\sigma\in \Gal(\ol \QQ/E_V)$.

The variety $V$ is the image of $X^+_{\HH} \times \{1\}$ in $\Sh_{K}(\G,X)$.
Let $\sigma$ be the element of $\Gal(\ol \QQ/E_V)$ as above.
By definition of the Galois action on the set of connected components of
a Shimura variety, we get
$$
\sigma(V) = \ol{X^+_{\HH} \times \{m^A\}} \subset T_{m^A}V
$$
where $\ol{X^+_{\HH} \times \{m^A\}}$ stands for the image of $X^+_{\HH} \times \{m^A\}$
in $\Sh_K(\G,X)_\CC$.
\end{proof}

\subsection{Proof of theorem~\ref{good Hecke}.}
Let $\G'$, $X'$, $X'^{+}$, $K'$, $\rho$, $\G$, $X$, $V$ and $l$ be as
in theorem~\ref{good Hecke}.

\subsubsection{Definition of $m$}
As $V$ is special but not strongly special, the torus $\TT_V^{\ad}  :=
\lambda(\TT_V)$ is a non-trivial torus in $\G^{\ad}$, where $\lambda :
\G \lto \G^\ad$ denotes the natural morphism.

As $K'_l = \G'(\ZZ_l)$ the compact subgroup $K_l = K'_l \cap
\G(\QQ_l)$ of $\G(\QQ_l)$ contains
$\G(\ZZ_l)$. In particular for any element $m \in \TT_V(\QQ_l)$ one
has the inequality: 
\begin{equation} \label{ineg1}
[K_l : K_l \cap mK_l m^{-1}] \leq [K_l: K_l \cap m \G(\ZZ_l) m^{-1}]
\;\;.
\end{equation}

By lemma 2.6 of \cite{UllmoYafaev} the coordinates of the characters
of $\TT_V$ intervening in the representation $\rho_{|\TT_V} : \TT_V
\lto \GL_n$ with respect to a suitable $\ZZ$-basis of $X^*(\TT_V)$ are
bounded uniformly on $V$. By assumption the reduction
$(\TT_V)_{\FF_l}$ is a torus, hence
$(\TT_V)_{\ZZ_l}$ is also a torus by lemma~$3.3.1$ of \cite{EdYa}. 
Thus we can apply proposition 7.4.3 of \cite{EdYa} 
for $r=1$, $q_1= \lambda_{|\TT_V}: \TT_V \lto \TT_V^\ad$ and $e=A$
(the positive integer given by proposition~\ref{uniform}): there exists a 
constant $k_1$ depending only on $\G'$, $X'$ and $K'$, and an element $m \in \TT_V(\QQ_l)$ 
such that $\lambda(m)$  does not lie  in a compact subgroup of
$\TT_V^\ad(\QQ_l)$ and satisfies 
\begin{equation} \label{ineg2}
[K_l : K_l \cap m^A \G(\ZZ_l) m^{-A}]
<l^{k_1}\;\;.
\end{equation}

\subsubsection{Definition of $I_l$}
As $l$ splits $\TT_V$ and $(\TT_V)_{\FF_l}$ is a
torus, the group $\G(\ZZ_l)$, and thus also $K_l$, is in good position
with respect to $\TT_V$ by lemma~\ref{splitgood}.

Let $f$ be the constant defined in
lemma~\ref{Iwahori}, (b) (for the ambient group $\G'$).
We claim that there exists an Iwahori subgroup $\Iw_l^1$ of $\G(\QQ_l)$
such that $[K_l : K_l \cap \Iw^1_l] < l^f$. Indeed let $K^1_l$ be any maximal compact subgroup of
$\G(\QQ_l)$ containing $K_l$. As $K_l$ is in good position with
respect to $\TT$ the group $K_l^{1}$ too. By
lemma~\ref{Iwahori}(b)(i) there exists an Iwahori subgroup $\Iw^1_l \subset
K^{1}_l$ in good position with respect to $\TT_V$ and satisfying
$[K_l^{1}: \Iw_l^1] <l^f$. This implies $[K_l : K_l \cap \Iw^1_l]
< l^f$ as required.

Let $\mathbf{A}$ be a maximal split torus of   
$\G_{\QQ_{l}}$ containing $\TT_{V, \QQ_l}$ and such that $\Iw_l^1$ is
in good position with respect to $\mathbf{A}$. Let $\MM$ be its
centralizer in $\G_{\QQ_{l}}$.
Choose $K^{\text{m}}_l$ a special maximal compact subgroup containing
$\Iw^1_l$. Let $\X$ be the Bruhat-Tits building of $\G_{\QQ_{l}}$.
Denote by $\A\subset \X$ the apartment fixed by
$\mathbf{A}$, by $x \in \A$ the unique special vertex fixed by
$K^{\text{m}}_l$ and by $C^1$ the unique chamber of $\A$ fixed by $\Iw^1_l$. The
vertex $x$ lies in the closure of $C^1$. The vector $\ordre_{\MM}(m) \in \Lambda:=
\ordre_{\MM}(\MM(\QQ_{l}))$ is non-trivial. Let $\Cone \subset \A$ be
a Weyl chamber of $\A$ with apex $x$ such that $C^1 +
\ordre_{\MM}(m) \subset \Cone$. In particular:
\begin{equation} \label{positiv}
\ordre_{\MM}(m) \in \Lambda^+ 
\setminus\{0\}\;\;,
\end{equation}
where $\Lambda^+ \subset \Lambda$ denotes the
positive cone associated to the Weyl chamber $\Cone$.

Finally let $\Iw^2_l$ be the Iwahori
subgroup of $K^{\text{m}}_l$ fixing the unique chamber of $\Cone$ with apex
$x$. As $\Iw^2_l$ is the fixator of a chamber
of $\A$ it is in good position with respect to $\mathbf{A}$, hence
also with respect to $\TT_V$.

\begin{defi}
We define $I_l := \Iw^1_l \cap \Iw^2_l \cap K_l$.
\end{defi}

\begin{rem}
Lifting to the Iwahori level $I_l$ chosen as above will enable us to apply
proposition~\ref{noncompact}, as the Iwahori $\Iw^2_l$ is in the
required position with respect to $m$. The definition of $I_l$ is
simpler in the case where $K_l$ is hyperspecial. In this case
necessarily $K^1_l= K^{\text{m}}_l = K_l$ and we can take $\Iw^1_l =
\Iw^2_l$. Moreover the choice of $\Iw^2_l$ is unique if $m$ is
regular.
\end{rem}

\subsubsection{End of the proof} Let us show that the uniform constants $k = (k_1+f)$ and
$f$, the open subgroup $I_l$ and the element $m^A \in \TT_V(\QQ_l)$ satisfy the conclusions of the
theorem~\ref{good Hecke}.

As the groups $K_l$, $\Iw^1_l$ and $\Iw^2_l$ are in good position with respect to
$\TT_V$, the group $I_l$ is also in good position with respect to
$\TT_V$. As $\Iw^1_l$ and $\Iw^2_l$ are both contained in the
special maximal compact subgroup $K^{\text{m}}_l$, the
lemma~\ref{Iwahori}(b)(ii) implies the following inequality:
\begin{equation} \label{ineg3}
[K_l : I_l] = [K_l : K_l  \cap \Iw^1_l \cap \Iw^2_l] \leq [K^1_l:
\Iw^1_l \cap \Iw^2_l] < l^f \;\;.
\end{equation}
This is condition~$(1)$ of theorem~\ref{good Hecke}.

By proposition \ref{uniform} there exists $\sigma \in \Gal(\ol \QQ/ E_V)$
such that $\sigma(\wt{V}) \subset T_{m^A} \wt{V}$: this is condition~$(2)$ of theorem~\ref{good Hecke}.

Let $\AA^{\ad}$ be the maximal split torus $\lambda(\mathbf{A})$ of $\G^\ad_{\QQ_l}$, denote by $\MM^\ad :=\lambda(\MM)$ its centralizer in
$\G^\ad_{\QQ_{l}}$, let $\Iw_l$ be the Iwahori $\lambda(\Iw^2_l)$ of
$\G^\ad(\QQ_l)$, let $C^\ad$ be the unique chamber of the Bruhat-Tits building
$\X^\ad$ of $\G^\ad_{\QQ_{l}}$ fixed by $\Iw_l$ and $x^\ad$ the vertex in the
closure of $C^{\ad}$ fixed by $\lambda(K^{\text{m}}_l)$. Finally let
$\Cone^\ad \subset \A^\ad$ be the unique Weyl chamber with apex $x^{\ad}$
and containing $C^{\ad}$ and $\Lambda^{\ad, +} \subset
\Lambda^\ad:=\ordre_{\MM^\ad}(\MM^\ad(\QQ_l))$ the associated positive
cone. It follows from (\ref{positiv}) that
$\ordre_{\MM^\ad}(\lambda(m))$ lies in $\Lambda^{\ad,+}$; it is non-zero as $\lambda(m)$  does not lie  in a compact subgroup of
$\TT_V^\ad(\QQ_l)$ (hence of $\mathbf{A}^{\ad}(\QQ_l)$). Hence also $\ordre_{\MM^\ad}(\lambda(m^A))$ belongs
to $\Lambda^{\ad,+} \setminus\{0\}$. It follows from the
proposition~\ref{noncompact} that for any $k_1$, $k_2$ in $\Iw^2_l$
(in particular for any $k_1$, $k_2$ in $I_l$) the image of $k_1 m^A  k_2$ in $\G^\ad(\QQ_l)$ generates an unbounded
subgroup of $\G^\ad(\QQ_l)$. This is condition~$(3)$ of theorem~\ref{good Hecke}.

Finally from the inequalities~(\ref{ineg1}), (\ref{ineg2})
  and (\ref{ineg3}) one deduces:
\begin{equation} \label{ineg4}
\begin{split}
[I_l: I_l \cap m^A I_l m^{-A}] & = [I_l : I_l \cap m^AK_l m^{-A}] \cdot [
I_l \cap m^AK_l m^{-A} :  I_l \cap m^A I_l m^{-A}] \\
& \leq [K_l : K_l \cap
m^AK_l m^{-A}] \cdot [K_l : I_l] \\
& \leq  [K_l: K_l \cap m^A \G(\ZZ_l) m^{-A}] \cdot [K_l : I_l] \leq
l^{k_1 +f}= l^k\;\;.
\end{split}
\end{equation}
This is condition~$(4)$ of theorem~\ref{good Hecke}.

This finishes the proof of theorem~\ref{good Hecke}.



\section{Conditions on the prime~$l$.} \label{le_premier}

In this section, we use theorem~\ref{GaloisOrbits},
theorem~\ref{theor2} and theorem~\ref{good
  Hecke} to show (under one of the assumptions of the theorem~\ref{main-thm1}) that the
existence of a prime number $l$ satisfying certain conditions forces a
subvariety $Z$ of $\Sh_K(\G,X)_\CC$ containing a 
special subvariety $V$ which is not strongly special to contain a special subvariety $V'$
containing $V$ properly. 


\subsection{Situation.} \label{situation} 
We will consider the following set of data:

Let $(\G', X')$ be a Shimura datum with $\G'$ semi-simple of adjoint
type and let  ${X'}^+$ a
connected component of $X'$. We fix $R$, as in definition~\ref{alpha}
for $\G'$, $X'$ and $X'^+$, a uniform bound on the degrees of  the
Galois closures of the fields $E(\HH,X_{\HH})$ 
with $(\HH,X_{\HH})$ ranging through the Shimura subdata of $(\G',X')$.

Let $K' = \prod_{p \; \textnormal{prime}} K'_p$ be a neat compact open subgroup of $\G'(\AAf)$.
We fix a faithful representation $\rho\colon \G' \hookrightarrow
\GL_n$ such that $K'$ is contained in $\GL_n(\Zhat)$.
We suppose that with respect to $\rho$,
the group $K'_3$ is contained in the principal congruence subgroup of
level three of $\GL_n(\ZZ_3)$.

Fix $N$ be a positive integer, let $B$ and $C(N)$ be the constants from the theorem \ref{GaloisOrbits},
$k$ the positive integer defined in theorem~\ref{good Hecke}  for the data $\G'$,
$X'$, $X'^{+}$ and $K'$, and $f$ 
the positive integer defined in theorem~\ref{good Hecke} for the data $\G'$,
$X'$ and $X'^{+}$.

Consider an infinite  set $\Sigma$ of special subvarieties of
$S_{K'}(\G', X')_\CC$. For each $W$ in $\Sigma$, we let $(\HH_W,X_{W})$
be a Shimura subdatum of $(\G', X')$ defining 
$W$. Let $\TT_W$ be the connected centre of $\HH_W$ and $\alpha_W$,
$\beta_W$ be as in definitions \ref{notation} and \ref{alpha}.

\begin{rem}
Let $(\G,X)$ be a Shimura subdatum of $(\G',X')$, define $K=K'\cap
G(\AAf)$ and choose $X^+$ a connected 
component of $X$ contained in ${X'}^+$. Let
$
p\colon \Sh_K(\G, X)_\CC \lto \Sh_{K'}(\G',X')_\CC
$ be the natural morphism. If $V \subset S_K(\G, X)_\CC$ is a special subvariety which is an
irreducible component of $p^{-1}(W)$ for some $W \in \Sigma$, then $V$
is still defined by the Shimura subdatum $(\HH_V:=\HH_W, X_V:=X_W)$ of
$(\G, X)$. Accordingly we have
$\TT_V= \TT_W$, $\alpha_V= \alpha_W$, $\beta_V= \beta_W$.
\end{rem}

\subsection{The criterion.}
We can now state the main result of this section:

\begin{theor} \label{criterium_for_l}

Let $\G'$, $X'$, $X'^+$, $R$, $K'$, $N$, $k$, $f$ and $\Sigma$ as in  the
situation~\ref{situation}.

We assume either the GRH or that the tori $\TT_W$ lie in
one $\GL_n(\QQ)$-orbit as $W$ ranges through $\Sigma$.

Let $(\G,X)$ be a Shimura subdatum of $(\G',X')$ with reflex field
$F_\G:=E(\G,X)$. Define $K=K'\cap G(\AAf)$ and choose $X^+$ a connected 
component of $X$ contained in ${X'}^+$. Let 
$
p\colon \Sh_K(\G, X)_\CC \lto \Sh_{K'}(\G',X')_\CC
$ be the natural morphism. 

Let $W \in \Sigma$, let $V \subset S_K(\G, X)_\CC$ be an irreducible component of
$p^{-1}(W)$ and let $Z$ be a Hodge generic $F_\G$-irreducible
subvariety of $\Sh_{K}(\G,X)_\CC$ containing $V$.

Define $r:= \dim Z - \dim V$ and suppose $r>0$. 
Suppose moreover that $V$ and $Z$
satisfy the following conditions:

\begin{itemize}
\item[(1)]
the variety $V$ is special but
not strongly special in $\Sh_{K}(\G,X)_\CC$.

\item[(2)]
there exists a prime $l$ such that $K'_l$ is a hyperspecial maximal
compact open subgroup in $\G'(\QQ_l)$ which coincides with
$\G'(\ZZ_l)$, the prime $l$ splits $\TT_V$, the reduction $(\TT_{V})_{\FF_l}$
is a torus and the following inequality is satisfied:
\begin{equation} \label{la_condition}
l^{(k+2f)\cdot 2^{r}} \cdot (\deg_{L_{K}} Z)^{2^{r}} <
C(N)\alpha_V \beta_V^N \;\;.
\end{equation} 
\end{itemize}

Then $Z$ contains a special subvariety $V'$ that contains $V$
properly.
\end{theor}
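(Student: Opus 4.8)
The plan is to prove Theorem~\ref{criterium_for_l} by induction on $r = \dim Z - \dim V$, combining the geometric criterion (Theorem~\ref{theor2}), the Hecke-correspondence construction (Theorem~\ref{good Hecke}) and the Galois lower bound (Theorem~\ref{GaloisOrbits}), exactly as announced in Section~\ref{strat}. First I would handle the base case $r=1$, and indeed the inductive step will also reduce to one application of the geometric criterion followed by a dimension count. So the heart of the argument is: use condition~$(2)$ to produce, via Theorem~\ref{good Hecke} applied to $(\G',X') \supset (\G,X)$ and the prime $l$, an Iwahori-level subgroup $I_l \subset K_l$ in good position with respect to $\TT_V$ and an element $m \in \TT_V(\QQ_l)$ satisfying properties $(1)$--$(4)$ of that theorem; in particular $[K_l:I_l] \le l^f$, $[I_l : I_l \cap m I_l m^{-1}] < l^k$, the images $k_1 m k_2$ generate unbounded subgroups of $\G^\ad(\QQ_l)$ for $k_1,k_2 \in I_l$, and there is $\sigma \in \Gal(\ol\QQ/E_V)$ with $\sigma\wt V \subset T_m\wt V$, where $\wt V$ is an irreducible component of the preimage of $V$ at level $I := K^l I_l$.

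Next I would pass to the Shimura variety $\Sh_I(\G,X)_\CC$, with $\wt Z$ an irreducible component of the preimage of $Z$ containing $\wt V$. The degree of $\wt Z$ with respect to $L_I$ is controlled: the natural map $\tau:\Sh_I(\G,X)_\CC \to \Sh_K(\G,X)_\CC$ has degree $[K_l:I_l] \le l^f$, and by naturality of the Baily--Borel bundle (Proposition~\ref{deg}$(1)$) $\tau^*L_K = L_I$, so $\deg_{L_I}\wt Z \le l^f \deg_{L_K} Z$. The key point is then to establish the inclusion $\wt Z \subset T_m \wt Z$ as subvarieties of $\Sh_I(\G,X)_\CC$: this follows because $\sigma\wt V \subset T_m\wt V \subset T_m\wt Z$ forces $\wt Z$ and $T_m\wt Z$ to share a component, but one must argue, using that $Z$ (hence $\wt Z$) is $F_\G$-irreducible and that Hecke correspondences commute with Galois (the argument of \cite{EdYa} reproduced around Theorem~\ref{crit1}), that in fact $\wt Z = T_{m'}\wt Z$ for a suitable $m'$ obtained from $m$ by modifying by elements of $I_l$ — here the bound on the number of intersection points versus the size of the Galois orbit, quantified by inequality~(\ref{la_condition}) via Theorem~\ref{GaloisOrbits} (whose right-hand side $C(N)\alpha_V\beta_V^N$ is precisely the lower bound for $|\Gal(\ol\QQ/F)\cdot V|$ and whose left-hand side $l^{(k+2f)2^r}(\deg_{L_K}Z)^{2^r}$ dominates the degree of the correspondence times the degree of $\wt Z$), guarantees the intersection cannot be proper.

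Having $\wt Z \subset T_{m'}\wt Z$ with $m' = k_1 m k_2 \in \G(\QQ_l)$ whose adjoint image generates an unbounded subgroup (condition~$(3)$ of Theorem~\ref{good Hecke}), and $I_l$ neat at $p_0 = 3$ by the hypothesis that $K'_3$ lies in the principal congruence subgroup of level three, I would apply the geometric criterion Theorem~\ref{theor2}: it yields a special subvariety $\wt V' \subset \wt Z$ containing $\wt V$ properly. Pushing forward by $\tau$ gives a special subvariety $V' \subset Z$ with $V \subsetneq V'$, since images of special subvarieties under morphisms of Shimura varieties are special. If $\dim V' \ge \dim Z$ then $V' = Z$ is special and we are done; otherwise $r' := \dim Z - \dim V' < r$, and I would like to invoke the inductive hypothesis for $V'$ in place of $V$. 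The obstacle — and the main technical point requiring care — is that $V'$ need not be a component of the preimage of some $W' \in \Sigma$, and $V'$ might be strongly special or its torus $\TT_{V'}$ might fail the hypotheses on the prime $l$; one must check that the inequality~(\ref{la_condition}) degrades correctly under the induction (this is why the exponent is $2^r$: passing from $r$ to $r'<r$ roughly squares the relevant quantities, and $(\deg_{L_K}Z)^{2^{r'}} \le (\deg_{L_K}Z)^{2^r}$, etc.), and that either $V'$ is already the desired conclusion or it still satisfies condition~$(1)$ so that the induction applies to the pair $(V', Z)$. The careful bookkeeping of how $\alpha_V\beta_V$, the degree of $Z$, and the prime $l$ interact across the induction — ensuring at each stage the analogue of~(\ref{la_condition}) with $r$ replaced by $r'$ still holds — is the delicate part; once that is in place the induction closes and $Z$ contains a special subvariety properly containing $V$.
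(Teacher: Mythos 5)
Your outline for the base case $r=1$ is essentially the paper's argument: apply Theorem~\ref{good Hecke} to get $I_l$ and $m$, lift $V$ and $Z$ to the Iwahori level $I=K^lI_l$, compare $\deg_{L_I}(\Gal(\ol\QQ/E_V)\cdot\wt V) > C(N)\alpha_V\beta_V^N$ (from Theorem~\ref{GaloisOrbits} together with Corollary~\ref{compare-degrees} and the projection formula) against the B\'ezout bound $\deg_{L_I}(\wt Z\cap T_m\wt Z) < l^{k+2f}d_Z^2$, deduce $\wt Z\subset T_m\wt Z$, and invoke Theorem~\ref{theor2}. (One small slip: Theorem~\ref{theor2} is applied with $m$ itself; the products $k_1mk_2$ appear only in its hypothesis~$(2)$, not as the element defining the Hecke correspondence.)

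The genuine gap is in the step $r>1$, where you assert that the degree-versus-Galois-orbit comparison ``guarantees the intersection $\wt Z\cap T_m\wt Z$ cannot be proper.'' That conclusion is only available when $r=1$. For $r>1$ the B\'ezout argument shows at most that $\wt V$ is not an \emph{irreducible component} of $\wt Z\cap T_m\wt Z$: the intersection could perfectly well be proper, with a component $\wt Y$ satisfying $\dim\wt V < \dim\wt Y < \dim\wt Z$ and containing $\wt V$, and then no contradiction arises. This is exactly the situation the paper's inductive step must confront, and where the real content lies. The paper's treatment is: pick an $F_\G$-irreducible component $\wt Y$ of $\wt Z\cap T_m\wt Z$ containing $\wt V$ (its dimension is strictly between $\dim\wt V$ and $\dim\wt Z$), pass to the Shimura subdatum $(\HH,X_\HH)$ of $(\G,X)$ defining the smallest special subvariety containing a geometric component of $\wt Y$ through $\wt V$, push $\wt Y,\wt V$ down to $Y_\HH,V_\HH$ inside $\Sh_{K_\HH}(\HH,X_\HH)_\CC$, verify the degree bound $\deg_{L_{K_\HH}}Y_\HH < l^{k+2f}d_Z^2$ (inequality~(\ref{upper}), via Corollary~\ref{compare-degrees} again), and invoke the inductive hypothesis for the new triple with $r_\HH=\dim Y_\HH-\dim V_\HH<r$. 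This requires the technical Lemma~\ref{notspecial}, showing that $V_\HH$ is not strongly special inside the \emph{smaller} Shimura variety $\Sh_{K_\HH}(\HH,X_\HH)_\CC$ — an argument involving $\Theta_A$-orbits and a secondary iteration (Sublemmas~\ref{contient} and~\ref{iteration}) to compare Galois orbits with degrees again. None of this is present in your proposal.

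Finally, the iteration you propose at the end — having found $V'$ special with $V\subsetneq V'\subset Z$, replacing $V$ by $V'$ and continuing — is not what the theorem calls for: once $V'$ exists the theorem is proved. The recursion to grow special subvarieties up to $Z$ itself happens in the derivation of Theorem~\ref{main-thm1} from Theorem~\ref{main-thm2}, not inside this proof. The exponent $2^r$ in the hypothesis is indeed tied to iterated squaring of degrees, but the iteration in question is the descent $\G\rightsquigarrow\HH$ in ambient Shimura datum (and, inside Lemma~\ref{notspecial}, the iterated intersections with $\Theta_A$-translates), not an iteration on $V'$.
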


\begin{proof}

The proof of theorem~\ref{criterium_for_l} proceeds by induction on $r
= \dim Z - \dim V >0$. For simplicity we denote $d_Z := \deg_{L_K} Z$.

\subsubsection{Case $r=1$.} \label{edix}
Let $\G$, $X$, $X^+$, $K$, $F_\G$, $W$, $V$ and $Z$ as in
theorem~\ref{criterium_for_l} with $\dim Z - \dim V =1$.
The inequality~(\ref{la_condition})
for $r=1$ gives us:
\begin{equation} \label{la_condition_pour_r=1}
l^{2(k+2f)} \cdot d_Z^2 <
C(N)\alpha_V \beta_V^N\;\;.
\end{equation}

Let $I_l \subset K_l$ and $m \in \TT_V(\QQ_l)$ satisfying the
conclusion of theorem~\ref{good Hecke}. Let $I\subset K $ be the neat
compact open subgroup $K^{l}I_l$ of $\G(\AAf)$ and 
$
\tau \colon \Sh_I(\G,X)_\CC \lto \Sh_K(\G,X)_\CC
$
the finite morphism of Shimura varieties deduced from the inclusion $I
\subset K$. It follows from the condition~(1) in theorem~\ref{good
  Hecke} that the degree of $\tau$ is bounded above by $l^f$.

Let $\wt{V} \subset S_I(\G, X)_\CC$ be an irreducible component of the preimage
$\tau^{-1}(V)$, this is a special but not strongly special
subvariety of $S_I(\G,X)_\CC$ still defined by
the Shimura subdatum $(\HH_V, X_V)$. Let $E_V = E(\HH_V, X_V)$. Notice that $F_\G\subset E_V$.
By the projection formula stated in section \ref{degrees1}
and proposition~\ref{deg}(1),
 we have the
inequality 
$$\deg_{L_{I}}(\Gal(\ol \QQ/E_V)\cdot \wt{V}) \geq
\deg_{L_{K}}(\Gal(\ol \QQ/E_V) \cdot V)\;\;.$$ 
By the corollary~\ref{compare-degrees} the following inequality holds~:
$$ \deg_{L_{K}}(\Gal(\ol \QQ/E_V) \cdot V) \geq \deg_{L_{K_{\HH_{V}}}} (\Gal(\ol \QQ/E_V) \cdot V) \;\;.$$
On the other hand, as $R$ satisfies the 
definition~\ref{alpha},  theorem~\ref{GaloisOrbits} applied to the special
subvariety $V$ of $S_K(\G, X)_\CC$ provides the following lower
bound:

$$  \deg_{L_{K_{\HH_{V}}}}(\Gal(\ol\QQ/E_V) \cdot V)  > C(N) \alpha_V \beta_V^N
\;\;.
$$
We thus obtain:
\begin{equation} \label{compareIwa}
\deg_{L_{I}}(\Gal(\ol \QQ/E_V)\cdot \wt{V}) > C(N)  \alpha_V \beta_V^N \;\;.
\end{equation}

Let $\wt{Z}$ be an $F_\G$-irreducible component of $\tau^{-1}(Z)$ containing
$\wt{V}$. In particular $\wt{Z}$ is Hodge generic in $\Sh_I(\G,X)_\CC$
and is the union of the $\Gal(\ol{F}/F_\G)$-conjugates of a geometrically irreducible component of $\tau^{-1}(Z)$.
The image of $\wt{Z}$ in $\Sh_K(\G,X)_{\CC}$ is $Z$ and as $\tau$ is of
degree bounded above by $l^f$ the following inequality follows from section~\ref{degrees1}:
\begin{equation} \label{degI}
\deg_{L_{I}} \wt{Z} \leq   l^f \cdot d_Z \;\;\;.
\end{equation}

As the morphism $\tau\colon \Sh_I(\G,X)_{\CC}\lto \Sh_K(\G,X)_{\CC}$ is
finite and preserves the property of a subvariety  
of being special, exhibiting a special subvariety $V'$ such that $V\subsetneq V'\subset Z$ 
is equivalent to exhibiting a special subvariety
$\wt{V'}$ such that $\wt{V}\subsetneq \wt{V'}\subset \wt{Z}$.

By conclusion~$(2)$ of theorem~\ref{good
  Hecke} there exists $\sigma \in \Gal(\ol \QQ/E_V)$ such that $ \sigma \wt{V}
\subset T_m\wt{V} \subset T_m\wt{Z}$. As $T_m \wt{Z}$ is defined over
$F_\G$ hence over $E_V$  we deduce that
$\wt{V} \subset T_m\wt{Z} \cap \wt{Z}$ and thus $\Gal(\ol \QQ/E_V)\cdot \wt{V} \subset \wt{Z} \cap
T_m \wt{Z}$.

If $\wt{Z}$ and $ T_m\wt{Z}$ have no common (geometric) irreducible component, then any
$\sigma(\wt{V})$, $\sigma \in \Gal(\ol \QQ /E_V)$, is an irreducible component of $\wt{Z} \cap T_m
\wt{Z}$ for dimension reasons. We get
\begin{equation} \label{fulton}
\begin{split}
 C(N)\alpha_V \beta_V^N \leq \deg_{L_{I}} (\Gal(\ol \QQ /E_V) \cdot \wt{V})
 & \leq \deg_{L_{I}} (\wt{Z} \cap T_m \wt{Z}) \\
 & \leq
(\deg_{L_{I}} \wt{Z})^2 [I_l : I_l \cap mI_l m^{-1}] < l^{k+2f} \cdot d_Z^2\;\;,
\end{split}
\end{equation}
where the first inequality on the left comes from the
inequality~(\ref{compareIwa}), the second from Bezout's theorem (as in \cite{Fulton}, Example 8.4.6) and
the last one from inequality~(\ref{degI}) and the condition~$(4)$ on
$m$ from theorem~\ref{good Hecke}.
This contradicts the inequality~(\ref{la_condition_pour_r=1}). 
Therefore, the intersection $\wt{Z}\cap T_m\wt{Z}$ is not proper and, as both $\wt{Z}$ and
$T_m\wt{Z}$ are defined over $F_\G$ and $\wt{Z}$ is $F_\G$-irreducible, we have $\wt{Z} \subset T_m \wt{Z}$. 
 
As $m$ also
satisfies condition~$(3)$ of theorem~\ref{good Hecke}, we can apply
theorem~\ref{theor2} to this $m$: there exists $\wt{V'}$ special
subvariety of $\wt{Z}$ containing $\wt{V}$ properly.

\subsubsection{Case $r>1$.} \label{r>1}

Fix $r>1$ an integer and suppose by induction that the conclusion of
theorem~\ref{criterium_for_l} holds for all Shimura subdata $(\G,X)$
of $(\G', X')$, connected components $X^+$ of $X$ contained in $X'^+$,
compact open subgroups $K= K' \cap \G'(\AAf)$, varieties $W\in
\Sigma$, and subvarieties $V$ and $Z$ of $\Sh_K(\G, X)_\CC$ as in the statement of
theorem~\ref{criterium_for_l}, satisfying moreover 
$0<\dim Z - \dim V <r$.

Now let $\G$, $X$, $X^+$, $K$, $F$, $W$, $V$ and $Z$ satisfying the
assumptions of theorem~\ref{criterium_for_l} with $\dim Z = \dim V +r$. 
Let $I$, $m$, $\wt{V}$ and $\wt{Z}$ be constructed as in the case
$r=1$. In particular the inequalities~(\ref{compareIwa}) and (\ref{degI}) still hold.

\sspace
Suppose that $\wt{Z}\subset T_m\wt{Z}$. In this case we can apply
theorem~\ref{theor2} with this $m$: there exists $\wt{V'}$ special
subvariety of $\wt{Z}$ containing $\wt{V}$ properly. This implies that
there exists $V'$ special subvariety of $Z$ containing $V$ properly.

\sspace
Suppose now that the intersection $\wt{Z}\cap T_m\wt{Z}$ is proper.
The same argument as in the case $r=1$ shows that this is equivalent to $\wt{Z}$ not being contained in $T_m\wt{Z}$.
 As the intersection $\wt{Z}\cap T_m\wt{Z}$ contains $\wt{V}$, we choose
an $F_\G$-irreducible component $\wt{Y}\subset \Sh_I(\G, X)_\CC$ of $\wt{Z}\cap T_m\wt{Z}$ containing
$\wt{V}$ and we denote by $Y$ its image in $\Sh_K(\G, X)_\CC$. Thus
$Y$ is $F_\G$-irreducible and satisfies $r_Y := \dim Y-\dim V< r$.
To show that $r_Y>0$ we need to check that $\wt{V}$ is not a component of $\wt{Z}\cap T_m(\wt{Z})$.
As $\wt{Z}\cap T_m(\wt{Z})$ is defined over $F_\G$ hence over $E_V$,
we have $
\Gal(\ol\QQ/E_V)\cdot \wt{V} \subset \wt{Z}\cap T_m(\wt{Z})
$. If $\wt{V}$ were a component of $\wt{Z}\cap
T_m(\wt{Z})$ by taking degrees and arguing as in the proof of
inequality~(\ref{fulton}) one still obtains:
$$
C(N)\alpha_V \beta_V^N < l^{k+2f}d_Z^2\;\;.
$$
This contradicts the condition (2). Hence $0 < r_Y <r$.

Let $(\HH,X_\HH)$ be a Shimura subdatum of $(\G, X)$ defining the
smallest special subvariety of $S_I(\G, X)_\CC$
containing a geometrically irreducible component of $\wt{Y}$
containing $\wt{V}$, let $X_\HH^+\subset X^+$ be the corresponding connected
component of $X_\HH$. We define $K_\HH:= K\cap \HH(\AAf)$ and $I_\HH := I \cap\HH(\AAf)$. 
We have the following commutative diagram:
$$
\xymatrix{
\Sh_{I_{\HH}} (\HH, X_\HH)_\CC \ar[r]^q \ar[d]_\tau & \Sh_{I}(\G,
X)_\CC \ar[d]^\tau \\
\Sh_{K_{\HH}} (\HH, X_\HH)_\CC \ar[r]_q  & \Sh_{K}(\G,
X)_\CC \;\;.
}
$$
Let $F_\HH$ be the reflex field $E(\HH,X_\HH)$ and let $\wt{V_\HH}$ be an
irreducible component of $q^{-1}(\wt{V})$ contained in
$S_{I_\HH}(\HH,X_\HH)_\CC$. We denote $V_\HH:=\tau(\wt{V_\HH})$ its
image in  $\Sh_{K_{\HH}}(\HH, X_\HH)_\CC$. Hence $V_\HH$ is also an
irreducible component of $(p \circ q)^{-1}(W)$.

Let $\wt{Y_\HH} \subset \Sh_{I_\HH}(\HH,X_\HH)_\CC $ be an
$F_\HH$-irreducible component of 
$q^{-1}(\wt{Y})$ containing $\wt{V_{\HH}}$. In particular $\wt{Y_\HH}$ is
an $F_\HH$-irreducible Hodge generic subvariety of $\Sh_{I_{\HH}}(\HH,
X_\HH)_\CC$. We define $Y_\HH:=\tau(\wt{Y_\HH})$, it is an
$F_\HH$-irreducible Hodge generic subvariety of $\Sh_{K_\HH}(\HH,
X_\HH)_\CC$.

Finally we have the commutative diagram of triples of varieties:
$$
\xymatrix{
(\Sh_{I_{\HH}} (\HH, X_\HH)_\CC, \wt{Y_\HH}, \wt{V_{\HH}})  \ar[r]^q \ar[d]_\tau & (\Sh_{I}(\G,
X)_\CC, \wt{Y}, \wt{V}) \ar[d]^\tau \\
(\Sh_{K_{\HH}} (\HH, X_\HH)_\CC, Y_\HH, V_\HH) \ar[r]_q  & (\Sh_{K}(\G,
X)_\CC, Y, V) \;\;.
}
$$
Notice that $Y_\HH$ satisfies
\begin{equation} \label{upper}
\deg_{L_{K_{\HH}}} Y_\HH \leq \deg_{L_{I_{\HH}}} 
\wt{Y_\HH} \leq \deg_{L_{I}} q(\wt{Y_\HH}) \leq \deg_{L_{I}} \wt{Y} \leq 
\deg_{L_{I}} (\wt{Z} \cap T_m\wt{Z})< l^{k+2f} d_Z^2 \;\;.
\end{equation}
Indeed, the inequality $\deg_{L_{K_{H}}} Y_\HH \leq \deg_{L_{I_{\HH}}}
\wt{Y_\HH}$ comes from section~\ref{degrees1}, the inequality  $\deg_{L_{I_{\HH}}} 
\wt{Y_\HH} \leq \deg_{L_{I}} q(\wt{Y_\HH})$ from
corollary~\ref{compare-degrees}, the inequality $\deg_{L_{I}}
q(\wt{Y_\HH}) \leq \deg_{L_{I}} \wt{Y}$ from the inclusion
$q(\wt{Y_\HH}) \subset \wt{Y}$, the inequality $\deg_{L_{I}} \wt{Y} \leq 
\deg_{L_{I}}(\wt{Z} \cap T_m\wt{Z})$ from the fact that $\wt{Y}$ is an
$F_\G$-irreducible component of $\wt{Z} \cap T_m \wt{Z}$, and the last
inequality on the right is proven as in (\ref{fulton}).

\begin{prop} \label{check-induction}
The data $\HH$, $X_\HH$, $X_\HH^+$, $K_\HH$, $F_\HH$, $W$, $V_\HH$ and $Y_\HH$ satisfy the conditions of theorem~\ref{criterium_for_l} (in
place respectively of $\G$, $X$, $X^+$, $K$, $F_\G$, $W$, $V$ and $Z$). 
\end{prop}

\begin{proof}
Let
$r_\HH:=\dim Y_\HH -\dim V_\HH$, thus $r_\HH = r_Y>0$.

We first check that  $\HH$, $X_\HH$, $X_\HH^+$, $K_\HH$, $F_\HH$, $W$,
$V_\HH$ and $Y_\HH$ satisfy condition~$(2)$ of theorem~\ref{criterium_for_l}, for
the same prime $l$. 
From the inequality~(\ref{upper}) we obtain:
$$
l^{(k+2f)\cdot 2^{r_\HH}} (\deg_{L_{K_{\HH}}} Y_\HH)^{2^{r_{\HH}}}
\leq
 l^{(k+2f) \cdot 2^{r_{\HH}+1}} d_Z^{2^{r_{\HH}+1}}
$$
and, as $r_\HH+1 \leq r$, we deduce from the inequality~(\ref{la_condition}) that
\begin{equation} \label{encore2}
l^{(k+2f)\cdot 2^{r_{\HH}}}(\deg_{L_{K_{\HH}}} Y_\HH)^{2^{r_{\HH}}}< C(N)\alpha_V\beta_V^N\;\;.
\end{equation}
This is condition~$(2)$ for $\HH$, $X_\HH$, $X_\HH^+$, $K_\HH$,
$F_\HH$, $W$, $V_\HH$ and $Y_\HH$. 

Proposition~\ref{check-induction} then follows from the
following lemma proving that $\HH$, $X_\HH$, $X_\HH^+$, $K_\HH$,
$F_\HH$, $W$, $V_\HH$ and $Y_\HH$ satisfy the condition~$(1)$ of theorem~\ref{criterium_for_l}.
\end{proof}

\begin{lem} \label{notspecial}
The special subvariety $V_\HH$ is not strongly special in
$\Sh_{K_\HH}(\HH,X_\HH)_\CC$.
\end{lem}
\begin{proof}
Suppose the contrary. Then $\TT_{V_\HH} (= \TT_{V})$ is contained in the connected centre $Z(\HH)^0$ of $\HH$
and by the lemma \ref{inclusion1},
$\TT_V = Z(\HH)^0$.
Recall that $K_{\TT_{V}}^{\mathrm{m}} $ denotes the maximal compact open subgroup of $\TT_V(\AAf)$ and
$K_{\TT_{V}} = \TT_V(\AAf)\cap K_{\HH}$.
Let
$
K_{\HH}^{\mathrm{m}} := K_{\TT_{V}}^{\mathrm{m}} K_{\HH}
$
and let 
$$
\pi \colon \Sh_{K_{\HH} }(\HH , X_{\HH})_\CC \lto 
\Sh_{K_{\HH}^{\mathrm{m}}}(\HH , X_{\HH})_\CC
$$
be the natural morphism.
Notice that $K_{\HH}^{\mathrm{m}}/K_{\HH}=K_{\TT_{V}}^{\mathrm{m}}/K_{\TT_{V}}$ acts transitively on the fibres of $\pi$. 
 Let $A$ be the positive integer defined by
\cite[prop.2.9]{UllmoYafaev} for Shimura subdata of $(\G', X')$
(notice that the
constant $A$
already appeared in proposition~\ref{uniform}). Let
$\Theta_A \subset K_{\TT_{V}}^{\mathrm{m}}/K_{\TT_{V}}$ be the 
image of the map $x\mapsto x^A$ on $K_{\TT_{V}}^{\mathrm{m}}/K_{\TT_{V}}$.

\begin{sublem}  \label{contient}
The orbit $\Theta_A V_{\HH}$ is contained
in $\Gal(\ol\QQ/E_V)\cdot V_{\HH}\cap \pi^{-1}\pi(V_{\HH})$. 
\end{sublem}

\begin{proof}
Let $f \colon \Sh_{K_{V_{\HH}}}(\HH_V, X_V)_\CC \lto \Sh_{K_{\HH}}(\HH, X_{\HH})_\CC$
be the morphism defining $V_{\HH}$, where $K_{V_{\HH}}:= K_\HH \cap
\HH_V(\AAf)$. It is naturally
$K_{\TT_{V}}^\textrm{m}/K_{\TT_{V}}$-equivariant and defined over
$\Gal(\ol\QQ/E_V)$.

Let $\mathcal{V}$ be the component of $\Sh_{K_{V_\HH}}(\HH_V, X_V)$ such that
$V_{\HH} = f(\mathcal{V})$. The
$K_{\TT_{V}}^\textrm{m}/K_{\TT_{V}}$-equivariance of $f$ implies:
$$\forall \; \theta \in \Theta_A, \quad f(\theta \cdot \mathcal{V}) = \theta \cdot V_{\HH}\;\;.$$

On the other hand, by the first claim of  \cite[lemma 2.15]{UllmoYafaev}
applied to the Shimura datum $(\HH_V, X_V)$, we see that
$$
\theta \cdot \mathcal{V} = \sigma \mathcal{V} 
$$
for some $\sigma \in \Gal(\ol\QQ/E_V)$. 
Hence 
$$
\theta \cdot V_{\HH} = f(\theta \cdot \mathcal{V}) = f(\sigma \mathcal{V}) = \sigma f(\mathcal{V}) = \sigma V_{\HH}\;\;.
$$
Hence the result.
\end{proof}

\begin{sublem} \label{iteration}
There exists a geometrically irreducible subvariety $Y'$ of $Y_{\HH}$
defined over $\ol\QQ$ and containing $V_{\HH}$
such that the following holds:
\begin{enumerate}
\item $\deg_{L_{K_{\HH}}}\Gal(\ol\QQ/E_V)\cdot Y' \leq (\deg_{L_{K_{\HH}}} Y_{\HH})^{2^{r_{\HH}}}$.
\item 
The variety
 $\Theta_A Y'$ is contained in $\Gal(\ol\QQ/E_V)\cdot Y'$.
\end{enumerate}
\end{sublem}
\begin{proof}
Let $Y_1$ be a geometrically irreducible component of $Y_\HH$ containing $V_{\HH}$.

If $\Theta_A Y_1$ is contained in $\Gal(\ol\QQ/E_V)\cdot Y_1$,
then take $Y'=Y_1$.
As $\Gal(\ol\QQ/E_V)\cdot Y'$ is contained in $Y_{\HH}$, the
condition~$(1)$ is obviously satisfied.

Otherwise there exists a $\theta \in \Theta_A$ such that $\theta Y_1$ is not a $\Gal(\ol\QQ/E_V)$-conjugate of
$Y_1$. Let $\ol{Y_1}= \Gal(\ol\QQ/E_V)\cdot Y_1$. Recall that the
action of $\Theta_A$ commutes with the action of $\Gal(\ol\QQ/E_V)$. In particular the
intersection $\ol{Y_1} \cap \theta \ol{Y_1}$ is proper. Moreover, as
$\theta V_\HH$ is a $\Gal(\ol\QQ/E_V)$-conjugate of $V_\HH$ by sublemma~\ref{contient}, we obtain that:
$$
V_{\HH}\subset \ol{Y_1} \cap \theta\ol{\cdot Y_1}\;\;.
$$

Let $Y_2$ be a geometrically irreducible component of $\ol{Y_1} \cap \theta \ol{Y_1}$ containing $V_{\HH}$ and let $\ol{Y_2} = \Gal(\ol\QQ/E_V)\cdot Y_2$.
We have
$$
\ol{Y_2}\subset \ol{Y_1}\cap \theta\ol{Y_1} \subset Y_{\HH}\cap \theta Y_{\HH}\;\;.
$$

It follows that
$$
\deg_{L_{K_{\HH}}}  \ol{Y_2} \leq
(\deg_{L_{K_{\HH}}}Y_\HH)^2 \;\;.
$$
On the other hand:
$$
\deg_{L_{K_{\HH}}} \Gal(\ol\QQ/E_V) \cdot V_{\HH} >
C(N)\alpha_V\beta_V^N > (\deg_{L_{K_{\HH}}}Y_\HH)^{2^{r_{\HH}}} 
$$
where the first left inequality follows from
theorem~\cite[Theorem 2.19]{UllmoYafaev} applied with $Y = V_{\HH}$
and the second one from~(\ref{encore2}).
These inequalities show that $\dim Y_2  > \dim V_{\HH}$.

We now iterate the process replacing $Y_1$ by $Y_2$. As $\dim V_\HH <
\dim Y_2 < \dim Y_1 =\dim Y_\HH$ after at most $r_{\HH} = \dim Y_\HH
- \dim V_\HH$ iterations we construct the variety $Y'$ satisfying the 
required conditions.
\end{proof}

We now finish the proof of lemma \ref{notspecial}.
Condition~$(2)$ of sublemma~\ref{iteration} enables us to apply theorem~\cite[theor.2.19]{UllmoYafaev}:
$$
\deg_{L_{K_{\HH}}} (\Gal(\ol\QQ/E_V)\cdot Y') \geq C(N)\alpha_V\beta_V^N\;\;.
$$

By sublemma~\ref{iteration} $(1)$, and inequality~(\ref{encore2}) we have
$$
\deg_{L_{K_{\HH}}} (\Gal(\ol\QQ/E_V)\cdot Y') \leq (\deg_{L_{K_{\HH}}} Y_\HH)^{2^{r_{\HH}}}
<C(N) \alpha_V \beta_V^N\;\;.
$$

These inequalities yield a contradiction. This finishes the proof
of lemma~\ref{notspecial}.
\end{proof}

Let us now finish the proof of theorem~\ref{criterium_for_l}.
As $r_{\HH} <r$ by induction hypothesis we can apply
theorem~\ref{criterium_for_l} to $\HH$, $X_\HH$, $X_\HH^+$, $K_\HH$,
$F_\HH$, $W$, $V_\HH$ and $Y_\HH$. Thus $Y_\HH$ contains a special subvariety $V'_\HH$ which
contains $V_\HH$ properly. This implies that $Z$ contains a special
subvariety $V'$ which contains $V$ properly.
This finishes the proof of theorem~\ref{criterium_for_l} by induction on $r$.
\end{proof}


\section{The choice of a prime $l$.} \label{choix}
In this section we complete the proof of the theorem~\ref{main-thm2},
and thus also of the main theorem~\ref{main-thm}, using
the theorem~\ref{criterium_for_l}. The choice of a prime $l$ satisfying the conditions
of the theorem~\ref{criterium_for_l}
will be made possible by the effective Chebotarev theorem, which we
now recall.

\subsection{Effective Chebotarev.}

\begin{defi}
Let $L$ be a number field of degree $n_L$ and absolute discriminant
$d_L$. Let $x$ be a positive real number. We denote by $\pi_L(x)$ the
number of primes $p$ such that $p$ is split in $L$ and $p\leq x$.
\end{defi}

\begin{prop} \label{chebotarev}
Assume the Generalized Riemann Hypothesis (GRH).
There exists a constant $A$ such that the following holds.
For any number field $L$ Galois over $\QQ$ and
for any $x> \max(A,2\log(d_L)^2 (\log(\log(d_L)))^2)$ we have
$$
\pi_L(x) \geq \frac{x}{3n_L \log(x)}\;\;.
$$
Furthermore, if we consider number fields such that $d_L$ is constant,  
then the assumption of the GRH can be dropped.
\end{prop}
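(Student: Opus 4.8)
The statement to be proved is Proposition~\ref{chebotarev}, the effective Chebotarev density theorem under GRH, in the form of a lower bound for $\pi_L(x)$, the number of primes $p \le x$ that split completely in a number field $L$ Galois over $\QQ$.

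The plan is to deduce this from the standard GRH-conditional effective Chebotarev theorem of Lagarias--Odlyzko, in the refined form due to Serre (see \cite[Th\'eor\`eme~4]{Serre-Cheb}, or Oesterl\'e). Recall that for a finite Galois extension $L/\QQ$ with group $G$ and a conjugacy-invariant set, here the trivial class $\{1\}$, the prime counting function $\pi_C(x,L/\QQ)$ counting unramified primes $p \le x$ with Frobenius class equal to $C$ satisfies, under GRH for the Dedekind zeta function $\zeta_L$,
$$
\Bigl| \pi_C(x, L/\QQ) - \frac{|C|}{|G|}\,\mathrm{Li}(x) \Bigr| \le c_1 \frac{|C|}{|G|} \sqrt{x}\bigl(\log d_L + n_L \log x\bigr)
$$
for an absolute constant $c_1$ and all $x \ge 2$, where $n_L = [L:\QQ]$ and $d_L$ is the absolute value of the discriminant. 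Applying this with $C = \{1\}$ (so $|C| = 1$, $|G| = n_L$), and noting that primes with trivial Frobenius are exactly those splitting completely (the finitely many ramified primes only help the lower bound or can be absorbed into the constant), we get
$$
\pi_L(x) \ge \frac{1}{n_L}\,\mathrm{Li}(x) - \frac{c_1}{n_L}\sqrt{x}\bigl(\log d_L + n_L \log x\bigr).
$$
First I would record this inequality carefully, being explicit that the constant $c_1$ is absolute.

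Next I would show the error term is at most half the main term once $x$ exceeds the stated threshold. Using $\mathrm{Li}(x) \ge x/\log x$ for $x$ large (say $x \ge A_0$ absolute) and the Minkowski-type bound $\log d_L \le n_L \log n_L + O(n_L)$ — actually one does not even need this, since the hypothesis $x > 2 (\log d_L)^2 (\log\log d_L)^2$ directly controls the $\sqrt{x}\log d_L$ term — one checks: $\sqrt{x}\log d_L \le \sqrt{x}\cdot \sqrt{x/2}/(\log\log d_L) \le x/(\sqrt 2 \log\log d_L)$, which is $o(x/\log x)$ when $d_L$ is large; and $\sqrt{x}\, n_L \log x$ is negligible against $\mathrm{Li}(x)/n_L \sim x/(n_L\log x)$ provided $x \ge (2 c_1 n_L \log x)^2$ roughly, i.e. $x \gtrsim n_L^2 (\log x)^2$ — and here one invokes the Minkowski bound $n_L \ll \log d_L$ (valid since $d_L \ge 1$ forces, via Minkowski, a lower bound on $d_L$ exponential in $n_L$), so $n_L^2 (\log x)^2 \ll (\log d_L)^2(\log x)^2$, which is dominated by the threshold $2(\log d_L)^2(\log\log d_L)^2$ after comparing $\log x$ with $\log\log d_L$ in the relevant range. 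Choosing $A$ to be the maximum of $A_0$ and the absolute constants emerging from these comparisons, one concludes
$$
\pi_L(x) \ge \frac{1}{2 n_L}\,\mathrm{Li}(x) \ge \frac{1}{2 n_L}\cdot\frac{x}{\log x} \ge \frac{x}{3 n_L \log x},
$$
where the last step uses $\mathrm{Li}(x) \ge \frac{3}{2} x/\log x$ for $x \ge A$ (enlarging $A$ once more if needed). I expect the main obstacle, or at least the only place demanding real care, to be bookkeeping the interplay between the three scales $n_L$, $\log d_L$, and $\log x$ so that the single clean threshold $\max(A, 2(\log d_L)^2(\log\log d_L)^2)$ genuinely suffices; the Minkowski bound $n_L \le c_2 \log d_L$ is the key lemma that makes the $n_L$-dependence disappear into the $d_L$-dependence.

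For the final sentence of the statement — that GRH can be dropped when $d_L$ ranges over a set on which it is constant — the point is simply that for fixed $L$ (or finitely many $L$, or a family with bounded discriminant, hence finitely many fields by Hermite--Minkowski) one has the unconditional Chebotarev density theorem: $\pi_L(x) \sim \mathrm{Li}(x)/n_L$ as $x \to \infty$. Since there are only finitely many number fields with a given discriminant, taking $A$ large enough (depending on that finite set) makes $\pi_L(x) \ge \frac{x}{3 n_L \log x}$ hold for all $x \ge A$ and all $L$ in the family, with no Riemann hypothesis needed. I would state this as a short remark citing the classical Chebotarev theorem and Hermite's finiteness theorem, and note that this is exactly the situation exploited under assumption~$(2)$ of theorem~\ref{main-thm}, where the reflex fields all have a fixed splitting behaviour.
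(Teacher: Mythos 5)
The paper's own ``proof'' is a one-line citation: the GRH case is delegated to Appendix N of \cite{EdHilbert} and the unconditional bounded-discriminant case to the classical Chebotarev theorem. Your proposal therefore goes well beyond the paper in trying to give a self-contained derivation. Your skeleton --- the Lagarias--Odlyzko/Serre effective Chebotarev theorem under GRH applied to the trivial Frobenius class, comparison of the error with the main term via the Minkowski bound $n_L \ll \log d_L$, and Hermite--Minkowski finiteness plus classical Chebotarev for the last sentence --- is the standard route and is presumably exactly what Edixhoven's appendix carries out. The unconditional bounded-discriminant part of your argument is correct as stated.

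However, the GRH estimates you sketch do not close at the stated threshold, and the gap is not merely cosmetic. First, the step ``$\sqrt{x}\log d_L \le x/(\sqrt2\log\log d_L) = o(x/\log x)$'' amounts to $\log x = o(\log\log d_L)$, which only holds for $x$ near the threshold; for $x\to\infty$ with $L$ fixed it fails, so a case split between ``$x$ near the threshold'' and ``$x$ large'' is unavoidable. Second, there is an arithmetic slip: $c_1\sqrt{x}\log x \le \tfrac12\, x/(n_L\log x)$ unwinds to $\sqrt x \ge 2c_1 n_L(\log x)^2$, i.e.\ $x \gtrsim n_L^2(\log x)^4$, not $n_L^2(\log x)^2$ as you wrote. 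Third, carrying the corrected exponent through: at $x$ of the order of the threshold $2(\log d_L)^2(\log\log d_L)^2$ one has $\log x \asymp \log\log d_L$, so the required inequality $\sqrt x \gtrsim n_L(\log x)^2$ becomes $\log d_L\log\log d_L \gtrsim n_L(\log\log d_L)^2$, i.e.\ $n_L \lesssim \log d_L/\log\log d_L$. Minkowski only gives $n_L \lesssim \log d_L$, which is short by a factor of $\log\log d_L$, so the naive Lagarias--Odlyzko input together with Minkowski does not prove the proposition at the stated threshold for arbitrary Galois $L$. (In the paper's application the degrees $n_L$ are bounded by an absolute constant --- see the beginning of the proof of proposition~\ref{choice_l} --- which removes the difficulty, but the proposition is asserted for all Galois $L$.) You rightly flag this bookkeeping as the only delicate part, but the sketch leaves the gap open; closing it would require either a finer zero-counting input, a modified threshold, or an explicit bounded-degree hypothesis, and this is precisely what one should look for in Edixhoven's Appendix~N.
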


\begin{proof}
The first statement (assuming the GRH) is proved in the Appendix N of  
\cite{EdHilbert}
and the second is a direct consequence of the classical Chebotarev theorem.
\end{proof}


\subsection{Proof of the theorem~\ref{main-thm2}.}

\begin{proof}
Let $\G$, $X$, $X^+$, $R$, $K$ and $Z$ be as in theorem~\ref{main-thm2}. Thus
$(\G,X)$ is a Shimura datum with $\G$ semisimple of adjoint type,
$X^+$ is a connected component of $X$, the positive integer $R$ is as
in definition~\ref{alpha}, the group $K=\prod_{p \; \textnormal{prime}} K_p$ is a neat compact open subgroup of
$\G(\AAf)$ and $Z \subset S_K(\G, X)_\CC$ is a Hodge generic
geometrically irreducible subvariety containing a Zariski dense set $\mathbf{\Sigma}$ of special subvarieties, which
is a union of special subvarieties $V$, $V \in \Sigma$, all of the same dimension
$n(\Sigma)$ such that for any
modification $\Sigma'$ of $\Sigma$ the set $\{ \alpha_V\beta_V, \; V \in \Sigma'\}$ is
unbounded.
We want to show, under each of the two assumptions $(1)$ or $(2)$ of
theorem~\ref{main-thm2} separately, that for every $V$ in
$\Sigma$ there exists a special subvariety $V'$ such that $V
\subsetneq  V' \subset Z$ (possibly after replacing $\Sigma$ by a modification).

From now on, we fix a faithful rational representation $\rho: \G \hookrightarrow 
\GL_n$ such that $K$ is contained in $\GL_n(\Zhat)$. In the case of the
assumption~$(2)$ in theorem~\ref{main-thm2}, we take for $\rho$ the representation
which has the property that the centres $\TT_V$ lie in
one $\GL_n(\QQ)$-conjugacy class (possibly replacing $K$ by $K\cap  
\GL_n(\Zhat)$) as $V$ ranges through $\Sigma$. 

\begin{lem} \label{lem10.2.1}
Without any loss of generality we can assume that:
\begin{enumerate}

\item The group $K_{3}$ is contained in the congruence subgroup of level three (with respect to the faithful representation $\rho$).

\item After possibly replacing $\Sigma$ by a modification, 
$\Sigma$ consists of special but not strongly special subvarieties.
\end{enumerate}
\end{lem}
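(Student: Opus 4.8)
The plan is to establish the two clauses separately and then note that they are compatible.

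\emph{Clause (1).} I would set $K'_3 := K_3 \cap \Gamma(3)$, where $\Gamma(3) \subset \GL_n(\ZZ_3)$ is the principal congruence subgroup of level three with respect to $\rho$, and $K' := K'_3 \ti \prod_{p \neq 3} K_p$. Then $K'$ is a compact open subgroup of $\G(\AAf)$ of the form $\prod_p K'_p$ with $K'_3 \subset \Gamma(3)$, and it is neat because $K$ is neat and any subgroup of a neat group is neat. Let $f \colon \Sh_{K'}(\G,X)_\CC \lto \Sh_K(\G,X)_\CC$ be the associated finite morphism, restricting to a finite étale surjection $S_{K'}(\G,X)_\CC \lto S_K(\G,X)_\CC$. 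Exactly as in the proof of Proposition~\ref{imply}, since images and irreducible components of preimages of special subvarieties under $f$ are special and $f$ has finite degree, it suffices to prove the conclusion of Theorem~\ref{main-thm2} for a suitable datum at level $K'$: take $Z'$ an irreducible component of $f^{-1}(Z)$, so that $f(Z') = Z$ and $Z'$ is again Hodge generic (a point of $Z'$ over a Hodge generic point of $Z$ has full Mumford-Tate group, as in the argument of Lemma~\ref{hodgecomponent}) and geometrically irreducible over $\CC$; and let $\Sigma'$ consist of those irreducible components of the $f^{-1}(V)$, $V \in \Sigma$, contained in $Z'$. One then checks that $\mathbf{\Sigma'}$ is Zariski dense in $Z'$ (otherwise $\overline{\mathbf{\Sigma'}} \subsetneq Z'$ would contradict density of $f^{-1}(\mathbf{\Sigma})$ in $f^{-1}(Z)$), that all members of $\Sigma'$ have dimension $n(\Sigma)$ ($f$ being étale), and that for each $V' \in \Sigma'$ over $V \in \Sigma$ the subdatum $(\HH_V,X_V)$ still defines $V'$, so $\TT_{V'} = \TT_V$, $\beta_{V'} = \beta_V$, while $K_{\TT_{V'}} = \TT_V(\AAf)\cap K' \subset K_{\TT_V}$ forces $\alpha_{V'} \geq \alpha_V$. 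Since any modification of $\Sigma'$ maps under $f$ to a modification of $\Sigma$, the hypothesis that $\{\alpha_W\beta_W\}$ is unbounded over every modification passes from $\Sigma$ to $\Sigma'$, and assumption~$(2)$ for $\Sigma$ descends to $\Sigma'$ because $\TT_{V'}=\TT_V$. Applying Theorem~\ref{main-thm2} at level $K'$ and pushing the output forward along $f$ gives the conclusion at level $K$, so we may assume~$(1)$.

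\emph{Clause (2).} If $V \in \Sigma$ is strongly special then, $\G$ being adjoint, Definition~\ref{strongly_special} forces $\HH_V$ to be semisimple, so $\TT_V = \{1\}$ and $\C_V = \HH_V/\HH_V^{\der}$ is trivial; hence, by Definition~\ref{notation}, its splitting field is $\QQ$ and $\beta_V = \log d_{\TT_V} = 0$, while the product defining $\alpha_V$ in Definition~\ref{alpha} is empty, so $\alpha_V = 1$; thus $\alpha_V\beta_V = 0$. Write $\Sigma = \Sigma_{\mathrm{ss}} \sqcup \Sigma_{\mathrm{nss}}$ according to whether members are strongly special or not. If $\mathbf{\Sigma_{\mathrm{nss}}}$ were not Zariski dense in $Z$, then, $Z$ being irreducible and $\mathbf{\Sigma} = \mathbf{\Sigma_{\mathrm{ss}}} \cup \mathbf{\Sigma_{\mathrm{nss}}}$ dense in $Z$, the union $\mathbf{\Sigma_{\mathrm{ss}}}$ would be Zariski dense in $Z$, so $\Sigma_{\mathrm{ss}}$ would be a modification of $\Sigma$ with $\{\alpha_V\beta_V : V \in \Sigma_{\mathrm{ss}}\} = \{0\}$ bounded, contradicting the hypothesis of Theorem~\ref{main-thm2}. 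Hence $\mathbf{\Sigma_{\mathrm{nss}}}$ is Zariski dense in $Z$, i.e. $\Sigma_{\mathrm{nss}}$ is a modification of $\Sigma$; it consists of special, not strongly special subvarieties of constant dimension $n(\Sigma)$, every modification of $\Sigma_{\mathrm{nss}}$ is a modification of $\Sigma$ (so the unboundedness hypothesis is retained), and assumption~$(2)$, when satisfied by $\Sigma$, is satisfied by the subset $\Sigma_{\mathrm{nss}}$. Replacing $\Sigma$ by $\Sigma_{\mathrm{nss}}$ yields~$(2)$.

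\emph{Compatibility and main obstacle.} The two reductions do not interfere: passing from $K$ to $K'$ does not change the group $\HH_V$ attached to a special subvariety, hence does not affect whether it is strongly special, so one first arranges~$(1)$ and then~$(2)$. The only non-formal point is the verification, in Clause~(1), that \emph{all} the hypotheses of Theorem~\ref{main-thm2} --- Hodge genericity and geometric irreducibility of $Z'$, and unboundedness of $\alpha_W\beta_W$ over \emph{every} modification --- survive the change of level; this is the step to be careful about, but it is routine given Lemma~\ref{hodgecomponent}, the precedent in the proof of Proposition~\ref{imply}, and the monotonicity $\alpha_{V'} \geq \alpha_V$. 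Clause~(2) is essentially immediate once one observes that $\alpha_V\beta_V = 0$ for strongly special $V$.
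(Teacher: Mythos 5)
Your proof is correct and follows essentially the same route as the paper: replace $K$ by the smaller group $K'$ with $K'_3\subset\Gamma(3)$, pass to an irreducible component of $f^{-1}(Z)$, check the hypotheses of Theorem~\ref{main-thm2} survive, and for clause~(2) observe that strongly special $V$ have $\alpha_V\beta_V=0$ so $\Sigma_{\mathrm{nss}}$ must be a modification. The only slight variation is cosmetic: where the paper asserts that $\alpha_{V'}$ equals $\alpha_{V}$ up to a factor independent of $V'$, you use the weaker but equally sufficient monotonicity $\alpha_{V'}\geq\alpha_V$, and you spell out the density/dimension/Hodge-genericity checks that the paper leaves implicit.
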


\begin{proof}
To fulfill the first condition, let $\wt{K} = \wt{K_3} \times \prod_{p
  \not = 3} K_p$ be a finite index subgroup of $K$ with $\wt{K_3}$
contained in the congruence subgroup of level three (with respect to
the faithful representation $\rho$). Let $\wt{Z}$ be an irreducible component of the preimage of $f^{-1}(Z)$,
where $f: S_{\wt{K}}(\G,X)_\CC \lto S_K(\G,X)_\CC$ is the canonical
finite morphism. Then 
$\wt{Z}$ contains a Zariski-dense set
$\wt{\Sigma}$, which is a union of special subvarieties $V$, $V \in \wt{\Sigma}$, all of the same dimension
$n(\Sigma)$: $\wt{\Sigma}$ is the set of all irreducible components $\wt{V}$
of $f^{-1}(V)$ contained in $\wt{Z}$ as $V$ ranges through
$\Sigma$. Notice that for any modification $\wt{\Sigma}'$ of
$\wt{\Sigma}$ the set $\{\alpha_{V'}\beta_{V'}, \; V' \in
\wt{\Sigma}'\}$ is unbounded: $\beta_{V'} = \beta_{f(V')}$ and $\alpha_{V'}$ is equal to 
$\alpha_{f(V')}$ up to a factor independent of $V'$. Thus $\wt{Z}$
satisfies the assumptions of theorem~\ref{main-thm2}. As a subvariety of
$\Sh_{K}(\G, X)_{\CC}$ is special if and only if some (equivalently any) irreducible component of its preimage by $f$
is special, theorem~\ref{main-thm2} for $\wt{Z}$ implies
theorem~\ref{main-thm2} for $Z$.

For the second condition : otherwise there is a modification
$\Sigma'$ of $\Sigma$
consisting only of strongly special subvarieties. Contradiction with
the assumption that the set $\{ \alpha_V \beta_V, V \in \Sigma' \}$ is
unbounded. 
\end{proof}

Let $B$ be the constant depending on $\G$, $X$ and $R$ given by the theorem \ref{GaloisOrbits}.
Fix $N$ a positive integer and let $C(N)$ be the real number depending
on 
$R$ and $N$  given by the theorem \ref{GaloisOrbits}. 
Let $k$ the constant depending on the data $\G$, $X$,
$X^+$, $K$ defined in the theorem~\ref{good Hecke}. Let $f$ be
the constant depending on the data $\G$, $X$,
$X^+$ defined in the theorem~\ref{good Hecke}.

Let $F_\G$ be the reflex field $E(\G,X)$.
As $Z$ contains a Zariski dense set of special subvarieties, $Z$ is defined over $\ol\QQ$.
We replace $Z$ by the union of its conjugates under $\Gal(\ol\QQ/F_\G)$. Thus $Z$ is now an
$F_\G$-irreducible $F_\G$-subvariety of $\Sh_K(\G,X)_\CC$.

For all primes $l$ larger than a constant $C$, the group $K_l$ is a hyperspecial maximal compact open
subgroup of $\G(\QQ_l)$ (cf. \cite[3.9.1]{tits}) and furthermore $K_l = \G(\ZZ_l)$, where the
$\ZZ$-structure on $\G$ is defined by taking the Zariski closure in 
$\GL_{n,\ZZ}$ via $\rho$.
\begin{prop} \label{suffisant}
To prove theorem~\ref{main-thm2} it is enough to show that for any $V$ in $\Sigma$ (up to a
modification), there exists a prime $l>C$ satisfying the following
conditions:
\begin{enumerate}
\item the prime $l$ splits $\TT_V$.
\item $\TT_{V,\FF_l}$ is a torus.
\item 
$l^{(k+2f)\cdot 2^{r}} \cdot (\deg_{L_{K}} Z)^{2^{r}} <
C(N)\alpha_V \beta_V^N\;\;, \qquad \text{where} \; r = \dim Z - \dim V\;\;.$
\end{enumerate}
\end{prop}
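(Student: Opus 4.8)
The plan is to observe that conditions~$(1)$--$(3)$ are precisely the hypotheses required to invoke theorem~\ref{criterium_for_l} with $(\G',X')=(\G,X)$, with the Shimura subdatum of $(\G',X')$ taken to be $(\G,X)$ itself (so that $K'=K$ and the morphism $p$ is the identity) and with $W=V$; its conclusion is exactly the conclusion of theorem~\ref{main-thm2}.

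First I would record that $r:=\dim Z-\dim V>0$ for every $V\in\Sigma$. Indeed each such $V$ is an irreducible closed subvariety of the irreducible $Z$, so $\dim V=\dim Z$ would force $V=Z$; since all members of $\Sigma$ have the same dimension $n(\Sigma)$, this would give $\Sigma=\{Z\}$, and then $\{\alpha_V\beta_V:V\in\Sigma\}$ would be the single real number $\alpha_Z\beta_Z$, contradicting the standing hypothesis of theorem~\ref{main-thm2} that this set is unbounded for every modification of $\Sigma$. In particular $\Sigma$ is infinite, and $r\geq 1$ throughout.

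Next I would match the hypotheses of theorem~\ref{criterium_for_l}. By Lemma~\ref{lem10.2.1} we may assume that $K_{3}$ lies in the principal congruence subgroup of level three with respect to $\rho$ and that every $V\in\Sigma$ is special but not strongly special; together with the reductions already in force ($\G$ semisimple of adjoint type, $K$ neat, $\rho$ faithful with $K\subset\GL_n(\Zhat)$, $Z$ a Hodge generic $F_\G$-irreducible subvariety containing the Zariski dense set $\mathbf{\Sigma}$, and $B$, $C(N)$, $k$, $f$, $N$ the indicated constants) this is exactly the situation~\ref{situation} with $(\G',X')=(\G,X)$. The alternative hypothesis of theorem~\ref{criterium_for_l} — either the GRH, or that the tori $\TT_W$ lie in one $\GL_n(\QQ)$-orbit as $W$ ranges over $\Sigma$ — is supplied by assumption~$(1)$, resp.\ assumption~$(2)$, of theorem~\ref{main-thm2}, in the latter case because $\rho$ was chosen so that the centres $\TT_V$ lie in a single $\GL_n(\QQ)$-conjugacy class. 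Now fix $V\in\Sigma$ — after replacing $\Sigma$ by the modification for which a prime as in the statement exists for every member — and a prime $l>C$ satisfying $(1)$--$(3)$. Put $W=V$; then $V$ is the (unique) irreducible component of $p^{-1}(W)$, still defined by $(\HH_V,X_V)$, and $Z\supset V$ is Hodge generic and $F_\G$-irreducible, so condition~$(1)$ of theorem~\ref{criterium_for_l} holds by the above. For condition~$(2)$ of that theorem: since $l>C$ the group $K_l=\G(\ZZ_l)$ is a hyperspecial maximal compact open subgroup of $\G(\QQ_l)$; $l$ splits $\TT_V$ by $(1)$; $\TT_{V,\FF_l}$ is a torus by $(2)$; and the displayed inequality~(\ref{la_condition}), namely $l^{(k+2f)\cdot 2^{r}}\cdot(\deg_{L_K}Z)^{2^{r}}<C(N)\alpha_V\beta_V^N$, is exactly $(3)$. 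Theorem~\ref{criterium_for_l} then produces a special subvariety $V'$ with $V\subsetneq V'\subset Z$, which is the assertion of theorem~\ref{main-thm2} for that modification of $\Sigma$.

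The only real content here is bookkeeping: checking each hypothesis of situation~\ref{situation} and of theorem~\ref{criterium_for_l}, and noting that conditions~$(1)$--$(3)$ coincide with the three requirements packaged into condition~$(2)$ of that theorem. There is no analytic or geometric obstacle — all of it has been absorbed into theorem~\ref{criterium_for_l} — and the genuine work that remains, producing a prime $l>C$ satisfying $(1)$--$(3)$, is carried out in the rest of section~\ref{choix} by combining the effective Chebotarev theorem (Proposition~\ref{chebotarev}) with the lower bound on $\alpha_V\beta_V^N$ coming from the hypothesis of theorem~\ref{main-thm2}.
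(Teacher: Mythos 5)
Your proposal is correct and follows essentially the same route as the paper: apply theorem~\ref{criterium_for_l} with $(\G',X')=(\G,X)$, $W=V$ and $p$ the identity, noting that condition~$(1)$ of that theorem comes from lemma~\ref{lem10.2.1}$(2)$, that conditions~$(1)$--$(3)$ of the proposition are exactly condition~$(2)$ of that theorem, and that $r>0$ because unboundedness of $\alpha_V\beta_V$ rules out $\dim V = \dim Z$. The paper is a little terser (it simply states that unboundedness forces $r>0$), but the content and the argument are the same.
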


\begin{proof}
Let $V$ be an element of $\Sigma$.

\sspace
Let us check that the conditions of the theorem~\ref{criterium_for_l}
are satisfied for $\G= \G'$, $X$, $X^+$, $K$, $F_\G$, $W=V$ and $Z$:
\begin{itemize}
\item[-] condition~(1) of theorem~\ref{criterium_for_l} is automatically satisfied because $\G = \G'$
  and $\Sigma$ consists of special but non strongly special
  subvarieties of $\G$ by lemma~\ref{lem10.2.1}$(2)$.
\item[-] the conditions of
proposition~\ref{suffisant} immediately imply that the
condition~$(2)$ of theorem~\ref{criterium_for_l} is
satisfied. 
\end{itemize}

\sspace
As the set $\{\alpha_V\beta_V, V \in \Sigma\}$ is unbounded the
difference $r:= \dim Z - n(\Sigma)$ is necessarily positive.
We now apply the theorem  \ref{criterium_for_l}: for any $V$ in
$\Sigma$ there exists a special subvariety $V'$ of $S_K(\G, X)_\CC$
such that $ V \subsetneq V' \subset Z$. 
\end{proof}

Therefore, in order to prove theorem~\ref{main-thm2}, it remains to check the existence of the prime $l$ satisfying the 
conditions of proposition~\ref{suffisant}. We first prove the following.

\begin{prop} \label{choice_l}
For every $D>0$, $\epsilon > 0$ and every integer
$m \geq \max(\epsilon,6)$, there exists  an integer $M$
such that (up to a modification of $\Sigma$):
for every $V$ in $\Sigma$ with $\alpha_V\beta_V$
larger than $M$ there exists a prime $l>C$ satisfying the following conditions:
\begin{enumerate}
\item the prime $l$ splits $\TT_V$.
\item $(\TT_{V})_{\FF_l}$ is a torus.
\item $l < D\alpha_{V}^{\epsilon} \beta_{V}^{m}$.
\end{enumerate}

\end{prop}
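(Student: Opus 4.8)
The plan is to produce $l$ by applying the effective Chebotarev theorem (Proposition~\ref{chebotarev}) to the splitting field $L_V$ of $\C_V$; the real work is to check that the quantities entering Chebotarev are controlled uniformly in $V$. I would take the ``modification of $\Sigma$'' allowed in the statement to be $\Sigma$ itself: the construction below produces the desired prime for every $V\in\Sigma$ with $\alpha_V\beta_V$ large, and those $V$ for which $\TT_V$ is split over $\QQ$ have $d_{\TT_V}=1$, hence $\beta_V=0$ and $\alpha_V\beta_V=0$, so they never satisfy $\alpha_V\beta_V>M$ and may simply be disregarded. For the remaining $V$ one has $L_V\neq\QQ$, so $\beta_V=\log d_{L_V}\geq\log 3>1$. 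Since $\TT_V$ is a subtorus of $\G'$, the Galois action on its cocharacter lattice factors through a finite subgroup of $\GL_d(\ZZ)$ with $d\leq\dim\G'$, so by Minkowski's theorem $n_V:=[L_V:\QQ]$ is bounded by a constant $n_0$ depending only on $\G'$. Moreover, by \cite[Lemma~2.6]{UllmoYafaev} the complexity of $\rho|_{\TT_V}$ is bounded uniformly in $V$, and combining this with \cite[Lemma~3.3.1]{EdYa} one gets that the set of primes $l$ at which $(\TT_V)_{\FF_l}$ fails to be a torus is contained in the union of the primes ramified in $L_V$ and a set of primes bounded by an absolute constant $c_3$. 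Consequently \emph{any} prime $l>c_3$ that splits completely in $L_V$ is unramified there and automatically satisfies conditions~(1) and~(2) of the proposition.

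Next, fixing $D,\epsilon>0$ and an integer $m\geq\max(\epsilon,6)$, it is enough to prove the statement with $D/2$ in place of $D$. So I would set $x:=(D/2)\,\alpha_V^{\epsilon}\beta_V^{m}$ and $C':=\max(C,c_3)$ and look for a prime $l$ with $C'<l\leq x$ splitting completely in $L_V$: such an $l$ gives~(1),~(2), $l>C$, and $l\leq x<D\alpha_V^{\epsilon}\beta_V^{m}$, which is~(3). As there are at most $C'$ primes $\leq C'$, it suffices to force $\pi_{L_V}(x)>C'$. By Proposition~\ref{chebotarev}, as soon as $x>\max\bigl(A,\,2\beta_V^{2}(\log\beta_V)^{2}\bigr)$ --- and under assumption~(2) the field $L_V$, hence $d_{L_V}$, is constant, so only $x>A$ is needed --- one has $\pi_{L_V}(x)\geq x/(3n_V\log x)\geq x/(3n_0\log x)$.

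It then remains to choose $M$ so that, whenever $\alpha_V\beta_V>M$, both $x>\max\bigl(A,\,2\beta_V^{2}(\log\beta_V)^{2}\bigr)$ and $x/(3n_0\log x)>C'$ hold; this is a routine growth estimate that I would not grind through. The key points: $\alpha_V,\beta_V\geq 1$ give $x\geq(D/2)\beta_V^{m}\geq(D/2)\beta_V^{6}$ and $x\geq(D/2)\alpha_V^{\epsilon}$, while $\alpha_V\beta_V>M$ forces $\max(\alpha_V,\beta_V)>\sqrt M$ and hence $x\geq(D/2)M^{\min(\epsilon/2,3)}\to\infty$ with $M$; for $\beta_V$ beyond an absolute ($D$-dependent) bound one has $(D/2)\beta_V^{6}>2\beta_V^{2}(\log\beta_V)^{2}$ because $t^{4}/(\log t)^{2}\to\infty$ --- this is where $m\geq 6$ (in fact $m\geq 3$ would do) enters --- and for $\beta_V$ below that bound the threshold $2\beta_V^{2}(\log\beta_V)^{2}$ is bounded whereas $x\geq(D/2)\alpha_V^{\epsilon}$ is large since $\alpha_V>M/\beta_V$ is large; finally $t\mapsto t/\log t$ is increasing for $t>e$, so $x/(3n_0\log x)$ is bounded below by a quantity tending to $\infty$ with $M$ and thus eventually exceeds the constant $C'$. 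The only substantive part of the whole argument is the pair of uniform statements in the first paragraph (Minkowski for $n_V$, and \cite[Lemma~2.6]{UllmoYafaev} together with \cite[Lemma~3.3.1]{EdYa} for the bad primes of reduction of $\TT_V$); once those are granted, the rest is effective Chebotarev plus bookkeeping.
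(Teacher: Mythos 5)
The central claim in your first paragraph — that the set of primes $l$ at which $(\TT_V)_{\FF_l}$ fails to be a torus is contained in the primes ramified in $L_V$ together with a set bounded by an \emph{absolute} constant $c_3$ — is false, and neither \cite[Lemma~2.6]{UllmoYafaev} nor \cite[Lemma~3.3.1]{EdYa} supports it. Lemma~2.6 of \cite{UllmoYafaev} bounds the coordinates of the characters of $\rho|_{\TT_V}$ with respect to \emph{some} $\ZZ$-basis of $X^*(\TT_V)$; this is an intrinsic statement about the abstract torus and says nothing about the $\ZZ$-structure that $\TT_V$ inherits from its scheme-theoretic closure in $\GL_{n,\ZZ}$, which is what governs whether the reduction mod~$l$ is a torus. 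The two $\ZZ$-structures differ at the primes $p$ where $K_{\TT_V,p}\neq K^{\mathrm{m}}_{\TT_V,p}$, and as $V$ varies over $\Sigma$ (with the embedding $\TT_V\hookrightarrow\G$ varying within a fixed $\GL_n(\QQ)$-conjugacy class but through different $\GL_n(\ZZ)$-conjugacy classes) the number $i_V$ of such primes is \emph{unbounded}. Concretely, for $\G'=\GL_2$ and $\TT_V$ ranging over the tori attached to orders of varying conductor $f$ in a fixed imaginary quadratic field, the bad primes are exactly the primes dividing $f$; no absolute $c_3$ works.

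This is precisely the obstruction the paper's proof is built around, and you have elided it. The paper introduces $i_V$ (the number of unramified primes where $K_{\TT_V,p}\neq K^{\mathrm{m}}_{\TT_V,p}$), proves in lemma~\ref{lowalpha} the lower bound $\alpha_V\geq (Bc)^{i_V}\,i_V!$, and then runs a dichotomy: either $i_V$ is unbounded along a modification of $\Sigma$, in which case the superexponential growth of $\alpha_V$ in $i_V$ makes $x_V$ (which contains $\alpha_V^{\epsilon}$) large enough that $\pi_{L_V}(x_V)>\max(C,i_V)$ and one can dodge all $i_V$ bad primes; or $i_V$ is bounded along a modification, in which case your Chebotarev count already suffices. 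Your plan only needs $\pi_{L_V}(x)>C'$ for a constant $C'$, which does not account for the (potentially many) split primes that are bad for the embedding of $\TT_V$; the prime produced by Chebotarev could then fail condition~(2). The bookkeeping you deferred is genuinely where the argument has to interact with $i_V$, so it cannot be dispensed with.
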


\begin{proof} 
For $V$ in $\Sigma$ recall that $n_V$ is the degree of the  
splitting field $L_V$ of $\C_{V}= \HH_{V} /\HH_{V}^{\textnormal{der}}$
over $\QQ$. By the proof of \cite[lemma 2.5]{UllmoYafaev} the number $n_V$ is bounded above by some
positive integer $n$ as $V$ ranges through $\Sigma$. 

Fix $D>0$, $\epsilon > 0$ and $m \geq \max(\epsilon, 6)$.
For $V$ in $\Sigma$, let
$$
x_V := D \alpha_V^{\epsilon} \beta_V^{m}\;\;.
$$

\begin{lem} \label{penultieme}
Up to a modification of $\Sigma$ the following inequality holds for
every $V$ in $\Sigma$:
\begin{equation} \label{lowerpi}
\pi_{L_{V}}(x_V) \geq \frac{D^{\frac{1}{2}}}{3n} \cdot  \alpha_V^{\frac{\epsilon}{2}} \cdot   
\beta_V^{\frac{m}{2}} \;\;.
\end{equation}
\end{lem}

\begin{proof}
As we are assuming either the GRH, or that the connected centres
$\TT_V$ of the generic Mumford-Tate groups $\HH_V$ of $V$ lie in one  
$\GL_n(\QQ)$-conjugacy class under $\rho$ as $V$ ranges through
$\Sigma$, in which case $d_{L_{V}}$ is independent of $V$, we can
apply proposition~\ref{chebotarev}: 
$$
\pi_{L_{V}}(x_V) \geq \frac{x_V}{3 n \log(x_V)}
$$
provided that $x_V$ is larger than some absolute constant and $\beta_V^3$.
Notice moreover that if $x_V\geq 4$ then
$
\sqrt{x_V} \geq \log(x_V)
$. 

It follows that
$$
\pi_{L_{V}}(x_V) \geq \frac{\sqrt{x_V}}{3 n} =
\frac{D^{\frac{1}{2}}}{3n} \cdot  \alpha_V^{\frac{\epsilon}{2}} \cdot   
\beta_V^{\frac{m}{2}} \;\;,
$$
hence the result, provided that $x_V$ is larger than some absolute constant and
$\beta_V^3$ for all $V$ in $\Sigma$.

It remains to show that (up to a modification of $\Sigma$) the quantity
$x_V$ is larger than any given constant and
$\beta_V^3$ for all $V \in \Sigma$. As $\alpha_V \beta_V$ is unbounded as $V$ ranges through any
modification of $\Sigma$, we can assume up to a modification of
$\Sigma$ that $\beta_V$ is non-zero for all $V$ in $\Sigma$. As
$\beta_V$ is the logarithm of a positive integer there exists $b>0$
such that $\beta_V >b$ for all $V$ in $\Sigma$. Then up to a
modification of $\Sigma$:

- either the inequality $\beta_V \leq
1$ holds for all $V
\in \Sigma$. In this case the assumption that $\alpha_V \beta_V$ is
unbounded as $V$ ranges through any modification of $\Sigma$ implies that $\alpha_V$ can
be ensured to be larger than any given constant (hence also larger than any given constant and $\beta_V^3$) for all $V$ in
$\Sigma$ and we are done.

- or the inequality $\beta_V >1$
holds for all $V \in \Sigma$.
On the one hand $m \geq \varepsilon$ hence we have $x_V = D (\alpha_V
\beta_V)^\varepsilon \beta_V^{m-\varepsilon} \geq D (\alpha_V
\beta_V)^\varepsilon$. On the
other hand as $m \geq 6$ one has $x_V \geq D (\alpha_V
\beta_V)^{\inf(\varepsilon, 3)} \beta_V^3$. Up to a new modification of
$\Sigma$ we can assume that for all $V$ in $\Sigma$ the quantity
$(\alpha_V \beta_V)^{\inf{(\varepsilon, 3)}}$ is larger than any
given constant
and we are also done in this case.

This finishes the proof of lemma~\ref{penultieme}.
\end{proof}

Let $i_V$ be the number of primes $p$ \emph{unramified} in $L_V$
such that $K^{\mathrm{m}}_{\TT_V,p}\not= K_{\TT_V,p}$.
To prove the proposition~\ref{choice_l} it is enough to show that
$\pi_{L_{V}}(x_V)> \max(C, i_V)$ if
$\alpha_V\beta_V$ is large enough.
Indeed, this will yield a prime $l>C$ satisfying $l <
D\alpha_{V}^{\epsilon} \beta_{V}^{m}$ and such that: $l$ is split in
$L_V$ and $K_{\TT_V,l} = K^{\mathrm{m}}_{\TT_V,l}$.
These last two conditions imply that $\TT_{V,\FF_l}$ is a torus
(we refer to the proof of lemma 3.17 of \cite{UllmoYafaev}
for the proof of this fact).


\begin{lem} \label{lowalpha}
Let $c$ be the uniform constant from the Proposition 4.3.9 of
\cite{EdYa}. Then:
$$
\alpha_V \geq (Bc)^{i_V}\cdot {i_V}!\qquad .
$$ 
\end{lem}
\begin{proof}
Notice that 
$$ \alpha_V= \prod_{\substack{p \, \textnormal{prime}\\
    K^{\mathrm{m}}_{{\TT_V},p}\not= K_{{\TT_V},p}}} \max(1, B \cdot
|K^{\mathrm{m}}_{\TT_V,p}/K_{\TT_V,p}|) 
\geq \prod_{\substack{p \, \textnormal{prime} \\ p \,
    \textnormal{unramified in} \, L_V \\
    K^{\mathrm{m}}_{{\TT_V},p}\not= K_{{\TT_V},p} }}  B \cdot
|K^{\mathrm{m}}_{\TT_V,p}/K_{\TT_V,p}|\;\;.$$ 
By proposition 3.15 of \cite{UllmoYafaev}, for $p$ unramified
in $L_V$ and such that $K_{\TT_V,p}^{\mathrm{m}}\not= K_{\TT_V,p}$ we have 
$
|K^{\mathrm{m}}_{\TT_V,p} / K_{\TT_V,p}| \geq cp
$.
Thus
$$ \alpha_V \geq (Bc)^{i_V} \cdot \Big(\prod_{\substack{p \, \textnormal{prime} \\ p \,
    \textnormal{unramified in} \, L_V \\
    K^{\mathrm{m}}_{{\TT_V},p}\not= K_{{\TT_V},p} }} p \Big) \geq (Bc)^{i_V} \cdot
{i_V}!\qquad ,$$
where we used in the last inequality that the $p$th prime in $\NN$ is at least $p$.

\end{proof}

\begin{defi}
Given a positive real number $t$ we denote by $\Sigma_t$
 the set of $V$ in $\Sigma$ with $i_V>t$.
\end{defi}

To finish the proof of proposition~\ref{choice_l} we proceed by dichotomy:

\begin{itemize}
\item Suppose that for any $t$ the set $\Sigma_{t}$ is a modification
of $\Sigma$. In particular the function $i_V$ is
unbounded as $V$ ranges through $\Sigma$.   For simplicity, we let $B':=Bc$.
Recall the well-known inequality: for every integer $n> 1$,
$$
e\cdot  \left(\frac{n}{e}\right)^n  < n! < e\cdot n \cdot \left(\frac{n}{e}\right)^{n} \;\;.
$$
The lower bound for $\alpha_V$ provided by lemma~\ref{lowalpha} gives:
$$
\alpha_V> e \left(\frac{B' i_V}{e}\right)^{i_V} >\left(\frac{B' i_V}{e}\right)^{i_V}\;\;.
$$
Hence:
$$
\alpha_{V}^{\frac{\epsilon}{2}} >  \left(\frac{B' i_V}{e}\right)^{\frac{\epsilon  
i_V}{2}}\;\;.
$$
For $i_V>\frac{4}{\epsilon}$ we obtain:
$$\alpha_{V}^{\frac{\epsilon}{2}} > \left(\frac{B' i_V}{e}\right) ^2\;\;.$$

Using the lower bound~(\ref{lowerpi}) for $\pi_{L_V}(x_V)$, the
trivial lower bound $\beta_V\geq 1$ and the fact that $m
\geq 6$,
we obtain that 
\begin{equation*}
\pi_{L_V}(x_V) \geq  \frac{D^{\frac{1}{2}}{B'}^2}{3n e^2}\cdot  i_V^2 \;\;.
\end{equation*}

Hence, whenever  $$i_V  > t = \max \left( \frac{3 ne^2
  }{D^{\frac{1}{2}}{B'}^2},\frac{4}{\epsilon},C \right)$$ we get $\pi_{L_V}(x_V) > \max(i_V,C)$. As the set
$\Sigma_t$ is a modification of $\Sigma$ we get the proposition~\ref{choice_l}.

\item Otherwise there exists a positive number $t$ such that $\Sigma
  \setminus \Sigma_t$ is a modification of $\Sigma$. Replacing
  $\Sigma$ by $\Sigma \setminus \Sigma_t$ we can assume without loss
  of generality that the function $i_V$ is
  bounded by $t$ as $V$ ranges through
  $\Sigma$.
  
We have for any $V$ in $\Sigma$,
  $$ \pi_{L_V}(x_V) > \frac{D^{\frac{1}{2}}}{3n} \cdot
  \alpha_V^{\frac{\epsilon}{2}} \cdot \beta_V^{\frac{m}{2}}\geq \frac{D^{\frac{1}{2}}}{3n}(\alpha_V\beta_V)^{\frac{\epsilon}{2}} \;\;.
  $$
Then $\pi_{L_V}(x_V) > \max(i_V, C)$ as soon as
$$
\frac{D^{\frac{1}{2}}}{3n}(\alpha_V\beta_V)^{\frac{\epsilon}{2}} > \max(t,C)\;\;.
$$ 
Hence we can take $M = \left( \frac{3n\max(t,C)}{D^{1/2}} \right) ^{2/\epsilon}$.  
\end{itemize}
\end{proof}
Let us now finish the proof of theorem~\ref{main-thm2} by showing the existence of the prime $l$ satisfying the 
conditions of proposition~\ref{suffisant}.
Let $r: = \dim Z - n (\Sigma)$.
Let $N$ be a positive integer, at least $6(k+2f)\cdot 2^{r}$ and such
that $m:= \frac{N}{(k+2f) \cdot 2^{r}}$ is an integer.
Let $\varepsilon < \frac{1}{(k+2f) \cdot 2^{r}}$, $D =
\{\frac{C(N)}{(\deg_{L_{K}} Z)^{2^{r}}}\}^\frac{1}{(k+2f) \cdot
  2^{r}}$.

Let $M$ be the integer obtained from proposition~\ref{choice_l}
applied to $\epsilon$, $m$ and $D$. Up to a modification of $\Sigma$
we can assume that any $V$ in $\Sigma$ satisfies $\alpha_V \beta_V
>M$. Thus by proposition~\ref{choice_l} for every $V \in \Sigma$ we can choose a
prime $l>C$ such that $l$ splits $\TT_V$, the reduction $\TT_{V,\FF_l}$ is a
 torus and $l < D \alpha_V^\epsilon \beta_V^{m}$: this is proposition~\ref{suffisant}.
\end{proof}

\vspace{0.5cm}
\small{

\noindent Bruno Klingler : Institut de Math\'ematiques de Jussieu,
Paris.

\noindent email : \texttt{klingler@math.jussieu.fr}

\sspace
\noindent
Andrei Yafaev : University College London, Department of Mathematics.

\noindent
email : \texttt{yafaev@math.ucl.ac.uk}}

\end{document}